\documentclass[11pt]{amsart}

\usepackage[
backend=biber,
style=alphabetic,
sorting=nyt,
isbn=false,
]{biblatex}
\renewbibmacro{in:}{}

\usepackage{graphicx,amssymb,epstopdf,subcaption,mathtools,tikz-cd,pb-diagram,thmtools, thm-restate,chngcntr,pinlabel,pifont, parskip}
\usepackage{amsfonts,amsthm}
\usepackage{mathrsfs}
  \usepackage{amsthm} 
  \usepackage{amsmath} 
  \usepackage{amssymb} 
  \usepackage{mathtools} 
   \usepackage{amsfonts}
  \usepackage{morefloats}
  \usepackage{bbm}
  \usepackage{tikz-cd}

  \usepackage{wrapfig}
   
  \usepackage[scr=boondox]{mathalfa}

 \usepackage{graphicx}
  \usepackage{xspace}
  \usepackage[all]{xy}
  \usepackage{pinlabel}
  \usepackage{enumerate}
  \usepackage{verbatim}
\usepackage[margin=1.3in]{geometry}
\usepackage{xargs} 
\usepackage{verbatim}

  \usepackage{hyperref}

  \newcommand{\p}[1]{\medskip \noindent \emph{#1}.}

  \newcommand{\calC}{\mathcal{C}}
  
  \newcommand{\calE}{\mathcal{E}}

  \newcommand{\calH}{\mathcal{H}}

  \newcommand{\calM}{\mathcal{M}}
  \newcommand{\calN}{\mathcal{N}}

  \newcommand{\calS}{\mathcal{S}}
  \newcommand{\calT}{\mathcal{T}}
  
  \newcommand{\calW}{\mathcal{W}}

  \newcommand{\ZZ}{\mathbb{Z}}
  \newcommand{\Z}{\mathbb{Z}}

  \newcommand{\cut}{\searrow}
  \newcommand{\calV}{\mathcal{V}}

  \newtheorem{theorem}{Theorem}[section]
  \newtheorem{proposition}[theorem]{Proposition}
  
  \newtheorem{lemma}[theorem]{Lemma}

  \newtheorem*{question*}{Question}

  \newtheorem{mainthm}{Theorem}
  
  \newtheorem{mainquestion}[mainthm]{Question}

  \newtheorem{altthm}{Theorem}

  \theoremstyle{definition}

  \newtheorem*{claim*}{Claim}

  \newtheorem*{answer*}{Answer}
  \newtheorem*{application*}{Application}
  \newtheorem*{notation*}{Notation}

  \theoremstyle{remark}
  
  \newtheorem*{remark*}{Remark}

  \newtheoremstyle{noparens}
    {}{}{\itshape}{}%
    {\bfseries}{.}{ }%
    {\thmname{#1}\thmnote{ {\mdseries #3}}}
\theoremstyle{noparens} 
\newtheorem{mainthm*}[mainthm]{Theorem} 

  \newcommand{\altref}[1]{Theorem~\ref{#1}}

  \DeclareMathOperator{\Mod}{Mod}
  \DeclareMathOperator{\PMod}{PMod}
  \DeclareMathOperator{\Ext}{Ext}

  \DeclareMathOperator{\Image}{im}

  \DeclareMathOperator{\Stab}{Stab}

  \DeclareMathOperator{\Fix}{Fix}
  
\DeclareMathOperator{\Homeo}{Homeo}
  
  \newcommand{\Aut}{\ensuremath{\operatorname{Aut}}\xspace} 
   
  \newcommand{\Inn}{\ensuremath{\operatorname{Inn}}\xspace} 

  \newcommand{\SL}{\ensuremath{\operatorname{SL}}\xspace} 
   
  \newcommand{\PSL}{\ensuremath{\operatorname{PSL}}\xspace} 
   
  \newcommand{\param}{{\mathchoice{\mkern1mu\mbox{\raise2.2pt\hbox{$
  \centerdot$}}
  \mkern1mu}{\mkern1mu\mbox{\raise2.2pt\hbox{$\centerdot$}}\mkern1mu}{
  \mkern1.5mu\centerdot\mkern1.5mu}{\mkern1.5mu\centerdot\mkern1.5mu}}}

  \renewcommand{\setminus}{{\smallsetminus}}

  \DeclarePairedDelimiter\abs{\lvert}{\rvert}

  \newcommand{\xdownarrow}[1]{%
  {\left\downarrow\vbox to #1{}\right.\kern-\nulldelimiterspace}}

\definecolor{cyan1}{RGB}{0,255,255}
\definecolor{dustyblue}{RGB}{55,171,200}
\definecolor{dustypink}{RGB}{224, 157, 205}
\definecolor{dustygreen}{RGB}{98, 196, 144}
\definecolor{dustyorange}{RGB}{200,150,25}

\DeclareMathOperator{\ab}{ab}

\DeclareMathOperator{\Int}{Int}
\newcommand{\defn}[1]{\textbf{#1}}
\newcommand{\extremal}{\calE_\Phi}
\newcommand{\extremalarg}[1]{\calE_\Phi^{#1}}

\newcommand{\cent}[2]{C_{#2}\left({#1}\right)}

\DeclareMathOperator{\B}{B}
\newcommand{\braid}[1]{\B_{#1}}

\DeclareMathOperator{\UConf}{UConf}
\newcommand{\conf}[1]{\UConf_{#1}(\mathbb{C})}
\newcommand{\disk}[1]{\mathbb{D}_{#1}}

\DeclareMathOperator{\CRS}{CRS}
\newcommand{\crs}[1]{\CRS\left(#1\right)}

\DeclareMathOperator{\Aff}{Aff}

\newcommand{\extgen}[1]{{\mathcal{SG}}_{#1}}

\newcommand{\sphere}[1]{\mathbb{S}_{#1,1}}
\newcommand{\outmulti}{\circ}

\DeclareMathOperator{\Comm}{Comm}
\DeclareMathOperator{\orb}{orb}

\newcommand{\diskinner}[1]{\mathbb{I}_{#1}}

\usepackage{textcomp,gensymb}
\title{Rigidity of maps between configuration spaces}

  \author{Rodrigo De Pool}
\address{Department of Mathematics, University of Notre Dame, South Bend, Indiana, United States of America}
 \email{rdepoola@nd.edu}

 \author{Peter Huxford}
\address{Department of Mathematics, Rice University, Houston, Texas, United States}
 \email{hux@rice.edu}
 
\author{Daniel Minahan}\thanks{DM is partially supported by NSF Grant 2402060}
\address{Department of Mathematics, University of Chicago, Chicago, Illinois, United States of America}
 \email{dminahan@uchicago.edu}

 \author{Jeroen Schillewaert}\thanks{JS is supported by the New Zealand Marsden Fund through grant UOA-2122}
 \address{Department of Mathematics, University of Auckland, Auckland, New Zealand}
 \email{j.schillewaert@auckland.ac.nz}
\begin{document}

\begin{abstract}
Let $n\geq5$ and $m\geq3$. Let $\Phi\colon\braid{n}\to\braid{m}$ be a homomorphism of braid groups. We prove that if the image of $\Phi$ is irreducible and not cyclic, then $m=n$ and $\Phi$ agrees with an automorphism modulo the center $Z(\braid{m})$. This resolves in the affirmative a conjecture of Chen, Kordek, and Margalit. It also provides a partial resolution to a problem on the K3 problem list.  As a consequence, we prove that every holomorphic map $\conf{n} \to \conf{m}$ for $n \geq 5$ and $m \geq 3$ is affine equivalent to either a constant map or the identity map. This resolves a conjecture of Farb for $n\neq4$.
\end{abstract}

\maketitle

\section{Introduction}\label{section:intro}

Let $n\geq3$ and let $\mathfrak{S}_n$ be the symmetric group on $n$ letters.  Let
\[
  \conf{n} \coloneqq \{(z_1,\ldots,z_n) \in \mathbb{C}^n : z_i \neq z_j \text{ when } i\neq j\}\, \big/\, \mathfrak{S}_n.
\]
This is the \defn{configuration space} of $n$ unordered points in the complex plane $\mathbb{C}$. Note that $\conf{n}$ is a complex manifold. Let $\Aff\cong\mathbb{C}\rtimes\mathbb{C}^*$ be the group of affine transformations of $\mathbb{C}$.  The group $\Aff$ acts on $\conf{m}$ in a natural way. Two holomorphic maps $\conf{n}\to\conf{m}$ are \defn{affine equivalent} if they become equal after post-composing with the topological quotient map $\conf{m}\to\conf{m}/\Aff$.  We prove the following rigidity theorem.

\begin{mainthm}\label{mainthm:holo}
  Let $n \geq 5$ and $m\geq 3$.  Let  $\Psi:\conf{n} \rightarrow \conf{m}$ be a holomorphic map that is not affine equivalent to a constant map. Then $m=n$ and $\Psi$ is affine equivalent to the identity.
\end{mainthm}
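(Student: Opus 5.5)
The plan is to reduce Theorem~\ref{mainthm:holo} to the group-theoretic rigidity statement from the abstract: if $\Phi\colon\braid{n}\to\braid{m}$ has irreducible, non-cyclic image, then $m=n$ and $\Phi$ agrees with an automorphism modulo $Z(\braid{m})$. I will take that statement as proved in the body of the paper. The bridge between holomorphic maps and braid homomorphisms is the reduction used by Chen--Salter and Lin. This reduction identifies $\conf{m}$ with the space of monic squarefree degree-$m$ polynomials. It then uses the hyperbolicity of the Teichmüller-type quotient $\conf{m}/\Aff$, namely the moduli space $\mathcal{M}_{0,m+1}$ (modulo the symmetric action). Royden's theorem and Imayoshi--Shiga type rigidity then imply that a holomorphic map out of $\conf{n}$ is determined, up to affine equivalence, by its induced map on $\pi_1$.

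The first step is to set $\Phi=\Psi_*\colon\braid{n}\to\braid{m}$ and apply the Nielsen--Thurston / canonical reduction system trichotomy to $\Phi(\braid{n})$. There are three cases.

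\begin{enumerate}[(a)]
\item The image of $\Phi$ is cyclic. Then the composite $\conf{n}\to\conf{m}/\Aff$ induces a map on fundamental groups with abelian image, hence one that factors through $\ZZ$. The image of that map into $\mathcal{M}_{0,m+1}$ (up to finite quotient) is a holomorphic map whose monodromy is finite modulo the center. Passing to a finite cover and using that bounded holomorphic maps to a hyperbolic target with trivial monodromy lift to Teichmüller space, a bounded domain, I will show this map is constant. Here I also need that $\conf{n}$ admits no nonconstant bounded holomorphic functions after the relevant compactification: its complement in $\mathbb{C}^n/\mathfrak{S}_n$ has codimension one, but the Riemann extension theorem applies along the discriminant for bounded functions. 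This gives affine equivalence to a constant map.
\item The image of $\Phi$ is reducible, with a nonempty canonical reduction system. Then the map becomes a ``cabling'' of lower-complexity maps. I will follow Lin's argument: the root-clusters of $\Psi(z)$ are holomorphically separated, and each cluster's center gives a holomorphic map to a configuration space with fewer points. Induction on $m$ together with case (a) shows that each piece is affine-constant. An extremal-cluster argument, in which the outermost clusters have affine-constant shape, then forces the whole map to be affine-constant or contradicts reducibility. For $m<n$ the known results of Lin and Chen--Kordek--Margalit already force cyclic image.
\item The image of $\Phi$ is irreducible and non-cyclic. By the main group-theoretic theorem, $m=n$ and $\Phi$ agrees modulo center with an automorphism of $\braid{n}$. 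Those automorphisms are the transvections $\sigma_i\mapsto\sigma_i z^k$ composed possibly with inversion. Then $\Psi$ and $\mathrm{id}$ (or complex conjugation) induce the same map $\pi_1\conf{n}\to\braid{n}/Z\cong\pi_1^{orb}(\conf{n}/\Aff)$. Inversion is excluded because it would correspond to an antiholomorphic map, and a holomorphic map cannot induce it, by an orientation/sign argument on the abelianization restricted to the discriminant divisor. Two holomorphic maps into the hyperbolic orbifold $\conf{n}/\Aff$ with the same monodromy lift to Teichmüller space, which is Kobayashi hyperbolic, and are homotopic. Imayoshi--Shiga rigidity then gives equality, so $\Psi$ is affine equivalent to the identity.
\end{enumerate}

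The main obstacle is the reducible case (b). It requires controlling holomorphic maps whose monodromy preserves a nontrivial multicurve, and an inductive cabling argument that must handle clusters permuted by the image. I would first try to show that the permutation action of $\braid{n}$, $n\ge5$, on the clusters is trivial; this holds because $\braid{n}$ has no small transitive quotients other than cyclic ones for $k<n$. Each cluster then defines its own holomorphic map, and I would proceed by induction on $m$.
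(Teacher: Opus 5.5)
Your case (c) matches the paper's endgame (Theorem~\ref{mainthm:natleast5} followed by Imayoshi--Shiga type rigidity). Cases (a) and (b), however, contain genuine gaps, and they are exactly where the analytic input of the proof lives.

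In (a) you assert that cyclic monodromy is finite modulo the center. As a statement about homomorphisms this is false. Sending every $s_i$ to one fixed pseudo-Anosov, or aperiodic reducible, braid $g\in\braid{m}$ respects the braid relations. It gives a cyclic homomorphism with infinite image in $\braid{m}/Z(\braid{m})$. For such monodromy there is no finite cover on which $\overline{\Psi}$ lifts to Teichm\"uller space, so your bounded-function/Liouville argument never starts. What rules these out for holomorphic $\Psi$ is a local monodromy statement. Each $s_i$ is a meridian of the smooth part of the discriminant, so $\Psi_*(s_i)$ is the monodromy of a holomorphic family over a punctured disk. It is therefore a root of a possibly empty multitwist (Huxford--Schillewaert, Proposition~7.2, used in Theorem~\ref{thm:hol-implies-noncyclic}). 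Cyclic monodromy is then one of two things. Either it is finite, and your argument (or Antonakoudis--Aramayona--Souto, Lemma~3.1) applies. Or it is generated by a root of a nontrivial multitwist, hence reducible, which sends you back to case (b).

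Case (b) is not something the cluster sketch can deliver. A reducing multicurve for $\Psi_*(\braid{n})$ lives, up to isotopy, in a reference fiber. Nothing forces the actual roots of $\Psi(z)$ to form geometrically separated clusters varying holomorphically in $z$, and proving that is precisely the hard part. Your claim that the clusters are not permuted also fails twice over:
\begin{itemize}
\item with $k<n$ clusters, Artin's theorem only makes the permutation action cyclic, and a cyclic action can still cycle the clusters;
\item since $m$ is unbounded, there may be $k\geq n$ clusters, where $\braid{n}$ can act through $\mathfrak{S}_n$ as in the cabling examples.
\end{itemize}
The paper bypasses your trichotomy with one deep input: the theorem of de Pool--Souto (Theorem~\ref{thm:hol-implies-irred}, extending Daskalopoulos--Wentworth). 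It says that a non-isotrivial holomorphic map from an irreducible quasi-projective variety to a finite cover of $\calM_{g,r}/\mathfrak{S}_r$ has irreducible monodromy. Apply it to $\overline{\Psi}\colon\conf{n}\to\conf{m}/\Aff\cong\calM_{0,m+1}/\mathfrak{S}_m$ together with the local monodromy fact above. This shows directly that $\Psi_*$ is irreducible and non-cyclic whenever $\Psi$ is not affine equivalent to a constant, so no induction on $m$ is needed.

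A smaller issue in (c): an abelianization sign argument cannot exclude the inversion $\kappa$. The group $\braid{n}/Z(\braid{n})$ has finite abelianization, and in $\braid{n}$ one has $\nu(\Psi_*(s_1))=-1+kn(n-1)$, which may be positive. The paper instead uses rigidity that also covers anti-holomorphic maps (Theorems~\ref{thm:moduli-rigidity} and~\ref{thm:conf-rigidity}).
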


\p{Ferrari map} The lower bound on $n$ is sharp, due to the existence of a map $\conf{4}\to\conf{3}$ dating back to Ferrari in 1540. His method that solves for the roots of a quartic polynomial begins with a construction that can be regarded as a holomorphic map from the space of quartic polynomials to the space of cubic polynomials.  Remarkably, if the initial quartic polynomial has no repeated roots, i.e., is square-free, then the resulting cubic polynomial will also be square free.  Thus, Ferrari's construction defines a holomorphic map $R\colon\conf{4}\to\conf{3}$.

Farb conjectured~\cite[Conjecture 2.1]{Far23} that for $n\geq4$ and $m\geq3$, any holomorphic map $\conf{n}\to\conf{m}$ must, up to affine equivalence, be constant, the identity, or $n=4$ and the map factors through $R$. Theorem~\ref{mainthm:holo} resolves this conjecture for $n\neq 4 $.

\p{Braid groups} A crucial tool in the study of $\conf{n}$ is the fundamental group
\[
  \braid{n}=\pi_1(\conf{n}),
\]
called the \defn{braid group} on $n$ strands. The group $\braid{n}$ is isomorphic to the mapping class group $\Mod(\disk{n})$ of the $n$-times punctured disk $\disk{n}$~\cite[Theorem 1.1]{BirmanBrendle}. We say that a homomorphism $\Phi:\braid{n}\to\braid{m}$ is \defn{irreducible} if $\Phi(\braid{n})$ does not preserve any non-empty multicurve in $\disk{m}$.  If $\Psi:\conf{n} \rightarrow \conf{m}$ is a holomorphic map that is not affine equivalent to a constant map, then the pushforward $\Psi_*:\braid{n} \rightarrow \braid{m}$ is irreducible as a consequence of the work of the first author and Souto~\cite[Theorem 1.2]{DPS24} which extends work of Daskalopalous--Wentworth~\cite[Theorem 5.7]{DW07}.

\p{Central equivalence} The analog of affine equivalence in the setting of braid groups is the notion of central equivalence. Let $G$ be a group and let $Z(G)$ denote its center.  If $\Phi:\braid{n} \rightarrow G$ is a homomorphism, then let $\overline{\Phi}\colon\braid{n}\to G/Z(G)$ denote the composition of $\Phi$ with the quotient map $G \rightarrow G/Z(G)$.  Two homomorphisms $\Phi_1,\Phi_2\colon\braid{n}\to G$ are \defn{centrally equivalent} if there is some $\psi\in\Aut(G)$ for which $\overline{\Phi_1} = \overline{\psi\circ\Phi_2}$.

The bulk of this paper is devoted to proving the following result.    

\begin{mainthm}\label{mainthm:natleast5}
  Let $n \geq 5$ and $m\geq 3$.  Let  $\Phi\colon\braid{n}\to\braid{m}$ be an irreducible homomorphism with non-cyclic image. Then $m=n$ and $\Phi$ is centrally equivalent to the identity.
\end{mainthm}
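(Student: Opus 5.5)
The plan is to study the images of the standard generators $\sigma_1,\ldots,\sigma_{n-1}$ through the Nielsen--Thurston classification and canonical reduction systems in $\Mod(\disk{m})$. First I will reduce to the case where $\Phi(\sigma_1)$ is not periodic and is not central modulo $Z(\braid{m})$. If $\Phi(\sigma_1)$ were periodic, then since all $\sigma_i$ are conjugate and $\sigma_1,\sigma_3$ commute, the images would be commuting conjugate periodic elements. Periodic elements of $\braid{m}$ are conjugate to powers of $\delta=\sigma_1\cdots\sigma_{m-1}$ or of $\varepsilon=\sigma_1\delta$. The centralizer of a noncentral such element is cyclic modulo the center, or small. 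Combined with the braid relations and $n\geq 5$, this should force $\overline{\Phi}$ to have abelian, hence cyclic, image; this is where $n\ge5$ first enters. If $\Phi(\sigma_1)$ is pseudo-Anosov modulo the center, its centralizer is virtually cyclic. Then $\Phi(\sigma_3),\ldots,\Phi(\sigma_{n-1})$ commute with it, and a similar argument gives cyclic image. So $\Phi(\sigma_1)$ is reducible with a nonempty canonical reduction system $\crs{\Phi(\sigma_1)}$.

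Next I will analyze the multicurves $C_i=\crs{\Phi(\sigma_i)}$. They are permuted by conjugation: $\Phi(\sigma_i\sigma_{i+1})$ conjugates $C_i$ to $C_{i+1}$. Because $\sigma_i$ and $\sigma_j$ commute when $|i-j|\ge2$, the multicurves $C_i$ and $C_j$ have zero intersection. Irreducibility means the union $\bigcup C_i$ is not preserved by the image. I would then separate out an ``outer'' multicurve, for instance the outermost curves, and study which components lie in several $C_i$. If some component $c$ lies in every $C_i$, the image preserves $c$, which contradicts irreducibility. The goal is to show that each $C_i$ is a single curve $c_i$ surrounding exactly two punctures. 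The $c_i$ should form a chain with $i(c_i,c_{i+1})=2$ and $i(c_i,c_j)=0$ for $|i-j|\geq2$. Moreover $\Phi(\sigma_i)$ should equal a half twist about $c_i$ times a central element, possibly times a ``remainder'' supported on the complement. I expect this combinatorial classification to be the main obstacle. It requires ruling out configurations where $C_i$ has many components, or where components surround larger sets of punctures. My first attempt would be a counting argument: combine $i(C_i,C_j)=0$ for $|i-j|\ge2$, the fact that the $C_i$ are all in one orbit, and the braid relation, which forces $C_i\cup C_{i+1}$ to fill a subsurface on which $\Phi(\sigma_i),\Phi(\sigma_{i+1})$ generate a braid group. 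This should reduce to a small list of possible curve types.

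Once the chain is established, the remainder pieces commute with everything and must be trivial or central, by irreducibility. The chain $c_1,\ldots,c_{n-1}$ of two-punctured curves fills a disk $D$ with $n$ punctures in $\disk{m}$. If $m>n$, then $\partial D$ is a nontrivial curve preserved by the image, contradicting irreducibility; hence $m=n$. Finally, some mapping class $\psi$ takes the standard chain to $(c_i)$, so $\psi^{-1}\circ\Phi(\sigma_i)\in\sigma_i^{\pm1}Z(\braid{n})$. The braid relations force all the signs to agree. The case of sign $-1$ is realized by the automorphism $\sigma_i\mapsto\sigma_i^{-1}$, so $\Phi$ is centrally equivalent to the identity.
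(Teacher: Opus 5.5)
\textbf{There is a genuine gap.} Your first step (nonempty canonical reduction systems) and your last step (a chain of two-punctured curves gives $m=n$ and central equivalence via Birman--Hilden and McCarthy) correspond to comparatively easy parts of the paper, namely Lemma~\ref{lem:centralizer:emptycrs} and Theorem~\ref{thm:case:m:leq:n}. The claim in between is essentially the whole theorem: that each $C_i=\crs{\Phi(\sigma_i)}$ is a single curve around two punctures, that these curves form a chain, and that the ``remainders'' are central. You give no argument for it.

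\textbf{Why the counting idea cannot work by itself.} The facts you intend to count with are: $C_i$ and $C_j$ are disjoint for $|i-j|\ge 2$, the $C_i$ form one conjugacy orbit, and the braid relation holds. Reducible homomorphisms such as cablings satisfy all of these. Yet there $C_i$ contains nested curves whose outermost curve surrounds $2k$ punctures. So irreducibility has to be used to manufacture an invariant multicurve in every bad configuration. ``A component lying in every $C_i$'' is only the easiest such configuration (Lemma~\ref{lem:alpha:extremalcollision}). The real obstructions are:
\begin{itemize}
\item curves of $C_j$ nested inside curves of $C_i$;
\item outermost curves of $C_i$ and $C_{i+1}$ that share punctures and have common interior curves;
\item the canonical exterior part of $\Phi(\sigma_i)$ being pseudo-Anosov, or a non-central periodic braid that moves curves and punctures.
\end{itemize}
Just to obtain non-nestedness, the paper needs $\Phi$-maximal curves, the type invariant of punctures (Theorem~\ref{thm:alpha:welltypedpuncture}), and the explicitly built invariant multicurve $\calW(\Phi)$ (Proposition~\ref{prop:curves:valid}). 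Also, the braid relation does not force $C_i\cup C_{i+1}$ to fill a subsurface on which the two images generate a braid group. That is an output of the classification, not an input.

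\textbf{The second missing idea: handling the remainders.} A priori each $\Phi(\sigma_i)$ has its own exterior part, which is periodic but not necessarily central. Nothing in your sketch shows these exterior parts come from one element commuting with the image. Irreducibility alone only makes a centralizing element periodic (Lemma~\ref{lem:mainthm:centralizerperiodic}), not central. The paper resolves this with a transvection, in these steps:
\begin{itemize}
\item For each commuting pair, it glues the exterior parts of $\Phi(s_i)$ and $\Phi(s_j)$ into a single element $f$, using Lemma~\ref{lem:centralizer:sheaf}.
\item It proves $f$ does not depend on the pair, using that $\extremal$ fills together with a multitwist argument.
\item It passes to $\Phi_{f^{-1}}$, which is externally central and still irreducible and non-cyclic (Lemmas~\ref{lem:mainthm:externallytrivial} and~\ref{lem:mainthm:irreducibletransvection}).
\end{itemize}
Only after this reduction does each $\Phi(s_i)$ fix every puncture whose type avoids $i$ (Lemma~\ref{lem:extremal:structural}). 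That fixing property is what makes the puncture count giving $m=n$ work, and the two-punctured chain is derived only afterwards.

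\textbf{Smaller issues.}
\begin{itemize}
\item Your periodic case is mis-justified. Centralizers of non-central periodic braids are themselves braid groups, not cyclic modulo the center. The correct route is that commuting conjugate periodic braids are equal (Lemma~\ref{lem:centralizer:basics}), which gives $\Phi(\sigma_1)=\Phi(\sigma_3)$, followed by Formanek's collision-implies-collapse.
\item You never rule out a pseudo-Anosov exterior part when $C_i\neq\emptyset$; the paper does this in Lemma~\ref{lem:external:anosov}.
\item At the end you must show the half-twist exponent is $\pm1$, not merely that the signs agree. The paper gets this from McCarthy's lemma in the Birman--Hilden double cover.
\end{itemize}
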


Chen--Salter, alongside many other results about holomorphic maps between other configuration spaces, show that Theorem~\ref{mainthm:natleast5} implies Theorem~\ref{mainthm:holo}~\cite[Theorem 1.5]{ChenSalter2026}. Theorem~\ref{mainthm:natleast5} was formulated as a question by Chen--Kordek--Margalit~\cite[Question 1.5]{CKM} and then later as a conjecture by the same authors~\cite[pg. 70]{CKMIcerm}.  Theorem~\ref{mainthm:natleast5} was also later conjectured by Chen--Salter~\cite[Conjecture 1.4]{ChenSalter2026}, and this conjecture was included in the K3 problems list \cite[Problem 2.7, Remark (2)]{K3problems}.  Problem 2.7 on the K3 problem list asks to classify all homomorphisms $\Phi:\braid{n} \rightarrow \braid{m}$.  Our Theorem \ref{mainthm:natleast5} resolves Problem 2.7 for all $n \geq 5$ and for $\Phi$ irreducible.  The pushforward $R_*:\braid{4} \rightarrow \braid{3}$ of Ferrari's map $R$ is irreducible and has non-cyclic image, so the bound $n \geq 5$ is also sharp in Theorem~\ref{mainthm:natleast5}.

\p{Prior results on homomorphisms of braid groups} Dyer--Grossman proved that $\Aut(\braid{n}) \cong \Inn(\braid{n}) \rtimes \ZZ/2\ZZ$~\cite{DyerGrossman}. In unpublished work, Lin showed that Theorem~\ref{mainthm:natleast5} holds when $m < n$~\cite[Theorem 3.1]{Lin2004braid}.  Bell--Margalit classify injective endomorphisms $\braid{n} \hookrightarrow \braid{n}$ for $n \geq 4$~\cite[Main Theorem 1]{BellMargalit}.  They further classify injective homomorphisms $\braid{n} \hookrightarrow \braid{n+1}$ for $n \geq 7$~\cite[Main Theorem 2]{BellMargalit}.  Castel generalized Bell--Margalit's work to the case that $n \geq 6$ and $\Phi:\braid{n} \rightarrow \braid{n+1}$ is arbitrary~\cite[Theorem 1.1.1]{Cas16}. Chen--Kordek--Margalit~\cite[Theorem 1.1]{CKM} classify all homomorphisms $\Phi:\braid{n} \rightarrow\braid{m}$ with $n \geq 5$ and $m \leq 2n$.

\p{Prior work on holomorphic maps between configuration spaces} In unpublished work, Lin proved Theorem~\ref{mainthm:holo}~\cite[Theorems 1.4 and 8.8]{Lin2004holo} for $m \leq n$ and $n \geq 5$.  Lin also showed that if $n\neq4$ and $n(n-1)\nmid m(m-1)$, then any holomorphic map $\Psi:\conf{n} \rightarrow \conf{m}$ is affine equivalent to a constant map~\cite[Corollary 9.9]{Lin2004holo}.  Chen--Kordek--Margalit upgrade this result to the case that $n \geq 5$ and either $n\nmid m$ and $n\nmid m-1$, or $n-1\nmid m$ and $n-1\nmid m-1$~\cite[Theorem 1.4]{CKM}.  Chen--Salter~\cite[Theorem 3.1]{ChenSalter2026} prove Theorem~\ref{mainthm:holo} in the case that $n \geq 5$ and $n \leq 2m$.  The second and fourth authors have classified, for $m,n\in\{3,4\}$, the holomorphic maps $\conf{n}\to\conf{m}$ up to affine equivalence~\cite[Theorem 1.1]{HuxfordSchillewaert2025},  This is done by classifying the corresponding homomorphisms $\braid{n}\to\braid{m}$.

\p{The assumptions of holomorphicity and irreducibility} The hypotheses of holomorphicity in Theorem~\ref{mainthm:holo}, and irreducibility in Theorem~\ref{mainthm:natleast5}, are necessary. Indeed, there is a continuous map $\conf{n}\to\conf{n+1}$ given by appending $1+\sum_{i=1}^n\abs{z_i}$ to the configuration $\{z_1,\ldots,z_n\}\in\conf{n}$. This induces the standard inclusion homomorphism $\braid{n}\to\braid{n+1}$, which is reducible: therefore this continuous map is not homotopic to a holomorphic map. As part of their classification of homomorphisms $\braid{n}\to\braid{m}$ for $n\geq5$ and $m\leq2n$, Chen--Kordek--Margalit construct more complicated examples of reducible homomorphisms $\braid{n}\to\braid{2n}$ called $k$-twist cablings \cite{CKM}. We believe that classifying all homomorphisms $\braid{n}\to\braid{m}$, which is equivalent to classifying the homotopy classes of continuous maps $\conf{n}\to\conf{m}$, for $n\geq5$ is an interesting problem that our Theorem~\ref{mainthm:natleast5} may be a stepping stone towards solving.

\p{The assumption $m\geq3$} It is possible to define $\conf{m}$ and $\braid{m}$ for $m\in\{1,2\}$. However Theorem~\ref{mainthm:holo} and Theorem~\ref{mainthm:natleast5} are trivial in these cases. Indeed, $\Aff$ acts 2-transitively on $\mathbb{C}$, and $\braid{2}\cong\ZZ$, whereas $\braid{1}$ is trivial. So if $n\geq1$ and $m\in\{1,2\}$, then all holomorphic maps $\conf{n}\to\conf{m}$ are affine equivalent to a constant map, and all homomorphisms $\braid{n}\to\braid{m}$ have cyclic image. For these reasons we generally assume that both $n\geq 3$ and $m\geq3$.

\p{On the notions of equivalence} Chen--Salter~\cite{ChenSalter2026} use a strictly finer notion of equivalence of holomorphic maps than our notion of affine equivalence.  However, it turns out that in the case that $n \geq 5$ and  $\Psi_1, \Psi_2:\conf{n} \rightarrow \conf{n}$ are two affine equivalent holomorphic maps that are not affine equivalent to a constant map, then $\Psi_1$ and $\Psi_2$ are equivalent under Chen--Salter's notion of equivalence. Furthermore, the holomorphic maps $\conf{n}\to\conf{m}$ that are affine equivalent to a constant map are well understood by Chen--Salter~\cite[Section 3.3]{ChenSalter2026}.  We discuss this more in Section~\ref{section:holomorphic}. Similarly, homomorphisms between braid groups are often said to be equivalent up to transvection \cite{CKM}.  We will discuss this notion of equivalence more in Section \ref{section:external}.

\p{Consequences of Theorem~\ref{mainthm:natleast5} for maps between hyperelliptic loci} Let $\calM_{g,r}$ denote the moduli space of genus $g\geq0$ Riemann surfaces with $r\geq0$ marked points, where $2g+r\geq3$. We view $\calM_{g,r}$ as a complex analytic orbifold. For $g \geq 1$, let $\calH_{g,1}$ be the hyperelliptic locus in $\calM_{g,1}$, i.e., the moduli space of genus $g\geq1$ hyperelliptic curves equipped with a point that is fixed by some hyperelliptic involution. Note that this is \emph{not} the universal family of genus $g$ hyperelliptic curves. Rather when $g\geq2$ it is a degree $2g + 2$ cover of the hyperelliptic locus $\calH_g$ in $\calM_g$.  Using Theorem~\ref{mainthm:natleast5}, we prove the following.

\begin{mainthm}\label{mainthm:hyp-to-hyp}
    Let $g\geq2$ and $h\geq1$.  Let $\Psi\colon\calH_{g,1}\to\calH_{h,1}$ be a holomorphic map such that $\Psi$ is non-constant as a map of coarse moduli spaces. Then $g=h$ and $\Psi$ agrees with the identity when viewed as a map between the coarse moduli spaces.
\end{mainthm}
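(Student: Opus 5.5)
We reduce Theorem~\ref{mainthm:hyp-to-hyp} to Theorem~\ref{mainthm:holo} through the classical description of the hyperelliptic locus via configuration spaces. A genus $g$ hyperelliptic curve with a marked Weierstrass point $p$ is a double cover $y^2=f(x)$ of $\mathbb{P}^1$ branched over $2g+2$ points, one of which is the image of $p$. Moving that branch point to $\infty$ presents the curve as $y^2=f(x)$ with $f$ squarefree of degree $2g+1$. The remaining $2g+1$ finite branch points are determined up to the action of $\Aff$. So the coarse space of $\calH_{g,1}$ is $\conf{2g+1}/\Aff$, and $\calH_{g,1}$ is the orbifold quotient of $\conf{2g+1}$ by $\Aff$, with extra inertia coming from the hyperelliptic involution. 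The first step is to make this precise at the level of orbifold fundamental groups. We have $\pi_1^{orb}(\calH_{g,1})$ equal to the hyperelliptic mapping class group $\mathrm{SMod}(S_{g,1})$ fixing the point, and by Birman--Hilden this is a central extension of $\braid{2g+1}/Z(\braid{2g+1})$, or equivalently a quotient of $\braid{2g+1}$ up to center.

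Given $\Psi$, the second step is to lift it to a map $\widetilde\Psi\colon\conf{2g+1}\to\conf{2h+1}$. The composition $\conf{2g+1}\to\calH_{g,1}\to\calH_{h,1}$ is holomorphic. I would pull back the family of branch divisors over $\calH_{h,1}$, which is well defined up to $\Aff$, to get a holomorphic $\Aff$-torsor over $\conf{2g+1}$. I then need a holomorphic section of this torsor. The translation part is handled by taking the centroid of the branch points to be $0$. The scaling part is handled by normalizing, for example, the discriminant or a suitable coefficient; if this produces a multivalued choice, I pass to a finite cover of $\conf{2g+1}$. Alternatively, the lifting can be done at the level of groups: $\Psi_*$ induces a homomorphism $\braid{2g+1}\to\pi_1^{orb}(\calH_{h,1})$, which lifts to $\braid{2h+1}$ modulo center. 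I expect this lifting to be the main obstacle, because of the orbifold and torsor subtleties and the issue of the center. The first thing I would try is to show the torsor is trivial using the fact that $H^1(\conf{n};\mathcal{O}^*)$-type obstructions vanish for the $\mathbb{C}^*$ part, since $\conf{n}$ is Stein and the relevant line bundle has finite-order Chern class that can be killed. Failing that, I would pass to a finite cover and use group-theoretic arguments instead.

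The third step applies Theorem~\ref{mainthm:holo}, with $n=2g+1\ge5$ since $g\ge2$, and $m=2h+1\ge3$ since $h\ge1$. Suppose $\widetilde\Psi$ is affine equivalent to a constant map. Then $\Psi$ is constant on coarse spaces, because the coarse space is exactly $\conf{2h+1}/\Aff$, which contradicts the hypothesis. Otherwise $2h+1=2g+1$, so $h=g$, and $\widetilde\Psi$ is affine equivalent to the identity. Hence $\Psi$ induces the identity on $\conf{2g+1}/\Aff$, the coarse space of $\calH_{g,1}$. If the lift exists only on a finite cover, the same conclusion follows. In that case the identity-type conclusion on the cover descends, since the map to the coarse space is determined pointwise.
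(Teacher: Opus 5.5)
Your route differs from the paper's. You want a genuine holomorphic lift $\widetilde\Psi\colon\conf{2g+1}\to\conf{2h+1}$ of $\overline{\Psi}$, the composite of $H_g$, $\Psi$, and the quotient by the hyperelliptic involution into $\conf{2h+1}/\Aff$, so that you can invoke Theorem~\ref{mainthm:holo}. The paper never lifts maps. It lifts only the homomorphism $\overline{\Psi}_*$ (Lemma~\ref{lem:holo:lift}) and applies Theorem~\ref{mainthm:natleast5}. It then pins down $\overline{\Psi}$ using the rigidity Theorem~\ref{thm:moduli-rigidity}, noting that the outer automorphism is realized anti-holomorphically; this is Theorem~\ref{thm:conf-to-hyp}.

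Your final paragraph is fine. The gap is that the step carrying the whole argument, existence of the lift on $\conf{2g+1}$ itself, is left open, and both of your fallbacks fail.
\begin{itemize}
\item A lift defined only on a finite cover $X\to\conf{2g+1}$ cannot be fed into Theorem~\ref{mainthm:holo}. That theorem classifies maps out of $\conf{n}$, i.e.\ homomorphisms out of $\braid{n}$. Nothing in the paper controls maps out of $X$, i.e.\ homomorphisms from a finite-index subgroup of $\braid{n}$. So ``the conclusion descends'' presupposes a conclusion on $X$ that you do not have.
\item A group-level lift gives a homomorphism, not a holomorphic map. Using it requires Theorem~\ref{mainthm:natleast5} plus a rigidity theorem, which is exactly the paper's proof.
\end{itemize}

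Your first idea does close the gap, provided you compute the Chern class rather than ``kill'' it.
\begin{itemize}
\item The orbifold map to $\conf{2h+1}/\Aff$ amounts to a holomorphic principal $\Aff$-bundle over $\conf{2g+1}$ with an equivariant map to $\conf{2h+1}$.
\item Normalizing the centroid to $0$ reduces this to a $\mathbb{C}^*$-bundle, i.e.\ a holomorphic line bundle $L$.
\item The discriminant on $\conf{2h+1}$ is translation invariant and homogeneous of degree $m(m-1)$, where $m=2h+1$. It therefore trivializes a nonzero tensor power of $L$, so $c_1(L)$ is torsion in $H^2(\conf{2g+1};\ZZ)$.
\item By universal coefficients, that torsion equals the torsion of $H_1(\conf{2g+1};\ZZ)\cong\ZZ$, hence is zero.
\item Since $\conf{2g+1}$ is affine, hence Stein, $H^1(\conf{2g+1};\mathcal{O}^*)\cong H^2(\conf{2g+1};\ZZ)$. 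So $L$ is holomorphically trivial, and a nonvanishing section yields $\widetilde\Psi$ with no cover needed.
\end{itemize}
With this in place your case analysis goes through.

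As for what each route buys: yours reuses Theorem~\ref{mainthm:holo}, and with it Chen--Salter's passage from Theorem~\ref{mainthm:natleast5} to holomorphic maps, at the cost of an Oka-principle input. The paper's route stays group-theoretic until a single application of Imayoshi--Shiga-type rigidity.
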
  

\p{Holomorphic maps between moduli spaces} Theorems~\ref{mainthm:holo} and~\ref{mainthm:hyp-to-hyp}  are examples of results that classify the holomorphic maps between parameter or moduli spaces that arise in algebraic geometry. In Section~\ref{section:holomorphic} we leverage Theorem~\ref{mainthm:natleast5} to classify maps between other moduli spaces.  Farb has assembled many pre-existing results and formulated a number of conjectures in this area~\cite{Far23}. Chen--Salter's work~\cite{ChenSalter2026}, which includes a suite of results classifying holomorphic maps between configuration spaces of points on Riemann surfaces, is closely related to ours and also belongs to this general program.  We have the following question, which we believe to be of general interest.

\begin{mainquestion}\label{question:moduli}
Let $g,r, h,s \geq 0$ with $2g+r\geq3$ and $2h+s\geq3$.  What are the holomorphic maps $\calM_{g,r} \rightarrow \calM_{h,s}$?
\end{mainquestion}  This was asked by Farb for $r=s=0$~\cite[Question 4.5]{Far23}. Antonakoudis--Aramayona--Souto~\cite[Theorem 1.1]{AAS18} showed that for $g\geq6$ and $h\leq 2g-2$, the only non-constant holomorphic maps occur when $g=h$ and $s\leq r$.  Furthermore, these are all forgetful maps, i.e.\ maps that permute and possibly forget some of the marked points. The first author and Souto extended this to $g\geq4$ and $h\leq 3\cdot2^{g-3}$~\cite[Theorem 1.1]{DPS24}. Benirschke--Serv\'{a}n \cite[Corollary 1.4]{Servan} classify isometric embeddings $\calM_{g,r} \rightarrow \calM_{h,s}$ when $\dim(\calM_{g,r}) \geq 2$.

The work of Aramayona--Souto, and that of the first author and Souto, uses a strategy similar to ours.  They use group theoretic results, of Aramayona--Souto~\cite{AS12}, and the first author~\cite{DP25}, respectively, about the corresponding mapping class groups. Antonakoudis--Aramayona--Souto also ask whether the holomorphic maps $\calM_{g,r}\to\calM_{h,s}$ for $g\geq3$ realize all irreducible homomorphisms between the corresponding mapping class groups~\cite[Question 1.3]{AAS18}. This is an analog of Chen--Kordek--Margalit's conjecture~\cite{CKMIcerm} that our Theorem~\ref{mainthm:natleast5} resolves.

\p{Overview of the paper and notation} The bulk of the paper proves Theorem~\ref{mainthm:natleast5}.  There are also a few pieces of recurring notation that we use throughout the paper, which are all listed here for the convenience of the reader. Most importantly, $\Phi$ is exclusively used to denote a homomorphism $\braid{n}\to\braid{m}$ of braid groups, and much of the notation we introduce refers to objects defined in terms of $\Phi$.

Section~\ref{section:braidprelim} introduces notation and assembles fundamental results about the braid group. The notation $\extgen{n}=\{s_1,\ldots,s_n\}$ for the extended generating set of $\braid{n}$ is introduced here.  We introduce an indexing convention for the elements of $\extgen{n}$ that we use repeatedly.  We also introduce elements $a_1,a_2 \in \braid{n}$ that we use repeatedly.

In Section~\ref{section:crs} we discuss the Nielsen--Thurston classification for braid groups and prove some basic results about homomorphisms between braid groups.

In Section~\ref{section:external}, we show in Lemma~\ref{lem:external:anosov} that any irreducible non-cyclic homomorphism is externally periodic.  We also define the exterior part $\Ext_M(f)$ of a braid and the canonical exterior part $\Ext(f)$.  This is related to Chen--Kordek--Margalit's notion of exterior map~\cite[Section 3.1] {CKM}.

Section~\ref{section:alpha} establishes key properties of special curves in $\disk{m}$ that we call $\Phi$-maximal.  The notations $\extremal$ and $\extremalarg{s_i}$ for $\Phi$-maximal curves are introduced here.

Section~\ref{section:punctures} defines an invariant of punctures in $\disk{m}$ called type. In Theorem~\ref{thm:alpha:welltypedpuncture} we classify the possible types of punctures and describe how $\Phi(s_i)$ acts on punctures of different types.  The notation $\calT_{\Phi}(p)$ for the type of a puncture $p$ is introduced here.

Section~\ref{section:curves} proves Proposition~\ref{prop:curves:valid}, which is a technical result about curves in the interior of $\Phi$-maximal curves.  The notation $\Delta(\Phi)$ is introduced here.

Section~\ref{section:onemaximal} introduces the notion of a simple homomorphism, and proves in Lemma~\ref{lem:onemaximal:twopunc} that these homomorphisms satisfy a certain technical property that is useful in Section~\ref{section:extcent}.

Section~\ref{section:extcent} proves Theorem~\ref{thm:extcent}, which states that Theorem~\ref{mainthm:natleast5} holds if we additionally assume that the homomorphism $\Phi$ is externally central.

Section~\ref{section:mainthm} completes the proof of Theorem~\ref{mainthm:natleast5}.  The proof proceeds by proving Lemma~\ref{lem:mainthm:externallytrivial}, which says that an irreducible homomorphism is centrally equivalent to an externally central homomorphism. 

Section~\ref{section:holomorphic} derives holomorphic rigidity consequences of Theorem~\ref{mainthm:natleast5}, including Theorem~\ref{mainthm:holo} and Theorem~\ref{mainthm:hyp-to-hyp}.

\subsection*{Acknowledgments}
The authors would like to thank Dan Margalit for providing extensive comments on an earlier draft of the paper, and would like to thank Noah Caplinger, Benson Farb, Andrew Putman, Sidhanth Raman, Nick Salter, Juan Souto, and Katherine Williams Booth for helpful conversations and comments.

\section{Generators of the braid group}\label{section:braidprelim}

Let $n \geq 2$. The braid group on $n$ strands, denoted $\braid{n}$, is by definition $\pi_1(\conf{n})$. Artin~\cite{Artin} proved that $\braid{n}$ admits the following presentation:
\begin{align*}
    \braid{n} = \langle s_1, \ldots, s_{n-1} \mid \;& s_is_{i+1}s_i = s_{i+1}s_is_{i+1} \text{ for } 1 \leq i \leq n-2,\\ & [s_i, s_j]=1 \text{ for } |i-j| >1 \;\rangle.
\end{align*}
The elements $s_1, \ldots, s_{n-1}$ are said to be a \defn{standard generating set} for $\braid{n}$  and the relation $s_is_{i+1}s_i = s_{i+1}s_is_{i+1}$ is known as the \defn{braid relation}. It follows from the presentation that the abelianization of $\braid{n}$ is $\braid{n}^{\ab}\cong\mathbb{Z}$ and that $s_is_{i+1}^rs_{i}^{-1} = s_{i+1}^{-1} s_i^r s_{i+1}$ for every  $r\geq 1$ and $1\leq i\leq n-1$.  Furthermore, we have $s_i=(s_{i+1}s_i)s_{i+1}(s_{i+1}s_i)^{-1}$ and $s_{i+1}=(s_is_{i+1})s_i(s_is_{i+1})^{-1}$ by the braid relation.

\p{Automorphisms of braid groups} Let $n \geq 2$.  Dyer--Grossman~\cite{DyerGrossman} proved that there is a short exact sequence
\[
  1 \rightarrow \Inn(\braid{n}) \rightarrow \Aut(\braid{n}) \rightarrow \ZZ/2\ZZ \rightarrow 1
\]
where $\Inn(\braid{n})$ denotes the inner automorphisms of $\braid{n}$. This short exact sequence splits because there is an outer automorphism $\kappa:\braid{n} \rightarrow \braid{n}$ of order 2 defined by
\[
  \kappa(s_i) = s_{i}^{-1}
\]
for all $1 \leq i \leq n-1$.

\p{Mapping class groups} Let $S_{g,n}^b$ be an orientable surface of genus $g$ with $b$ boundary components and $n$ punctures. Throughout, we always assume that surfaces are orientable.  We omit $n$ or $b$ when they are zero. The group $\Homeo^+(S_{g,n}^b, \partial S_{g,n}^b)$ is the group of orientation preserving homeomorphisms of $S_{g,n}^b$ that fix the boundary pointwise.  The \defn{mapping class group} $\Mod(S_{g,n}^b)$ is the quotient of $\Homeo^+(S_{g,n}^b,\partial S_{g,n}^b)$ by isotopies that fix $\partial S_{g,n}^b$ pointwise. We write $\Mod_{g,n}^b$ for the group $\Mod(S_{g,n}^b)$, and again omit $n$ or $b$ when they are zero. For more on mapping class groups, see~\cite{FM}.

\p{Braid groups as mapping class groups} Let $\disk{n}=S_{0,n}^1$ denote the $n$-times punctured disk. By a slight abuse of notation, we refer to a puncture $p$ of the disk as a puncture $p \in \disk{n}$. Let $\alpha$ be an embedded arc between two punctures of $\disk{n}$, and let $D_\alpha\cong\disk{2}$ be a closed regular neighborhood of $\alpha$. The \defn{half-twist} about $\alpha$, or equivalently the half-twist about $\partial D_\alpha$, is the mapping class, supported on $D_\alpha$, that interchanges the two endpoints of $\alpha$ in a counterclockwise manner, and whose square is the Dehn twist $T_{\partial D_\alpha}$ (see Figure~\ref{fig:crs:halftwistexample}).

 There is an isomorphism between $\braid{n}\cong\Mod(\disk{n})$~\cite[Theorem 1.1]{BirmanBrendle}, which we now describe. We number the punctures of $\disk{n}$ with the integers $1,\ldots,n$.  Consider embedded arcs $\alpha_1,\ldots,\alpha_{n-1}$ in $\disk{n}$ whose interiors are pairwise disjoint such that the endpoints of $\alpha_i$ are the punctures numbered $i$ and $i+1$. One obtains an isomorphism $\braid{n}\to\Mod(\disk{n})$ by sending the generator $s_i$ to the half twist about the arc $\alpha_i$ for each $1\leq i \leq n-1$. Any other labeling of the punctures and different choices of arcs can be achieved by applying an orientation preserving homeomorphism of $\disk{n}$, so this isomorphism is well-defined up to inner automorphisms.

The half-twist corresponding to the standard generator $s_1 \in \braid{3}$, identified with an element of $\Mod(\disk{n})$ for a specific choice of arc $\alpha_1$, can be seen in Figure~\ref{fig:crs:halftwistexample}.

\begin{figure}[ht]
    \centering
    \begin{tikzpicture}
        \node[anchor = south west] at (0,0){\includegraphics[scale=0.6]{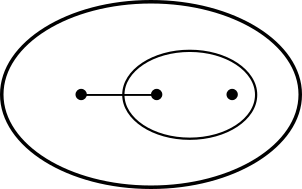}};
        \node at (4.2,1){\large $\gamma$};
        \node at (1.8,1.8){\large $\alpha_1$};
        \node at (6,1.6){\huge $\longrightarrow$};
        \node[anchor = south west] at (6.8,0){\includegraphics[scale=0.6]{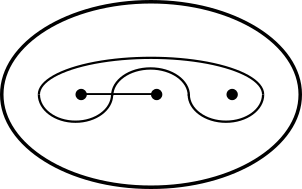}};
        \node at (10.8,1){\large $s_1(\gamma)$};
    \end{tikzpicture}
    \caption{The half-twist $s_1$ acting on a curve $\gamma$}\label{fig:crs:halftwistexample}
\end{figure}

For the rest of the paper we fix a standard generating set $\{s_1, \ldots, s_{n-1}\}$ for $\braid{n}$ and an isomorphism $\braid{n}\cong\Mod(\disk{n})$ as above.  We use this isomorphism without comment. 

\p{Notational convention on composition} Our convention for the remainder of the paper is that composition is done functionally, i.e., if $f, f' \in \braid{n}$, then $f f' \in \braid{n}$ is the element of $\Mod(\disk{n})$ given by first applying $f'$ and then by applying $f$.

\p{Standard central roots} For $n\geq3$, the Dehn twist $T_{\partial \disk{n}}$ along the boundary component $\partial \disk{n}$ generates the center of the braid group $Z(\braid{n})=\langle T_{\partial \disk{n}}\rangle \cong \mathbb{Z}$~\cite[Theorem III]{Cho48}. Associated to the standard generating set $\{s_1, \ldots, s_{n-1}\} \subset \braid{n}$ are two special roots of $T_{\partial \disk{n}}$. First, we have
\[
  a_1 = s_1 \cdots s_{n-1}.
\]
This element is realized in $\Mod(\disk{n})$ by a rotation of all $n$ punctures (see Figure~\ref{fig:crs:nthrootexample}).
\begin{figure}[ht]
    \centering
    \begin{tikzpicture}
        \node[anchor = south west] at (0,0){\includegraphics[scale=0.6]{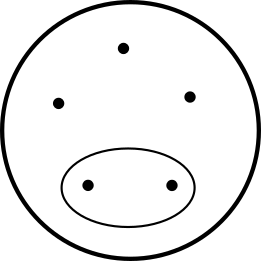}};
        \node at (2.2,2.1){\large $\gamma$};
        \node at (5.5,2.5){\huge $\longrightarrow$};
        \node[anchor = south west] at (6.8,0){\includegraphics[scale=0.6]{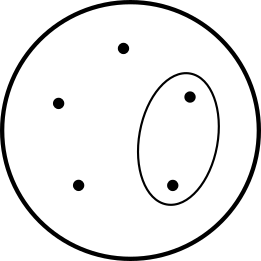}};
        \node at (8.6, 2.1){\large $a_1(\gamma)$};
    \end{tikzpicture}
    \caption{The element $a_1 \in \braid{5}$ acting on a curve $\gamma$}\label{fig:crs:nthrootexample}
\end{figure}
We call $a_1$ the \defn{standard central root of order $n$}.  Likewise, we have
\[
  a_2 = s_1^2 s_2 \cdots s_{n-1}.
\]
This is realized in $\Mod(\disk{n})$ by a rotation that fixes one puncture and rotates the other $n-1$ punctures (see Figure~\ref{fig:crs:nminusonerootexample}).
\begin{figure}[ht]
    \centering
    \begin{tikzpicture}
        \node[anchor = south west] at (0,0){\includegraphics[scale=0.6]{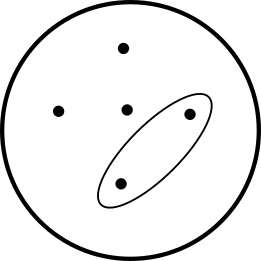}};
        \node at (3.1,1.4){\large $\gamma$};
        \node at (5.5,2.5){\huge $\longrightarrow$};
        \node[anchor = south west] at (6.8,0){\includegraphics[scale=0.6]{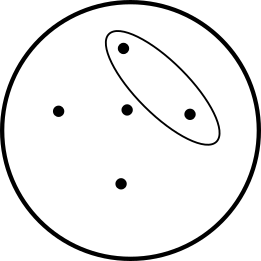}};
        \node at (10.2,1.7){\large $a_2(\gamma)$};
    \end{tikzpicture}
    \caption{The element $a_2 \in \braid{5}$ acting on a curve $\gamma$}\label{fig:crs:nminusonerootexample}
\end{figure}
We call the element $a_2$ the \defn{standard central root of order $n-1$}. These names are justified by the following observation:
\[a_1^n=a_2^{n-1}=T_{\partial\disk{n}}\in Z(\braid{n}).\]  We use the symbols $a_1$ and $a_2$ to denote these elements in $\braid{n}$ throughout the paper.

\p{Extended generating set} Let $s_n = a_1s_{n-1}a_1^{-1}$. The element $s_n $ is a half-twist that exchanges the punctures $n$ and $1$. Note that $a_1s_na_1^{-1} = s_1$. Note also that $s_n$ satisfies the following:
\begin{itemize}
    \item $s_ns_1s_n = s_1s_ns_1$ and $s_{n-1}s_ns_{n-1} = s_n s_{n-1} s_n$; and 
    \item $[s_n, s_i] = 1$ for all $2 \leq i \leq n-2$.
\end{itemize}The \defn{extended generating set} of $\braid{n}$ is defined to be the set $\extgen{n} \coloneqq \{s_1, \ldots, s_{n-1}, s_n\}$. We typically work with $\extgen{n}$, as opposed to the smaller standard generating set, in subsequent sections. However, we also note that $\extgen{n}\setminus\{s_i\}$ generates $\braid{n}$ for any $s_i\in\extgen{n}$.

\p{Indexing convention} For the sake of notational cleanliness, we define the element $s_k \in \extgen{n}$ for all $k \in \ZZ$.  In particular, we set $s_k = s_i$ for $i \equiv k \mod n$.  Under this indexing convention, we have
\[
  a_1s_ia_1^{-1}=s_{i+1} \qquad \text{and} \qquad s_is_{i+1}s_i=s_{i+1}s_is_{i+1},
\]
for all $i \in \ZZ$.  We repeatedly use this indexing convention throughout the remainder of the paper.

\p{Alternate expression for $a_1$} The central root $a_1\in \braid{n}$ admits different expressions using the indexing convention. If we conjugate the equality $a_1=s_1\ldots s_{n-1}$ by $a_1^{i}$, we obtain $a_1=s_{i+1}s_{i+2}\ldots s_{i+n-1}$ for every $i \in \ZZ$ using the above notational convention. 

\p{Commuting graph when $n \geq 5$}  Let $G$ be a group and let $S \subseteq G$ be a set of elements.  The \defn{commuting graph} $\Comm_G(S)$ is the graph where:
\begin{itemize}
    \item the vertex set is $V(\Comm_G(S)) = S$; and
    \item the edge set is $E(\Comm_G(S)) = \{\{s,s'\} \subseteq S: [s,s'] = 1,\ s\neq s'\}$.
\end{itemize} The graph $\Comm_{\braid{n}}(\extgen{n})$ is isomorphic to the complement of the $n$-cycle graph. The key reason that many of our arguments hold when $n \geq 5$ but fail when $n\in\{3,4\}$ is due to the following three properties of $\Comm_{\braid{n}}(\extgen{n})$, all of which hold when $n \geq 5$:
\begin{itemize}
    \item \emph{Connectivity:} $\Comm_{\braid{n}}(\extgen{n})$ is connected when $n \geq 5$;
    \item \emph{Small diameter:} $\Comm_{\braid{n}}(\extgen{n})$ has diameter 2 when $n \geq 5$; and
    \item \emph{Large neighborhoods:} for each two element subset $\{s_i,s_j\}\subset\extgen{n}$, there is at most one vertex $s_k \in \extgen{n}$ not adjacent to either one of $s_i$ or $s_j$ when $n \geq 5$.
\end{itemize}  This is closely related to the notion of totally symmetric sets due originally to Kordek--Margalit~\cite{KordekMargalit}.  We will refer to each of these facts about $\Comm_{\braid{n}}(\extgen{n})$ by name rather than citation. 
  
\p{Collision implies collapse} We make use of the following principle observed by Formanek.  See also the work of Kordek--Margalit~\cite{KordekMargalit} and Caplinger--Salter~\cite{CaplingerSalter} on totally symmetric sets.  Let $\Phi:\braid{n} \rightarrow \braid{m}$ be a homomorphism.  We say that $\Phi$ is \defn{cyclic} if the image of $\Phi$ is a cyclic subgroup of $\braid{m}$. 

\begin{lemma}[{\cite[Corollary 2]{Formanek}}]\label{lem:crs:collisioncollapse}
Let $n \geq 5$.  Let $G$ be a group and let $\Phi\colon\braid{n}\to G$ be a homomorphism. If $s_i, s_j \in \extgen{n}$ with $s_i \neq s_j$ and $\Phi(s_i)=\Phi(s_j)$, then $\Phi$ is cyclic.
\end{lemma}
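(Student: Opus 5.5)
We prove the lemma by showing that the kernel of $\Phi$ is large enough to force all the generators to collide. Set $g=\Phi(s_i)=\Phi(s_j)$. Since $\extgen{n}\setminus\{s_k\}$ generates $\braid{n}$ for any $k$, it is enough to show that $\Phi(s_k)=g$ for every $s_k\in\extgen{n}$. Then $\Phi(\braid{n})=\langle g\rangle$ is cyclic.

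First we reduce to the case where $s_i,s_j$ are adjacent in the cyclic order, that is, $j\equiv i\pm1 \pmod n$. Suppose instead that $s_i$ and $s_j$ are non-adjacent, so they commute. Because $n\geq5$, there is an index $k$ with $s_k$ adjacent in the $n$-cycle to $s_i$ but not to $s_j$; for example $k=i+1$ or $k=i-1$, chosen to avoid both $j\pm1$ and $j$. Then $s_k$ commutes with $s_j$ and satisfies the braid relation with $s_i$. Apply $\Phi$:
\[
\Phi(s_k)g\Phi(s_k)=g\Phi(s_k)g
\]
from the braid relation, and $\Phi(s_k)g=g\Phi(s_k)$ from commutation. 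Together these give $\Phi(s_k)^2g=\Phi(s_k)g^2$, so $\Phi(s_k)=g$. Now $s_k$ and $s_i$ are adjacent generators with equal images, which is the adjacent case.

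Now suppose $\Phi(s_i)=\Phi(s_{i+1})=g$. Choose $\ell$ with $s_\ell$ commuting with $s_i$ and adjacent to $s_{i+1}$, namely $\ell=i+2$. This requires $i+2\not\equiv i-1$, i.e. $n\ne3$, which holds. Then $\Phi(s_\ell)$ commutes with $g=\Phi(s_i)$ and braids with $g=\Phi(s_{i+1})$, so the same computation gives $\Phi(s_{i+2})=g$. Induct around the cycle: $\Phi(s_{i+1})=\Phi(s_{i+2})=g$ gives $\Phi(s_{i+3})=g$, and so on. This yields $\Phi(s_k)=g$ for all $k\in\ZZ$ under the indexing convention.

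The only real work is checking the index bookkeeping in the first step, namely that a suitable $k$ exists once $n\geq5$. We need $k\in\{i\pm1\}$ with $k\notin\{j,j\pm1\}$, and $j$ itself is not adjacent to $i$. If both $i+1$ and $i-1$ failed, each would lie within distance one of $j$, which forces $n\le4$. The inductive step uses only $n\ge4$, so all steps hold for $n\geq5$.
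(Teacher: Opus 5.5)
Your proof is correct, and it takes a different route from the paper. The paper does not argue directly. It conjugates by a power of $a_1$, using $a_1 s_k a_1^{-1}=s_{k+1}$, to arrange $s_n\notin\{s_i,s_j\}$, and then cites Formanek's Corollary 2 for the standard generators.

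You instead give a self-contained argument. It rests on one observation: if $h$ commutes with $g$ and $hgh=ghg$, then $h=g$. You apply this using only the cyclically symmetric relations of $\extgen{n}$ (braid relations for cyclically adjacent indices, commutation otherwise), so $s_n$ needs no special treatment and no reduction step is required.

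Your index bookkeeping is right. The adjacent case propagates around the cycle for any $n\geq4$. The non-adjacent case needs a neighbor of $s_i$ that is not a neighbor of $s_j$, and this fails only when $j\equiv i+2\equiv i-2 \pmod n$, i.e.\ $n=4$. That is exactly the configuration $s_1,s_3\mapsto s_1$ of the Ferrari map $R_*\colon\braid{4}\to\braid{3}$ discussed in the paper.

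So your version buys independence from the literature and shows exactly where $n\geq5$ is used, while the paper's version is shorter but relies on the citation. Your opening remark that $\extgen{n}\setminus\{s_k\}$ generates is unnecessary, since you end up showing that every element of $\extgen{n}$ maps to $g$.
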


\begin{proof}
    Formanek's result assumes additionally that $s_n\notin\{s_i,s_j\}$. However, without loss of generality we can assume we are in this situation by conjugating $s_i$ and $s_j$ by an appropriate power of $a_1$.  By~\cite[Corollary 2]{Formanek} the result holds.
\end{proof}

Note that the lemma is not true for $n=4$.  Indeed, there is a map $\braid{4} \rightarrow \braid{3}$ given by sending $s_1 \mapsto s_1$, $s_2 \mapsto s_2$ and $s_3 \mapsto s_1$. This is the pushforward $R_*$ of the Ferrari map $R: \conf{4} \rightarrow \conf{3}$ discussed in Section~\ref{section:intro}.  It turns out that the lemma holds when $n = 3$ since $\braid{3}$ is generated by $s_1,s_2$.

\p{Cyclic homomorphisms} Let $G$ be a group and $H \subseteq G$ a subset. 
We denote the centralizer of $H$ in $G$ by $\cent{H}{G}$.

\begin{lemma}\label{lem:crs:order}
Let $n \geq 5$ and $m \geq 3$.  Let $\Phi:\braid{n} \rightarrow \braid{m}$ be a homomorphism.  Suppose that at least one of the following holds:
\begin{itemize}
    \item there is an integer $1 \leq k \leq n-1$ such that $\Phi(a_1^k) \in \cent{\Phi(\braid{n})}{\braid{m}}$; or
    \item there is an integer $1 \leq k\leq n-2$ such that $\Phi(a_2^k)\in \cent{\Phi(\braid{n})}{\braid{m}}$.
\end{itemize}
Then $\Phi$ is cyclic.
\end{lemma}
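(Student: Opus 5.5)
The plan is to reduce both cases to Lemma~\ref{lem:crs:collisioncollapse}: I want two distinct elements of $\extgen{n}$ that $\Phi$ sends to the same element. The relations we have are conjugation by $a_1$, which shifts the indices of $\extgen{n}$, and conjugation by $a_2$, which acts in a similar way on a different set of half-twists.

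\emph{First case.} Suppose $\Phi(a_1^k)$ centralizes $\Phi(\braid{n})$ for some $1\le k\le n-1$. The indexing convention gives $a_1^k s_i a_1^{-k} = s_{i+k}$. Applying $\Phi$ and using centrality,
\[
\Phi(s_{i+k}) = \Phi(a_1^k)\Phi(s_i)\Phi(a_1^k)^{-1} = \Phi(s_i).
\]
Since $1\le k\le n-1$, the indices $i$ and $i+k$ are distinct modulo $n$, so $s_i\neq s_{i+k}$. Lemma~\ref{lem:crs:collisioncollapse} then shows that $\Phi$ is cyclic.

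\emph{Second case.} Suppose $\Phi(a_2^k)$ centralizes $\Phi(\braid{n})$ for some $1\le k\le n-2$. Here $a_2$ is the rotation that fixes one puncture and cyclically permutes the other $n-1$. Conjugation by $a_2$ does not permute $\extgen{n}$ as a whole, so I need an algebraic identity.

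\begin{itemize}
\item I would first check that $a_2 s_i a_2^{-1} = s_{i+1}$ for $2\le i\le n-2$. This follows from $a_2 = s_1 a_1$: for these $i$ we have $a_1 s_i a_1^{-1} = s_{i+1}$ with $i+1\le n-1$, and $s_{i+1}$ commutes with $s_1$ because $i+1\ge 3$.
\item Geometrically, $a_2$ cyclically permutes the $n-1$ half-twists $t_2,\dots,t_n$ whose arcs join consecutive moving punctures. Here $t_j=s_j$ for $2\le j\le n-1$, and $t_n$ is some half-twist joining the punctures at the two ends of this chain.
\item So $a_2^k t_j a_2^{-k} = t_{j+k}$, with indices taken cyclically in $\{2,\dots,n\}$, and $1\le k\le n-2$ moves each $t_j$ to a different element of the chain.
\end{itemize}

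I then need the collision to land on two distinct elements of $\extgen{n}$. Since $n\ge 5$, the chain $s_2,\dots,s_{n-1}$ has $n-2\ge 3$ elements, and I expect that a suitable choice of $j$ always works. If $k\le n-4$, take $j=2$: then $t_{2+k}=s_{2+k}$ with $2+k\le n-2$, so $\Phi(s_2)=\Phi(s_{2+k})$. For $k=n-3$ or $k=n-2$, I would instead use $a_2^{-k}=a_2^{(n-1)-k}$ up to the central element $a_2^{n-1}$, which is also central in the image. This replaces $k$ by $n-1-k\in\{1,2\}$, which satisfies $n-1-k\le n-4$ because $n\ge5$, so the previous argument applies. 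In every case Lemma~\ref{lem:crs:collisioncollapse} gives that $\Phi$ is cyclic.

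The main obstacle is getting the conjugation action of $a_2$ on the relevant half-twists right. Once that is pinned down, everything reduces to Formanek's collision lemma; the reduction modulo $a_2^{n-1}$ handles the large values of $k$.
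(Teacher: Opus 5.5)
Your strategy is the paper's. In the first case you use the shift $a_1^k s_i a_1^{-k}=s_{i+k}$ to force a collision. In the second case you conjugate $s_2$ by powers of $a_2$, and you trade $k$ for $n-1-k$ using centrality of $a_2^{n-1}$. However, your case split in the second case has an arithmetic error that leaves one case uncovered.

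You apply the direct argument only for $k\le n-4$. For $k\in\{n-3,n-2\}$ you replace $k$ by $n-1-k$ and claim $n-1-k\le n-4$ ``because $n\ge5$.'' For $k=n-3$ the replaced exponent is $2$, so the claim reads $2\le n-4$, which requires $n\ge 6$. When $n=5$ and $k=2$, the replacement gives $2$ again. This is not $\le n-4=1$, so your argument does not treat that case.

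The fix is already in your first bullet. Since $a_2 s_i a_2^{-1}=s_{i+1}$ for $2\le i\le n-2$, iterating from $i=2$ gives $a_2^k s_2 a_2^{-k}=s_{k+2}$ for every $1\le k\le n-3$. The last step uses $i=k+1\le n-2$. Moreover $s_{k+2}\ne s_2$, because $3\le k+2\le n-1$. Your restriction $2+k\le n-2$ was unnecessary, since $t_j=s_j$ holds all the way up to $j=n-1$. So the direct argument covers every $k\le n-3$. Only $k=n-2$ needs the replacement, and it turns into $k=1\le n-3$. For example, when $n=5$ and $k=2$ one gets $a_2^2 s_2 a_2^{-2}=s_4$, hence $\Phi(s_2)=\Phi(s_4)$.

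With this correction your proof coincides with the paper's. The geometric discussion of the extra half-twist $t_n$ is correct but not needed.
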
 Note that this is false for $n = 4$.  For example, the Ferrari map $R_*\colon\braid{4}\to\braid{3}$ discussed previously is non-cyclic but satisfies $R_*(a_1^2)\in Z(\braid{3})$.

\begin{proof}[Proof of Lemma~\ref{lem:crs:order}]
    Suppose that $\Phi(a_1^k)\in \cent{\Phi(\braid{n})}{\braid{m}}$ for some $1 \leq k \leq n-1$. Then
    \[
      \Phi(s_{k+1}) = \Phi(a_1^ks_1a_1^{-k}) = \Phi(a_1^k)\Phi(s_1)\Phi(a_1^k)^{-1} = \Phi(s_1).
    \]
    By Lemma~\ref{lem:crs:collisioncollapse} the map $\Phi$ is cyclic. 
    
    Now suppose that $\Phi(a_2^k)\in \cent{\Phi(\braid{n})}{\braid{m}}$ for some $1\leq k\leq n-2$. Recall that $a_2^{n-1}\in Z(\braid{n})$. By possibly replacing $a_2^k$ with $a_2^{n-1-k}$, we may assume without loss of generality that in fact $1 \leq k \leq n-3$, so that $a_2^ks_2a_2^{-k}=s_{k+2}$. Hence
    \[
    \Phi(s_{k+2}) = \Phi(a_2^ks_2a_2^{-k}) = \Phi(a_2^k)\Phi(s_2)\Phi(a_2^k)^{-1} = \Phi(s_2).
    \]
    Since $1 \leq k \leq n-3$ we have $s_2 \neq s_{k+2}$.  By Lemma~\ref{lem:crs:collisioncollapse} the map $\Phi$ has cyclic image.
\end{proof}

\section{Nielsen--Thurston classification}\label{section:crs} Our goal now is to describe the Nielsen--Thurston classification for braid groups and some consequences of this classification for homomorphisms between braid groups.

\p{Preliminaries on curves} Hereafter, a \defn{curve} in $S_{g,n}^b$ refers to an isotopy class of essential simple closed curves unless otherwise specified. Recall that a curve is \defn{inessential} if it is homotopic to a boundary component or to a point (including a puncture). Otherwise we say that the curve is \defn{essential}.  We will say that two curves $\gamma$ and $\delta$ are \defn{distinct} if no representative of $\gamma$ is isotopic to any representative of $\delta$. An \defn{arc} is a homotopy class of embedded intervals in $S_{g,n}^b$ where the endpoints of the interval map into punctures or boundary components of $S_{g,n}^b$.  Here we require homotopies to fix the punctures and boundary components of $S_{g,n}^b$.  If $\gamma,\delta\subset S_{g,n}^b$ are curves, we denote their \defn{geometric intersection number} by $\iota(\gamma, \delta)\in\ZZ_{\geq0}$, which is the minimum number of intersections over all representatives of $\gamma$ and $\delta$. We say $\delta$ and $\gamma$ are  \defn{disjoint} if they have disjoint representatives, i.e. $\iota(\gamma, \delta)=0$. Otherwise, we say $\delta$ and $\gamma$ \defn{intersect}. Note that a curve $\gamma$ is always disjoint from itself.  If $\calN$ is a nonempty set of curves, we say that a curve $\gamma$ is disjoint from $\calN$ if $\gamma$ is disjoint from every $\gamma' \in \calN$. Likewise, two sets of curves $\calN$ and $\calN'$ are \defn{disjoint} if each $\gamma \in \calN$ is disjoint from $\calN'$.  A \defn{multicurve} $M$ is a set of pairwise disjoint curves.  An Euler characteristic argument implies that multicurves are always finite.  For a more thorough discussion, see~\cite{FM}.

\p{Canonical reduction systems} Let $f \in \Mod_{g,n}^b$.  A \defn{reducing system} for $f$ is a multicurve $M$ in $S_{g,n}^b$ such that $f(M) = M$.  Let $\calV_f$ denote the set of all maximal reducing systems $M$ of $f$, i.e.\ the set of all $M$ such that there is no reducing system $M'$ of $f$ with $M' \supsetneq M$. The \defn{canonical reduction system}~\cite[pg. 373]{FM} of $f$ is the intersection
\[
  \crs{f} \coloneqq \bigcap_{M \in \calV_f} M.
\]
Note that if $f \in \braid{n}$, then $\partial \disk{n}$ is not in $\crs{f}$ as $\partial \disk{n}$ is inessential.  An element $f \in \Mod_{g,n}^b$ is \defn{reducible} if it admits a non-empty reducing system, otherwise it is \defn{irreducible}.  See Figure~\ref{fig:reducingex} and Figure~\ref{fig:periodicreducingex} for examples of reducing systems.

\begin{figure}[ht]
\centering
\begin{minipage}{.45\textwidth}
\centering
\begin{tikzpicture}
    \node at (0,0)[anchor = south west]{\includegraphics[scale=0.5]{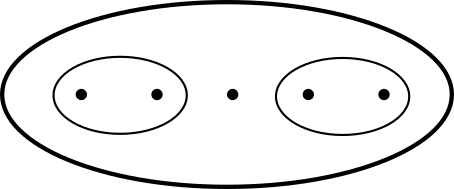}};
\end{tikzpicture}
\caption{The canonical reduction system of $s_1s_4 \in \braid{5}$.  This is also a reducing system for $s_1$ and $s_4$.}\label{fig:reducingex}
\end{minipage}
\begin{minipage}{.45\textwidth}
\centering
\begin{tikzpicture}
    \node at (0,0)[anchor = south west]{\includegraphics[scale=0.5]{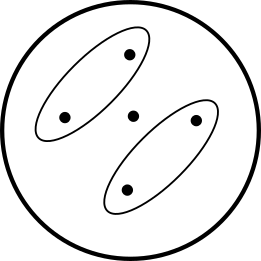}};
\end{tikzpicture}
\caption{A reducing system for $a_2^2 \in \braid{5}$.  Note $\crs{a_2^2} = \emptyset$.}\label{fig:periodicreducingex}
\end{minipage}
\end{figure}

As observed in~\cite[Section 13.2.2]{FM}, Birman--Lubotzky--McCarthy's definition of the canonical reduction system is equivalent to the one provided above~\cite{BirmanLubotzkyMcCarthy}. They define $\crs{f}$ to be the collection of curves $c$ satisfying the following two properties:
\begin{itemize}
    \item the $f$-orbit of $c$ is finite; and
    \item the $f$-orbit of any curve $b$ with $\iota(b,c) > 0$ is infinite.  
\end{itemize} The following standard facts follow from either Farb--Margalit's definition of the canonical reduction system, or from the definition due to Birman--Lubotzky--McCarthy~\cite{BirmanLubotzkyMcCarthy}. We state them without proof.

\begin{lemma}\label{lem:crs:infdisj}
Let $f \in \Mod_{g,n}^b$ and $\delta \subset S_{g,n}^b$ a curve.  Suppose that $\crs{f}$ intersects $\delta$. Then the $f$-orbit of $\delta$ is infinite.
\end{lemma}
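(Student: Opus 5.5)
The plan is to argue by contraposition using the Birman--Lubotzky--McCarthy description of $\crs{f}$ recalled just above. Suppose the $f$-orbit of $\delta$ is finite. We want to show that $\delta$ is disjoint from every curve $c \in \crs{f}$. The second defining property of $\crs{f}$ says that if $c \in \crs{f}$ and $b$ is any curve with $\iota(b,c)>0$, then the $f$-orbit of $b$ is infinite. Taking $b=\delta$ shows that $\iota(\delta,c)=0$ for every $c\in\crs{f}$. This contradicts the hypothesis that $\crs{f}$ intersects $\delta$.

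If one prefers to work from the Farb--Margalit definition, $\crs{f}=\bigcap_{M\in\calV_f}M$, the plan is as follows. Let $k\ge1$ be such that $f^k(\delta)=\delta$, and consider the finite set $O=\{\delta,f(\delta),\dots,f^{k-1}(\delta)\}$.

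The first step is to show that $O$ can be enlarged to a reducing system. If the curves of $O$ are pairwise disjoint, then $O$ is an $f$-invariant multicurve. It is therefore contained in some maximal reducing system $M$, and we use that curves which can be added to a reducing system are disjoint from all members of a maximal one. In general the orbit curves may intersect each other. In that case I would replace $O$ by the boundary of a regular neighborhood of the union of $O$, discarding inessential components and identifying parallel ones. This is an $f$-invariant multicurve $N$, and $\delta$ lies in the subsurface it bounds.

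The second step is to show that a canonical reduction curve $c$ intersecting $\delta$ would have to intersect $N$ essentially, or else lie inside that subsurface. The key fact is that each $c\in\crs{f}$ is disjoint from every $f$-invariant multicurve. Granting this, $c$ must lie inside a component of the complement of $N$ containing $\delta$, where $f^k$ restricts to a map preserving a filling collection of curves. Such a restriction is periodic, so it cannot have canonical reduction curves in the interior of that component.

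The main obstacle is this second step, namely justifying rigorously that canonical reduction curves avoid every invariant multicurve and interior periodic pieces. Because of this, I would rely on the equivalence of the two definitions (cited from Farb--Margalit, Section 13.2.2), which makes the statement essentially immediate from the BLM characterization as in the first paragraph.
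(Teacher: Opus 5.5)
Your first paragraph is correct and matches the paper's approach: the paper states this lemma without proof as an immediate consequence of the Birman--Lubotzky--McCarthy characterization of $\crs{f}$ (take $b=\delta$ in the second defining property), relying on the equivalence with the Farb--Margalit definition cited from \cite[Section 13.2.2]{FM}. The Farb--Margalit-based sketch is unnecessary. In its disjoint case all you need is that $O$ lies in some maximal reducing system $M$, which contains $\crs{f}$. Its hard step, that no canonical reduction curve lies inside a region where a power of $f$ is periodic, is exactly the content of that cited equivalence, so deferring to it was the right call.
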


\begin{lemma}\label{lem:crs:functorial}
Let $f,h \in \Mod_{g,n}^b$.  Then $\crs{hfh^{-1}} = h(\crs{f})$.
\end{lemma}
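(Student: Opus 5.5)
The natural route uses Farb--Margalit's definition: $\crs{f}$ is the intersection of all maximal reducing systems of $f$. The idea is that conjugation by $h$ carries reducing systems of $f$ bijectively onto reducing systems of $hfh^{-1}$ and preserves inclusion, so it carries the set of maximal ones, and hence their intersection, across.

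First, $h$ acts on isotopy classes of essential simple closed curves. Since $h$ is a homeomorphism fixing the boundary, it preserves essentiality and disjointness: we have $\iota(h\gamma,h\delta)=\iota(\gamma,\delta)$. So $h$ sends multicurves to multicurves.

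Next, let $M$ be a reducing system for $f$, so $f(M)=M$. Then
\[
hfh^{-1}(h(M)) = hf(M) = h(M),
\]
so $h(M)$ is a reducing system for $hfh^{-1}$. Applying the same argument to $h^{-1}$ and the conjugate $h^{-1}(hfh^{-1})h=f$ shows that $M\mapsto h(M)$ is a bijection from reducing systems of $f$ onto reducing systems of $hfh^{-1}$. This bijection preserves inclusion in both directions, since $h$ acts bijectively on curves. Hence it restricts to a bijection $\calV_f\to\calV_{hfh^{-1}}$.

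Finally, because $h$ acts bijectively on the set of curves, it commutes with intersections:
\[
h\Bigl(\bigcap_{M\in\calV_f}M\Bigr)=\bigcap_{M\in\calV_f}h(M)=\bigcap_{M'\in\calV_{hfh^{-1}}}M'.
\]
This gives $\crs{hfh^{-1}}=h(\crs{f})$.

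The only point needing care is the empty case: if $f$ has no nonempty reducing system, then $\calV_f=\{\emptyset\}$. The bijection still applies, and both sides are empty. There is no real obstacle; the main thing to verify is that the action on isotopy classes is well defined and preserves essentiality, which is standard.

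Alternatively, one can argue with Birman--Lubotzky--McCarthy's characterization. The $hfh^{-1}$-orbit of $h(c)$ is $h$ applied to the $f$-orbit of $c$, so finiteness of orbits is preserved. Intersection numbers are also preserved by $h$. So $c$ satisfies both defining conditions for $f$ exactly when $h(c)$ satisfies them for $hfh^{-1}$.
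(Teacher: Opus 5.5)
Your proposal is correct and takes the same approach as the paper, which states this lemma without proof and only remarks that it follows directly from either the Farb--Margalit definition (intersection of the maximal reducing systems) or the Birman--Lubotzky--McCarthy characterization. You have carried out both of those verifications, including the empty case.
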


A particularly useful consequence of Lemma~\ref{lem:crs:functorial} is that if $f$ and $h$ satisfy the braid relation $fhf=hfh$, then $hf\crs{h} = \crs{f}$.

\begin{lemma}\label{lem:crs:commdisj}
Let $f, h \in \Mod_{g,n}^b$.  Suppose that $f$ and $h$ commute.  Then $\crs{h}$ is a reducing system for $f$ and the curves in $\crs{f}\cup\crs{h}$ are pairwise disjoint.  In particular, $\crs{f} \cup \crs{h}$ is a multicurve.
\end{lemma}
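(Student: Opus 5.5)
Since $f$ and $h$ commute, Lemma~\ref{lem:crs:functorial} gives $\crs{f} = \crs{hfh^{-1}} = h(\crs{f})$, so $h$ preserves the multicurve $\crs{f}$. By symmetry $f(\crs{h}) = \crs{h}$. This already shows that $\crs{h}$ is a reducing system for $f$, which is the first assertion of the lemma (and the corresponding statement with $f$ and $h$ swapped).

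For disjointness, curves within $\crs{f}$ are pairwise disjoint, as are curves within $\crs{h}$, since each is a multicurve. So it suffices to take $c \in \crs{f}$ and $d \in \crs{h}$ and show $\iota(c,d)=0$. Suppose instead that $\iota(c,d) > 0$. Because $f(\crs{h}) = \crs{h}$ and $\crs{h}$ is a finite set, the $f$-orbit of $d$ lies inside $\crs{h}$ and is therefore finite. On the other hand, $d$ intersects the curve $c \in \crs{f}$, so $\crs{f}$ intersects $d$. Lemma~\ref{lem:crs:infdisj} then says the $f$-orbit of $d$ is infinite, a contradiction. Hence every curve of $\crs{f}$ is disjoint from every curve of $\crs{h}$.

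Therefore $\crs{f}\cup\crs{h}$ is a set of pairwise disjoint curves, that is, a multicurve. There is no real obstacle here. The only care needed is in reading ``$\crs{f}$ intersects $\delta$'' in Lemma~\ref{lem:crs:infdisj} as ``some curve of $\crs{f}$ has positive intersection number with $\delta$''. Equal curves have intersection number zero, so curves common to both systems cause no issue.
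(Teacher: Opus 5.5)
Your argument is correct. The paper gives no proof of this lemma: it lists it among standard facts that follow from the Farb--Margalit or Birman--Lubotzky--McCarthy definitions. Your derivation is the standard one via the Birman--Lubotzky--McCarthy characterization: commutation plus Lemma~\ref{lem:crs:functorial} gives mutual invariance of the canonical reduction systems, and finiteness of orbits plus Lemma~\ref{lem:crs:infdisj} gives disjointness. It is not circular, since those two facts do not depend on this lemma.
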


\begin{lemma}\label{lem:crs:power}
Let $f \in \Mod_{g,n}^b$ and $k \in \Z$ a non-zero integer.  Then $\crs{f^k} = \crs{f}$.
\end{lemma}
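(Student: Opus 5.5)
The plan is to use the Birman--Lubotzky--McCarthy characterization of $\crs{f}$ recalled above. A curve $c$ lies in $\crs{f}$ exactly when its $f$-orbit is finite and every curve $b$ with $\iota(b,c)>0$ has infinite $f$-orbit. It therefore suffices to show that, for $k\neq 0$, each of these two properties holds for $f$ if and only if it holds for $f^k$. The sets $\crs{f}$ and $\crs{f^k}$ are then defined by the same condition and coincide. Since $\crs{f^{-1}}=\crs{f}$ (the orbits under $f$ and $f^{-1}$ generate the same set $\{f^j(c):j\in\ZZ\}$), we may assume $k>0$.

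The key step is the following elementary fact about orbits: for any curve $c$, the $\langle f\rangle$-orbit of $c$ is finite if and only if the $\langle f^k\rangle$-orbit of $c$ is finite.
\begin{itemize}
\item If $f^p(c)=c$ for some $p>0$, then $f^{kp}(c)=c$. Hence $(f^k)^p(c)=c$, and the $f^k$-orbit is finite.
\item Conversely, suppose $(f^k)^q(c)=c$ for some $q>0$. Then $f^{kq}(c)=c$, so the $f$-orbit of $c$ is contained in $\{f^j(c):0\le j<kq\}$, which is finite.
\end{itemize}
Equivalently, the $f$-orbit of $c$ is the union of the $f$-images of the $f^k$-orbit under $f^0,\dots,f^{k-1}$, so it is finite exactly when the $f^k$-orbit is.

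Applying this fact to $c$, and separately to every curve $b$ with $\iota(b,c)>0$, the two defining conditions are unchanged when $f$ is replaced by $f^k$. Hence $c\in\crs{f}$ if and only if $c\in\crs{f^k}$.

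The only point needing care is the equivalence between the Farb--Margalit definition (the intersection of maximal reducing systems) and the Birman--Lubotzky--McCarthy definition. The paper already takes this equivalence as given, citing \cite[Section 13.2.2]{FM}, so there is no real obstacle. If one preferred to argue from the maximal-reducing-system definition directly, the obstacle would be that $\calV_f$ and $\calV_{f^k}$ can differ, since $f^k$ may have more reducing systems than $f$. Working with the orbit characterization avoids this issue entirely.
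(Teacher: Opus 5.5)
Your proof is correct and follows the route the paper indicates: the paper states this lemma without proof as a standard consequence of the Birman--Lubotzky--McCarthy characterization of $\crs{f}$. Your observation that a curve has finite $\langle f\rangle$-orbit if and only if it has finite $\langle f^k\rangle$-orbit, applied to $c$ and to every curve intersecting $c$, is exactly the verification that claim relies on.
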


\p{Pseudo-Anosov mapping classes} Suppose that $\phi\in\Homeo^+(S_{g,n})$ leaves two transverse foliations invariant. Suppose further that the foliations are measured, and that there is a real number $\lambda>1$ such that $\phi$ scales one by $\lambda$ and the other by $\lambda^{-1}$. A mapping class $f\in\Mod_{g,n}$ is \defn{pseudo-Anosov} if it is represented by such a homeomorphism $\phi$. It follows readily from the definition that if $f\in\Mod_{g,n}$ is pseudo-Anosov, then $f$ is irreducible. For a detailed discussion of pseudo-Anosov mapping classes, we refer the reader to~\cite[Chapter 13--14]{FM}.

\p{Capping} Let $\sphere{m}$ denote the 2-sphere $S_{0,m+1}$ with $m+1$ punctures, where one of the punctures is distinguished. Let $\Mod(\sphere{m})$ denote the index $(m+1)$ subgroup of $\Mod(S_{0,m+1})$ that fixes the distinguished puncture. There is a map $\disk{m}\to\sphere{m}$ given by capping the unique boundary component of $\disk{m}$ with a once-punctured disk. This induces a short exact sequence~\cite[Theorem 3.18]{FM}
\[
1 \to \langle T_{\partial\disk{m}} \rangle \to \Mod(\disk{m}) \to \Mod(\sphere{m}) \to 1.
\]
Recall that $Z(\braid{m})=\langle T_{\partial\disk{m}}\rangle$, so the map $\Mod(\disk{m})\to\Mod(\sphere{m})$ above can be identified with the quotient map $\braid{m}\to\braid{m}/Z(\braid{m})$. See \cite[Section 3.6.2]{FM} for more on capping.

We say that $f\in\Mod(\disk{m})$ is \defn{pseudo-Anosov} if its image in $\Mod(\sphere{m})$ is pseudo-Anosov. We highlight the following consequence of the fact that nonzero powers of pseudo-Anosov elements are pseudo-Anosov, and hence irreducible.

\begin{lemma}\label{lem:crs:infpA}
    If $f \in \braid{m}$ is pseudo-Anosov, then $f^k(M)\neq M$ for all $k\in\ZZ_{>0}$ and all non-empty multicurves $M \subset\disk{m}$.
\end{lemma}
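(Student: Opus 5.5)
We argue by contradiction, pushing everything down to the capped surface $\sphere{m}$, where the definition of pseudo-Anosov lives. Let $\pi\colon\Mod(\disk{m})\to\Mod(\sphere{m})$ be the capping homomorphism. Suppose $f\in\braid{m}$ is pseudo-Anosov, $k>0$, and $M\subset\disk{m}$ is a non-empty multicurve with $f^k(M)=M$.

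\emph{Step 1: transfer the multicurve.} Every curve in $\disk{m}$ is essential, so it is not isotopic to $\partial\disk{m}$ and it does not bound a disk with at most one puncture. After capping, such a curve bounds a disk containing at least two of the original punctures on one side. On the other side lie the distinguished puncture and at least one further original puncture, since otherwise the curve would be peripheral to $\partial\disk{m}$. Hence the curve remains essential in $\sphere{m}$. Distinct disjoint curves in $\disk{m}$ remain distinct and disjoint in $\sphere{m}$: isotopy classes in the punctured disk correspond to the subsets of punctures they enclose, and the same subsets persist after capping. Thus $M$ maps to a non-empty multicurve $\bar M$ in $\sphere{m}$. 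The capping map is equivariant for the actions, so $\pi(f)^k(\bar M)=\bar M$.

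\emph{Step 2: powers of pseudo-Anosovs.} By definition $\pi(f)$ is represented by a homeomorphism $\phi$ preserving transverse measured foliations $\mathcal F^u$ and $\mathcal F^s$, with stretch factors $\lambda^{\pm1}$ and $\lambda>1$. Then $\phi^k$ preserves the same foliations with factors $\lambda^{\pm k}$, where $\lambda^k>1$. So $\pi(f)^k=\pi(f^k)$ is pseudo-Anosov.

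\emph{Step 3: pseudo-Anosovs are irreducible.} The paper records that pseudo-Anosov classes are irreducible, so $\pi(f)^k$ cannot fix the non-empty multicurve $\bar M$. This contradicts Step 1. If one wants the underlying reason, it is the standard length argument. For a curve $\gamma$, the intersection number with $\mathcal F^u$ satisfies $i(\phi^{kj}(\gamma),\mathcal F^u)=\lambda^{kj}\, i(\gamma,\mathcal F^u)$, and this quantity is positive because the foliations fill. It therefore grows without bound, so no curve has a finite orbit. But a curve in a fixed multicurve has an orbit of size at most $|\bar M|$.

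The only point needing care is Step 1, namely checking that capping preserves essentiality and distinctness of curves. The remaining steps are immediate from the definitions.
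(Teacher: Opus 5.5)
Your argument is correct and is the same as the paper's, which justifies the lemma in one line by noting that nonzero powers of pseudo-Anosov classes are pseudo-Anosov and hence irreducible; you additionally spell out how multicurves pass through capping. One wording fix in Step 1: curves in $\disk{m}$ are not determined in general by the punctures they enclose (e.g.\ the pure braid $s_2^2$ moves the curve surrounding punctures $1,2$ in $\disk{3}$ without changing the enclosed set), but two \emph{disjoint} curves enclosing the same punctures cobound a puncture-free annulus and so are isotopic, which is all your distinctness claim needs.
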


\p{Nielsen--Thurston classification theorem} Here we state a version of the Nielsen--Thurston classification theorem \cite{Thurston} in the special case of the braid group. See \cite[Section 13.3]{FM} for a discussion of the general version of the theorem.

\begin{theorem}\label{cor:crs:NTbraid}
Let $m\geq3$ and $f\in\braid{m}$. Then $f$ is exactly one of the following three types of elements:
    \begin{itemize}
        \item \defn{periodic}, i.e., there is some integer $k\geq1$ such that $f^k\in Z(\braid{m})$;
        \item \defn{aperiodic reducible}, i.e., $\crs{f}\neq\emptyset$; or
        \item \defn{pseudo-Anosov}, i.e., the image of $f$ in $\Mod(\sphere{m})$ is pseudo-Anosov.
    \end{itemize}
\end{theorem}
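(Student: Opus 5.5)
The plan is to deduce Theorem~\ref{cor:crs:NTbraid} from the general Nielsen--Thurston classification for $\Mod(\sphere{m})$, using the capping sequence to pass between $\braid{m}$ and $\Mod(\sphere{m})$. Let $\bar f$ be the image of $f$ under the quotient map $\braid{m}\to\Mod(\sphere{m})$. Since $m\geq3$, the sphere $\sphere{m}=S_{0,m+1}$ has at least four punctures, so the classification applies. It says $\bar f$ is periodic, reducible, or pseudo-Anosov. Moreover, periodic and pseudo-Anosov are mutually exclusive, and pseudo-Anosov and reducible are mutually exclusive.

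First I would translate each case back to $\braid{m}$.

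\textbf{Periodic.} Since $Z(\braid{m})=\langle T_{\partial\disk{m}}\rangle$ is exactly the kernel of the capping map, $\bar f$ has finite order if and only if $f^k\in Z(\braid{m})$ for some $k\geq1$.

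\textbf{Pseudo-Anosov.} By definition, $f$ is pseudo-Anosov if and only if $\bar f$ is.

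\textbf{Reducible.} The key comparison is between curves in $\disk{m}$ and curves in $\sphere{m}$. Capping identifies essential curves in $\disk{m}$ with essential curves in $\sphere{m}$. The boundary curve $\partial\disk{m}$ becomes peripheral around the distinguished puncture, and the $\disk{m}$-essential curves are precisely the essential curves of $\sphere{m}$. This identification preserves geometric intersection number and is equivariant for the action of $f$ and $\bar f$. It follows that $\crs{f}$, computed via the Birman--Lubotzky--McCarthy characterization, corresponds exactly to $\crs{\bar f}$.

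Next I would establish the trichotomy, in the form: exactly one of (i) $\bar f$ periodic, (ii) $\crs{\bar f}\ne\emptyset$, (iii) $\bar f$ pseudo-Anosov holds. The input is the standard fact \cite[Section 13.2]{FM} that a mapping class has empty canonical reduction system if and only if it is periodic or pseudo-Anosov.

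\textbf{At most one case.} A periodic class has empty canonical reduction system. Indeed, every curve has finite orbit under a periodic class, so no curve satisfies the second condition of Birman--Lubotzky--McCarthy (for any curve $c$ there is some $b$ with $\iota(b,c)>0$, and $b$ also has finite orbit). A pseudo-Anosov class is irreducible, so its canonical reduction system is empty. Hence (ii) excludes (i) and (iii), and (i) and (iii) exclude each other by the classification.

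\textbf{At least one case.} If $\bar f$ is neither periodic nor pseudo-Anosov, then $\crs{\bar f}\ne\emptyset$ by the characterization above, which gives case (ii).

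I expect the main obstacle to be this last step, namely that a non-periodic, non-pseudo-Anosov class has nonempty canonical reduction system. It is not automatic from reducibility alone, because a reducible class could a priori have empty intersection of its maximal reducing systems. I would simply cite \cite[Corollary 13.3]{FM}. If a direct argument is wanted, I would cut along a maximal reducing system and apply the classification to the first-return maps on the components. The first-return maps cannot all be periodic, since then $\bar f$ would be periodic up to Dehn twists on the reducing curves, and aperiodicity forces those twists to be nontrivial, placing the twisted curves in the canonical reduction system. Otherwise some component carries a pseudo-Anosov piece, and its boundary curves lie in every maximal reducing system.

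Translating back through the correspondence $\crs{f}\leftrightarrow\crs{\bar f}$ then gives the stated trichotomy for $f$.
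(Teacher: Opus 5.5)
Your proposal is correct and takes the same route the paper indicates. The paper gives no separate proof: it only remarks that the statement follows from the Nielsen--Thurston classification on $\Mod(\sphere{m})$ via capping, and that the Birman--Lubotzky--McCarthy characterization of canonical reduction systems upgrades ``reducible'' to ``aperiodic reducible'' to give a genuine trichotomy. Your argument is a faithful expansion of that remark: the kernel of capping is $Z(\braid{m})$, curves in $\disk{m}$ correspond to curves in $\sphere{m}$ so that $\crs{f}$ matches $\crs{\bar f}$, and $\crs{\cdot}$ is empty exactly for periodic or pseudo-Anosov classes.
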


We remark that the Nielsen--Thurston classification theorem is typically stated for mapping class groups of surfaces $S_{g,n}$ without boundary. The notion of capping \cite[Section 3.6.2]{FM} provides a way to generalize this to surfaces with boundary. Furthermore, the theorem is not typically stated as a trichotomy, but rather that every mapping class is either periodic, reducible, or pseudo-Anosov: however periodic mapping classes can be reducible. It follows from work of Birman--Lubotzky--McCarthy that replacing reducible with the notion of aperiodic reducible as defined above gives a true trichotomy \cite{BirmanLubotzkyMcCarthy}.

\p{Orders of periodic elements} We stress that periodic elements of $\braid{m}$ are not finite order in $\braid{m}$. In fact, Fadell--Neuwirth showed $\braid{m}$ is torsion free~\cite[Theorem 8]{FN62}. Rather, the periodic elements of $\braid{m}$ are precisely the elements that are finite order in $\braid{m}/Z(\braid{m})$. Despite this, we say that the \defn{order} of a periodic element $a \in \braid{m}$ is the minimal $k \in \ZZ_{>0}$ such that $a^k \in Z(\braid{m})$.

\p{Classification of periodic elements} As far as the authors are aware, the following classification of periodic elements of $\braid{m}$ was first explicitly stated by Murasugi~\cite[Theorem C]{Murasugi}.  See also \cite[Theorem 7.1]{FM} and the following discussion.
\begin{lemma}\label{lem:crs:periodic}
Let $f \in \braid{m}$ be periodic.  Then $f$ is conjugate to a power of either $a_1 \in \braid{m}$ or $a_2 \in \braid{m}$.
\end{lemma}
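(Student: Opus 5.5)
The plan is to pass to the capped sphere and use the classical classification of finite-order mapping classes of the sphere. Let $\bar f$ be the image of $f$ in $\Mod(\sphere{m})$. Since $f$ is periodic, $\bar f$ has finite order $k$, the order of $f$ in the sense above. By Kerékjártó's theorem (Nielsen realization for cyclic groups, see \cite[Theorem 7.1]{FM}), $\bar f$ is represented by a homeomorphism $\phi$ of $S^2$ of order exactly $k$. This homeomorphism is topologically conjugate to a rotation about an axis, and it preserves the set of $m+1$ punctures and fixes the distinguished puncture $\infty$.

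First I would handle the case $k=1$. Then $f\in Z(\braid{m})$, so $f=T_{\partial\disk{m}}^j=a_1^{mj}$ and we are done. For $k\geq 2$, the rotation $\phi$ has exactly two fixed points, one of which is $\infty$; call the other $x$. Every other point of $S^2$ has an orbit of size exactly $k$. So the $m$ ordinary punctures split into free orbits of size $k$, together with possibly the single point $x$ if $x$ is a puncture. Hence either $k\mid m$ and no puncture is fixed, or $k\mid m-1$ and exactly one puncture is fixed.

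In the first case I would arrange the punctures symmetrically on concentric circles about $x$ in $\disk{m}$. Then $\phi$, restricted to the disk, is isotopic to a rotation by angle $2\pi j/k$ for some $j$ coprime to $k$. Up to conjugation by a homeomorphism moving the punctures into a single round circle, this is $a_1^{jm/k}$ modulo the center. The second case is analogous: conjugating puts the fixed puncture at the center and the rest on a circle, giving $a_2^{j(m-1)/k}$ modulo the center.

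It remains to lift back from $\Mod(\sphere{m})$ to $\braid{m}$. We have $f=h\,a_i^{r}h^{-1}T_{\partial\disk{m}}^{t}$ for some $h$ and integers $r,t$. Since $T_{\partial\disk{m}}=a_i^{N_i}$ with $N_1=m$ and $N_2=m-1$, and $T_{\partial\disk{m}}$ is central, this gives $f=h\,a_i^{r+tN_i}h^{-1}$, which is a conjugate of a power of $a_i$.

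I expect the main obstacle to be the middle step: justifying that a finite-order homeomorphism fixing $\infty$, with its punctures in several concentric orbits, is conjugate in $\Mod(\sphere{m})$ to the standard rotation with all orbit points on one circle. To do this I would use that any two configurations of $m$ points, invariant under the same rotation and with the same fixed-point data, are related by an equivariant homeomorphism. Such a homeomorphism can be built by an isotopy on the quotient orbifold, which is a sphere with two cone points, moving the images of the orbits to a single point-set. Alternatively, I would simply cite Murasugi's theorem \cite[Theorem C]{Murasugi} or the discussion following \cite[Theorem 7.1]{FM} for this step.
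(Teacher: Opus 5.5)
Your proposal is correct and is the standard argument: the paper gives no proof of its own, deferring to Murasugi and to the discussion following \cite[Theorem 7.1]{FM}. That discussion proceeds just as you do. It realizes $\bar f$ by a finite-order homeomorphism of the sphere, identifies it with a rotation fixing $\infty$ via Kerékjártó's theorem, moves the puncture orbits equivariantly into standard position, and absorbs the central ambiguity using $T_{\partial\disk{m}}=a_1^m=a_2^{m-1}$.
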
  We make repeated use of the following consequence of Lemma~\ref{lem:crs:periodic}.

\begin{lemma}\label{lem:crs:fixtwopunc}
    Let $f \in \braid{m}$ be periodic.  Suppose that there are two distinct punctures $p,q \in \disk{m}$ with $f(p) = p$ and $f(q) = q$.  Then $f \in Z(\braid{m})$.
\end{lemma}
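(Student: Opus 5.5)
By Lemma~\ref{lem:crs:periodic}, after conjugating we may assume $f = a_j^k$ for $j\in\{1,2\}$ and some integer $k$. Conjugation permutes punctures, so the conjugated element still fixes two distinct punctures, namely the images of $p$ and $q$. Being central is also invariant under conjugation. So it suffices to treat $f=a_1^k$ and $f=a_2^k$ directly.

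We use the explicit action of these roots on punctures, viewed as rotations in the model of Figures~\ref{fig:crs:nthrootexample} and~\ref{fig:crs:nminusonerootexample}.

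\emph{Case $f=a_1^k$.} Here $a_1$ acts on the $m$ punctures as an $m$-cycle. Concretely, $a_1=s_1\cdots s_{m-1}$ induces the permutation whose image in $\mathfrak{S}_m$ is an $m$-cycle. Then $a_1^k$ fixes some puncture only if $m\mid k$, in which case $a_1^k = T_{\partial\disk{m}}^{k/m}$ is central.

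\emph{Case $f=a_2^k$.} Here $a_2$ fixes one puncture and permutes the remaining $m-1$ cyclically. Its permutation is the $(m-1)$-cycle coming from $s_2\cdots s_{m-1}$ composed with $s_1^2$, which acts trivially on punctures. So $a_2^k$ fixes a second puncture only if $(m-1)\mid k$. In that case $a_2^k = T_{\partial\disk{m}}^{k/(m-1)}$, which is again central.

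This uses $a_1^m=a_2^{m-1}=T_{\partial\disk{m}}$, and $m\ge 3$ guarantees $m-1\ge 2$, so the cycle is nontrivial.

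The only point needing care is the claim about the permutations induced by $a_1$ and $a_2$. This is read off from the map $\braid{m}\to\mathfrak{S}_m$ sending $s_i$ to the transposition $(i\ i{+}1)$. Then $a_1\mapsto(1\,2\,\cdots\,m)$, and $a_2\mapsto (1\,2)^2(2\,3)\cdots(m{-}1\,m)$, which is an $(m-1)$-cycle fixing $1$.

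The remaining point is that a power of an $r$-cycle fixing any point of its support must be trivial, hence the exponent is a multiple of $r$. This is elementary.
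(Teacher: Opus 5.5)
Your proof is correct and follows the paper's argument. You reduce via Lemma~\ref{lem:crs:periodic} to a power of $a_1$ or $a_2$, use that these act on punctures as an $m$-cycle and as an $(m-1)$-cycle with one fixed point, and conclude that the power is a power of $a_1^m=a_2^{m-1}=T_{\partial\disk{m}}$. You additionally spell out the conjugation-invariance step and the explicit permutation computation in $\mathfrak{S}_m$, which the paper leaves implicit.
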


\begin{proof}
    The element $f$ is conjugate to a power of $a_1$ or $a_2$ by Lemma~\ref{lem:crs:periodic}. Note that $a_1$ permutes the punctures of $\disk{m}$ in an $m$-cycle, whereas $a_2$ permutes the punctures of $\disk{m}$ in an $(m-1)$-cycle. Therefore, the only way that $f$ can fix two punctures is if it is conjugate to a power of $a_1^m=a_2^{m-1}=T_{\partial\disk{m}}$, i.e.\ if $f\in Z(\braid{m})$.
\end{proof}

\p{Basic properties of braid group homomorphisms} Let $\Phi:\braid{n}\to \braid{m}$ be a homomorphism. The map $\Phi$ is \defn{reducible} if there is a non-empty multicurve $M$ in $\disk{m}$ such that $f(M) = M$ for all $f \in \Phi(\braid{n})$. Such an $M$ is called a \defn{reducing system} of $\Phi$.  If $\Phi$ is not reducible, we say $\Phi$ is \defn{irreducible}.  

\p{Emptiness convention}  Note that our convention is that a reducing system for a mapping class $f \in \braid{m}$ is allowed to be empty, while a reducing system for a homomorphism $\Phi:\braid{n} \rightarrow \braid{m}$ must be nonempty.

We now use Theorem~\ref{cor:crs:NTbraid} to start analyzing homomorphisms $\Phi:\braid{n} \rightarrow \braid{m}$.

\begin{lemma}\label{lem:crs:centralclassification}
    Let $n \geq 5$ and $m \geq 3$.  Let $\Phi:\braid{n} \rightarrow \braid{m}$ be an irreducible and non-cyclic homomorphism.  Let $a \in \braid{n}$ be a periodic element. Then $\Phi(a)$ is periodic.
\end{lemma}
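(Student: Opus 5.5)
Every periodic $a \in \braid{n}$ is conjugate to a power of $a_1$ or $a_2$ by Lemma~\ref{lem:crs:periodic}, and a conjugate of a periodic element is periodic. So it suffices to show that $\Phi(a_1^k)$ and $\Phi(a_2^k)$ are periodic. If $a$ is central, the argument below applies directly. The key observation is that $a_1^n = a_2^{n-1} = T_{\partial\disk{n}}$ is central in $\braid{n}$. Hence $z \coloneqq \Phi(T_{\partial\disk{n}})$ commutes with all of $\Phi(\braid{n})$, and $\Phi(a)^N \in \{z^{\pm j}\}$ for suitable $N$ whenever $a$ is a power of $a_1$ or $a_2$. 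By Lemma~\ref{lem:crs:power} and the Nielsen--Thurston trichotomy (Theorem~\ref{cor:crs:NTbraid}), $\Phi(a)$ is periodic exactly when $\Phi(a)^N$ is periodic. So the whole problem reduces to showing that $z$ is periodic. (If $\Phi(a)^N$ is trivial, we are already done.)

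To show $z$ is periodic, I would rule out the other two cases of Theorem~\ref{cor:crs:NTbraid}.

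\emph{$z$ aperiodic reducible.} Then $\crs{z} \neq \emptyset$. For every $g \in \braid{n}$, Lemma~\ref{lem:crs:functorial} gives
\[
\Phi(g)\bigl(\crs{z}\bigr) = \crs{\Phi(g) z \Phi(g)^{-1}} = \crs{z}.
\]
Thus $\crs{z}$ is a non-empty multicurve preserved by $\Phi(\braid{n})$, contradicting irreducibility.

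\emph{$z$ pseudo-Anosov.} Then every element of $\Phi(\braid{n})$ commutes with $z$. Passing to $\Mod(\sphere{m})$, the image of $\Phi(\braid{n})$ lies in the centralizer of a pseudo-Anosov element. That centralizer is virtually cyclic; this is a standard fact from McCarthy, see~\cite{FM}, and it is not among the results stated earlier, so I would cite it. Consequently the image of $\Phi(\braid{n})$ in $\braid{m}/Z(\braid{m})$ is virtually cyclic. I then want to derive a contradiction with non-cyclicity.

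Since $n \geq 5$, the commutator subgroup $[\braid{n},\braid{n}]$ is perfect (Gorin--Lin). Its image in a virtually cyclic group is therefore finite, because a perfect subgroup of a virtually cyclic group must map trivially to the $\ZZ$ or $D_\infty$ quotient. So $\overline{\Phi}([\braid{n},\braid{n}])$ is a finite group of periodic elements modulo the center. A more elementary route, avoiding Gorin--Lin, is to show that $\Phi(s_1 s_2^{-1})$ has a finite orbit structure. Either way I would conclude that $\Phi(s_1)\Phi(s_2)^{-1}$ is periodic, and then upgrade this to $\Phi(s_1)^k = \Phi(s_2)^k$ up to center. Alternatively, I would show directly that $\Phi(a_1)$ is periodic, since some power of $\Phi(a_1)$ times a central element lands in the finite part, and then apply Lemma~\ref{lem:crs:order} after arranging some power $\Phi(a_1^k)$ with $1 \le k \le n-1$ to centralize the image. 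I expect this step to be the main obstacle: turning ``virtually cyclic image'' into genuine cyclicity via the collision lemmas (Lemma~\ref{lem:crs:collisioncollapse}, Lemma~\ref{lem:crs:order}).

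For that final step, the approach I would try first is this. In the virtually cyclic group $\overline{\Phi}(\braid{n})$, the element $\overline{\Phi}(a_1)$ either has finite order or generates a finite-index infinite cyclic subgroup. In the infinite-order case, the conjugates $\overline{\Phi}(s_i) = \overline{\Phi}(a_1)^{i-1}\overline{\Phi}(s_1)\overline{\Phi}(a_1)^{1-i}$ are all conjugate in a virtually-$\ZZ$ group. The abelianization forces their translation parts to be equal, so they differ by finite-order elements. Pigeonhole on the $n \geq 5$ elements $s_i$, together with the structure of the finite normal subgroup, should then produce two of them with equal image modulo center. From there, a lifting argument using Lemma~\ref{lem:crs:collisioncollapse} applied to $\overline{\Phi}$ (with $G = \braid{m}/Z(\braid{m})$) gives that $\overline{\Phi}$ is cyclic. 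A cyclic $\overline{\Phi}$ makes $\Phi(\braid{n})$ abelian, hence cyclic, since $\braid{n}^{\ab} \cong \ZZ$. This contradicts the hypothesis, so $z$ is periodic, and therefore $\Phi(a)$ is periodic.
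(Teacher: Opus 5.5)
Your reduction to showing that $z=\Phi(T_{\partial\disk{n}})$ is periodic is correct and matches the paper, and so is your treatment of the aperiodic reducible case. The gap is in the pseudo-Anosov case, which you never close.

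From McCarthy you only learn that $V=\overline{\Phi}(\braid{n})$ is virtually cyclic. None of the routes you sketch turns this into the exact collision $\overline{\Phi}(s_i)=\overline{\Phi}(s_j)$ that Lemma~\ref{lem:crs:collisioncollapse} requires:
\begin{itemize}
\item Periodicity of $\Phi(s_1)\Phi(s_2)^{-1}$ is not that hypothesis, and neither is $\Phi(s_1)^k=\Phi(s_2)^k$ modulo the center.
\item The pigeonhole step fails. Putting all $\overline{\Phi}(s_i)$ in one coset of a finite normal subgroup $F$ forces a coincidence only if $|F|<n$, and nothing you have shown bounds $|F|$ that way. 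The ``equal translation parts'' claim also ignores a possible $D_\infty$ quotient.
\item Gorin--Lin only places $\overline{\Phi}([\braid{n},\braid{n}])$ inside a finite group. A perfect group can have nontrivial finite images; for example, $[\braid{n},\braid{n}]$ surjects onto $A_n$. So finiteness alone yields nothing without a further structural fact.
\end{itemize}

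The missing idea is commutativity of the image. The paper uses the fact that the centralizer of a pseudo-Anosov braid in $\braid{m}$ is $\ZZ^2$~\cite{GMW}. Hence $\Phi(\braid{n})\subseteq\cent{z}{\braid{m}}$ is abelian, and therefore cyclic because $\braid{n}^{\ab}\cong\ZZ$.

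You can also extract this from your McCarthy input. The element $\overline{z}$ is central in $V$ and has infinite order, so $\langle\overline{z}\rangle$ has finite index in the virtually cyclic group $V$. Set $W=\Phi(\braid{n})Z(\braid{m})$. Then $\langle z,T_{\partial\disk{m}}\rangle$ is a central subgroup of $W$ of index $[V:\langle\overline{z}\rangle]<\infty$. By Schur's theorem $[W,W]$ is finite, and since $\braid{m}$ is torsion-free, $[W,W]$ is trivial. Thus $\Phi(\braid{n})$ is abelian and hence cyclic, a contradiction. This route needs neither Gorin--Lin nor the collision lemmas.
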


\begin{proof}
    It suffices to prove this in the case where $a\in Z(\braid{n})$, since for any $k\in\ZZ_{>0}$, the element $f\in\braid{m}$ is periodic if and only if $f^k$ is periodic. The hypothesis that $a\in Z(\braid{n})$ implies in particular that $\Phi(\braid{n})\subseteq\cent{\Phi(a)}{\braid{m}}$.
    
    Suppose by way of contradiction that there is some $a \in Z(\braid{n})$ such that $\Phi(a)$ is not periodic. By Theorem~\ref{cor:crs:NTbraid}, either $\crs{\Phi(a)}\neq\emptyset$ or $\Phi(a)$ is pseudo-Anosov.
    
    Suppose that $\crs{\Phi(a)}\neq\emptyset$. Since $\Phi(\braid{n})\subseteq\cent{\Phi(a)}{\braid{m}}$, Lemma~\ref{lem:crs:commdisj} implies that $\crs{\Phi(a)}$ is a reducing system for $\Phi(\braid{n})$. This contradicts irreducibility.

    Suppose now that $\Phi(a)$ is pseudo-Anosov. Since pseudo-Anosov braids have centralizer $\Z^2$~\cite[Theorem 1.1(a)]{GMW}, the fact that $\Phi(\braid{n})\subseteq\cent{\Phi(a)}{\braid{m}}$ implies that $\Phi(\braid{n})$ is abelian. Since $\braid{n}^{\ab}\cong\Z$, we conclude that $\Phi$ is cyclic, a contradiction.
\end{proof}

Lin showed that if $n\neq4$ and $m<n$, then any homomorphism $\Phi\colon\braid{n}\to\braid{m}$ is cyclic~\cite[Theorem 3.1(b)]{Lin2004braid}. For completeness, we reprove Lin's result here for irreducible homomorphisms. We reprove it in full in Theorem~\ref{thm:lin:mlessn}.

\begin{lemma}\label{lem:fewer-irred-cyclic}
    Let $n\geq2$ and $m\geq2$ with $n\neq4$. Assume that $m<n$. Let $\Phi\colon\braid{n}\to\braid{m}$ be an irreducible homomorphism. Then $\Phi$ is cyclic.
\end{lemma}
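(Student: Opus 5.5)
We first dispose of the small cases and then, for $n\geq5$, use the periodicity of $\Phi(a_1)$ together with Lemma~\ref{lem:crs:order} to force an order bound that $m<n$ violates.

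\emph{Small cases.} If $m=2$, then $\braid{2}\cong\ZZ$, so $\Phi$ has cyclic image and there is nothing to prove. Since $m\geq2$ and $m<n$, we have $n\geq3$. If $n=3$, then $m=2$, which is the case just handled. Since $n\neq4$, it remains to treat $n\geq5$ and $3\leq m<n$. This is exactly the range where Lemma~\ref{lem:crs:centralclassification} and Lemma~\ref{lem:crs:order} apply.

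\emph{Main case $n\geq5$, $3\leq m<n$.} Suppose for contradiction that $\Phi$ is irreducible and not cyclic. Since $a_1\in\braid{n}$ is periodic, Lemma~\ref{lem:crs:centralclassification} shows that $\Phi(a_1)$ is periodic in $\braid{m}$. By Lemma~\ref{lem:crs:periodic}, $\Phi(a_1)$ is conjugate to a power of either $a_1\in\braid{m}$ or $a_2\in\braid{m}$. Since the $m$-th power of $a_1\in\braid{m}$ and the $(m-1)$-st power of $a_2\in\braid{m}$ both equal $T_{\partial\disk{m}}\in Z(\braid{m})$, and conjugation preserves the center, there is an integer $d$ with $1\leq d\leq m$ such that $\Phi(a_1)^d\in Z(\braid{m})$. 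Concretely, we may take $d=m$ or $d=m-1$, and both are at least $1$ because $m\geq3$.

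\emph{Conclusion.} Since $\Phi(a_1^d)=\Phi(a_1)^d$ is central in $\braid{m}$, it lies in $\cent{\Phi(\braid{n})}{\braid{m}}$. Moreover $1\leq d\leq m\leq n-1$. The first bullet of Lemma~\ref{lem:crs:order} then applies and shows that $\Phi$ is cyclic, contradicting our assumption.

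There is no real obstacle in this argument. The only points to check carefully are, first, that the exceptional values $n\in\{2,3\}$ are exactly absorbed by the case $m=2$ (using $n\neq4$), and second, that the bound $d\leq n-1$ holds so that Lemma~\ref{lem:crs:order} applies. The second point uses both the order bound coming from Lemma~\ref{lem:crs:periodic} and the hypothesis $m<n$.
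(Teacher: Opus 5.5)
Your proof is correct and follows the paper's argument essentially verbatim. Both reduce to $n\geq5$, $m\geq3$, obtain that $\Phi(a_1)$ is periodic from Lemma~\ref{lem:crs:centralclassification} (under the assumption that $\Phi$ is non-cyclic), use Lemma~\ref{lem:crs:periodic} to get $\Phi(a_1^k)\in Z(\braid{m})$ for some $1\leq k\leq m\leq n-1$, and conclude with Lemma~\ref{lem:crs:order}.
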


\begin{proof}
    It suffices to prove this when $n\geq5$ and $m\geq3$, otherwise the hypotheses imply that $\braid{m}$ itself is cyclic. The maximum order of a periodic element in $\braid{m}$ is $m$ by Lemma~\ref{lem:crs:periodic}. However, $\Phi(a_1)$ is a periodic element of $\braid{m}$ by Lemma~\ref{lem:crs:centralclassification}. Therefore, $\Phi(a_1^k)\in Z(\braid{m})$ for some integer $k$ satisfying $1\leq k\leq m \leq n-1$. Therefore, $\Phi$ is cyclic by Lemma~\ref{lem:crs:order}.
\end{proof}

The hypothesis $n\neq4$ above is required, since the pushfoward $R_*\colon\braid{4}\to\braid{3}$ of Ferrari's map $R$ is irreducible and non-cyclic.

\section{External braids and centralizers}\label{section:external}

We write $\disk{m}\cut\gamma$ for the disconnected surface $\disk{m}\setminus  U_\gamma$, where $U_\gamma$ is an open regular neighborhood of a smooth representative of $\gamma$ with no self intersection. Up to homeomorphism this does not depend on the choice of representative or regular neighborhood.  We emphasize that each component of $\disk{m}\cut\gamma$ has a boundary component homotopic to $\gamma$. See Figure~\ref{fig:cutex} for an example. If $M$ is a multicurve, we define $\disk{m} \cut M$ similarly.  We also define $\disk{m} \cut (M_1 \cup M_2)$ for $M_1$ and $M_2$ two multicurves with $M_1 \cap M_2 = \emptyset$ as follows.  Choose smooth minimally intersecting representatives $C_1$ and $C_2$ of $M_1$ and $M_2$ respectively that do not intersect $\partial \disk{m}$. Choose $U$ a regular neighborhood of $C_1 \cup C_2$.  The cut surface $\disk{m} \cut (M_1 \cup M_2)$ is the disjoint union of the connected components of $\disk{m} \setminus U$ that are not homeomorphic to disks.

\begin{figure}[ht]
\centering
   \begin{tikzpicture}
        \node[anchor = south west] at (-1,0){\includegraphics[scale=0.6]{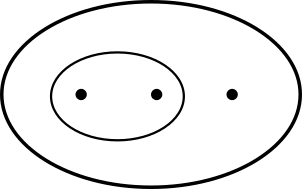}};
        \node at (1,2) {\large $\gamma$};
        \node at (5,1.6) {\Huge $\Rightarrow$};
        \node[anchor = south west] at (6,0){\includegraphics[scale=0.6]{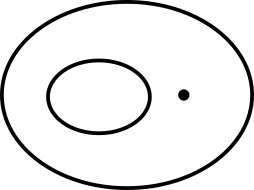}};
        \node at (10.9, 1.6) {\huge $\sqcup$};
        \node[anchor = south west] at (11.5,0.8){\includegraphics[scale=0.6]{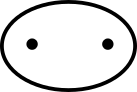}};
    \end{tikzpicture}
    \caption{Cutting $\gamma$ yields a disconnected surface: the disjoint union of a once punctured annulus and a twice punctured disk}\label{fig:cutex} 
\end{figure}

\p{Restrictions of curves to cut-open surfaces} Let $M \subset \disk{m}$ be a multicurve and let $\gamma \subset \disk{m}$ be a curve disjoint from $M$.  We can canonically identify $\gamma$ with a curve in $\disk{m} \cut M$ by choosing disjoint representatives of $M$ and $\gamma$.  We often make this identification implicitly without comment.

\p{Partial orders on curves}  Let $\gamma$ and $\gamma'$ be two distinct curves in $\disk{m}$.  We say that $\gamma'<\gamma$ if the following hold: 
\begin{itemize}
    \item $\gamma'$ and $\gamma$ are disjoint; and
    \item $\gamma'$ and $\partial \disk{m}$ are in different connected components of $\disk{m} \cut \gamma$.
\end{itemize}  Similarly, we say that $\gamma' \leq \gamma$ if either $\gamma' < \gamma$ or $\gamma' = \gamma$. This defines a partial order on the set of curves in $\disk{m}$. Note that the action of $\braid{m}$ on curves respects this partial order. That is, if $f\in\braid{m}$ and $\gamma'<\gamma$, then $f(\gamma')<f(\gamma)$. We use this property frequently without comment.

If $\gamma' \leq \gamma$ we say that $\gamma'$ is \defn{interior} to $\gamma$.  If $\calN$ is a set of curves, we let $\calN^{\outmulti}$ denote the \defn{exterior part} of $\calN$, which by definition is the subset of $\calN$ consisting of all curves $\gamma'$ such that there is no $\gamma \in \calN$ with $\gamma' < \gamma$.   We say that a curve $\gamma'$ is \defn{interior} to a multicurve $M$ if $\gamma' \leq \gamma$ for some $\gamma \in M$.  See Figure~\ref{fig:interiorex} for an example.   

\begin{figure}[ht]
    \centering
    \begin{tikzpicture}
        \node[anchor = south west] at (0,0){\includegraphics[scale=0.6]{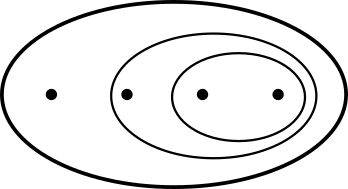}};
        \node at (2.7,1.8){\large $\gamma'$};
        \node at (1.7,1.8){\large $\gamma$};
    \end{tikzpicture}
    \caption{The curve $\gamma'$ is interior to $\gamma$}\label{fig:interiorex} 
\end{figure}

Let $\calN$ be a nonempty set of curves and let $\gamma$ be a curve disjoint from $\calN$. If there does not exist $\gamma'\in\calN$ with $\gamma<\gamma'$, we say that $\gamma$ is \defn{exterior} to $\calN$. Note that $\gamma$ can be exterior to $\calN$ without any curves in $\calN$ being interior to $\gamma$. See Figure~\ref{fig:exteriorex} for an example. 

\begin{figure}[ht]
    \centering
    \begin{tikzpicture}
        \node[anchor = south west] at (0,0){\includegraphics[scale=0.6]{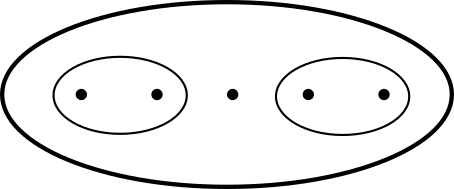}};
        \node at (2.9,2.4){\large $\gamma'$};
        \node at (6,2.4){\large $\gamma$};
    \end{tikzpicture}
    \caption{The curves $\gamma'$ and $\gamma$ are both exterior to each other}\label{fig:exteriorex}
\end{figure}  Similarly, we say that a multicurve $M$ is \defn{exterior} to a nonempty set of curves $\calN$ if every $\gamma\in M$ is exterior to $\calN$.  Note that a curve $\gamma$ is both interior and exterior to itself.  We say that a multicurve $M$ is \defn{non-nested} if $M = M^{\outmulti}$.   We also say that two sets of curves $\calN$ and $\calN'$ are \defn{mutually exterior} if every $\gamma \in \calN$ is exterior to $\calN'$ and vice versa.

\p{Inessential curves}  We also use the above terminology for inessential curves, or multicurves containing inessential curves, but only those that are homotopic to a puncture.  Since curves are always assumed to essential unless otherwise specified, it will be made clear when we are using the above terminology in reference to inessential curves.

\p{Curves containing punctures}  Similarly, we say a puncture $p \in \disk{m}$ is \defn{interior} to a curve $\gamma$ if $p$ lies in the connected component of $\disk{m} \cut \gamma$ not containing $\partial \disk{m}$. We write $\Int(\gamma)$ for the set of punctures interior to $\gamma$.  If $M$ is a non-nested multicurve, then we say that $p$ is \defn{interior} to $M$ if $p$ is interior to some $\gamma \in M$.  See Figure~\ref{fig:intpunc} for an example. We denote by $\Int(M)$ the set of punctures interior to a multicurve $M$. 

\begin{figure}
    \centering
    \begin{tikzpicture}
        \node at (0,0)[anchor = south west]{\includegraphics[scale=0.6]{reducingsystemhalftwisex.png}};
        \node at (1.8,1.7) {$p$};
        \node at (4.1,1.7) {$q$};
        \node at (2, 2.5) {$\gamma_1$};
        \node at (5, 2.5) {$\gamma_2$};
    \end{tikzpicture}
    \caption{$p$ is interior to the multicurve $\{\gamma_1, \gamma_2\}$, while $q$ is not}\label{fig:intpunc}
\end{figure}

\p{Braids fixing multicurves} Let $M \subset \disk{m}$ be a multicurve.  For $G\leq\braid{m}$, let $\Stab_G(M)$ denote the subgroup of $G$ consisting of elements that fix $M$ setwise. Assume now that $M$ is also non-nested. Kordek--Margalit~\cite[Lemma 3.1]{KordekMargalit} (see also the work of the second and fourth authors~\cite[Lemma 3.1]{HuxfordSchillewaert2025}) show that if the internal connected components of $\disk{m} \cut M$ are the disks $\disk{m_1},\ldots, \disk{m_k}$, then there is an exact sequence
\[
  1 \rightarrow \braid{m_1} \times \cdots \times \braid{m_k} \rightarrow \Stab_{\braid{m}}(M) \xrightarrow{\Ext_M} \braid{m - m_1 - \cdots - m_k+k}.
\]
This sequence in fact splits (non-canonically) over its image.  Let
\[
  \Ext_M:\Stab_{\braid{m}}(M) \rightarrow \braid{m-m_1-\cdots-m_k+k}
\]
denote the rightmost map in the above sequence and let $\braid{M} \coloneqq \Image(\Ext_M)$.  Let $\disk{M}$ denote the disk with $m - m_1 - \cdots - m_k+k$ punctures. If $f \in \Stab_{\braid{m}}(M)$, we call $\Ext_M(f)$ the \defn{external part} of $f$, which is the image of $f$ in $\braid{M}$.  Let $\Ext(f)$ denote $\Ext_{\crs{f}^{\outmulti}}(f)$, which we call the \defn{canonical external part} of $f$. Note that if $\crs{f} = \emptyset$, then $\Ext(f) = f$.  If the multicurve $M$ is not non-nested, then we define $\Ext_M \coloneqq \Ext_{M^{\outmulti}}$, $\braid{M}\coloneqq\braid{M^{\outmulti}}$, and $\disk{M}\coloneqq\disk{M^{\outmulti}}$.

\p{Equivariance of $\Ext$}  Let $M$ be a non-nested multicurve in $\disk{m}$, and $\delta$ a curve exterior to $M$.  Let $f \in \Stab_{\braid{m}}(M)$.  Let $\bar{\delta}$ denote the image of $\delta$ in the cut open surface $\disk{m} \cut M$.  By construction, $f(\delta)$ is equal to the natural lift of the curve $\Ext_M(f)(\bar{\delta})$ to $\disk{m}$.  We will use this observation repeatedly without comment.

\p{Basic properties of $\Ext$} We now record the following properties of $\Ext$.

\begin{lemma}\label{lem:centralizer:irreducible}Let $m\geq3$. If $f \in \braid{m}$, then $\Ext(f)$ is either pseudo-Anosov or periodic.
\end{lemma}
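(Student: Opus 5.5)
We apply the Nielsen--Thurston trichotomy (Theorem~\ref{cor:crs:NTbraid}) to $g\coloneqq\Ext(f)\in\braid{M}$, where $M=\crs{f}^{\outmulti}$, and rule out the aperiodic reducible case. If $\crs{f}=\emptyset$ then $\Ext(f)=f$ and $f$ is not aperiodic reducible, so it is periodic or pseudo-Anosov and we are done. (If $\disk{M}$ has at most two punctures, $\braid{M}$ is cyclic and generated by a periodic element; this degenerate case can be checked directly.) So assume $\crs{f}\neq\emptyset$, and suppose for contradiction that $\crs{g}\neq\emptyset$ in $\disk{M}$.

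\textbf{Lifting a curve.} Pick a curve $\bar\delta\in\crs{g}$. The surface $\disk{M}$ is obtained from the exterior component of $\disk{m}\cut M$ by collapsing each boundary curve of $M$ to a puncture. Hence $\bar\delta$ lifts to a curve $\delta$ in $\disk{m}$ that is disjoint from $M$ and exterior to $M$. The lift is essential in $\disk{m}$: $\bar\delta$ bounds at least two punctures of $\disk{M}$ and is not peripheral, and the same then holds for $\delta$ in $\disk{m}$. Also $\delta\notin M$, because curves of $M$ correspond to punctures of $\disk{M}$ and not to essential curves.

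\textbf{Finite orbit.} Since $\crs{g}$ is finite and $g$-invariant, the $g$-orbit of $\bar\delta$ is finite. By equivariance of $\Ext$, $f^k(\delta)$ is the lift of $g^k(\bar\delta)$, so the $f$-orbit of $\delta$ is also finite.

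\textbf{Contradiction via Birman--Lubotzky--McCarthy.} We use their characterization of $\crs{f}$. If some curve $c\in\crs{f}$ intersected $\delta$, then Lemma~\ref{lem:crs:infdisj} would make the $f$-orbit of $\delta$ infinite, which is false. So $\delta$ is disjoint from all of $\crs{f}$. It remains to show that every curve $b$ with $\iota(b,\delta)>0$ has infinite $f$-orbit; then $\delta\in\crs{f}$, and since $\delta$ is exterior to $M$ and not in $M$, this contradicts the definition $M=\crs{f}^{\outmulti}$.

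\textbf{Main obstacle.} The difficult step is this last one, for an arbitrary curve $b$ in $\disk{m}$ meeting $\delta$. Such a $b$ may also cross $M$ or pass into the interior disks, so $b$ does not simply descend to $\disk{M}$.

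The planned argument is by contradiction. Suppose $b$ has finite $f$-orbit. Since $M\subseteq\crs{f}$, the curve $b$ must be disjoint from $M$ by Lemma~\ref{lem:crs:infdisj}. A curve disjoint from $M$ and crossing $\delta$ cannot lie inside an interior disk, because $\delta$ is exterior to $M$. Hence $b$ descends to a curve $\bar b$ in $\disk{M}$ with $\iota(\bar b,\bar\delta)>0$. By equivariance, $\bar b$ has finite $g$-orbit. This contradicts $\bar\delta\in\crs{g}$ by the Birman--Lubotzky--McCarthy characterization applied to $g$.

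To finish, we check carefully that intersection numbers are preserved when passing between $\disk{m}\cut M$ and $\disk{M}$. This holds because collapsing boundary components to punctures induces a bijection on essential non-peripheral curves that preserves geometric intersection.
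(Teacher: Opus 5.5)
Your proof is correct and follows the same route as the paper. Both lift a curve of $\crs{\Ext(f)}$ to $\disk{m}$, show the lift lies in $\crs{f}$ while being exterior to and distinct from $\crs{f}^{\outmulti}$, and conclude it would have to collapse to a puncture of $\disk{\crs{f}}$. The paper asserts the membership of the lift in one line; you verify it via the Birman--Lubotzky--McCarthy characterization and also handle the degenerate case where $\disk{M}$ has at most two punctures, which adds detail without changing the argument.
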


\begin{proof}
Suppose otherwise, so $M:=\crs{\Ext(f)}\neq \emptyset$ by Theorem~\ref{cor:crs:NTbraid}.  If $\gamma \subset \disk{\crs{f}}$ is a curve of $M$, then $\gamma$ lifts to a curve $\widehat{\gamma} \subset \disk{m}$ which must lie in $\crs{f}^{\outmulti}$.  But this implies that $\gamma$ is inessential in $\disk{\crs{f}}$, a contradiction.
\end{proof}  

We also obtain the following.
\begin{lemma}\label{lem:centralizer:typepreserve}
Let $m\geq3$. Let $M \subset \disk{m}$ be a multicurve and $f \in \Stab_{\braid{m}}(M)$ a periodic element.  Then $\Ext_M(f)$ is also periodic.
\end{lemma}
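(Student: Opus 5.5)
The natural approach is to use that $\Ext_M$ is a group homomorphism on $\Stab_{\braid{m}}(M)$ and that it carries the center of $\braid{m}$ into the center of the target braid group. Since $f$ is periodic, there is some $k\geq 1$ with $f^k\in Z(\braid{m})=\langle T_{\partial\disk{m}}\rangle$, say $f^k=T_{\partial\disk{m}}^j$. Then
\[
\Ext_M(f)^k=\Ext_M(f^k)=\Ext_M(T_{\partial\disk{m}})^j,
\]
so it suffices to show that $\Ext_M(T_{\partial\disk{m}})$ is central in $\braid{m-m_1-\cdots-m_k+k}$.

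First, replace $M$ by $M^{\outmulti}$, which is permitted by the convention $\Ext_M=\Ext_{M^{\outmulti}}$. Note that $f$ still stabilizes $M^{\outmulti}$, because the braid action preserves the partial order $<$ and so sends exterior curves of $M$ to exterior curves of $M$.

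Next, identify $\Ext_{M}(T_{\partial\disk{m}})$. The Dehn twist $T_{\partial \disk{m}}$ is supported in a collar of $\partial\disk{m}$, and this collar is disjoint from a neighborhood of $M$. The map $\Ext_M$ collapses each internal disk $\disk{m_i}$ bounded by a curve of $M$ to a single puncture, and is the identity near the outer boundary. Hence $\Ext_M(T_{\partial\disk{m}})=T_{\partial\disk{M}}$, which generates $Z(\braid{M})$ when the target has at least $3$ punctures by \cite[Theorem III]{Cho48}. The subtlety is that when the target has only $1$ or $2$ punctures, the target braid group is $\braid{1}$ or $\braid{2}\cong\ZZ$. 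These groups are abelian, so every element is central, and the statement holds there if we read "periodic" as "some power is central." I expect this identification of the image of the boundary twist, including making this edge case precise, to be the only point needing care.

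Combining these steps, $\Ext_M(f)^k\in Z(\braid{M})$, so $\Ext_M(f)$ is periodic. One could alternatively argue via Lemma~\ref{lem:crs:periodic}, conjugating $f$ to a power of $a_1$ or $a_2$ and describing $M$ explicitly, but the homomorphism argument above avoids any case analysis.
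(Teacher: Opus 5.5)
Your argument is correct, but it takes a different route from the paper.

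\emph{Your route.} You argue algebraically: $\Ext_M$ is a homomorphism on $\Stab_{\braid{m}}(M)$, and it sends $T_{\partial\disk{m}}$ to the boundary twist $T_{\partial\disk{M}}$, which is central. So $f^k=T_{\partial\disk{m}}^j$ immediately gives $\Ext_M(f)^k=T_{\partial\disk{M}}^j$.

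\emph{The paper's route.} The paper argues dynamically, by contradiction. If $\Ext_M(f)$ were not periodic it would be pseudo-Anosov. Then by Lemma~\ref{lem:crs:infpA} some curve $\delta\subset\disk{M}$ has infinite $\Ext_M(f)$-orbit. By equivariance of lifting, its lift $\widehat{\delta}\subset\disk{m}$ has infinite $f$-orbit, which contradicts periodicity of $f$.

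\emph{Comparison.} Your version is shorter, gives the sharper statement that the order of $\Ext_M(f)$ divides the order of $f$, and never needs the Nielsen--Thurston classification. This matters slightly here. The paper obtains ``pseudo-Anosov'' from Lemma~\ref{lem:centralizer:irreducible}, but that lemma is stated for the canonical external part $\Ext(f)$, not for $\Ext_M(f)$ with arbitrary $M$. Strictly, the paper's argument must also handle the aperiodic reducible case, for instance via Lemma~\ref{lem:crs:infdisj}. In exchange, the paper's argument uses only equivariance of lifting curves and never identifies $\Ext_M(T_{\partial\disk{m}})$. The paper does use exactly that identity itself in the proof of Lemma~\ref{lem:centralizer:fixtwocurves}, so relying on it is consistent with the paper.

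\emph{Minor remarks.} Your edge-case discussion is unnecessary. You only need $T_{\partial\disk{M}}$ to be central, not to generate the center, and a boundary twist is central in every disk mapping class group. Moreover, $\disk{M}$ always has at least two punctures, because the curves of $M$ are essential. Finally, you reuse $k$ both as the exponent with $f^k$ central and as the number of inner disks.
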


\begin{proof}
    Suppose by way of contradiction that $\Ext_M(f)$ is not periodic, so that by Lemma~\ref{lem:centralizer:irreducible} it is pseudo-Anosov.  This means that there is a curve $\delta \subset \disk{M}$ such that $\Ext_M(f)^k(\delta) \neq \delta$ for all $k \geq 1$ by Lemma~\ref{lem:crs:infpA}.  Now, let $\widehat{\delta}\subset\disk{m}$ be a lift of $\delta$.  We see that $f^k(\widehat{\delta}) \neq \widehat{\delta}$ for all $k \geq 1$, but this contradicts our hypothesis that $f$ is periodic.
    \end{proof}

\p{Degenerate multicurves} In order to apply the homomorphism $\Ext_M$ multiple times for different multicurves, we extend the notation $\Ext_M(f)$, $\braid{M}$, and $\disk{M}$, to allow for the possibility that a multicurve may contain inessential curves homotopic to a puncture. We identify such an inessential curve with the puncture it encircles.  For $M=M'\cup P$ the union of a multicurve $M'$ and a set $P$ of punctures, we say $M$ is \defn{non-nested} if $M'$ is non-nested and $\Int(M')\cap P=\emptyset$.

\begin{lemma}\label{lem:centralizer:sheaf}
Let $m\geq3$ and $M \subset \disk{m}$ be a non-nested multicurve.  Let $M_1$ and $M_2$ be two multicurves such that $M = M_1 \cup M_2$. We identify $M_1$ with its image in $\disk{M_2}$ after collapsing $M_2$ and similarly we identify $M_2$ with its image in $\disk{M_1}$ and $M$ with its image in $\disk{M_1\cap M_2}$. Let $f_1\in\Stab_{\braid{M_1}}(M_2)$ and $f_2\in\Stab_{\braid{M_2}}(M_1)$ be such that $\Ext_{M_2}(f_1) = \Ext_{M_1}(f_2)$.  Then there is a unique $f \in \Stab_{\braid{M_1 \cap M_2}}(M_1)\cap\Stab_{\braid{M_1\cap M_2}}(M_2)$ with $\Ext_{M_i}(f) = f_i$ for $i =1,2$.
\end{lemma}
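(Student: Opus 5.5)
We reduce to the case $M_1\cap M_2=\emptyset$ and argue with the product structure of stabilizers in a punctured disk. First replace $\disk{m}$ by $\disk{M_1\cap M_2}$ and $M_i$ by $M_i\setminus(M_1\cap M_2)$. This is harmless because the curves of $M_1\cap M_2$ become punctures, which the degenerate-multicurve convention allows, and collapsing in stages is compatible: $\Ext_{M_i}$ on $\braid{M_1\cap M_2}$ agrees with $\Ext_{M_i}$ on the original group, as both are computed by collapsing the same non-nested collection of disks. So assume $M_1\cap M_2=\emptyset$, with $M=M_1\sqcup M_2$ non-nested. Then each curve of $M_1$ bounds a disk $D_\gamma$ disjoint from every disk $D_\delta$, $\delta\in M_2$. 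Write $\disk{m}=E\cup\bigcup_{\gamma\in M}D_\gamma$, where $E$ is the exterior piece.

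A mapping class in $H:=\Stab(M_1)\cap\Stab(M_2)$ is determined by three pieces of data: a class on $E$ permuting the boundary curves of $M_1$ among themselves and those of $M_2$ among themselves, plus identifications $D_\gamma\to D_{g(\gamma)}$ up to isotopy. Concretely, the Kordek--Margalit sequence gives
\[
1\to\prod_{\gamma\in M}\braid{m_\gamma}\to \Stab(M)\xrightarrow{\Ext_M}\braid{M}.
\]
Since $M_1$ and $M_2$ are invariant subsets of $M$ under $H$, the map $\Ext_{M_1}$ on $H$ forgets exactly the $M_1$-disk data and keeps the external class together with the $M_2$-disk data; $\Ext_{M_2}$ does the symmetric thing.

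For existence, fix representatives: $f_1$ is a homeomorphism of $\disk{M_1}$ preserving the disks $D_\delta$, $\delta\in M_2$, and $f_2$ is a homeomorphism of $\disk{M_2}$ preserving the $D_\gamma$, $\gamma\in M_1$. The hypothesis $\Ext_{M_2}(f_1)=\Ext_{M_1}(f_2)$ says the two induced classes on $\disk{M}$ agree. After an isotopy of $f_2$ supported away from the $D_\gamma$ (equivalently, isotopy of the exterior pieces rel the marked boundary punctures), we may assume $f_1$ and $f_2$ agree on $E$ as homeomorphisms. Then define $f$ to be $f_1=f_2$ on $E$, $f_1$ on the $D_\delta$, and $f_2$ on the $D_\gamma$. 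This gives a well-defined homeomorphism of $\disk{m}$ preserving $M_1$ and $M_2$, and by construction $\Ext_{M_i}(f)=f_i$.

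For uniqueness, suppose $f,f'$ both work. Then $g=f^{-1}f'\in H$ satisfies $\Ext_{M_1}(g)=\Ext_{M_2}(g)=1$. Applying the exact sequence to $M_1$, $g$ lies in $\prod_{\gamma\in M_1}\braid{m_\gamma}$, so it is supported on the $M_1$-disks. Its image under $\Ext_{M_2}$ is then the same product element, now viewed in $\disk{M_2}$ where the $M_1$-disks are unchanged, and this restriction is injective. Hence $g=1$.

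The main obstacle is the existence step, specifically upgrading the equality of classes on $\disk{M}$ to equality of representatives on $E$. One must ensure that the isotopy can be chosen fixing the collapsed marked points, so that it lifts to an isotopy of $f_2$ that still preserves the disks. The tools are the usual fact that isotopies of punctured surfaces correspond to isotopies fixing the marked points, together with the choice of the splitting, which fixes a parametrization of each boundary circle. The only remaining ambiguity is boundary twisting about curves of $M$, and that twisting can be absorbed into the disk factors $\braid{m_\gamma}$, since the Dehn twist about $\partial D_\gamma$ is central there.
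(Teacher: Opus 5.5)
Your proof is correct, and your uniqueness half is the same as the paper's, but your existence half takes a different route.

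The paper never works with representatives. It forms the commutative square of $\Ext_{M_1}$ and $\Ext_{M_2}$ from $\Stab_{\braid{M_1\cap M_2}}(M_1)\cap\Stab_{\braid{M_1\cap M_2}}(M_2)$ to $\braid{M}$. It then uses surjectivity of these maps to reduce the pullback property to the square of kernels $K\to K_1,K_2$, and concludes from Kordek--Margalit's identification $K\cong K_1\times K_2$. In effect, existence there is: lift the common image $\Ext_{M_2}(f_1)=\Ext_{M_1}(f_2)$ to any $a_0$. Then multiply $a_0$ by the element of $K_1\times K_2$ that corrects $\Ext_{M_1}(a_0)$ to $f_1$ and $\Ext_{M_2}(a_0)$ to $f_2$.

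Your cut-and-paste argument instead builds this correction by hand. That makes it clearer where the boundary-twist ambiguity goes, but it needs more care than you give it. $f_1$ and $f_2$ are homeomorphisms of different surfaces, so they cannot literally be isotoped to agree on $E$. After you standardize both near the collapsed punctures, the induced homeomorphisms of $E$ agree only up to a product of twists about the curves of $M$, which is the kernel of capping. Before gluing, you must choose the map on $E$ so that it differs from the $f_1$-side only by twists about curves of $M_1$, which die when $M_1$ is capped. It must also differ from the $f_2$-side only by twists about curves of $M_2$.

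Your last paragraph contains this idea, but the sentence claiming exact agreement on $E$ should be replaced by it. The paper's algebraic route avoids this bookkeeping entirely.
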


\begin{proof}
Let $M_0=M_1\cap M_2$. We have the following diagram
\begin{center}
\begin{tikzcd}
\Stab_{\braid{M_0}}(M_1) \cap\Stab_{\braid{M_0}}(M_2) \arrow["\Ext_{M_1}"]{r} \arrow["\Ext_{M_2}"]{d}& \Stab_{\braid{M_1}}(M_2) \arrow["\Ext_{M_2}"]{d}\\
\Stab_{\braid{M_2}}(M_1) \arrow["\Ext_{M_1}"]{r} & \braid{M}.
\end{tikzcd}
\end{center}
This diagram commutes because the two composite homomorphisms are the restriction of $\Ext_M\colon\Stab_{\braid{M_0}}(M)\to\braid{M}$ to the subgroup $\Stab_{\braid{M_0}}(M_1)\cap\Stab_{\braid{M_0}}(M_2)$. The lemma amounts to saying that the above diagram is a pullback diagram. Define
\[
K \coloneqq \ker(\Ext_M\colon\Stab_{\braid{M_0}}(M)\to\braid{M}) \leq \Stab_{\braid{M_0}}(M_1)\cap\Stab_{\braid{M_0}}(M_2),
\]
and
\[
K_1 \coloneqq \ker(\Ext_{M_1}\colon\Stab_{\braid{M_2}}(M_1)\to\braid{M}), \qquad K_2 \coloneqq \ker(\Ext_{M_2}\colon\Stab_{\braid{M_1}}(M_2)\to\braid{M}).
\]
Since all the maps in the above commutative diagram above are surjective, it suffices to show that the following diagram obtained by restriction
\begin{center}
\begin{tikzcd}
K \arrow[r]\arrow[d] & K_2 \arrow[d] \\
K_1 \arrow[r] & \{1\}
\end{tikzcd}
\end{center}
is a pullback diagram. This follows from the description of the group $K$ due to Kordek--Margalit~\cite[Lemma 3.1]{KordekMargalit}, who show that $K\cong K_1\times K_2$.
\end{proof}

We also need the following auxiliary lemma which is a consequence of the classification of periodic elements in $\braid{m}$.  

\begin{lemma}\label{lem:centralizer:fixtwocurves}
Let $m\geq3$. Let $f \in \braid{m}$ be a periodic element.  Let $M \subset \disk{m}$ be a possibly degenerate multicurve.  Assume that $\left|M\right| \geq 2$ and that $f$ fixes each curve and puncture in $M$.  Then $f\in Z(\braid{m})$.
\end{lemma}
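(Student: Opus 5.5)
By Lemma~\ref{lem:crs:periodic}, after conjugating (which is harmless: conjugation carries $M$ to another possibly degenerate multicurve whose elements are each fixed by the conjugated element, and $Z(\braid{m})$ is conjugation invariant), we may assume $f$ is a power of $a_1$ or of $a_2$. The strategy is to show that a periodic $f$ which is not central fixes at most one curve or puncture. This contradicts $\left|M\right|\geq 2$.

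Write $f=a_1^k$ or $f=a_2^k$, and assume $f\notin Z(\braid{m})$. Then $f$ is realized by a rotation $\rho$ of the disk, of finite order $d\geq 2$ modulo the boundary twist. Concretely, $\rho$ is rotation about a center $c$, where $c$ is either not a puncture (for $a_1$) or is the fixed puncture (for $a_2$). The punctures off $c$ are permuted freely by $\rho$ in orbits of size $d$.

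\textbf{Punctures.} A puncture fixed by $f$ must be $c$, since all other punctures lie in nontrivial orbits. So at most one puncture is fixed, and this only happens in the $a_2$ case. (This is the argument of Lemma~\ref{lem:crs:fixtwopunc}.)

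\textbf{Curves.} Suppose a curve $\gamma$ is fixed by $f$. By the standard theory of finite-order mapping classes (Nielsen realization, or equivalently the fact that an isotopy class fixed by a periodic homeomorphism has a $\rho$-invariant geodesic representative on the capped sphere $\sphere{m}$ with a $\rho$-invariant hyperbolic metric), $\gamma$ has a representative that is invariant under the rotation. A $\rho$-invariant simple closed curve in the disk, other than one surrounding nothing essential, must be a round circle about $c$ up to isotopy. The reason is that $\rho$ acts on the disk with the single fixed point $c$, and an invariant Jordan curve must bound a $\rho$-invariant disk, which therefore contains $c$. The punctures inside such a circle form a union of $\rho$-orbits, possibly together with $c$. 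Two distinct such circles would both be essential, so they would be nested, with the inner one containing $c$ and some full orbits. Here lies the main obstacle: a rotation such as $a_1^k$ with several orbits can genuinely fix several nested round circles. So the plan must handle this case rather than claim uniqueness outright.

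Fortunately the powers of $a_1$ and $a_2$ rotate all the non-central punctures, that is, $m$ or $m-1$ of them respectively, and they lie on a single circle around $c$. Each power of such a rotation permutes these punctures in orbits that are all equidistant from $c$. The disk $\disk{m}$ then has at most two "levels", namely $c$ and the ring of punctures. An invariant circle about $c$ therefore encloses either nothing except possibly $c$, which makes it inessential or homotopic to the puncture $c$, or all punctures, which makes it boundary parallel. So $f$ fixes no essential curve at all. The only fixable element of a degenerate multicurve is then the puncture $c$, giving $\left|M\right|\leq 1$, a contradiction.

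The step I would be most careful with is justifying that the fixed isotopy class has a $\rho$-invariant representative. I would do this via the hyperbolic metric on $\sphere{m}$ in which $\rho$ is an isometry, using uniqueness of geodesic representatives.
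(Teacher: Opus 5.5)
The step you flagged as delicate, the existence of invariant geodesic representatives, is fine. The step after it fails: the claim that an $f$-invariant curve is isotopic to a round circle about $c$. With it goes your conclusion that a non-central periodic $f$ fixes no essential curve. Your Brouwer/geodesic argument only gives that the invariant representative bounds a $\rho$-invariant disk containing $c$. Its interior punctures therefore form a union of $\rho$-orbits (plus $c$ in the $a_2$ case), and nothing forces that union to be empty or everything. The punctures being equidistant from $c$ is irrelevant, because the invariant disk need not be round.

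For example, $a_1^2\in\braid{4}$ acts as rotation by $\pi$ and fixes the essential curve bounding a thin neighborhood of the diameter through two opposite punctures. Fixed nested pairs also occur:
\begin{itemize}
\item With $p_1,\ldots,p_6$ in cyclic order, $a_1^3\in\braid{6}$ fixes both the boundary of a thin neighborhood of $[p_4,p_1]$ and the boundary of a slightly larger neighborhood of $[p_4,p_1]\cup[p_5,p_2]$.
\item $a_2^2\in\braid{5}$ fixes its central puncture together with an essential curve around it.
\end{itemize}
Neither element is central. So the statement only holds when $M$ is non-nested in the (degenerate) sense of Section~\ref{section:external}. This is what the paper's proof tacitly uses, since collapsing $M$ by $\Ext_M$ must produce two fixed punctures of $\disk{M}$, and it is the only way the lemma is applied later. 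Your argument never uses that the elements of $M$ are disjoint and non-nested, so it cannot be correct as written.

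Your approach repairs easily once that hypothesis is added. Keep the two correct facts:
\begin{itemize}
\item a fixed puncture must be $c$;
\item each fixed curve has a $\rho$-invariant geodesic representative whose inner disk contains $c$ in its interior. The uniformized metric on the capped sphere is $\rho$-invariant because the rotation is a M\"obius map preserving the puncture set.
\end{itemize}
Geodesic representatives of two disjoint, non-nested curves have disjoint inner disks. A puncture in a non-nested $M$ lies inside no curve of $M$. So at most one element of $M$ can contain $c$, contradicting $|M|\geq 2$.

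This is a genuinely different route from the paper's. The paper passes to $\Ext_M(f)$ and uses Lemma~\ref{lem:centralizer:typepreserve} and Lemma~\ref{lem:crs:fixtwopunc} to see that $\Ext_M(f)$ is central in $\braid{M}$. It then lifts centrality back to $\braid{m}$ by writing $f^k=T_{\partial\disk{m}}^{\ell}$ and checking $k\mid\ell$. Your repaired version is more geometric and explains directly why non-nestedness is needed. The paper's reuses the puncture-counting lemma and avoids hyperbolic geometry.
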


\begin{proof}
It suffices to prove this in the case that $\left|M\right| = 2$.  Let $M \coloneqq \{\gamma,\gamma'\}$. By Lemma~\ref{lem:centralizer:typepreserve}, the element $\Ext_M(f)$ is also periodic. Since $\Ext_M(f)$ fixes two punctures of $\disk{M}$, by Lemma~\ref{lem:crs:fixtwopunc} we have $\Ext_M(f)\in\langle T_{\disk{M}}\rangle$. Let $k,\ell\in\Z$ with $k\neq0$ be such that $f^k=T_{\partial\disk{m}}^\ell$. Since $\Ext_M(T_{\partial\disk{m}})=T_{\partial\disk{M}}$, we have $T_{\partial\disk{M}}^\ell=\Ext_M(f^k)\in\langle T_{\disk{M}}^k\rangle$. Hence $k\mid\ell$, and so by the classification of periodic elements from Lemma~\ref{lem:crs:periodic} we have $f=T_{\partial\disk{m}}^{\ell/k}\in Z(\braid{m})$.
\end{proof}

\p{Centralizers of braids}  Gonz\'alez-Meneses--Wiest~\cite[Theorem 1.1]{GMW} describe the centralizer of a braid for a general $f \in \braid{m}$.  We need the following specific statements. 

\begin{lemma}\label{lem:centralizer:basics}
Let $m\geq3$. Let $f \in \braid{m}$ be a periodic element.  If $g \in \braid{m}$ is conjugate to $f$ and commutes with $f$ then $f=g$.
\end{lemma}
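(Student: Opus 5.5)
Since $f$ is periodic, Lemma~\ref{lem:crs:periodic} lets us conjugate and assume $f = a^j$ for $a \in \{a_1, a_2\}$ and some $j\in\ZZ$. Write $g = hfh^{-1}$ with $[f,g]=1$. If $f$ is central then $g=f$ and we are done, so assume $f\notin Z(\braid{m})$. The plan is to pass to $\Mod(\sphere{m})$ via capping, where $f$ becomes a genuine finite-order element, and there use the geometric description of periodic mapping classes as rotations.

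In the capped sphere, the image $\bar f$ of $a_1^j$ (resp.\ $a_2^j$) is conjugate to a rotation of the sphere about an axis. The axis passes through the distinguished puncture $\infty$ and through an unpunctured point (for $a_1$) or through a puncture (for $a_2$). Since $\bar g$ commutes with $\bar f$ and both have finite order, the group $\langle \bar f,\bar g\rangle$ is a finite abelian subgroup of $\Mod(\sphere{m})$. By Nielsen realization (Kerckhoff, or the elementary case for spheres), it is realized by a finite group of conformal automorphisms of the punctured sphere, i.e.\ a finite abelian subgroup of $\PSL_2(\mathbb{C})$ fixing $\infty$. Such a group fixes $\infty$ and acts on $\mathbb{C}$ by finite-order affine maps, so it is a finite subgroup of $\mathbb{C}^*$ acting by rotation about a common center, and hence it is cyclic. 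Thus $\bar f$ and $\bar g$ are realized as rotations about the same center. Being conjugate, they have the same rotation number: the rotation angle is a conjugacy invariant, read off for instance from the action near $\infty$ or from the fractional Dehn twist coefficient. Two rotations in a cyclic group of rotations about the same center with the same angle are equal, so $\bar f=\bar g$.

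It remains to lift from $\Mod(\sphere{m})$ back to $\braid{m}$. We get $g=fT_{\partial\disk{m}}^{\ell}$ for some $\ell$. Abelianizing $\braid{m}\to\ZZ$, the elements $f$ and $g$ are conjugate and so have equal exponent sum. The exponent sum of $T_{\partial\disk{m}}$ is $m(m-1)\neq0$, so $\ell=0$ and $f=g$.

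The main obstacle is making the rotation-number step rigorous with tools available here. To handle it, I would use the explicit realizations: $a_1^j$ and $a_2^j$ act on the (capped) $\mathbb{C}$ as rotation by $2\pi j/m$, respectively $2\pi j/(m-1)$, and their conjugacy classes in $\braid{m}$ are separated by exponent sum, with the type $a_1$ versus $a_2$ distinguished by the fixed-puncture count. Once a common rotation center is known, the equal exponent sums determine the angle uniquely, which gives $\bar f = \bar g$.
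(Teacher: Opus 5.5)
Your argument is correct, and it takes a genuinely different route from the paper's.

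\textbf{The paper's route.} It cites the Gonz\'alez-Meneses--Wiest structure theorem: there is an injective homomorphism $\varphi_f\colon \cent{f}{\braid{m}}\to\braid{d+1}$ with $\varphi_f(f)=T_{\partial\disk{d+1}}$. Let $k$ be the order of $f$. Since $g$ is conjugate to $f$ and $f^k$ is central, $g^k=f^k$. Hence $\varphi_f(g)^k=T_{\partial\disk{d+1}}^k$, and the classification of periodic braids forces $\varphi_f(g)=T_{\partial\disk{d+1}}$. Injectivity then gives $g=f$.

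\textbf{Your route.} You never use centralizer structure. You pass to $\Mod(\sphere{m})$ and use Nielsen realization to see that $\langle\bar f,\bar g\rangle$ is a cyclic group of rotations about a common center. You then use the exponent sum, which is a free conjugacy invariant, twice: once to match $\bar f$ with $\bar g$, and once to kill the central ambiguity $T_{\partial\disk{m}}^{\ell}$ when lifting.

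\textbf{What each buys.} The paper's proof is a few lines, but it rests on the detailed description of centralizers of periodic braids. Yours replaces that with Kerckhoff/Nielsen realization plus elementary bookkeeping. It also exposes the underlying reason the lemma holds: finite subgroups of $\braid{m}/Z(\braid{m})$ are cyclic and are detected by exponent sum modulo $m(m-1)$.

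\textbf{One step to state carefully.} The angle/exponent-sum correspondence must hold for the invariant configuration of punctures produced by the realization, not only for the standard models of $a_1^j$ and $a_2^j$. Your sketch only addresses the standard models. The cleanest fix is as follows. Let $N$ be the order of the realized group. A lift of its generator (rotation by $2\pi/N$ about the center, untwisted in a collar of $\partial\disk{m}$) has $N$-th power $T_{\partial\disk{m}}^{\pm1}$. So that lift has exponent sum $\pm m(m-1)/N$. Exponent sum modulo $m(m-1)$ is therefore injective on the realized cyclic group. Then $e(f)=e(g)$ gives $\bar f=\bar g$ directly, without treating rotation numbers as conjugacy invariants of mapping classes.

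Alternatively, once cyclicity is known, Lemma~\ref{lem:crs:periodic} alone suffices: a generator $r$ with $r^N=T_{\partial\disk{m}}^{\ell}$ has $\gcd(\ell,N)=1$.
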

\begin{proof}
   Suppose that $f$ is periodic of order $k$ and conjugate to $a_i$, where $i\in\{1,2\}$. Let $d = \frac{m-i+1}{k}\in\Z$. Gonz{\'a}lez-Meneses--Wiest show there is an injective homomorphism $\varphi_f\colon\cent{f}{\braid{m}}\to\braid{d+1}$ such that $\varphi_f(f)=T_{\partial\disk{d+1}}$~\cite[Theorems 3.2 and 3.4]{GMW}. Suppose that $g\in\cent{f}{\braid{m}}$ is conjugate to $f$. The fact that $f$ and $g$ are conjugate periodic elements of order $k$ implies that $f^k=g^k$, thus $\varphi_f(g^k)=T_{\partial\disk{d+1}}^k$. By the classification of periodic elements in $\braid{d+1}$ from Lemma~\ref{lem:crs:periodic}, $\varphi_f(g)=T_{\partial\disk{d+1}}$, and hence $f=g$.
\end{proof}

We make use of the above lemma to prove the following.  This is a result of Chen--Kordek--Margalit~\cite[Proposition 7.1]{CKM}, which we include a proof of for completeness.

\begin{lemma}\label{lem:centralizer:emptycrs}
    Let $n\geq5$ and $m\geq3$. Let $\Phi:\braid{n} \rightarrow \braid{m}$ be a homomorphism. Let $s_i\in \extgen{n}$.  If $\crs{\Phi(s_i)}=\emptyset$, then $\Phi$ is cyclic.
\end{lemma}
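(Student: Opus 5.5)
By Theorem~\ref{cor:crs:NTbraid}, if $\crs{\Phi(s_i)}=\emptyset$ then $f\coloneqq\Phi(s_i)$ is either periodic or pseudo-Anosov. Since all elements of $\extgen{n}$ are conjugate (by powers of $a_1$), every $\Phi(s_j)$ is conjugate to $f$. So every $\Phi(s_j)$ is periodic if $f$ is, and pseudo-Anosov if $f$ is. The plan is to show in each case that two distinct elements of $\extgen{n}$ have the same image, and then apply Lemma~\ref{lem:crs:collisioncollapse}.

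\emph{Periodic case.} Since $n\geq5$, the vertex $s_i$ has a neighbor in the commuting graph; for instance $s_{i+2}$ commutes with $s_i$. The element $\Phi(s_{i+2})$ is conjugate to $f$ and commutes with it, and $f$ is periodic. Lemma~\ref{lem:centralizer:basics} therefore gives $\Phi(s_{i+2})=\Phi(s_i)$. Because $s_{i+2}\neq s_i$, Lemma~\ref{lem:crs:collisioncollapse} shows that $\Phi$ is cyclic.

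\emph{Pseudo-Anosov case.} Again $\Phi(s_{i+2})$ commutes with $f$, so $\Phi(s_{i+2})$ lies in $\cent{f}{\braid{m}}$. By~\cite[Theorem 1.1(a)]{GMW} this centralizer is $\ZZ^2$, generated by the center $Z(\braid{m})$ together with a root $r$ of a power of $f$; in particular $f$ and $r$ are powers of a common pseudo-Anosov element modulo the center.

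The key step is to show $\Phi(s_{i+2})=f$. Passing to $\braid{m}/Z(\braid{m})$, both images are pseudo-Anosov and lie in a virtually cyclic group containing the infinite cyclic group $\langle \bar f\rangle$, and they are conjugate. Conjugate elements of a torsion-free abelian (indeed, cyclic-mod-center) centralizer should have equal translation length, which forces $\bar\Phi(s_{i+2})=\bar f^{\pm1}$. The sign is positive, since conjugate pseudo-Anosovs have equal stretch factor and the same attracting foliation up to the conjugation. To lift back to $\braid{m}$, use the abelianization $\braid{m}\to\ZZ$: conjugate elements have equal exponent sum, and $T_{\partial\disk{m}}$ has nonzero exponent sum. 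So if $\Phi(s_{i+2})=f\,T_{\partial\disk{m}}^{\ell}$, comparing exponent sums forces $\ell=0$.

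A cleaner alternative for this step would avoid the structure of the centralizer. Since $\Phi(s_{i+2})$ commutes with $f$ and $\cent{f}{\braid{m}}$ is abelian, all images of elements commuting with $s_i$ commute pairwise. Using the connectivity and small-diameter properties of the commuting graph, one would then propagate this to show that $\Phi(\braid{n})$ is abelian. Either way, $\Phi(s_{i+2})=\Phi(s_i)$ and Lemma~\ref{lem:crs:collisioncollapse} finishes the proof; or $\Phi(\braid{n})$ is abelian, hence cyclic because $\braid{n}^{\ab}\cong\ZZ$.

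The main obstacle is the pseudo-Anosov case: pinning down exactly that a conjugate of $f$ lying in its $\ZZ^2$ centralizer must equal $f$. This requires care with roots of $f$ and with the central factor. I would first try the exponent-sum plus stretch-factor argument, where the centralizer is $\langle r\rangle\times\langle T_{\partial\disk{m}}\rangle$ with $f=r^aT^b$.
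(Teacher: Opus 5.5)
Your periodic case is the paper's argument verbatim. In the pseudo-Anosov case, however, the route you say you would try first has a genuine gap at the sign step.

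\textbf{The sign step fails.} For periodic braids, commuting with a conjugate forces equality (Lemma~\ref{lem:centralizer:basics}). For pseudo-Anosov braids it does not. In $\braid{3}$ the braid $f=s_1s_2^{-1}$ is pseudo-Anosov, since its image in $\braid{3}/Z(\braid{3})\cong\PSL_2(\ZZ)$ has trace $3$. Conjugation by $s_1s_2s_1$ sends it to $s_2s_1^{-1}=f^{-1}$. This conjugate commutes with $f$ but is not equal to $f$. Replacing $f$ by $fT_{\partial\disk{3}}^k$ with $k\neq0$ gives the same phenomenon with nonzero exponent sum, so the abelianization cannot fix the sign either. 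The fact that the attracting foliation of a conjugate is the image of that of $f$ says nothing about whether it equals the attracting or the repelling foliation of $f$.

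\textbf{Smaller inaccuracies.} Modulo the center, $\cent{\bar f}{\braid{m}/Z(\braid{m})}$ need not be cyclic. So the discrepancy between $\Phi(s_{i+2})$ and $f^{\pm1}$ is a priori a periodic braid, not a power of $T_{\partial\disk{m}}$. This part is repairable: a periodic braid with zero exponent sum is trivial. Also, the $\ZZ^2$ of Gonz{\'a}lez-Meneses--Wiest need not be generated by the center and a root of $f$. In your setting, the bad case $\Phi(s_{i+2})=f^{-1}p$ with $p$ periodic can only be excluded after you know the image is abelian.

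\textbf{Your alternative is the correct fix.} Your ``cleaner alternative'' is exactly the paper's proof, so commit to it and make the propagation explicit. Set $A_j=\cent{\Phi(s_j)}{\braid{m}}$.
\begin{enumerate}
\item Every $\Phi(s_j)$ is conjugate to $f$, hence pseudo-Anosov, so every $A_j$ is abelian by Gonz{\'a}lez-Meneses--Wiest.
\item If $[s_j,s_k]=1$, then $\Phi(s_j)\in A_k$. Since $A_k$ is abelian, every element of $A_k$ commutes with $\Phi(s_j)$, so $A_k\subseteq A_j$. By symmetry $A_j=A_k$.
\item Connectivity of $\Comm_{\braid{n}}(\extgen{n})$ for $n\geq5$ then gives $A_j=A_i$ for every $j$.
\item Hence $\Phi(\braid{n})\subseteq A_i$ is abelian, and it is cyclic because $\braid{n}^{\ab}\cong\ZZ$.
\end{enumerate}
This needs only connectivity; no collision argument or diameter bound is required in this case.
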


\begin{proof}
    Since $\crs{\Phi(s_i)}=\emptyset$, by Lemma~\ref{lem:centralizer:irreducible} the braid $\Phi(s_i)$ is either pseudo-Anosov or periodic. Recall that the standard generators $s_j\in\extgen{n}$ are all conjugate.
    
    \p{Case 1: $\Phi(s_i)$ is periodic} Since $n\geq5$ we have $[s_i,s_{i+2}]=1$, hence $\Phi(s_i)=\Phi(s_{i+2})$ by Lemma~\ref{lem:centralizer:basics}. Since $s_i\neq s_{i+2}$ we can apply Lemma~\ref{lem:crs:collisioncollapse} to conclude $\Phi$ is cyclic.

    \p{Case 2: $\Phi(s_i)$ is pseudo-Anosov} For $j\in\Z$ let
    \[
      A_j = \cent{\Phi(s_j)}{\braid{m}}.
    \]
    By Gonz{\'a}lez-Meneses--Wiest~\cite[Proposition 4.1]{GMW} each $A_j$ is abelian. If $[s_j,s_k] = 1$, then $s_j \in A_k$ and $s_k \in A_j$.  Since $A_k$ is abelian, any element $f \in A_k$ satisfies $[f,s_j] = 1$, so $A_k \subseteq A_j$.  Similarly any $g \in A_j$ satisfies $[g,s_k] = 1$, so $A_j \subseteq A_k$.  We conclude that $A_j = A_k$.  Since $n \geq 5$ the graph $\Comm_{\braid{n}}(\extgen{n})$ is connected.  In particular, $A_i = A_j$ for all $j\in\Z$.  Therefore, $\Phi(\braid{n}) \subseteq A_i$.  Since $\braid{n}^{\ab} \cong \ZZ$, we conclude that $\Phi$ is cyclic.
\end{proof}

We obtain the following criterion for a homomorphism to be reducible.

\begin{lemma}\label{lem:external:commonmax}
    Let $n\geq5$ and $m\geq3$. Let $\Phi:\braid{n} \rightarrow \braid{m}$ be a non-cyclic homomorphism.  Assume that $\crs{\Phi(s_i)}^{\outmulti} = \crs{\Phi(s_j)}^{\outmulti}$ for two distinct $s_i,s_j \in \extgen{n}$.  Then $\Phi$ is reducible.
\end{lemma}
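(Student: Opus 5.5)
Set $M \coloneqq \crs{\Phi(s_i)}^{\outmulti} = \crs{\Phi(s_j)}^{\outmulti}$. By Lemma~\ref{lem:centralizer:emptycrs} and non-cyclicity, $\crs{\Phi(s_k)}\neq\emptyset$ for every $s_k\in\extgen{n}$, so $M$ is a non-empty multicurve. The plan is to show that $M = \crs{\Phi(s_k)}^{\outmulti}$ for every $s_k \in \extgen{n}$. Since $\extgen{n}$ generates $\braid{n}$ and each $\Phi(s_k)$ fixes $\crs{\Phi(s_k)}$, and hence its exterior part, this shows that $M$ is preserved by all of $\Phi(\braid{n})$. Thus $M$ is a reducing system and $\Phi$ is reducible.

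\textbf{Propagation along commuting pairs.} For any $s_k$, set $M_k \coloneqq \crs{\Phi(s_k)}^{\outmulti}$. By Lemma~\ref{lem:crs:functorial}, $\Phi(g)(M_k) = M_{k'}$ whenever $g s_k g^{-1} = s_{k'}$. First I handle the case where $s_k$ commutes with both $s_i$ and $s_j$. Lemma~\ref{lem:crs:commdisj} then gives that $\Phi(s_k)$ preserves $\crs{\Phi(s_i)}$. Since the action preserves the partial order, it also preserves the exterior part $M$. Symmetrically, $\Phi(s_i)$ preserves $M_k$, and $M\cup M_k$ is a multicurve.

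The heart of the argument is a transport step. By the large-neighborhood property, for all but at most one index $k$ there is some $s_\ell$ commuting with both $s_i$ and $s_k$, or with both $s_j$ and $s_k$. I would prefer a cleaner route than a direct curve-by-curve comparison. Pick $g\in\braid{n}$ with $g s_i g^{-1} = s_j$. By Lemma~\ref{lem:crs:functorial}, $\Phi(g)(M)=M_j=M$. For instance, if $j=i+r$, I would take $g=a_1^{r}$, since $a_1^r s_i a_1^{-r}=s_{i+r}$.

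\textbf{Upgrading to an invariant set.} The previous step means the subgroup $H\coloneqq\{h\in\braid{n} : \Phi(h)(M)=M\}$ contains $a_1^{r}$. It also contains $s_i$ and $s_j$, and every $s_k$ commuting with both $s_i$ and $s_j$. Moreover, if $h\in H$ and $h s_i h^{-1}=s_k$, then $M_k=\Phi(h)(M_i)=M$. The goal is to show $H=\braid{n}$, and I would do this by closing up under these operations. Conjugating the pair $\{s_i,s_j\}$ by powers of $a_1^{r}$ yields many pairs $\{s_k,s_{k+r}\}$ with common exterior CRS equal to $M$. Together with the elements commuting with such pairs, these should generate $\braid{n}$ for $n\ge5$. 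Concretely, $H$ then contains $a_1^{r}$ and all $s_k$ with $k\not\equiv i\pm1, j\pm1$. For $n\geq 5$, these $s_k$ together with their $a_1^r$-conjugates exhaust $\extgen{n}$, unless the configuration degenerates.

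\textbf{Main obstacle.} I expect the hardest part to be the degenerate configurations. These arise when $s_i,s_j$ do not commute (for example $j=i+1$) and the common-neighbor set is small, or when $\gcd(r,n)$ makes the $a_1^r$-orbits miss indices. The first fallback is the identity $s_i s_{i+1}\crs{\Phi(s_i)}\dots$ from the braid-relation remark after Lemma~\ref{lem:crs:functorial}. That remark gives $\Phi(s_{i+1}s_i)(\crs{\Phi(s_{i+1})})=\crs{\Phi(s_i)}$, so $\Phi(s_{i+1}s_i)\in H$ when $j=i+1$. Combined with $s_i,s_{i+1}\in H$, this recovers all adjacent conjugations, and hence all of $\braid{n}$. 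If some leftover case survives, I would instead apply Lemma~\ref{lem:crs:order}, with $\Phi(a_1^r)$ fixing $M$, to derive that $\Phi$ is cyclic. That contradicts the hypothesis.
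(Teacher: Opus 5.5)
There is a genuine gap: your subgroup-closure step does not always reach all of $\extgen{n}$, and neither fallback covers the cases where it stalls. Into $H$ you put only $a_1^r$, $s_i$, $s_j$, and the $s_k$ commuting with \emph{both} $s_i$ and $s_j$, and then you iterate via conjugation. This works in many cases, for instance whenever $\gcd(r,n)=1$, and for all odd $n$. It can stall for even $n$, though.

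For example, take $n=6$ and $\{s_i,s_j\}=\{s_3,s_6\}$, so $r=3$. No other element of $\extgen{6}$ commutes with both, so your generators are $s_3,s_6,a_1^3$. Their images in $\mathfrak{S}_6$ are $(3\,4)$, $(6\,1)$ and $(1\,4)(2\,5)(3\,6)$, and all of these preserve $\{2,5\}$. So they generate a proper subgroup, and no element of it conjugates $s_3$ to $s_1$. Your iteration therefore certifies nothing beyond $\{s_3,s_6\}$. The same stall occurs whenever $j=i+n/2$, and also for $n=6$ with $j=i\pm2$.

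Neither fallback helps here. Your braid-relation fallback only treats $j=i+1$, which was never a problem, since then $a_1\in H$. The final fallback misuses Lemma~\ref{lem:crs:order}: that lemma needs $\Phi(a_1^r)\in\cent{\Phi(\braid{n})}{\braid{m}}$, and knowing that $\Phi(a_1^r)$ fixes $M$ gives nothing of the sort.

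The missing idea is already inside your propagation step. Your argument that $\Phi(s_k)$ preserves $\crs{\Phi(s_i)}$, and hence its exterior part $M$, uses only $[s_k,s_i]=1$. Since $M$ is simultaneously $\crs{\Phi(s_i)}^{\outmulti}$ and $\crs{\Phi(s_j)}^{\outmulti}$, every $s_\ell$ that commutes with \emph{at least one} of $s_i,s_j$ satisfies $\Phi(s_\ell)(M)=M$. By the large-neighborhoods property of $\Comm_{\braid{n}}(\extgen{n})$, at most one $s_k\in\extgen{n}$ fails this. The set $\extgen{n}\setminus\{s_k\}$ still generates $\braid{n}$, and $M\neq\emptyset$ by Lemma~\ref{lem:centralizer:emptycrs}, so $M$ is a reducing system. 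This is exactly the paper's proof, and it needs no $a_1^r$, no closure argument and no case analysis. In the example above, every element of $\extgen{6}$ commutes with $s_3$ or with $s_6$.
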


\begin{proof}
    Since $n\geq5$, the commuting graph $\Comm_{\braid{n}}(\extgen{n})$ has large neighborhoods. In particular, there is some $s_k\in\extgen{n}$ such that each $s_\ell\in\extgen{n}\setminus\{s_k\}$ commutes with either $s_i$ or $s_j$. Therefore, by Lemma~\ref{lem:crs:commdisj} the braid $\Phi(s_\ell)$ preserves $\crs{\Phi(s_i)}^{\outmulti}=\crs{\Phi(s_j)}^{\outmulti}$ for each $s_\ell\in\extgen{n}\setminus\{s_k\}$. Note that $\crs{\Phi(s_i)}^{\outmulti}$ is non-empty by Lemma~\ref{lem:centralizer:emptycrs}. Since $\extgen{n}\setminus\{s_k\}$ is a generating set for $\braid{n}$, it follows that $\crs{\Phi(s_i)}^{\outmulti}$ is a reducing system for $\Phi$.
\end{proof}

We say that a homomorphism $\Phi:\braid{n} \rightarrow \braid{m}$ is \defn{externally periodic} if $\Ext(\Phi(s_i))$ is periodic for some, and hence for all, $s_i \in \extgen{n}$. 

\begin{lemma}\label{lem:external:anosov}
  Let $\Phi: \braid{n} \rightarrow \braid{m}$ be an irreducible and non-cyclic homomorphism with $n \geq 5$ and $m\geq 3$. Then $\Phi$ is externally periodic.
\end{lemma}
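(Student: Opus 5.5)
We argue by contradiction, using the dichotomy of Lemma~\ref{lem:centralizer:irreducible} together with the reducibility criterion Lemma~\ref{lem:external:commonmax}. First we note that "for some, hence for all" is justified: the $s_i$ are pairwise conjugate, and by Lemma~\ref{lem:crs:functorial} conjugating $\Phi(s_i)$ by $h$ carries $\crs{\Phi(s_i)}^{\outmulti}$ to $h(\crs{\Phi(s_i)}^{\outmulti})$. It therefore conjugates the canonical external parts, so periodicity of $\Ext(\Phi(s_i))$ does not depend on $i$.

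Set $M_i \coloneqq \crs{\Phi(s_i)}^{\outmulti}$. By Lemma~\ref{lem:centralizer:emptycrs}, since $\Phi$ is non-cyclic, each $M_i$ is a non-empty non-nested multicurve. Suppose for contradiction that $\Ext(\Phi(s_i))$ is not periodic. Then by Lemma~\ref{lem:centralizer:irreducible} it is pseudo-Anosov, and by the conjugacy remark above the same holds for every $j$. Fix $s_i,s_j\in\extgen{n}$ with $[s_i,s_j]=1$; such a pair exists since $n\geq5$.

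By Lemma~\ref{lem:crs:commdisj}, $\Phi(s_j)$ preserves $\crs{\Phi(s_i)}$, and $M_i\cup M_j$ consists of pairwise disjoint curves. Since the $\braid{m}$-action respects the partial order $<$, the element $\Phi(s_j)$ preserves the exterior part $M_i$. Symmetrically, $\Phi(s_i)$ preserves $M_j$.

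Now consider the curves $\gamma\in M_j$ that are not interior to any curve of $M_i$. Each such $\gamma$ is disjoint from $M_i$ and exterior to it, so it descends to a curve $\bar\gamma$ in $\disk{M_i}$. Together these form a multicurve in $\disk{M_i}$, and by equivariance of $\Ext$ this multicurve is preserved by the pseudo-Anosov element $\Ext_{M_i}(\Phi(s_i))$. By Lemma~\ref{lem:crs:infpA} every such $\bar\gamma$ must be inessential in $\disk{M_i}$. This leaves two possibilities.
\begin{enumerate}
\item $\bar\gamma$ bounds a single puncture of $\disk{M_i}$. Since $\gamma$ is essential in $\disk{m}$, that puncture must be a collapsed curve $\delta\in M_i$, and then $\gamma=\delta$.
\item $\bar\gamma$ is parallel to $\partial\disk{M_i}$. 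Then $\gamma$ encloses all punctures of $\disk{m}$, so $\gamma$ is inessential in $\disk{m}$, which is impossible.
\end{enumerate}
Hence every curve of $M_j$ is $\leq$ some curve of $M_i$. Running the same argument with the roles of $i$ and $j$ exchanged, using that $\Ext(\Phi(s_j))$ is also pseudo-Anosov, shows that every curve of $M_i$ is $\leq$ some curve of $M_j$.

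Because $M_i$ and $M_j$ are both non-nested, these two containments force $M_i=M_j$. Indeed, take $\gamma\in M_j$ with $\gamma\leq\delta\in M_i$ and $\delta\leq\gamma'\in M_j$. Then $\gamma\leq\gamma'$, and non-nestedness of $M_j$ gives $\gamma=\gamma'$, hence $\gamma=\delta$. Thus $\crs{\Phi(s_i)}^{\outmulti}=\crs{\Phi(s_j)}^{\outmulti}$ for the distinct elements $s_i,s_j$, so Lemma~\ref{lem:external:commonmax} shows that $\Phi$ is reducible. This contradicts irreducibility.

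The step that needs the most care is the inessentiality analysis of $\bar\gamma$ in $\disk{M_i}$ in the degenerate setting, where curves of $M_i$ become punctures. One must check that an inessential image really forces $\gamma$ either to equal a curve of $M_i$ or to be boundary parallel in $\disk{m}$. The rest is bookkeeping with the partial order.
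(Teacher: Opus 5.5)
Your proof is correct and follows the same route as the paper: assuming the canonical external parts are pseudo-Anosov, you take the curves of one commuting generator's canonical reduction system that lie exterior to the other's and note that they descend to an invariant multicurve of a pseudo-Anosov, so they must vanish. This forces $\crs{\Phi(s_i)}^{\outmulti}=\crs{\Phi(s_j)}^{\outmulti}$ and gives a contradiction via Lemma~\ref{lem:external:commonmax}. The paper uses the specific pair $s_1,s_3$ and passes quickly over the degenerate case where a curve of one system coincides with a curve of the other, which your inessentiality case analysis and your final non-nestedness argument handle explicitly.
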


\begin{proof}
    Let $f_i\coloneqq\Ext(\Phi(s_i))$ for each $i\in\Z$. By Lemma~\ref{lem:centralizer:irreducible} it suffices to show that $f_1$ is not pseudo-Anosov. Suppose for a contradiction that $f_1$ is pseudo-Anosov, and hence that $f_i$ is pseudo-Anosov for all $i\in\Z$.

    For $i,j\in\Z$, let $M_{i,j}$ denote the multicurve in $\disk{\crs{\Phi(s_i)}}$ consisting of $\delta \in \crs{\Phi(s_j)}$ such that $\delta$ is exterior to $\crs{\Phi(s_i)}$. If $[s_i,s_j]=1$, then by Lemma~\ref{lem:crs:commdisj} the curves in
    \[
        \crs{\Phi(s_i)} \cup \crs{\Phi(s_j)}
    \]
    are pairwise disjoint and $\crs{\Phi(s_j)}$ is a reducing system for $\Phi(s_i)$. It follows that $M_{1,3}$ is a reducing system for $f_1$, and $M_{3,1}$ is a reducing system for $f_3$. Since we have assumed these elements are pseudo-Anosov, we must have $M_{1,3}=\emptyset$ and $M_{3,1}=\emptyset$ by Lemma~\ref{lem:crs:infpA}. Since the curves in $\crs{\Phi(s_1)}$ and $\crs{\Phi(s_3)}$ are pairwise disjoint, it follows that
    \[
    \crs{\Phi(s_1)}^{\outmulti} = \crs{\Phi(s_3)}^{\outmulti}.
    \]
    Since $\Phi$ is non-cyclic this contradicts the irreducibility of $\Phi$ by Lemma~\ref{lem:external:commonmax}. Therefore $f_1$ is not pseudo-Anosov, and thus $\Phi$ is externally periodic.
\end{proof}

Henceforth we only need to consider externally periodic homomorphisms in our proof of Theorem~\ref{mainthm:natleast5}.  

Before proceeding with the proof of Theorem~\ref{mainthm:natleast5}, we handle the special case $m<n$ in Theorem~\ref{thm:lin:mlessn}, reproving a result originally due to Lin~\cite[Theorem 3.1(b)]{Lin2004braid}. Let $\Phi:\braid{n} \rightarrow G$ be a homomorphism and let $f \in \cent{G}{\braid{n}}$. The \defn{transvection} of $\Phi$ along $f$ is the homomorphism
\[
  \Phi_f:\braid{n} \rightarrow G
\]
defined by $\Phi_f(s_i) = s_i f$ for all $s_i \in \extgen{n}$ (see Chen--Kordek--Margalit~\cite{CKM}). We require one auxiliary lemma.

\begin{lemma}\label{lem:alpha:transvectioncyclic}
Let $n \geq 3$ and let $G$ be a group.  Let $\Phi:\braid{n} \rightarrow G$ be a cyclic homomorphism and let $f \in \cent{G}{\Phi(\braid{n})}$.  Then the transvection $\Phi_f$ is cyclic.
\end{lemma}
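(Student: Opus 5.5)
The plan is to show directly that all the generators $s_i\in\extgen{n}$ have the same image under $\Phi_f$. The image of $\braid{n}$ is generated by the images of $s_1,\ldots,s_{n-1}$, so it is then generated by a single element and hence cyclic. Here I read the defining formula for the transvection as $\Phi_f(s_i)=\Phi(s_i)f$, with $f$ commuting with $\Phi(\braid{n})$. I take for granted, as the definition implicitly does, that this rule on generators gives a well-defined homomorphism. (It does: the braid and commutation relations among the $\Phi(s_i)f$ follow from those among the $\Phi(s_i)$, because $f$ commutes with each $\Phi(s_i)$ and both sides of each relation have the same number of letters.)

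\emph{Step 1.} Since $\Phi$ is cyclic, $\Phi(\braid{n})$ is abelian. The standard generators are pairwise conjugate in $\braid{n}$: by the braid relation $s_{i+1}=(s_is_{i+1})s_i(s_is_{i+1})^{-1}$, and more generally $a_1s_ia_1^{-1}=s_{i+1}$. Applying $\Phi$ gives $\Phi(s_{i+1})=\Phi(a_1)\Phi(s_i)\Phi(a_1)^{-1}$. This is a conjugation inside the abelian group $\Phi(\braid{n})$, so it is trivial, and $\Phi(s_{i+1})=\Phi(s_i)$ for all $i$. Hence $\Phi(s_i)=\Phi(s_1)$ for every $s_i\in\extgen{n}$.

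\emph{Step 2.} It follows that $\Phi_f(s_i)=\Phi(s_i)f=\Phi(s_1)f$ for every $i$. Since $s_1,\ldots,s_{n-1}$ generate $\braid{n}$, the image $\Phi_f(\braid{n})$ is generated by the single element $\Phi(s_1)f$, so it is cyclic.

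I do not expect any real obstacle. The only point needing care is that the conjugating element $a_1$ is mapped into the image of $\Phi$, which is abelian, so the conjugation collapses. It does not matter that $f$ itself might not lie in that image. The hypothesis $n\geq3$ is only needed so that the extended generating set and the conjugacy of generators behave as described.
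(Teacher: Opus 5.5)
Your proof is correct and follows the paper's argument: since all $s_i$ have the same image under $\Phi$, the group $\Phi_f(\braid{n})$ is generated by the single element $\Phi(s_1)f$. You also justify two points the paper leaves implicit. First, you show why a cyclic $\Phi$ forces $\Phi(s_i)=\Phi(s_j)$: the generators are conjugate and the image is abelian. Second, you check that $\Phi_f$ is a well-defined homomorphism.
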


\begin{proof}
    For all $s_i, s_i \in \extgen{n}$ we have $\Phi(s_i) = \Phi(s_j)$.  Therefore $\Phi(s_i)f = \Phi(s_j)f$ so $\Phi(\braid{n}) = \langle \Phi(s_i)f \rangle$ for some $s_i \in \extgen{n}$.  In particular, $\Phi_f$ is cyclic.
\end{proof}

Part (a) of the following theorem is a result originally due to Lin~\cite[Theorem 3.1(b)]{Lin2004braid}.

\begin{theorem}\label{thm:lin:mlessn}
Let $n\geq2$ and $m\geq1$ with $n\neq4$. Assume that $m\leq n$. Let $\Phi\colon\braid{n}\to\braid{m}$ be a homomorphism. Assume further that either
\begin{enumerate}
    \item[(a)] $m<n$ (see~\cite[Theorem 3.1(b)]{Lin2004braid}); or
    \item[(b)] $m=n$ and $\Phi$ is reducible.
\end{enumerate}
Then $\Phi$ is cyclic.
\end{theorem}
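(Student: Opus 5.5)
We argue by strong induction on $n$ (with $n\neq4$), treating (a) and (b) together, and reducing the reducible case to homomorphisms into smaller braid groups. First the trivial cases. If $m\leq2$, then $\braid{m}$ is cyclic. If $n\in\{2,3\}$, then $m\leq n\leq 3$. For $n=2$ the source $\braid{2}\cong\ZZ$ is cyclic. For $n=3$ and $m=3$, a reducible $\Phi$ preserves a multicurve in $\disk{3}$, which is a single curve surrounding two punctures; its stabilizer is $\braid{2}\times\braid{2}$-like and hence abelian, so $\Phi$ factors through $\braid{3}^{\ab}\cong\ZZ$. So we may assume $n\geq5$ and $m\geq3$.

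\textbf{Irreducible case.} If $\Phi$ is irreducible and $m<n$, then Lemma~\ref{lem:fewer-irred-cyclic} gives that $\Phi$ is cyclic. It remains to treat reducible $\Phi$ with $m\leq n$, which covers case (b) and the reducible half of case (a).

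\textbf{Reducible case.} Suppose $\Phi$ is reducible and non-cyclic, and seek a contradiction.
\begin{enumerate}
\item Let $M$ be a reducing system for $\Phi$. Replace it by the canonical choice: the exterior part of the intersection of the canonical reduction systems of all elements of $\Phi(\braid{n})$. Concretely, take the outermost curves of a maximal invariant multicurve; these form a non-empty non-nested $\Phi$-invariant multicurve $M^{\outmulti}$.
\item Consider the composite $\Ext_M\circ\Phi\colon\braid{n}\to\braid{M}\subseteq\braid{m'}$, where $m'=m-\sum m_i+k<m\leq n$, since each interior disk has $m_i\geq2$ punctures.
\item Also consider the induced action of $\braid{n}$ on the $k$ components of $M$. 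This is a homomorphism $\braid{n}\to\mathfrak{S}_k$ with $k\leq m/2<n$. For $n\geq5$, any homomorphism $\braid{n}\to\mathfrak{S}_k$ with $k<n$ has cyclic image, by Artin's classical theorem or by the totally symmetric set argument of Lemma~\ref{lem:crs:collisioncollapse}. Hence $\Phi(s_i)$ all induce the same permutation, and the commutator subgroup $\braid{n}'$, which is perfect for $n\geq5$, fixes each curve of $M$.
\item By induction (case (a) for $m'<n$), $\Ext_M\circ\Phi$ is cyclic.
\item Restrict $\Phi$ to each interior disk. After passing to the finite-index subgroup fixing every component, we obtain homomorphisms into each $\braid{m_i}$ with $m_i<m\leq n$. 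By induction and Lemma~\ref{lem:crs:collisioncollapse}, each component homomorphism is cyclic.
\end{enumerate}
The stabilizer is an extension of $\prod\braid{m_i}$ by $\braid{M}$. So $\Phi(\braid{n}')$, which is perfect, maps trivially to $\braid{M}$ and then lands in $\prod\braid{m_i}$, with cyclic and hence trivial projection to each factor. Therefore $\Phi(\braid{n}')=1$, and $\Phi$ factors through $\braid{n}^{\ab}\cong\ZZ$, so $\Phi$ is cyclic. This is the desired contradiction.

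\textbf{Main obstacle.} The delicate point is step (5). The restriction to an interior disk is only a homomorphism from the subgroup $\braid{n}'$ (or a finite-index stabilizer), not from $\braid{n}$ itself. Handling this needs one of two things. The first option is to use that $\braid{n}'$ is perfect and finitely generated for $n\geq5$, and show directly that any homomorphism $\braid{n}'\to\braid{m_i}$ with $m_i<n$ is trivial; this would require redoing the induction for $\braid{n}'$. The second option, which I would try first, is to use that $\Phi(s_i)$ all fix each curve of $M$ whenever the permutation in step (3) is trivial. We can arrange this by replacing $\Phi$ with a transvection by a suitable central element of the stabilizer, using Lemma~\ref{lem:alpha:transvectioncyclic}, or by noting that the common permutation $\sigma$ can be absorbed. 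Once every $\Phi(s_i)$ fixes each curve of $M$, the component restrictions are honest homomorphisms $\braid{n}\to\braid{m_i}$, and the induction applies.
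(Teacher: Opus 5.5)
Your step (5) has a genuine gap. You flagged it yourself, and neither of your options closes it. Once $\Ext_M\circ\Phi$ is cyclic, all $\Phi(s_i)$ induce one common permutation $\sigma$ of the interior disks. If $\sigma\neq1$, restriction to a disk is defined only on the finite-index subgroup $\ker(\braid{n}\to\langle\sigma\rangle)\supseteq\braid{n}'$. Your inductive hypothesis says nothing about such maps: it concerns homomorphisms out of $\braid{n}$, so the induction must be on $m$ with $n$ fixed, not on $n$. The phrase ``cyclic and hence trivial projection to each factor'' in your last paragraph is exactly this missing statement. Perfectness of $\braid{n}'$ does not supply it, since $\braid{m_i}'$ is itself perfect for $m_i\geq5$.

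Option 2 fails as stated. A central element $z$ of $\Stab_{\braid{m}}(M)$ satisfies $T_{z(\gamma)}=zT_\gamma z^{-1}=T_\gamma$, so it fixes every curve of $M$ and cannot cancel $\sigma$. What you actually need is some $f\in\cent{\Phi(\braid{n})}{\braid{m}}$ with $\Ext_M(f)=\overline{f}$, the common external part, and you give no construction.

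When $\sigma=1$ this is easy, and it is the paper's route. Take $f$ supported off the interior disks. It commutes with $\ker(\Ext_M)$, hence with each $\Phi(s_i)=fk_i$. So $\Phi_{f^{-1}}(s_i)=k_i$ defines an honest homomorphism $\braid{n}\to\braid{m_1}\times\cdots\times\braid{m_k}$. This also avoids the problem that naive restriction to an inner disk only lands in $\braid{m_i}/Z(\braid{m_i})$. Each factor is then cyclic by induction on $m$, and Lemma~\ref{lem:alpha:transvectioncyclic} recovers $\Phi$.

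For $\sigma\neq1$, note that the paper's own proof only asserts that a lift of $\overline{f}$ ``permuting the factors of $\ker(\Ext_M)$'' centralizes $\Phi(\braid{n})$. This is not automatic. Take $\braid{4}$ with $M$ the curves around $\{1,2\}$ and $\{3,4\}$. The lift $f=s_2s_1s_3s_2$ swaps the factors $\langle s_1\rangle$ and $\langle s_3\rangle$. The homomorphism with $\Phi(s_i)=fs_1$ for all $i$ preserves $M$ and has $\Ext_M(\Phi(s_i))=\Ext_M(f)$. Yet $f\Phi(s_i)f^{-1}=fs_3\neq\Phi(s_i)$. This $\Phi$ is of course cyclic; the point is only that the asserted centralizing property fails. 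So the case $\sigma\neq1$ needs a different input.

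The cleanest input is the tool you already use in step (3), applied to punctures rather than to components:
\begin{enumerate}
\item For $m<n$, the composite $\braid{n}\to\braid{m}\to\mathfrak{S}_m$ is cyclic.
\item For $m=n$, a non-cyclic composite is transitive with image $\mathfrak{S}_n$, by Artin's theorem together with the $k<n$ case. This group is primitive, so it cannot permute the pairwise disjoint blocks $\Int(\gamma)$, $\gamma\in M^{\outmulti}$.
\item In either case $\Phi(\braid{n}')$ lies in the pure braid group, which is residually nilpotent.
\item Since $\braid{n}'$ is perfect for $n\geq5$, this forces $\Phi(\braid{n}')=1$.
\end{enumerate}
Hence $\Phi$ is cyclic, with no induction needed in the range $n\geq5$.
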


\begin{proof}
    Fix $n\geq2$ with $n\neq4$. We proceed by induction on $m\geq2$. We handle cases (a) and (b) simultaneously.

    \p{Base case: $m=2$} In this case $\braid{m}$ is cyclic, so the result holds.

    \p{Inductive step: $m \geq 3$} Our inductive hypothesis is that every homomorphism $\braid{n}\to\braid{m'}$ with $m'<m$ is cyclic. If $m<n$ and $\Phi$ is irreducible, then $\Phi$ is cyclic by Lemma~\ref{lem:fewer-irred-cyclic}. Suppose then that $m\leq n$ and $\Phi$ is reducible, and let $M$ be a reducing system for $\Phi$. Since $M$ is non-empty, $\disk{M}$ has strictly fewer than $m$ punctures. By our inductive hypothesis $\Ext_M \circ \Phi$ is cyclic.  Let
    \[
      \overline{f} = (\Ext_M \circ \Phi)(s_i)
    \]
    for some (and hence all) $s_i \in \extgen{n}$.  Write
    \[
      \ker(\Ext_M)= \braid{m_1} \times \cdots \times \braid{m_k},
    \]
    and note that each $m_\ell$ is strictly smaller than $m$ since $M$ is non-empty. Choose a lift $f\in\Stab_{\braid{m}}(M^{\outmulti})$ of $\overline{f}$ that permutes the factors of $\ker(\Ext_M)$ (see the work of the second and fourth author~\cite[Lemma 3.1]{HuxfordSchillewaert2025}).
    
    By construction $\Ext_M(f) = \Ext_M(\Phi(s_i))$ for all $s_i \in \extgen{n}$, and $f \in \cent{\Phi(\braid{n})}{\braid{n}}$. In particular, the transvection $\Phi_{f^{-1}}$ is well defined and also has $M^{\outmulti}$ as a reducing system. Furthermore, $\Ext_M\circ\Phi_{f^{-1}}$ is trivial, so the image of $\Phi_{f^{-1}}$ lies in $\ker(\Ext_M)=\braid{m_1} \times \cdots \times \braid{m_k}$. By our inductive hypothesis, any homomorphism $\braid{n}\to\braid{m_\ell}$ is cyclic for all $1\leq \ell\leq k$. Therefore $\Phi_{f^{-1}}$ has abelian image. Since $\braid{n}^{\ab}\cong\Z$ this implies $\Phi_{f^{-1}}$ is cyclic. By Lemma~\ref{lem:alpha:transvectioncyclic}, we have $\Phi=(\Phi_{f^{-1}})_f$ is also cyclic.
\end{proof}

The hypothesis $n\neq4$ above is required, since the pushfoward $R_*\colon\braid{4}\to\braid{3}$ of Ferrari's map is a counterexample to part (a), and post-composing it with the standard inclusion $\braid{3}\hookrightarrow\braid{4}$ gives a counterexample to part (b).

\section{Maximal curves}\label{section:alpha}

We now focus on the case of externally periodic homomorphism $\Phi:\braid{n} \rightarrow \braid{m}$.  We begin with the following, which we use repeatedly.

\begin{lemma}\label{lem:alpha:exterioragreement}
  Let $n \geq 4$ and $m \geq 3$.  Let $\Phi:\braid{n} \rightarrow \braid{m}$ be an externally periodic homomorphism.  Let $s_i, s_j \in \extgen{n}$ with $[s_i,s_j]=1$. Let $M = \crs{\Phi(s_i)}^{\outmulti} \cup \crs{\Phi(s_j)}^{\outmulti}$. Then
  \[
    \Ext_M(\Phi(s_i)) = \Ext_M(\Phi(s_j)).
  \]
  In particular, if $\delta \subset \disk{m}$ is a possibly inessential curve exterior to $M$, then $\Phi(s_i)(\delta) = \Phi(s_j)(\delta)$.
\end{lemma}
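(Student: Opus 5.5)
The plan is to show that $\Ext_M(\Phi(s_i))$ and $\Ext_M(\Phi(s_j))$ are commuting, conjugate, periodic elements of $\braid{M}$, and then apply Lemma~\ref{lem:centralizer:basics}.

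\textbf{Both images stabilize $M$.} Write $f_i=\Phi(s_i)$ and $f_j=\Phi(s_j)$. These commute, so by Lemma~\ref{lem:crs:commdisj} the curves of $\crs{f_i}\cup\crs{f_j}$ are pairwise disjoint. Also, $f_i$ preserves $\crs{f_j}$, and trivially $\crs{f_i}=\crs{f_i^{1}}$ by Lemma~\ref{lem:crs:functorial}. The action of $\braid{m}$ respects the partial order $<$, so $f_i$ preserves $\crs{f_i}^{\outmulti}$ and $\crs{f_j}^{\outmulti}$. Hence $f_i$ preserves $M$ and $M^{\outmulti}$, and symmetrically so does $f_j$. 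Thus $\Ext_M(f_i)$ and $\Ext_M(f_j)$ are defined, and they commute since $\Ext_M$ is a homomorphism.

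\textbf{Periodicity.} Every curve of $M^{\outmulti}$ is either in $\crs{f_i}^{\outmulti}$ or exterior to it. Therefore $\Ext_M$ factors as $\Ext_{\crs{f_i}^{\outmulti}}$ followed by $\Ext_{M'}$, where $M'$ is the image in $\disk{\crs{f_i}}$ of the curves of $M^{\outmulti}$ not in $\crs{f_i}^{\outmulti}$. The factorization is justified by Lemma~\ref{lem:centralizer:sheaf}, or directly from the Kordek--Margalit description. Thus $\Ext_M(f_i)=\Ext_{M'}(\Ext(f_i))$. Because $\Phi$ is externally periodic, $\Ext(f_i)$ is periodic, and it stabilizes $M'$. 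Lemma~\ref{lem:centralizer:typepreserve} then shows that $\Ext_M(f_i)$ is periodic, and likewise for $f_j$. The main care needed is this bookkeeping: checking that the composite of the two collapses really is $\Ext_M$ and that $M'$ is a genuine, possibly degenerate, multicurve.

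\textbf{Conjugacy.} Since $s_i$ and $s_j$ commute and are distinct, they are half-twists about disjoint arcs. There is an $h\in\braid{n}$ with $hs_ih^{-1}=s_j$ and $hs_jh^{-1}=s_i$; for example, for $s_1,s_3$ take $h=s_2s_1s_3s_2$, and in general conjugate by a suitable braid, e.g. a power of $a_1$ together with a braid carrying one pair of disjoint arcs to the other. By Lemma~\ref{lem:crs:functorial}, $\Phi(h)$ swaps $\crs{f_i}$ and $\crs{f_j}$, hence swaps their exterior parts and preserves $M$. Applying $\Ext_M$ gives $\Ext_M(f_j)=\Ext_M(\Phi(h))\,\Ext_M(f_i)\,\Ext_M(\Phi(h))^{-1}$, so the two elements are conjugate in $\braid{M}$.

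\textbf{Conclusion.} If $\disk{M}$ has at least $3$ punctures, Lemma~\ref{lem:centralizer:basics} gives $\Ext_M(f_i)=\Ext_M(f_j)$. If it has at most $2$ punctures, then $\braid{M}$ is abelian, so conjugate elements are equal. For the final assertion, take a possibly inessential curve $\delta$ exterior to $M$. By the equivariance of $\Ext$, $f_i(\delta)$ and $f_j(\delta)$ are both the natural lift of the common curve $\Ext_M(f_i)(\bar\delta)=\Ext_M(f_j)(\bar\delta)$, so $f_i(\delta)=f_j(\delta)$.
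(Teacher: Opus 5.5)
Your proposal is correct and follows the paper's proof: both external parts are periodic and commute, they are conjugate by $\Ext_M(\Phi(h))$ for a braid $h$ swapping $s_i$ and $s_j$, and Lemma~\ref{lem:centralizer:basics} then gives equality. Your extra bookkeeping fills in details the paper leaves implicit, namely factoring $\Ext_M$ through $\Ext(\Phi(s_i))$ so that Lemma~\ref{lem:centralizer:typepreserve} is applied to a genuinely periodic element, and treating separately the case where $\disk{M}$ has at most two punctures.
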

\begin{proof}
    Note that $M$ is indeed a multicurve by Lemma~\ref{lem:crs:commdisj}. The elements $\Ext_M(\Phi(s_i))$ and $\Ext_M(\Phi(s_j))$ are periodic by Lemma~\ref{lem:centralizer:typepreserve}. They also commute. By Lemma~\ref{lem:centralizer:basics} it suffices to show that they are also conjugate.
    
    By the change of coordinates principle, there is some $f\in\braid{n}$ such that $fs_if^{-1}=s_j$ and $fs_jf^{-1}=s_i$. We have $\Phi(f)(M)=M$ by Lemma~\ref{lem:crs:functorial}. Thus $\Ext_M(\Phi(s_i))$ and $\Ext_M(\Phi(s_j))$ are indeed conjugate by the braid $\Ext_M(\Phi(f))$.  
\end{proof}

As a consequence, we can give a criterion for a map $\Phi:\braid{n} \rightarrow \braid{m}$ to be reducible. Let $s_i\in \extgen{n}$. We say that a curve $\gamma \in \crs{\Phi(s_i)}^{\outmulti}$ is \defn{$\Phi$-maximal} if there is no $s_j \in \extgen{n}$ and $\gamma' \in \crs{\Phi(s_j)}$ such that $\gamma' > \gamma$.  The set of $\Phi$-maximal curves is be denoted by $\extremal$. Note that
\[
  \extremal = \left(\bigcup_{s_i\in\extgen{n}}\crs{\Phi(s_i)}\right)^{\outmulti} = \left(\bigcup_{s_i\in\extgen{n}}\crs{\Phi(s_i)}^{\outmulti}\right)^{\outmulti}.
\]
For $s_i \in \extgen{n}$ we let
\[
  \extremalarg{s_i} \coloneqq \extremal \cap \crs{\Phi(s_i)}=\extremal \cap \crs{\Phi(s_i)}^{\outmulti}.
\]
We prove in Lemma~\ref{lem:curves:nonesting} that $ \extremalarg{s_i} = \crs{\Phi(s_i)}^{\outmulti}$ when $\Phi:\braid{n} \rightarrow \braid{m}$ is irreducible and non-cyclic with $n \geq 5$ and $m \geq 3$.

We have the following result about the action of $\Phi(a_1)$ on $\extremal$.

\begin{lemma}\label{lem:alpha:standardrootaction}
  Let $n \geq 3$ and $m \geq 3$.  Let $\Phi:\braid{n} \rightarrow \braid{m}$ be a homomorphism.  If $s_i \in \extgen{n}$, then
  \[
    \Phi(a_1)(\extremalarg{s_i}) = \extremalarg{s_{i+1}}.
  \]
\end{lemma}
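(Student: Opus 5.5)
The plan is to show that $\Phi(a_1)$ preserves the set $\extremal$, and then intersect with the conjugation identity for canonical reduction systems.

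First, since $a_1 s_j a_1^{-1} = s_{j+1}$ for all $j\in\Z$ (by the indexing convention), Lemma~\ref{lem:crs:functorial} gives
\[
  \Phi(a_1)\bigl(\crs{\Phi(s_j)}\bigr) = \crs{\Phi(a_1)\Phi(s_j)\Phi(a_1)^{-1}} = \crs{\Phi(s_{j+1})}
\]
for every $j$. In particular, $\Phi(a_1)$ permutes the finite family $\{\crs{\Phi(s_j)} : s_j\in\extgen{n}\}$ cyclically. It therefore preserves the union
\[
  \calN \coloneqq \bigcup_{s_j\in\extgen{n}}\crs{\Phi(s_j)}.
\]

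Next, I would use the fact that the action of $\braid{m}$ on curves respects the partial order $<$, with strict inequalities preserved in both directions by applying $\Phi(a_1)^{-1}$. Since $\Phi(a_1)$ is a bijection of $\calN$ preserving $<$, it maps the set of $<$-maximal elements of $\calN$ bijectively onto itself, that is, $\Phi(a_1)(\calN^{\outmulti}) = \calN^{\outmulti}$. By the displayed description of $\extremal$ preceding the lemma, $\calN^{\outmulti}=\extremal$, so $\Phi(a_1)(\extremal)=\extremal$. Concretely, if $\gamma\in\extremal$ and some $\gamma'\in\calN$ satisfied $\gamma'>\Phi(a_1)(\gamma)$, then $\Phi(a_1)^{-1}(\gamma')\in\calN$ would satisfy $\Phi(a_1)^{-1}(\gamma')>\gamma$, contradicting maximality.

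Finally, since $\Phi(a_1)$ acts bijectively on curves, it commutes with intersections. Hence
\[
  \Phi(a_1)(\extremalarg{s_i}) = \Phi(a_1)(\extremal)\cap\Phi(a_1)\bigl(\crs{\Phi(s_i)}\bigr) = \extremal\cap\crs{\Phi(s_{i+1})} = \extremalarg{s_{i+1}}.
\]

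There is no real obstacle here. The only point requiring care is that $\extgen{n}$ is closed under conjugation by $a_1$, including the wraparound $a_1 s_n a_1^{-1} = s_1$, so that $\calN$ is genuinely $\Phi(a_1)$-invariant. This is exactly what the indexing convention records.
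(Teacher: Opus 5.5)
Your proposal is correct and follows essentially the same route as the paper: use Lemma~\ref{lem:crs:functorial} with $a_1 s_j a_1^{-1}=s_{j+1}$ to show $\Phi(a_1)$ preserves $\bigcup_{s_j\in\extgen{n}}\crs{\Phi(s_j)}$, then use that $\Phi(a_1)^{\pm1}$ preserves the partial order to get $\Phi(a_1)(\extremal)=\extremal$, and intersect. Your explicit final intersection step just spells out what the paper leaves implicit.
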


\begin{proof}
  Since $a_1 s_j a_1^{-1} = s_{j+1}$ for all $s_j\in\extgen{n}$, we have by Lemma~\ref{lem:crs:functorial}
  \[
    \Phi(a_1)(\crs{\Phi(s_j)}) = \crs{\Phi(s_{j+1})} \quad \text{and} \quad \Phi(a_1^{-1})(\crs{\Phi(s_{j+1})})=\crs{\Phi(s_j)}.
  \]
    It remains to show that $\Phi(a_1)(\extremal)=\extremal$. From the above, both $\Phi(a_1)$ and $\Phi(a_1^{-1})$ send the union $\bigcup_{s_i\in\extgen{n}}\crs{\Phi(s_i)}$ to itself. Since both $\Phi(a_1)$ and $\Phi(a_1^{-1})$ preserve the partial order $\leq$ on curves in $\disk{m}$, it follows that $\Phi(a_1)$ preserves the set of elements of this union that are maximal with respect to $\leq$. In other words, $\Phi(a_1)(\extremal)=\extremal$, as required.
\end{proof} We now study the action of $\Phi(s_j)$ on $\extremal$.

\begin{lemma}\label{lem:alpha:extremalaction}
Let $n \geq 5$ and $m \geq 3$.  Let $\Phi:\braid{n} \rightarrow \braid{m}$ be an externally periodic homomorphism.  If $s_i,s_j \in \extgen{n}$ satisfy $[s_i,s_j] = 1$, then $\Phi(s_j)(\extremalarg{s_i})=\extremalarg{s_i}$.
\end{lemma}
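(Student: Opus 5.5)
The strategy is to show that $\Phi(s_j)$ maps $\extremalarg{s_i}$ into itself; since $\Phi(s_j)$ is a bijection on curves and the set is finite, equality follows. Because $[s_i,s_j]=1$, Lemma~\ref{lem:crs:commdisj} says that $\Phi(s_j)$ preserves $\crs{\Phi(s_i)}$. Moreover, $\Phi(s_j)$ respects the partial order on curves, so it also preserves the exterior part $\crs{\Phi(s_i)}^{\outmulti}$. It therefore suffices to show the following: if $\gamma\in\extremalarg{s_i}$, then $\Phi(s_j)(\gamma)$ is again $\Phi$-maximal.

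Take $\gamma\in\extremalarg{s_i}$ and split into cases according to where $\gamma$ sits relative to $\crs{\Phi(s_j)}$. By Lemma~\ref{lem:crs:commdisj}, the curves of $\crs{\Phi(s_i)}\cup\crs{\Phi(s_j)}$ are pairwise disjoint. Since $\gamma$ is $\Phi$-maximal, no curve of $\crs{\Phi(s_j)}$ lies strictly above $\gamma$. Hence $\gamma$ is either in $\crs{\Phi(s_j)}^{\outmulti}$ or exterior to $M=\crs{\Phi(s_i)}^{\outmulti}\cup\crs{\Phi(s_j)}^{\outmulti}$.

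\emph{Case 1: $\gamma$ is exterior to $M$ and not in $\crs{\Phi(s_j)}$.} Apply Lemma~\ref{lem:alpha:exterioragreement}, which is valid since $\Phi$ is externally periodic, $n\geq 5$, and the two elements commute. It gives $\Phi(s_j)(\gamma)=\Phi(s_i)(\gamma)$. Because $\gamma\in\crs{\Phi(s_i)}^{\outmulti}$ and $\Phi(s_i)$ preserves $\crs{\Phi(s_i)}^{\outmulti}$ and the set of all curves $\bigcup_k\crs{\Phi(s_k)}$ permuted appropriately\ldots{} here I instead argue directly. Since $\gamma$ lies in $\crs{\Phi(s_i)}^{\outmulti}$, the curve $\gamma$ is an element of $M^{\outmulti}$ and hence is one of the curves being collapsed. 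In this case I use the degenerate-curve version: $\gamma$ (as a boundary of the collapsed region) is treated as a puncture in $\disk{M}$. Then the equality $\Ext_M(\Phi(s_i))=\Ext_M(\Phi(s_j))$ shows that $\Phi(s_j)(\gamma)=\Phi(s_i)(\gamma)$. The latter lies in $\crs{\Phi(s_i)}$ and is $\Phi$-maximal.

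Why is $\Phi(s_i)(\gamma)$ $\Phi$-maximal? I check this by showing that $\Phi(s_i)$ preserves $\extremal$. Since $\Phi(s_i)$ commutes with $\Phi(s_k)$ for all $s_k$ in the large neighborhood of $s_i$, it maps each such $\crs{\Phi(s_k)}$ to itself. For the two non-adjacent $s_{i\pm1}$, the braid relation gives $\Phi(s_i)\Phi(s_{i\pm1})\crs{\Phi(s_i)}=\crs{\Phi(s_{i\pm1})}$. A cleaner route is needed here, and this is the main obstacle: controlling how $\Phi(s_i)$ moves $\crs{\Phi(s_{i\pm1})}$. I would first try to avoid this by choosing the comparison element differently. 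If $\Phi(s_j)(\gamma)$ failed to be maximal, some $\gamma'\in\crs{\Phi(s_k)}$ would satisfy $\gamma'>\Phi(s_j)(\gamma)$. Applying $\Phi(s_j)^{-1}$ would then give $\Phi(s_j)^{-1}(\gamma')>\gamma$.

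\emph{Case 2: $\gamma\in\crs{\Phi(s_j)}^{\outmulti}$.} Here $\Phi(s_j)$ preserves $\crs{\Phi(s_j)}^{\outmulti}$, and $\Phi(s_j)(\gamma)$ is still in $\crs{\Phi(s_i)}^{\outmulti}$.

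In both cases, what remains is the claim that $\Phi(s_j)$ preserves $\extremal$. For $s_k$ commuting with $s_j$, $\Phi(s_j)$ preserves $\crs{\Phi(s_k)}$. For $s_k=s_{j\pm1}$, I use the large-neighborhoods property: for $n\geq5$, pick $s_\ell$ commuting with both $s_j$ and $s_k$. Then a curve $\gamma'\in\crs{\Phi(s_k)}$ lying strictly above $\Phi(s_j)(\gamma)$ can be compared, via Lemma~\ref{lem:alpha:exterioragreement} applied to the pair $(s_k,s_\ell)$, with a curve of $\crs{\Phi(s_\ell)}$ that is preserved by $\Phi(s_j)$. Pulling this back by $\Phi(s_j)^{-1}$ produces a curve in some $\crs{\Phi(s_\ell)}$ strictly above $\gamma$, contradicting maximality. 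This comparison step is where I expect the real work to lie.
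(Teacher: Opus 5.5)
Your opening reduction is correct and matches the paper: it suffices to show $\Phi(s_j)(\gamma)\in\extremal$ for $\gamma\in\extremalarg{s_i}$. Your observation that $\Phi(s_j)(\gamma)=\Phi(s_i)(\gamma)$ by Lemma~\ref{lem:alpha:exterioragreement} is also correct. In fact it holds for every such $\gamma$, because maximality and Lemma~\ref{lem:crs:commdisj} make $\gamma$ exterior to $M$, so your Case 2 is unnecessary.

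\textbf{The gap.} You then reduce, in both cases, to the claim that $\Phi(s_j)$ (resp.\ $\Phi(s_i)$) preserves $\extremal$. That claim is false. For the identity homomorphism $\braid{n}\to\braid{n}$, $s_i$ sends the round curve about punctures $i+1,i+2$, which is $\Phi$-maximal, to a curve about punctures $i,i+2$ that is not; the paper remarks on exactly this right after the lemma.

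So you must use $[s_i,s_j]=1$ together with the maximality of the specific curve $\gamma$, and your closing comparison step does not do this. You choose $s_\ell$ commuting with $s_j$ and $s_k$ and apply Lemma~\ref{lem:alpha:exterioragreement} to the pair $(s_k,s_\ell)$. That produces no curve of $\crs{\Phi(s_\ell)}$ comparable with $\gamma'$. You also cannot replace $\Phi(s_j)(\gamma)$ by $\Phi(s_\ell)(\gamma)$: such an $s_\ell$ need not commute with $s_i$, so $\gamma$ may intersect $\crs{\Phi(s_\ell)}$. For example, with $n=5$, $s_i=s_1$, $s_j=s_3$, $s_k=s_2$, your choice is forced to be $s_\ell=s_5$, which does not commute with $s_1$.

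\textbf{The missing idea.} The auxiliary generator should commute with $s_i$, not with $s_j$. For every $s_\ell$ with $[s_\ell,s_i]=1$, the curve $\gamma$ is exterior to $\crs{\Phi(s_i)}^{\outmulti}\cup\crs{\Phi(s_\ell)}^{\outmulti}$. Lemma~\ref{lem:alpha:exterioragreement} then gives $\Phi(s_\ell)(\gamma)=\Phi(s_i)(\gamma)=\Phi(s_j)(\gamma)$. In other words, the image of $\gamma$ is the same under every commuting neighbour of $s_i$.

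Now suppose $\gamma'\in\crs{\Phi(s_k)}$ satisfies $\gamma'>\Phi(s_j)(\gamma)$. Pick $s_\ell$ commuting with both $s_i$ and $s_k$:
\begin{itemize}
\item take $s_\ell=s_j$ if $[s_k,s_j]=1$;
\item take $s_\ell=s_k$ if $[s_k,s_i]=1$;
\item otherwise use that $\Comm_{\braid{n}}(\extgen{n})$ has diameter 2.
\end{itemize}
Then $\Phi(s_\ell)^{-1}(\gamma')\in\crs{\Phi(s_k)}$ by Lemma~\ref{lem:crs:commdisj}, and $\Phi(s_\ell)^{-1}(\gamma')>\Phi(s_\ell)^{-1}\Phi(s_\ell)(\gamma)=\gamma$. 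This contradicts $\gamma\in\extremal$, and it is precisely the paper's three-case argument.
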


\begin{proof}
    Since $\extremalarg{s_i}$ is finite, it suffices to show that $\Phi(s_j)(\extremalarg{s_i})\subseteq\extremalarg{s_i}$. In fact, by Lemma~\ref{lem:crs:commdisj} it suffices to show that $\Phi(s_j)(\extremalarg{s_i})\subseteq\extremal$. Let $\gamma\in\extremalarg{s_i}$. Assume for a contradiction that $\Phi(s_j)(\gamma)\notin\extremal$. Let $s_k\in \extgen{n}$ and $\gamma' \in \crs{\Phi(s_k)}$ be a curve with $\gamma' > \Phi(s_j)(\gamma)$.  Assume without loss of generality that $\gamma' \in \extremalarg{s_k}$.
    There are three cases to consider. Note that $\Phi(s_j)(\gamma)=\Phi(s_i)(\gamma)$ by Lemma~\ref{lem:alpha:exterioragreement}.

    \p{Case 1: $[s_k,s_j] = 1$} We have $\gamma<\Phi(s_j)^{-1}(\gamma')$, and $\Phi(s_j)^{-1}(\gamma')\in\crs{\Phi(s_k)}$ by Lemma~\ref{lem:crs:commdisj}, which contradicts $\gamma\in\extremal$.

    \p{Case 2: $[s_k,s_i] = 1$}  By Lemma~\ref{lem:alpha:exterioragreement} we have $\Phi(s_k)(\gamma) = \Phi(s_i)(\gamma)$, and so $\gamma < \Phi(s_k)^{-1}(\gamma')$.  Since $\Phi(s_k)^{-1}(\gamma') \in \crs{\Phi(s_k)}$, this contradicts $\gamma\in\extremal$.
    
    \p{Case 3: $[s_k,s_i] \neq 1$ and $[s_k, s_j] \neq 1$} 
    Since $n \geq 5$ and $[s_k,s_i]\neq 1$, the fact that $\Comm_{\braid{n}}(\extgen{n})$ has diameter 2 implies that there is an $s_\ell \in \extgen{n}$ with $[s_\ell,s_i]=[s_\ell,s_k]=1$. Applying Lemma~\ref{lem:alpha:exterioragreement} we see that $\Phi(s_i)(\gamma)=\Phi(s_\ell)(\gamma)$. Thus, $\gamma<\Phi(s_\ell)^{-1}(\gamma')$.  Since $[s_\ell, s_k] = 1$, we have
    \[
      \Phi(s_\ell)^{-1}(\gamma') \in \crs{\Phi(s_k)}
    \]
    by Lemma~\ref{lem:crs:commdisj}.  This contradicts $\gamma\in\extremal$.
\end{proof} In general, if $\gamma \in \extremalarg{s_i}$, the curve $\Phi(s_{i+1})(\gamma)$ is not $\Phi$-maximal. Indeed, this is not the case for the identity homomorphism. We now explain how the braid relation interacts with $\Phi$-maximal curves.

\begin{lemma}\label{lem:alpha:extremalbraid}
Let $n \geq 5$ and $m \geq 3$.  Let $\Phi:\braid{n} \rightarrow \braid{m}$ be an externally periodic homomorphism.  If $s_i\in \extgen{n}$ then $\Phi(s_is_{i+1})(\extremalarg{s_i})=\extremalarg{s_{i+1}}$.
\end{lemma}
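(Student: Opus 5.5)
Since $s_{i+1}s_i s_{i+1}^{-1}\cdot$ conjugation gives $(s_is_{i+1})s_i(s_is_{i+1})^{-1}=s_{i+1}$ by the braid relation, Lemma~\ref{lem:crs:functorial} yields $\Phi(s_is_{i+1})(\crs{\Phi(s_i)})=\crs{\Phi(s_{i+1})}$. As in Lemma~\ref{lem:alpha:standardrootaction}, the statement then reduces to showing that $\Phi(s_is_{i+1})$ sends $\Phi$-maximal curves in $\crs{\Phi(s_i)}$ to $\Phi$-maximal curves, and that $\Phi(s_is_{i+1})^{-1}$ does the same for curves of $\extremalarg{s_{i+1}}$. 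Since both sets are finite and have the same cardinality (use $\Phi(a_1)$ from Lemma~\ref{lem:alpha:standardrootaction}), the inclusion $\Phi(s_is_{i+1})(\extremalarg{s_i})\subseteq\extremal$ suffices.

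\emph{Key step.} Choose an element of $\extgen{n}$ commuting with both $s_i$ and $s_{i+1}$; since $n\geq5$, $s_{i+3}$ works. We write $s_is_{i+1}$ as a product that can be handled by Lemma~\ref{lem:alpha:extremalaction}. The idea is to trade $\Phi(s_i)$ acting on $\gamma\in\extremalarg{s_i}$ for $\Phi(s_{i+3})$, using Lemma~\ref{lem:alpha:exterioragreement}.

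\emph{First factor.} Let $\gamma\in\extremalarg{s_i}$. First apply $\Phi(s_{i+1})$. I want $\Phi(s_{i+1})(\gamma)$ to be disjoint from and exterior to the relevant multicurves, but this need not be maximal, as the remark before the lemma says. So instead I use $\Phi(s_i)(\gamma)=\gamma$, which holds because $\gamma\in\crs{\Phi(s_i)}^{\outmulti}$ and, being $\Phi$-maximal, is fixed by $\Phi(s_i)$ via the periodic external part. More precisely, $\Phi(s_i)$ permutes $\extremalarg{s_i}$, and that suffices: it shows $\Phi(s_is_{i+1})(\extremalarg{s_i})=\Phi(s_i)\Phi(s_{i+1})\Phi(s_i)(\extremalarg{s_i})$, which by the braid relation equals $\Phi(s_{i+1})\Phi(s_i)\Phi(s_{i+1})(\extremalarg{s_i})$.

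\emph{Second factor.} Alternatively, write $s_is_{i+1}=a_1\cdot(s_{i+2}\cdots s_{i+n-1})^{-1}\cdot$, suitably indexed, using $a_1=s_is_{i+1}\cdots s_{i+n-2}$. Then
\[
\Phi(s_is_{i+1})=\Phi(a_1)\,\Phi(s_{i+2}\cdots s_{i+n-2})^{-1}.
\]
Each $s_k$ with $i+2\leq k\leq i+n-2$ commutes with $s_i$, so by Lemma~\ref{lem:alpha:extremalaction} each $\Phi(s_k)^{\pm1}$ preserves $\extremalarg{s_i}$. Hence $\Phi(s_{i+2}\cdots s_{i+n-2})^{-1}(\extremalarg{s_i})=\extremalarg{s_i}$, and applying Lemma~\ref{lem:alpha:standardrootaction} gives $\Phi(s_is_{i+1})(\extremalarg{s_i})=\Phi(a_1)(\extremalarg{s_i})=\extremalarg{s_{i+1}}$. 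This clean route is the one I would actually use.

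\emph{Main check.} The hardest part is verifying the index bookkeeping. I need $a_1=s_is_{i+1}s_{i+2}\cdots s_{i+n-2}$, which is the alternate expression $a_1=s_{j+1}\cdots s_{j+n-1}$ with $j=i-1$. I also need every $s_k$ in the range $i+2\leq k\leq i+n-2$ to commute with $s_i$ in $\extgen{n}$. This holds because those indices are not $i$ or $i\pm1$ mod $n$: the range avoids $i+n-1\equiv i-1$. Both facts hold for $n\geq5$.
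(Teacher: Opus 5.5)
Your final route (the one you say you would actually use) is correct and is essentially the paper's argument, mirrored. The paper writes $a_1=(s_{i+3}\cdots s_{i+n-1})\,s_is_{i+1}$ and uses that the prefix commutes with $s_{i+1}$, so it preserves $\extremalarg{s_{i+1}}$. You write $a_1=s_is_{i+1}\,(s_{i+2}\cdots s_{i+n-2})$ and use that the suffix commutes with $s_i$, so it preserves $\extremalarg{s_i}$. Both versions then rest on Lemma~\ref{lem:alpha:extremalaction} and Lemma~\ref{lem:alpha:standardrootaction}, and your ordering gives the set equality directly, with no cardinality count needed. You should drop the ``First factor'' detour, in particular the claim $\Phi(s_i)(\gamma)=\gamma$, which does not follow from external periodicity; only the setwise statement that $\Phi(s_i)$ permutes $\extremalarg{s_i}$ does.
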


\begin{proof}
    Let $\gamma\in\extremalarg{s_i}$. Note that
    \[\Phi(a_1)(\gamma)=\Phi(s_{i+3}\cdots s_{i+n-1}s_{i+n}s_{i+n+1})(\gamma)=\Phi(s_{i+3}\cdots s_{i+n-1})(\Phi(s_is_{i+1})(\gamma)).\]
    If $[s_k,s_{i+1}]=1$, then $\Phi(s_k)(\extremalarg{s_{i+1}})=\extremalarg{s_{i+1}}$ by Lemma~\ref{lem:alpha:extremalaction}, hence $\Phi(s_k)^{-1}(\extremalarg{s_{i+1}})=\extremalarg{s_{i+1}}$. Furthermore, $\Phi(a_1)(\gamma)\in \extremalarg{s_{i+1}}$ by Lemma~\ref{lem:alpha:standardrootaction}. By the above equality we have $\Phi(s_is_{i+1})(\gamma)\in \extremalarg{s_{i+1}}$. Therefore $\Phi(s_is_{i+1})(\extremalarg{s_i})\subseteq\extremalarg{s_{i+1}}$. By Lemma~\ref{lem:alpha:standardrootaction}, the finite sets $\extremalarg{s_i}$ and $\extremalarg{s_{i+1}}$ have the same cardinality, so in fact $\Phi(s_is_{i+1})(\extremalarg{s_i}) = \extremalarg{s_{i+1}}$.
\end{proof} 

\p{Remark} Note that the relation $\Phi(s_{i+1}s_i)(\extremalarg{s_{i+1}}) = \extremalarg{s_i}$ does not \emph{a priori} hold for $s_i\in\extgen{n}$.

Lemma~\ref{lem:alpha:extremalbraid} implies the following reducibility criterion.

\begin{lemma}\label{lem:alpha:extremalcollision}
  Let $n\geq5$ and $m\geq3$. Let $\Phi:\braid{n} \rightarrow \braid{m}$ be an externally periodic homomorphism.  Let $s_i,s_j\in \extgen{n}$ with $s_i\neq s_j$, and suppose that
  \[
    \extremalarg{s_i} \cap \extremalarg{s_j} \neq \emptyset.
  \]
  Then $\Phi$ is reducible.  Furthermore, $\extremalarg{s_i} \cap \extremalarg{s_j} \subseteq \extremalarg{s_k}$ for all $s_k \in \extgen{n}$.
\end{lemma}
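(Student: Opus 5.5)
The plan is to prove the second assertion first, namely that any curve $\gamma \in \extremalarg{s_i}\cap\extremalarg{s_j}$ lies in $\extremalarg{s_k}$ for every $s_k\in\extgen{n}$. Reducibility then follows quickly. Set $I \coloneqq \{k : \gamma \in \extremalarg{s_k}\}$, which contains $i$ and $j$. It suffices to show that $I$ is all of $\Z/n$, and we will do this by propagating membership along the commuting graph and along the braid relation.

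\emph{Propagation via commuting elements.} Suppose $k,\ell \in I$ and $s_m$ commutes with both $s_k$ and $s_\ell$. By Lemma~\ref{lem:alpha:exterioragreement}, $\Phi(s_m)(\gamma)=\Phi(s_k)(\gamma)$. Since $\gamma\in\crs{\Phi(s_k)}$ and every curve of a canonical reduction system has finite orbit, and $\Phi(s_k)$ permutes $\crs{\Phi(s_k)}$, we get $\Phi(s_m)(\gamma)\in\extremalarg{s_k}$. This alone is not yet the right statement, so the more useful tool is the braid relation.

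\emph{Propagation via Lemma~\ref{lem:alpha:extremalbraid}.} That lemma gives $\Phi(s_ks_{k+1})(\gamma)\in\extremalarg{s_{k+1}}$. Suppose $k \in I$ and some $s_\ell$ with $\ell \in I$ commutes with $s_{k+1}$. Then $\Phi(s_{k+1})(\gamma) = \Phi(s_\ell)(\gamma)$ by Lemma~\ref{lem:alpha:exterioragreement}. I would arrange that $\Phi(s_\ell)(\gamma)$ is comparable to $\gamma$ or equal to it, and here is the subtle point: I want $\Phi(s_k)$ to fix $\gamma$, not merely preserve $\extremalarg{s_k}$. I would obtain this via Lemma~\ref{lem:alpha:extremalaction}. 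Since $n\geq 5$, pick $s_m$ commuting with both $s_i$ and $s_j$, or failing that use diameter $2$. Then $\Phi(s_m)$ preserves both $\extremalarg{s_i}$ and $\extremalarg{s_j}$, hence preserves their intersection. Combining this with Lemma~\ref{lem:alpha:exterioragreement} shows that $\Phi(s_i)$ and $\Phi(s_j)$ preserve $X \coloneqq \extremalarg{s_i}\cap\extremalarg{s_j}$.

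The cleanest route is to show that $X$ is invariant under $\Phi(s_\ell)$ for all $\ell$ and then compare cardinalities. Lemma~\ref{lem:alpha:standardrootaction} gives $\Phi(a_1)(\extremalarg{s_k})=\extremalarg{s_{k+1}}$, so $|\extremalarg{s_k}|$ is constant in $k$. I would first handle the case where $s_i$ and $s_j$ commute. By large neighborhoods, every $s_\ell$ except at most one commutes with $s_i$ or with $s_j$. Say $s_\ell$ commutes with $s_i$. Then $\Phi(s_\ell)(\extremalarg{s_i})=\extremalarg{s_i}$, and on $X$ we have $\Phi(s_\ell)=\Phi(s_i)$ by Lemma~\ref{lem:alpha:exterioragreement}. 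The curves of $X$ are exterior to $\crs{\Phi(s_i)}^{\outmulti}\cup\crs{\Phi(s_\ell)}^{\outmulti}$ or belong to it, so I need the version of exterior agreement that applies to curves in $M$ itself, which follows from $\Ext_M$ equality acting on punctures of $\disk{M}$. Since $\Phi(s_i)$ preserves $\extremalarg{s_j}$, we get $\Phi(s_\ell)(X)\subseteq X$. Thus $X$ is preserved by every generator but one, and those generators generate $\braid{n}$, so $X$ is a nonempty reducing system for $\Phi$. This gives the first claim. For the second claim, $\Phi(a_1)(X)=X$ while $\Phi(a_1)$ maps $\extremalarg{s_k}$ onto $\extremalarg{s_{k+1}}$, so $X\subseteq\extremalarg{s_{i+t}}$ for all $t$. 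If $s_i$ and $s_j$ do not commute, i.e.\ $j=i\pm1$, I would pick $s_m$ commuting with both and reduce to the commuting case, using Lemma~\ref{lem:alpha:extremalaction} to show $X\subseteq\extremalarg{s_m}$.

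The main obstacle is justifying that $\Phi(s_\ell)$ and $\Phi(s_i)$ agree on curves that lie in $M$ rather than strictly exterior to it, and, in the non-commuting case, showing that $X$ lands inside $\extremalarg{s_m}$ for a common commuting $s_m$. For the latter I would try to use the $\Phi(s_is_{i+1})$ relation of Lemma~\ref{lem:alpha:extremalbraid} together with fixedness of $X$ under $\Phi(s_m)$.
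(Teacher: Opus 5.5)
Your argument is complete only in the case $[s_i,s_j]=1$. There it is essentially the paper's proof, with $s_i$ itself playing the role of the element that commutes with both the generator being tested and $s_j$.

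\textbf{The gap: the case $j=i\pm1$.} You leave this case open, and the reduction you propose for it does not work. Applying Lemma~\ref{lem:alpha:extremalaction} to a common commuting $s_m$ only gives $\Phi(s_m)(X)=X$. It never gives $X\subseteq\crs{\Phi(s_m)}$. Membership of $X$ in $\extremalarg{s_m}$ is essentially the second conclusion of the lemma. Both the paper and your own commuting case obtain it only after reducibility is known, by transporting $X$ with $\Phi(a_1)$ via Lemma~\ref{lem:alpha:standardrootaction}. So the proposed reduction presupposes what it is meant to prove. The case cannot simply be dropped, because later arguments such as Lemma~\ref{lem:curves:outerauto} apply this lemma to $\extremalarg{s_i}\cap\extremalarg{s_{i+1}}$.

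\textbf{The fix: no reduction is needed.} The paper handles both cases at once.
\begin{itemize}
\item[(i)] Conjugate by a power of $a_1$ so that every $s_k\neq s_n$ commutes with $s_i$ or $s_j$. Take such an $s_k$ with, say, $[s_k,s_i]=1$.
\item[(ii)] Use diameter $2$ to choose $s_\ell$ with $[s_\ell,s_k]=[s_\ell,s_j]=1$.
\item[(iii)] Then $\Phi(s_k)(X)\subseteq\extremalarg{s_i}$ by Lemma~\ref{lem:alpha:extremalaction}.
\item[(iv)] Also $\Phi(s_k)(X)=\Phi(s_\ell)(X)\subseteq\extremalarg{s_j}$, by Lemmas~\ref{lem:alpha:exterioragreement} and~\ref{lem:alpha:extremalaction}.
\end{itemize}
The key move is to check membership in $\extremalarg{s_j}$ using a fresh element commuting with both $s_k$ and $s_j$, rather than $s_i$.

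Alternatively, your own exploratory observation already closes the gap. For $j=i+1$ and $n\geq5$ there is $s_m\notin\{s_i,s_{i+1}\}$ commuting with both. Hence $\Phi(s_m)(X)=X$, and therefore $\Phi(s_i)(X)=\Phi(s_m)(X)=X$; likewise $\Phi(s_{i+1})(X)=X$. Every other generator commutes with $s_i$ or with $s_{i+1}$. So each $\Phi(s_\ell)$ agrees on $X$ with $\Phi(s_i)$ or $\Phi(s_{i+1})$, and $X$ is a reducing system.

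Your other stated obstacle, about curves lying in $M$ itself, is not a real problem. Under the paper's conventions a curve is exterior to itself, and curves of $M$ become punctures of $\disk{M}$. That is exactly how Lemma~\ref{lem:alpha:exterioragreement} is used throughout this section, as you suspected.
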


\begin{proof}
  Let $M = \extremalarg{s_i} \cap \extremalarg{s_j}$.  We show that $M$ is a reducing system for $\Phi$.  Since $n \geq 5$, $\Comm_{\braid{n}}(\extgen{n})$ has large neighborhoods. In particular, by conjugating by a power of $a_1$ and applying Lemma~\ref{lem:alpha:standardrootaction}, we may assume without loss of generality that all $s_k \in \extgen{n}\setminus\{s_n\}$ commute with at least one of $s_i$ or $s_j$.  Let $s_k \in \extgen{n} \setminus \{s_n\}$.  Without loss of generality, assume that $[s_k,s_i]=1$. By Lemma~\ref{lem:alpha:extremalaction},
  \[
    \Phi(s_k)(M) \subseteq \extremalarg{s_i}.
  \]
  Since $n\geq5$, the graph $\Comm_{\braid{n}}(\extgen{n})$ has diameter 2. Therefore, there is some $s_{\ell}\in \extgen{n}$, possibly equal to $s_k$ or $s_j$, such that $[s_k,s_{\ell}]=[s_j,s_{\ell}]=1$. By Lemma~\ref{lem:alpha:exterioragreement} we have
  \[\Phi(s_k)(M) = \Phi(s_{\ell})(M).\]
  By Lemma~\ref{lem:alpha:extremalaction}, $\Phi(s_{\ell})(M) \subseteq \extremalarg{s_j}$.  Therefore $\Phi(s_k)(M) = \Phi(s_{\ell})(M) \subseteq M$. Since $\extgen{n}\setminus\{s_n\}$ generates $\braid{n}$, this implies that $M$ is a reducing system for $\Phi$.  
    
    The second part of the lemma follows by noting that $\Phi(a_1^k)(M) = M$ for all $k \in \ZZ$ since $M$ is a reducing system, but also that $\Phi(a_1^k)(M) \subseteq \extremalarg{s_{i+k}}$ by Lemma~\ref{lem:alpha:standardrootaction}.
\end{proof}  

\section{Types of punctures}\label{section:punctures}

We now study the action of images of externally periodic homomorphisms on the punctures in $\disk{m}$.  Let $\Phi:\braid{n} \rightarrow \braid{m}$ be a homomorphism and let $p \in \disk{m}$ be a puncture.  For each $s_i \in \extgen{n}$, let 
\[
  \extremalarg{s_i}(p) \coloneqq \{\gamma \in \extremalarg{s_i}: p \in \Int(\gamma)\}.
\]
Note that by the definition of $\extremal$ the set $\extremalarg{s_i}(p)$ is always either empty or a singleton.  We also let $\extremal(p) \coloneqq \bigcup_{s_i \in \extgen{n}} \extremalarg{s_i}(p)$.  We say that the \defn{type} of a puncture $p \in \disk{m}$ is the set
\[
  \calT_{\Phi}(p) \coloneqq \{i: \extremalarg{s_i}(p)\neq\emptyset\}.
\]
As with the indexing convention for $\extgen{n}$, if $i \in \calT_{\Phi}(p)$ then $i + kn \in \calT_{\Phi}(p)$ for all $k \in \ZZ$. For the sake of notational convenience, we consider $\calT_{\Phi}(p)$ to be a set of residue classes modulo $n$, but we still write its elements as integers.  See Figure~\ref{fig:typepuncex} for an example of the type of a puncture.

\begin{figure}[ht]
    \centering
    \begin{tikzpicture}
        \node at (0,0)[anchor = south west]{\includegraphics[scale=0.6]{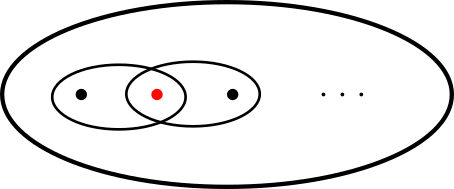}};
        \node at (1.7,2.3){$\gamma_1$};
        \node at (3.5,2.3){$\gamma_2$};
        \node at (2.35,1.7){$p$};
    \end{tikzpicture}
    \caption{If $\gamma_1 \in \extremalarg{s_1}$ and $\gamma_2 \in \extremalarg{s_2}$, then $\calT_{\Phi}(p)= \{1,2\}$}\label{fig:typepuncex}
\end{figure}

Our goal in Section~\ref{section:punctures} is to prove Theorem~\ref{thm:alpha:welltypedpuncture}, which classifies the possible types of punctures and describes how $\Phi(s_i)$ acts on punctures of different types.  We also prove Theorem~\ref{thm:case:m:leq:n}, which resolves Theorem~\ref{mainthm:natleast5} in the case that $n \geq 5$ and $3\leq m\leq n$.

A homomorphism $\Phi:\braid{n} \rightarrow \braid{m}$ is \defn{minimally typed} if for every puncture $p \in \disk{m}$, there is some $s_i \in \extgen{n}$ such that $\calT_{\Phi}(p) \subseteq \{i,i+1\}$. Note that possibly $\calT_{\Phi}(p) = \emptyset$.  

\begin{lemma}\label{lem:alpha:nonwelltypedirred}
    Let $n \geq 5$ and $m \geq 3$.  Let $\Phi:\braid{n} \rightarrow \braid{m}$ be a homomorphism.  Suppose that $\Phi$ is not minimally typed.  Then $\Phi$ is reducible.
\end{lemma}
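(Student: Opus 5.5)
The plan is to argue by contradiction: assume $\Phi$ is irreducible, use a puncture $p$ witnessing the failure of minimal typing to produce a single curve lying in $\extremalarg{s_i}\cap\extremalarg{s_j}$ for two distinct $s_i,s_j$, and then invoke Lemma~\ref{lem:alpha:extremalcollision}.

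\emph{Reduction to the externally periodic case.} First I dispose of the cyclic case. If $\Phi$ is cyclic, Lemma~\ref{lem:crs:collisioncollapse} is not needed: all $s_i$ are conjugate and the image is abelian, so all $\Phi(s_i)$ are equal. Then all the sets $\extremalarg{s_i}$ coincide with $\extremal$. Since $\Phi$ is not minimally typed, some puncture has nonempty type, so $\extremal\neq\emptyset$. This set equals $\crs{\Phi(s_1)}^{\outmulti}$ and is preserved by $\Phi(s_1)$, which generates the image. Hence it is a reducing system for $\Phi$.

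So assume $\Phi$ is non-cyclic, and suppose for contradiction that $\Phi$ is irreducible. By Lemma~\ref{lem:external:anosov}, $\Phi$ is externally periodic, so Lemmas~\ref{lem:alpha:exterioragreement} and~\ref{lem:alpha:extremalcollision} are available.

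\emph{Finding a commuting pair in a type.} Let $p$ be a puncture with $\calT_\Phi(p)\not\subseteq\{i,i+1\}$ for every $i$. I claim $\calT_\Phi(p)$ contains two residues $i,j$ with $j\not\equiv i,i\pm1 \pmod n$, i.e.\ with $s_i\neq s_j$ and $[s_i,s_j]=1$.
\begin{itemize}
\item If $\calT_\Phi(p)$ has two elements, they are non-consecutive, and the claim holds.
\item If it has at least three elements, they cannot all be pairwise consecutive mod $n$. The only way three residues are pairwise consecutive is $n=3$, and here $n\geq5$. So again some pair is non-consecutive.
\end{itemize}
The absence of an edge in the $n$-cycle is exactly commutation in $\Comm_{\braid{n}}(\extgen{n})$.

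\emph{Forcing equality of the curves.} Take $\gamma\in\extremalarg{s_i}(p)$ and $\gamma'\in\extremalarg{s_j}(p)$. Since $\Phi(s_i)$ and $\Phi(s_j)$ commute, Lemma~\ref{lem:crs:commdisj} shows $\gamma$ and $\gamma'$ are disjoint. Two disjoint curves in $\disk{m}$ that both contain $p$ in their interiors must be nested: either $\gamma\leq\gamma'$ or $\gamma'\leq\gamma$. Both curves are $\Phi$-maximal, so a strict inequality is impossible, giving $\gamma=\gamma'$. Therefore $\extremalarg{s_i}\cap\extremalarg{s_j}\neq\emptyset$ with $s_i\neq s_j$, and Lemma~\ref{lem:alpha:extremalcollision} shows $\Phi$ is reducible. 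This contradicts the assumption.

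\emph{Main obstacle.} The only delicate point is the nesting claim: disjoint curves whose interiors share a puncture are comparable. This follows because each curve bounds a disk in $\disk{m}$, and two disjoint such disks with a common point must be nested. Everything else is direct citation of earlier lemmas.
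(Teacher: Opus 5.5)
Your proof is correct and follows the paper's argument: you extract a commuting pair $s_i,s_j$ from the type of $p$, observe that the corresponding $\Phi$-maximal curves are disjoint and both contain $p$, so they coincide, and then conclude via Lemma~\ref{lem:alpha:extremalcollision}. Your preliminary step is a genuine tightening rather than a detour: Lemma~\ref{lem:alpha:extremalcollision} assumes $\Phi$ is externally periodic, and the paper invokes it for an arbitrary $\Phi$ without checking this, whereas you handle the cyclic case directly and otherwise get external periodicity from Lemma~\ref{lem:external:anosov}.
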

\begin{proof}
    Let $p \in \disk{m}$ be a puncture such that $\calT_{\Phi}(p) \not \subseteq  \{i,i+1\}$ for all $s_i \in \extgen{n}$.  Then there are $i,j \in \calT_{\Phi}(p)$ such that $s_i\neq s_j$ and $[s_i,s_j]=1$.  Let $\gamma_i \in \extremalarg{s_i}(p)$ and $\gamma_j \in \extremalarg{s_j}(p)$.  Since $[s_i,s_j]=1$, the curves $\gamma_i$ and $\gamma_j$ are disjoint by Lemma~\ref{lem:crs:commdisj}.  Since $\gamma_i,\gamma_j\in\extremal$ and $p \in \Int(\gamma_i)\cap\Int(\gamma_j)$, we obtain $\gamma_i = \gamma_j$.  Hence $\extremalarg{s_i} \cap \extremalarg{s_j} \neq \emptyset$, so $\Phi$ is reducible by Lemma~\ref{lem:alpha:extremalcollision}.
\end{proof} In particular, if $\Phi$ is irreducible, we see that any $p \in \disk{m}$ has $\calT_{\Phi}(p)$ equal to one of $\emptyset$, $\{i\}$, or $\{i,i+1\}$, for some $s_i \in \extgen{n}$.  We show in Lemma~\ref{lem:alpha:welltypednouni} that we cannot have $\calT_{\Phi}(p) = \{i\}$.  We begin with the following.  
\begin{lemma}\label{lem:alpha:periodictypepreserve}
  Let $n \geq 5$ and $m \geq 3$.  Let $\Phi:\braid{n} \rightarrow \braid{m}$ be an externally periodic homomorphism. For any puncture $p \in \disk{m}$ we have
  \[
    \calT_{\Phi}(\Phi(a_1)(p)) = \{i + 1: i \in \calT_{\Phi}(p)\}.
  \]
\end{lemma}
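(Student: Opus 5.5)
The plan is to prove the inclusion $\calT_{\Phi}(\Phi(a_1)(p)) \supseteq \{i+1 : i \in \calT_{\Phi}(p)\}$ for every puncture $p$. The reverse inclusion then follows by applying the same argument to $\Phi(a_1)^{-1}$. Alternatively, note that the statement, applied to every puncture, gives inclusions which, iterated $n$ times, return to the original type. Concretely, $\Phi(a_1)$ permutes punctures and induces an injection of types shifted by $1$; applying it $n$ times sends $\calT_{\Phi}(p)$ into $\calT_{\Phi}(\Phi(a_1)^n(p))$ with shift $n\equiv 0$. Since $\Phi(a_1)$ has finite order on the finite set of punctures, some power $\Phi(a_1)^{nN}$ fixes $p$. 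The resulting chain of inclusions of finite sets is then a cycle of inclusions, so all of them are equalities.

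The key step is the inclusion itself. Let $i\in\calT_{\Phi}(p)$ and let $\gamma\in\extremalarg{s_i}(p)$, so $\gamma\in\extremalarg{s_i}$ and $p\in\Int(\gamma)$. By Lemma~\ref{lem:alpha:standardrootaction}, $\Phi(a_1)(\gamma)\in\extremalarg{s_{i+1}}$.

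Since $\Phi(a_1)$ is a mapping class, it carries the component of $\disk{m}\cut\gamma$ not containing $\partial\disk{m}$ to the corresponding component for $\Phi(a_1)(\gamma)$. This is because homeomorphisms of the disk fix the boundary, so they preserve which side contains $\partial\disk{m}$, exactly as with the partial order. Hence $\Phi(a_1)(p)\in\Int(\Phi(a_1)(\gamma))$, so $\extremalarg{s_{i+1}}(\Phi(a_1)(p))\neq\emptyset$ and $i+1\in\calT_{\Phi}(\Phi(a_1)(p))$.

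For the reverse inclusion, the cleanest route is the symmetric one. Lemma~\ref{lem:alpha:standardrootaction} applied at index $i$ gives $\Phi(a_1)(\extremalarg{s_i})=\extremalarg{s_{i+1}}$, which is an equality of sets, so $\Phi(a_1)^{-1}(\extremalarg{s_{i+1}})=\extremalarg{s_i}$. Now suppose $i+1\in\calT_{\Phi}(\Phi(a_1)(p))$, witnessed by $\gamma'$. Then $\Phi(a_1)^{-1}(\gamma')\in\extremalarg{s_i}$, and it contains $p$ in its interior by the same argument, so $i\in\calT_{\Phi}(p)$.

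No step is a serious obstacle. The only point needing care is the claim that mapping classes preserve interiors of curves. This holds because $\partial\disk{m}$ is fixed, so the non-boundary component of $\disk{m}\cut\gamma$ maps to the non-boundary component of $\disk{m}\cut f(\gamma)$. The externally periodic hypothesis is not actually needed beyond what Lemma~\ref{lem:alpha:standardrootaction} already provides.
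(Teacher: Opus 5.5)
Your proposal is correct and follows essentially the same route as the paper: both combine the set equality $\Phi(a_1)(\extremalarg{s_i})=\extremalarg{s_{i+1}}$ from Lemma~\ref{lem:alpha:standardrootaction} with the observation that $p\in\Int(\gamma)$ if and only if $\Phi(a_1)(p)\in\Int(\Phi(a_1)(\gamma))$. Your side remarks are also correct: the orbit-counting alternative for the reverse inclusion works, and the externally periodic hypothesis is indeed not needed.
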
 
\begin{proof}
It suffices to show that $\Phi(a_1)(\extremalarg{s_i}(p))=\extremalarg{s_{i+1}}(\Phi(a_1)(p))$ for all $s_i\in\extgen{n}$ and all punctures $p\in\disk{m}$. Clearly, $p$ is interior to a curve $\gamma$ if and only if $\Phi(a_1)(p)$ is interior to $\Phi(a_1)(\gamma)$. Therefore, it suffices to show that $\Phi(a_1)(\extremalarg{s_i})=\extremalarg{s_{i+1}}$, which is precisely the content of Lemma~\ref{lem:alpha:standardrootaction}.
\end{proof} We now study the action of $\Phi(s_j)$ on punctures $p$ with $i \in \calT_{\Phi}(p)$ for $[s_i,s_j] = 1$.

\begin{lemma}\label{lem:alpha:externaltypepreservecomm}
Let $n \geq 5$ and $m \geq 3$.  Let $\Phi:\braid{n} \rightarrow \braid{m}$ be an externally periodic homomorphism.  Let $p \in \disk{m}$ be a puncture.  Let $s_i \in \extgen{n}$ such that $i \in \calT_{\Phi}(p)$ and let $s_j \in \extgen{n}$ such that $[s_j, s_i] = 1$.  Then $i \in \calT_{\Phi}\left(\Phi(s_j)(p)\right)$ and $i \in \calT_{\Phi} \big(\Phi(s_j^{-1})(p)\big)$.
\end{lemma}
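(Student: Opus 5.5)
Since $i \in \calT_{\Phi}(p)$, there is a curve $\gamma \in \extremalarg{s_i}$ with $p \in \Int(\gamma)$. By Lemma~\ref{lem:alpha:extremalaction}, since $[s_j,s_i]=1$, the braid $\Phi(s_j)$ preserves the finite set $\extremalarg{s_i}$. Hence $\Phi(s_j)^{-1}$ also preserves it. So both $\Phi(s_j)(\gamma)$ and $\Phi(s_j^{-1})(\gamma)$ lie in $\extremalarg{s_i}$.

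Next I use that interiority of punctures is natural under the mapping class group action. For any braid $f$ and any curve $\gamma$, we have $p \in \Int(\gamma)$ if and only if $f(p) \in \Int(f(\gamma))$. This holds because $f$ is a homeomorphism fixing $\partial\disk{m}$, so it carries the component of $\disk{m}\cut\gamma$ not containing $\partial\disk{m}$ onto the corresponding component of $\disk{m}\cut f(\gamma)$. This is the same observation used in the proof of Lemma~\ref{lem:alpha:periodictypepreserve}.

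Applying this with $f=\Phi(s_j)^{\pm1}$ gives $\Phi(s_j^{\pm1})(p) \in \Int(\Phi(s_j^{\pm1})(\gamma))$, where $\Phi(s_j^{\pm1})(\gamma)\in\extremalarg{s_i}$. Therefore $\extremalarg{s_i}(\Phi(s_j^{\pm1})(p))\neq\emptyset$, so $i\in\calT_\Phi(\Phi(s_j^{\pm1})(p))$.

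The only substantive input is Lemma~\ref{lem:alpha:extremalaction}, which uses $n\geq5$ and external periodicity; once that is available, no step presents a real obstacle. The one point to check is that Lemma~\ref{lem:alpha:extremalaction} gives equality of sets and not merely containment, so that the inverse $\Phi(s_j)^{-1}$ also preserves $\extremalarg{s_i}$. The lemma states equality, and even containment would suffice here because $\extremalarg{s_i}$ is finite.
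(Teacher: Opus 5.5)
Your proposal is correct and follows essentially the same argument as the paper. Both take $\gamma \in \extremalarg{s_i}(p)$, use Lemma~\ref{lem:alpha:extremalaction} to get $\Phi(s_j^{\pm1})(\gamma)\in\extremalarg{s_i}$, and conclude by naturality of interiority under the action of $\Phi(s_j^{\pm1})$.
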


\begin{proof}
Let $\gamma \in \extremalarg{s_i}(p)$.  Then $\Phi(s_j)(\gamma) \in \extremalarg{s_i}$ by Lemma~\ref{lem:alpha:extremalaction}.  Furthermore we have $\Phi(s_j)(p) \in \Int(\Phi(s_j)(\gamma))$ by definition.  We conclude that $i \in \calT_{\Phi}(\Phi(s_j)(p))$ as desired.  Similarly, $\Phi(s_j^{-1})(\gamma) \in \extremalarg{s_i}$ by Lemma~\ref{lem:alpha:extremalaction}, so $i\in\calT_{\Phi}\big(\Phi(s_j^{-1})(p)\big)$.
\end{proof}

Let $n \geq 5$ and $m \geq 3$ and $\Phi:\braid{n} \rightarrow \braid{m}$ is an externally periodic and minimally typed homomorphism. Our goal now is to prove Lemma~\ref{lem:alpha:welltypednouni}, which says that $\calT_{\Phi}(p)$ is never a singleton. See Figure~\ref{fig:unitypepunc} for an example of what we seek to rule out.
\begin{figure}[ht]
    \centering
    \begin{tikzpicture}
        \node at (0,0)[anchor = south west]{\includegraphics[scale=0.6]{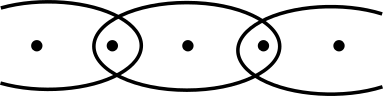}};
        \node at (-0.5, 0.9){\large $\cdots$};
        \node at (6.7, 0.9){\large$\cdots$};
        \node at (0.7, 1.8){$\gamma_1$};
        \node at (5.2,1.8){$\gamma_3$};
        \node at (3,1.8){$\gamma_2$};
        \node at (2.8,0.9){$p$};
    \end{tikzpicture}
    \caption{$\gamma_i \in \extremalarg{s_i}$ and $\calT_{\Phi}(p) = \{2\}$}\label{fig:unitypepunc}
\end{figure}

We now introduce some notation.  Let
\[
  \calN_i(p) = \{\delta \in \crs{\Phi(s_i)}: p \in \Int(\delta)\}.
\]
Let $\calN_i(p)^{\outmulti}$ denote the set of maximal curves in $\calN_i(p)$, where we think of $\calN_i(p)$ as a partially ordered set with the ordering $\leq$ from Section~\ref{section:external}.  Note that if $\calN_i(p)$ is nonempty then $\calN_i(p)$ has a unique maximal element.  Let $\calN(p) \coloneqq \bigcup_{i=1}^n \calN_i(p)$. 

\begin{lemma}\label{lem:alpha:welltypednouniaux}
Let $n \geq 5$ and $m \geq 3$.  Let $\Phi:\braid{n} \rightarrow \braid{m}$ be an externally periodic and minimally typed homomorphism. Let $p \in \disk{m}$ be a puncture and suppose that $\{i\} =\calT_{\Phi}(p)$ for some $s_i \in \extgen{n}$. Let $\gamma \in \extremalarg{s_i}(p)$.  Then $\delta\leq\gamma$ for all $\delta \in \calN(p)$.
\end{lemma}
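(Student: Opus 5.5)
The plan is a direct argument from finiteness and the definition of $\Phi$-maximality. We do not expect to need any dynamics of $\Phi(s_j)$, and minimal typedness will not actually be used. Fix $\delta \in \calN(p)$, so $\delta \in \crs{\Phi(s_j)}$ for some $s_j \in \extgen{n}$ and $p \in \Int(\delta)$. We will find a $\Phi$-maximal curve lying above $\delta$ and show that it must be $\gamma$.

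\emph{Step 1: there is a maximal curve above $\delta$.} Consider the set
\[
  \calC \coloneqq \bigcup_{s_k\in\extgen{n}}\crs{\Phi(s_k)}.
\]
It is finite, since $\extgen{n}$ is finite and each canonical reduction system is a multicurve. It is partially ordered by $\leq$. Hence there is an element $\gamma' \in \calC$ with $\delta \leq \gamma'$ that is maximal in $\calC$ with respect to $\leq$. Explicitly, start at $\delta$ and repeatedly pass to a strictly larger element of $\calC$; a strictly increasing chain in a finite set terminates. By the displayed description $\extremal = \calC^{\outmulti}$, the curve $\gamma'$ is $\Phi$-maximal. Choose $s_k$ with $\gamma' \in \crs{\Phi(s_k)}$; then $\gamma' \in \extremal \cap \crs{\Phi(s_k)} = \extremalarg{s_k}$.

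\emph{Step 2: identify $\gamma'$ with $\gamma$.} Since $\delta \leq \gamma'$ and $p \in \Int(\delta)$, we get $p \in \Int(\gamma')$. Indeed, if $\delta<\gamma'$, then $\delta$ lies in the component of $\disk{m}\cut\gamma'$ not containing $\partial\disk{m}$, and so does everything interior to $\delta$. Thus $\gamma' \in \extremalarg{s_k}(p)$, so $k \in \calT_{\Phi}(p) = \{i\}$, that is, $s_k = s_i$. Now $\extremalarg{s_i}(p)$ is either empty or a singleton, as noted right after its definition. It contains both $\gamma$ and $\gamma'$, so $\gamma' = \gamma$. Therefore $\delta \leq \gamma$.

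The only point needing care is Step 1: the existence of a $\Phi$-maximal curve dominating $\delta$, together with the transitivity of "interior to" used in Step 2. Both follow from finiteness of $\calC$ and the definition of the partial order $\leq$, so we do not expect a genuine obstacle.
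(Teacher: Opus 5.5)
Your proof is correct, and it takes a more direct route than the paper's.

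\textbf{The paper's route.} The paper argues by contradiction. It chooses a counterexample $\delta\in\calN_j(p)$ that intersects $\gamma$ and is maximal among such counterexamples. It notes that $\delta$ cannot be $\Phi$-maximal, since $j\notin\calT_{\Phi}(p)$, and picks $\gamma_k\in\extremalarg{s_k}$ with $\gamma_k>\delta$. Maximality of the counterexample makes $\gamma_k$ disjoint from $\gamma$, so $\gamma_k=\gamma$. It then concludes via Lemma~\ref{lem:alpha:extremalcollision} and minimal typedness.

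\textbf{Your route.} You take the $\leq$-maximal element of $\bigcup_k\crs{\Phi(s_k)}$ lying above $\delta$. Because it contains $p$, its index lies in $\calT_{\Phi}(p)=\{i\}$. It is therefore the unique element of $\extremalarg{s_i}(p)$, namely $\gamma$.

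\textbf{What this buys.} First, you skip the maximal-counterexample bookkeeping and the implicit case split on whether $\delta$ meets $\gamma$. Second, you make clear that the hypotheses \emph{externally periodic}, \emph{minimally typed}, and $n\geq5$ are not needed. The lemma holds for any homomorphism once $\calT_{\Phi}(p)$ is a singleton.

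Your observation also simplifies the end of the paper's proof. There, $p\in\Int(\gamma_k)$ already forces $s_k=s_i$. So $\gamma=\gamma_k>\delta$ contradicts the assumption that $\delta$ intersects $\gamma$ immediately, with no appeal to the collision lemma. That lemma requires two distinct generators, so it does not literally apply in that situation anyway.
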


\begin{proof}
    Suppose by way of contradiction we have an $s_j \in \extgen{n}$ and $\delta \in \calN_j(p)$ such that $\delta$ intersects $\gamma$.  Since $\crs{\Phi(s_i)}$ is a multicurve, we must have $s_j \neq s_i$.  Assume that $\delta$ is maximal among all such counterexamples.  By assumption $\calT_{\Phi}(p) = \{i\}$, so $\delta$ cannot be $\Phi$-maximal.  Therefore there is some $s_k \in \extgen{n}$  and $\gamma_k \in \extremalarg{s_k}$ such that $\gamma_k > \delta$. Since $\delta$ is a maximal counterexample to the lemma, $\gamma_k$ is disjoint from $\gamma$.  Furthermore, $p \in \Int(\delta) \subseteq \Int(\gamma_k)$ so $\gamma_k \geq \gamma$.  Since $\gamma\in\extremal$ we therefore have $\gamma_k = \gamma$. By Lemma~\ref{lem:alpha:extremalcollision} we have $\gamma \in \extremalarg{s_\ell}$ for all $s_\ell \in \extgen{n}$.  In particular, this implies that $\Phi$ is not minimally typed, a contradiction.
\end{proof}

We now study the type of $\Phi(f)(p)$ for $f\in\braid{n}$ when $\calT_{\Phi}(p)$ is a singleton.

\begin{lemma}\label{lem:alpha:nontypepreserve}
Let $n \geq 5$ and $m \geq 3$.  Let $\Phi:\braid{n} \rightarrow \braid{m}$ be an externally periodic homomorphism.  Let $p \in \disk{m}$ be a puncture such that $\calT_{\Phi}(p) = \{i\}$. Then $i\in\calT_{\Phi}(\Phi(f)(p))$ for all $f\in\braid{n}$.
\end{lemma}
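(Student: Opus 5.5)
Since $\extgen{n}\setminus\{s_k\}$ generates $\braid{n}$ for any $s_k$, we may write $f$ as a word in the letters $s_j^{\pm1}$ with $s_j\neq s_{i+1}$, or alternatively with $s_j\neq s_{i-1}$. The aim is to show that the set
\[
P_i \coloneqq \{q \in \disk{m} : \calT_{\Phi}(q)=\{i\}\}
\]
is preserved by the images of enough generators. Then an induction on word length will give $\Phi(f)(p)\in P_i$, and in particular $i\in\calT_\Phi(\Phi(f)(p))$.

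\textbf{Step 1.} For $s_j$ commuting with $s_i$, Lemma~\ref{lem:alpha:externaltypepreservecomm} already gives $i\in\calT_\Phi(\Phi(s_j^{\pm1})(q))$. We need more: the type stays exactly $\{i\}$. Suppose $\ell\in\calT_\Phi(\Phi(s_j)(q))$ with $\ell\neq i$. If $[s_\ell,s_j]=1$, then Lemma~\ref{lem:alpha:externaltypepreservecomm} applied with $s_j^{-1}$ gives $\ell\in\calT_\Phi(q)$, a contradiction. Otherwise $s_\ell=s_{j\pm1}$. In that case I would use that $\Phi(s_j)(q)$ lies interior to both $\Phi(s_j)(\gamma)\in\extremalarg{s_i}$ and a curve of $\extremalarg{s_\ell}$. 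By maximality, these two curves either coincide or intersect. Coincidence contradicts minimal typing via Lemma~\ref{lem:alpha:extremalcollision}, which we may assume since otherwise $\Phi$ is reducible. If $[s_\ell,s_i]=1$, the curves are disjoint by Lemma~\ref{lem:crs:commdisj}, so they coincide, giving a contradiction. In the remaining case $s_\ell=s_{i\pm1}$, and I would lean on Lemma~\ref{lem:alpha:welltypednouniaux}, which says every curve of $\calN(q)$ lies inside $\gamma$. Under this the problematic cases look restricted.

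\textbf{Step 2.} Handle the non-commuting neighbours using the braid relation and $a_1$. By Lemma~\ref{lem:alpha:periodictypepreserve}, $\Phi(a_1)$ sends $P_i$ to $P_{i+1}$. Write $s_{i+1}$ as a product of $a_1$ and generators commuting with $s_i$, or more simply use $a_1=s_{i+2}\cdots s_{i+n}s_{i+n+1}$ together with Lemma~\ref{lem:alpha:extremalbraid}, which gives $\Phi(s_is_{i+1})(\extremalarg{s_i})=\extremalarg{s_{i+1}}$.

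A cleaner route is to avoid $s_{i\pm1}$ entirely. Because the commuting graph has diameter $2$, the subgroup generated by $\{s_j : [s_j,s_i]=1\}$ together with $s_i$ is not all of $\braid{n}$. Instead, I would show that $s_{i+1}$ moves $q\in P_i$ to a puncture whose type still contains $i$. The key identity is
\[
\Phi(s_{i+1})(q)=\Phi(s_i^{-1})\,\Phi(s_is_{i+1})(q),
\]
and $\Phi(s_is_{i+1})(q)$ is interior to $\Phi(s_is_{i+1})(\gamma)\in\extremalarg{s_{i+1}}$. Then $\Phi(s_i^{-1})$, which fixes $\crs{\Phi(s_i)}$, acts on the result. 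Here we must argue that the resulting puncture lies inside some curve of $\extremalarg{s_i}$.

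\textbf{Main obstacle.} I expect the hardest step to be this neighbour case, namely controlling $\Phi(s_{i\pm1})(q)$. My first attempt would be to express $s_{i+1}$ as $a_1 s_i a_1^{-1}$ and reduce to tracking how $\Phi(a_1)$ followed by commuting generators returns $P_{i+1}$ to a puncture with $i$ in its type. This uses $a_1=s_{i+1}\cdots s_{i+n-1}$ and Step 1 repeatedly, and relies on the singleton hypothesis through Lemma~\ref{lem:alpha:welltypednouniaux} to rule out escape from $\gamma$.
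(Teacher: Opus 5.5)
Your proposal has a genuine gap. The one case with real content, controlling $\Phi(s_{i\pm1})(q)$, is never actually argued.

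Step~1 leaves the subcase $s_\ell\in\{s_{i-1},s_{i+1}\}$ open. Examples are $s_j=s_i$, or $s_j=s_{i+2}$ with $s_\ell=s_{i+1}$. For this subcase you offer only the remark that it ``looks restricted''.

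None of the routes in Step~2 closes either:
\begin{itemize}
\item The identity $\Phi(s_{i+1})(q)=\Phi(s_i^{-1})\Phi(s_is_{i+1})(q)$ is a tautology. $\Phi(s_i^{-1})$ preserves $\extremalarg{s_i}$ but not $\extremalarg{s_{i+1}}$, and $\Phi(s_i^{-1})\Phi(s_is_{i+1})(\gamma)=\Phi(s_{i+1})(\gamma)$. This curve is in general not $\Phi$-maximal, as remarked after Lemma~\ref{lem:alpha:extremalaction}. So you learn nothing about which curve of $\extremalarg{s_i}$, if any, contains the image.
\item Writing $s_{i+1}=a_1s_ia_1^{-1}$ is circular. $\Phi(a_1^{-1})$ moves $q$ to a puncture of type $\{i-1\}$, on which you must control $\Phi(s_i)$. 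That is the same neighbour problem shifted by one. Expanding $a_1$ into generators reintroduces $s_{i\pm1}$.
\item Step~2 only aims at $i\in\calT_\Phi(\Phi(s_{i+1})(q))$. That is too weak for your word-length induction, which needs $P_i$ to be invariant.
\end{itemize}

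The missing idea is to replace $\Phi(s_j)$, near $p$, by the image of a generator that does preserve $\extremalarg{s_i}$. Diameter~$2$ of $\Comm_{\braid{n}}(\extgen{n})$ is the right tool, but you should use it to produce $s_k$ with $[s_k,s_i]=[s_k,s_j]=1$, not to argue about subgroups.
\begin{itemize}
\item Let $\delta$ be the outermost curve of the multicurve $\crs{\Phi(s_j)}\cup\crs{\Phi(s_k)}$ containing $p$, or $p$ itself if there is none. Let $\gamma\in\extremalarg{s_i}(p)$.
\item The singleton hypothesis enters exactly once, through Lemma~\ref{lem:alpha:welltypednouniaux}, to give $\delta\le\gamma$.
\item Since $\delta$ is exterior to both canonical reduction systems, Lemma~\ref{lem:alpha:exterioragreement} gives $\Phi(s_j)(\delta)=\Phi(s_k)(\delta)\le\Phi(s_k)(\gamma)$.
\item $\Phi(s_k)(\gamma)\in\extremalarg{s_i}$ by Lemma~\ref{lem:alpha:extremalaction}, so $i\in\calT_\Phi(\Phi(s_j)(p))$.
\end{itemize}
This is the paper's argument for a single generator.

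You are right that passing from generators to arbitrary $f$ requires the type to stay exactly $\{i\}$. The paper's one-line reduction to generators does not address this. The fix uses minimal typing, which is not in the statement but which you assume, and which the paper also uses through Lemma~\ref{lem:alpha:welltypednouniaux}. Under that assumption, Lemma~\ref{lem:alpha:welltypednouni}, whose proof does not use the present lemma, shows that no puncture has singleton type. So the statement then holds vacuously.
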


\begin{proof}
    It suffices to prove this when $f\in\{s_j,s_j^{-1}\}$ for some $s_j\in\extgen{n}$.
    If $[s_i,s_j] = 1$ then this follows from Lemma~\ref{lem:alpha:externaltypepreservecomm}, so assume that $[s_i,s_j] \neq 1$. By applying the following argument with $s_j$ replaced by $s_{j}^{-1}$, it suffices to prove that $i \in \calT_{\Phi}(\Phi(s_j)(p))$.  Since $\Comm_{\braid{n}}(\extgen{n})$ has diameter 2, there is an $s_k \in \extgen{n}$ such that $[s_k,s_i] = [s_k,s_j] = 1$.  Let
    \[
      \delta \in \left(\calN_j(p) \cup \calN_k(p)\right)^{\outmulti}.
    \]
    Such a $\delta$ is unique if it exists since $\calN_j(p) \cup \calN_k(p)$ is a multicurve by Lemma~\ref{lem:crs:commdisj}.  If no such $\delta$ exists, i.e., if $\calN_j(p) \cup \calN_k(p)=\emptyset$, we let $\delta$ denote the inessential curve bounding the puncture $p$.  Let $\gamma \in \extremalarg{s_i}(p)$.  Since $\calT_{\Phi}(p)$ is a singleton, Lemma~\ref{lem:alpha:welltypednouniaux} implies that $\delta \leq \gamma$.
    
    The curve $\delta$ is exterior to $\crs{\Phi(s_j)}$ and $\crs{\Phi(s_k)}$ by construction, so $\Phi(s_j)(\delta) = \Phi(s_k)(\delta)$ by Lemma~\ref{lem:alpha:exterioragreement}.  In particular, we have
    \[
      \Phi(s_j)(p) \in \Int(\Phi(s_k)(\delta)).
    \]
    On the other hand, since $[s_k,s_i] = 1$ we have $\Phi(s_k)(\gamma) \in \extremalarg{s_i}$ by Lemma~\ref{lem:alpha:extremalaction}.  Therefore
    \[
      \Phi(s_j)(\delta) = \Phi(s_k)(\delta) \leq \Phi(s_k)(\gamma) \in \extremalarg{s_i}.
    \]
    We conclude that $i \in \calT_{\Phi}(\Phi(s_j)(p))$.

    The argument for $\Phi(s_j^{-1})$ follows by the same line of reasoning.
\end{proof}

We now show that $\calT_{\Phi}(p)$ can never be a singleton.

\begin{lemma}\label{lem:alpha:welltypednouni}
    Let $n \geq 5$ and $m \geq 3$.  Let $\Phi:\braid{n} \rightarrow \braid{m}$ be an externally periodic and minimally typed homomorphism. Let $p \in \disk{m}$ be a puncture and suppose that $i \in \calT_{\Phi}(p)$ for some $s_i \in \extgen{n}$.  Then $\calT_{\Phi}(p)=\{i-1,i\}$ or $\calT_{\Phi}(p)=\{i,i+1\}$.
\end{lemma}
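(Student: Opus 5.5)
Since $\Phi$ is minimally typed and $i\in\calT_{\Phi}(p)$, the type $\calT_{\Phi}(p)$ is contained in some pair $\{j,j+1\}$ containing $i$. So it is one of $\{i\}$, $\{i-1,i\}$ or $\{i,i+1\}$, and it suffices to rule out $\calT_{\Phi}(p)=\{i\}$. Suppose, for contradiction, that $\calT_{\Phi}(p)=\{i\}$. By Lemma~\ref{lem:alpha:nontypepreserve}, $i\in\calT_{\Phi}(\Phi(f)(p))$ for every $f\in\braid{n}$. The plan is to choose $f$ so that this membership forces a second index into the type of some puncture in the orbit. That would produce a puncture whose type is not contained in any $\{j,j+1\}$, or a collision of $\Phi$-maximal curves, contradicting minimal typedness via Lemma~\ref{lem:alpha:extremalcollision} or Lemma~\ref{lem:alpha:nonwelltypedirred}.

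The natural choice is $f=a_1^k$. By Lemma~\ref{lem:alpha:periodictypepreserve}, $\calT_{\Phi}(\Phi(a_1^k)(p))=\{i+k\}$. On the other hand, Lemma~\ref{lem:alpha:nontypepreserve} gives $i\in\calT_{\Phi}(\Phi(a_1^k)(p))$. Taking $k=1$ gives $\{i+1\}\ni i$. Since $n\geq5$, we have $i\not\equiv i+1 \pmod n$, which is a contradiction. So the argument is essentially a short combination of these two lemmas.

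The only care needed is to check that Lemma~\ref{lem:alpha:nontypepreserve} applies as stated, for an arbitrary $f\in\braid{n}$ including $a_1$. Its proof reduces to the generators $s_j^{\pm1}$ and inducts along a word, so I must make sure the inductive step is valid. After applying one generator, the image puncture has $i$ in its type, but its type might no longer be the singleton $\{i\}$, while the lemma's hypothesis requires a singleton. This is the step I expect to be the main obstacle. If the induction is problematic, I would avoid it as follows. Apply the single-generator case only to $p$ itself, using $s_{i+1}$ and $s_i$, since $\Phi(a_1)=\Phi(s_{i+1}\cdots s_{i+n-1}s_{i+n})$ involves generators that mostly commute with $s_i$. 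Write $\Phi(a_1)(p)$ as $\Phi(s_{i+1}s_{i+2})$ applied to $\Phi(s_{i+3}\cdots s_{i+n-1}s_{i+n})(p)$, where $s_{i+n}=s_i$. The factors $s_{i+3},\dots,s_{i+n-1}$ commute with $s_i$ and preserve membership of $i$ by Lemma~\ref{lem:alpha:externaltypepreservecomm}; I would also argue they preserve the singleton property, since any other index gained would, by the same lemma applied backwards, appear in the type of $p$. Then $s_i$ fixes $\gamma\in\extremalarg{s_i}(p)$, since $\gamma$ lies in its canonical reduction system, though it may permute the curves of $\extremalarg{s_i}$. I would handle the remaining two non-commuting steps $s_{i+2}$ and $s_{i+1}$ directly with the argument of Lemma~\ref{lem:alpha:nontypepreserve}, re-verifying its singleton hypothesis at each stage in the same way. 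This yields $i\in\{i+1\}$ and hence the contradiction.
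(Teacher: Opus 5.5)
Your reduction to the case $\calT_{\Phi}(p)=\{i\}$ is fine. It is also true that Lemma~\ref{lem:alpha:nontypepreserve} with $f=a_1$, together with Lemma~\ref{lem:alpha:periodictypepreserve}, would give a one-line contradiction. That is exactly why the citation cannot carry the proof: for $f=a_1$ the lemma already implies the statement you want, so invoking it without a valid proof assumes the conclusion. The paper's own proof of this statement does not use it.

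The argument given for Lemma~\ref{lem:alpha:nontypepreserve} only handles a single generator $s_j^{\pm1}$ applied to a puncture of singleton type; the singleton hypothesis is consumed by Lemma~\ref{lem:alpha:welltypednouniaux} at $p$. The reduction ``it suffices to treat generators'' would need $\Phi(s_j^{\pm1})(p)$ to again have singleton type, and this is never shown. You rightly flag this, but your fallback does not repair it:
\begin{enumerate}
\item The word $s_{i+1}\cdots s_{i+n-1}s_{i+n}$ has $n$ letters and equals $a_1s_i$, not $a_1$. The correct words are $s_{i+1}\cdots s_{i+n-1}$ or $s_{i+2}\cdots s_{i+n}$.
\item The factor $s_{i+n-1}=s_{i-1}$ does not commute with $s_i$, whereas $s_{i+2}$ does.
\item More seriously, running Lemma~\ref{lem:alpha:externaltypepreservecomm} backwards only shows singleton types are preserved by generators commuting with all of $s_{i-1},s_i,s_{i+1}$. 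After applying $s_i$, $s_{i\pm1}$ or $s_{i\pm2}$, the only possible new index is $i\pm1$. That index is adjacent to the generator just applied, so it cannot be pulled back to $p$.
\end{enumerate}
The intermediate puncture may therefore have type $\{i,i+1\}$. From such a puncture, $\Phi(s_{i+1})$ genuinely can destroy the index $i$: already for $\Phi=\mathrm{id}$, puncture $i+1$ of type $\{i,i+1\}$ goes to puncture $i+2$ of type $\{i+1,i+2\}$. So type bookkeeping along any word for $a_1$ breaks down at the non-commuting steps.

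The missing idea is to use the singleton hypothesis only at $p$ and get the contradiction from one braid relation rather than from an orbit. This is what the paper does.
\begin{enumerate}
\item Take $\delta$ outermost in $\calN_{i-1}(p)\cup\calN_{i+1}(p)$, which is a multicurve since $[s_{i-1},s_{i+1}]=1$; if that set is empty, take the inessential curve around $p$.
\item By Lemma~\ref{lem:alpha:exterioragreement}, $\Phi(s_{i-1})(\delta)=\Phi(s_{i+1})(\delta)$. By Lemma~\ref{lem:alpha:welltypednouniaux}, $\delta\le\gamma\in\extremalarg{s_i}(p)$.
\item $\Phi(s_is_{i-1})(\gamma)\in\crs{\Phi(s_{i-1})}$ by the braid relation and Lemma~\ref{lem:crs:functorial}. $\Phi(s_is_{i+1})(\gamma)\in\extremalarg{s_{i+1}}$ by Lemma~\ref{lem:alpha:extremalbraid}.
\item These two curves are disjoint by Lemma~\ref{lem:crs:commdisj}, and both contain the same curve $\Phi(s_is_{i-1})(\delta)=\Phi(s_is_{i+1})(\delta)$. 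So one is interior to the other, and they are equal because they enclose the same number of punctures.
\item This places a single curve in $\extremalarg{s_{i-1}}\cap\extremalarg{s_{i+1}}$, contradicting minimal typedness.
\end{enumerate}
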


\begin{proof}
  Suppose by way of contradiction that $\calT_{\Phi}(p) = \{i\}$ for some $s_i \in \extgen{n}$.  Let
  \[
    \delta \in \left(\calN_{i-1}(p) \cup \calN_{i+1}(p)\right)^{\outmulti}
  \]
  or set $\delta$ to be the inessential curve surrounding $p$ if no such curve exists.  Since $[s_{i-1}, s_{i+1}] = 1$, the set $\calN_{i-1}(p) \cup \calN_{i+1}(p)$ is a multicurve by Lemma~\ref{lem:crs:commdisj}.  In particular, $\delta$ must be unique.  Furthermore, by construction $\delta$ is exterior to $\crs{\Phi(s_{i-1})}$ and $\crs{\Phi(s_{i+1})}$.  By Lemma~\ref{lem:alpha:exterioragreement} we conclude that $\Phi(s_{i-1})(\delta) = \Phi(s_{i+1})(\delta)$.  Let $\delta' \coloneqq \Phi(s_{i-1})(\delta)$.

  Let $\gamma \in \extremalarg{s_i}(p)$.  By Lemma~\ref{lem:alpha:welltypednouniaux} we have $\delta \leq \gamma$.  Consider the two curves
  \[
    \gamma_{i-1} \coloneqq \Phi(s_{i}s_{i-1})(\gamma) \text{ and } \gamma_{i+1} \coloneqq \Phi(s_{i}s_{i+1})(\gamma).
  \]
  By the braid relation and Lemma~\ref{lem:crs:functorial}, we have $\gamma_{i-1} \in \crs{\Phi(s_{i-1})}$.  Furthermore, by Lemma~\ref{lem:alpha:extremalbraid} we have $\gamma_{i+1} \in \extremalarg{s_{i+1}}$.  In particular, by Lemma~\ref{lem:crs:commdisj} we have $\gamma_{i-1}$ disjoint from $\gamma_{i+1}$.  By construction we have
  \[
    \Phi(s_{i})(\delta') \leq \gamma_{i-1} \text{ and } \Phi(s_i)(\delta') \leq \gamma_{i+1}.
  \]
  In particular, we must have $\gamma_{i-1} \leq \gamma_{i+1}$ since $\gamma_{i+1} \in \extremalarg{s_{i+1}}$.  Since $\gamma_{i-1}$ and $\gamma_{i+1}$ both have the same number of interior punctures as $\gamma$, we conclude that $\gamma_{i-1} = \gamma_{i+1}$.  However this implies that $\Phi$ is not minimally typed, a contradiction.  
\end{proof}  

As a consequence of Lemma~\ref{lem:alpha:welltypednouni}, we see that if $\Phi:\braid{n} \rightarrow \braid{m}$ is an externally periodic and minimally typed homomorphism then any  puncture $p$ in  $\disk{m}$ either has $\calT_{\Phi}(p) =\{i,i+1\}$ for some $s_i \in \extgen{n}$ or has $\calT_{\Phi}(p) = \emptyset$.  We can now describe how $\Phi(s_j)$ acts on any puncture $p$ with $j \not \in \calT_{\Phi}(p)\neq\emptyset$. 

\begin{lemma}\label{lem:alpha:welltypeactionnotintype}
    Let $n \geq 5$ and $m \geq 3$.  Let $\Phi:\braid{n} \rightarrow \braid{m}$ be an externally periodic and minimally typed homomorphism.  Let $p \in \disk{m}$ be a puncture with $\calT_{\Phi}(p) = \{i,i+1\}$ and let $s_j \in \extgen{n}$ such that $j \not \in \calT_{\Phi}(p)$.  Then $\Phi(s_j)(p)$ and $\Phi(s_j^{-1})(p)$ are also of type $\{i,i+1\}$.
\end{lemma}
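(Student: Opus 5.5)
Since $j\notin\{i,i+1\}$, either $s_j$ commutes with $s_i$ or with $s_{i+1}$. Indeed, $s_j$ fails to commute with $s_i$ only if $j\in\{i-1,i+1\}$, and fails to commute with $s_{i+1}$ only if $j\in\{i,i+2\}$. Since $j\neq i,i+1$ and $n\geq5$ (so $i-1\not\equiv i+2$), at least one of these holds.

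First I will show that the type of $\Phi(s_j^{\pm1})(p)$ still meets $\{i,i+1\}$ in the right way. Suppose first that $[s_j,s_i]=1$. Then Lemma~\ref{lem:alpha:externaltypepreservecomm} gives $i\in\calT_{\Phi}(\Phi(s_j^{\pm1})(p))$. By Lemma~\ref{lem:alpha:welltypednouni}, the type of $q\coloneqq\Phi(s_j^{\pm1})(p)$ is then $\{i-1,i\}$ or $\{i,i+1\}$. If also $[s_j,s_{i+1}]=1$, the same lemma gives $i+1\in\calT_{\Phi}(q)$, and we are done. Otherwise $j=i+2$. In the symmetric case $[s_j,s_{i+1}]=1$ but $[s_j,s_i]\neq1$, we have $j=i-1$, and the type of $q$ is $\{i,i+1\}$ or $\{i+1,i+2\}$.

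So the real content is the two boundary cases: $j=i+2$, where we must exclude type $\{i-1,i\}$, and $j=i-1$, where we must exclude type $\{i+1,i+2\}$. By conjugating with $\Phi(a_1)$ and using Lemma~\ref{lem:alpha:periodictypepreserve}, it suffices to treat one of them together with its inverse. Take $j=i+2$ and suppose $q=\Phi(s_{i+2})(p)$ has type $\{i-1,i\}$. Let $\gamma\in\extremalarg{s_{i+1}}(p)$ and $\eta\in\extremalarg{s_i}(p)$. Since $s_{i+2}$ commutes with $s_i$, Lemma~\ref{lem:alpha:extremalaction} gives $\Phi(s_{i+2})(\eta)\in\extremalarg{s_i}(q)$, which is consistent. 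The aim is to produce a curve of $\extremalarg{s_{i+1}}$ containing $q$, which would contradict $i+1\notin\calT_{\Phi}(q)$.

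I will use the pairs of commuting generators. Choose $s_k$ commuting with both $s_{i+1}$ and $s_{i+2}$; for instance $k=i+4$ works when $n\geq6$, and otherwise I use $k=i-1$, which for $n=5$ equals $i+4$. Then Lemma~\ref{lem:alpha:exterioragreement} gives $\Phi(s_{i+2})(\delta)=\Phi(s_k)(\delta)$ for every curve $\delta$ exterior to $\crs{\Phi(s_{i+2})}^{\outmulti}\cup\crs{\Phi(s_k)}^{\outmulti}$. As in the proof of Lemma~\ref{lem:alpha:nontypepreserve}, I take $\delta$ to be the outermost curve of $\calN_{i+2}(p)\cup\calN_k(p)$ containing $p$, or the puncture $p$ itself if there is none. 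The key claim is that $\delta\leq\gamma$, and I expect this to be the main obstacle. The argument will be an analogue of Lemma~\ref{lem:alpha:welltypednouniaux} for doubly typed punctures. Any curve of $\calN(p)$ that intersects $\gamma$ and is maximal among such curves must lie inside some $\Phi$-maximal curve containing $p$. The only candidates are $\gamma$ itself and $\eta$. If $\eta$ is the one, the curve lies in $\crs{\Phi(s_{i+2})}$ or $\crs{\Phi(s_k)}$, and $\eta$ is disjoint from these multicurves by Lemma~\ref{lem:crs:commdisj}, because $s_i$ commutes with $s_{i+2}$ and, for a suitable choice of $k$, also with $s_k$. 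In that case I will have to argue that such a curve is nonetheless disjoint from $\gamma$. If this fails, the fallback is to choose $\delta$ inside $\gamma\cap\eta$ using $\calN_{i+2}(p)\cup\calN_k(p)$ together with Lemma~\ref{lem:crs:commdisj} applied to $\gamma$.

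Once $\delta\leq\gamma$ is established, $q\in\Int(\Phi(s_k)(\delta))$ and $\Phi(s_k)(\delta)\leq\Phi(s_k)(\gamma)$. Since $[s_k,s_{i+1}]=1$, Lemma~\ref{lem:alpha:extremalaction} gives $\Phi(s_k)(\gamma)\in\extremalarg{s_{i+1}}$. Hence $i+1\in\calT_{\Phi}(q)$, which contradicts the assumed type $\{i-1,i\}$. For $s_j^{-1}$ the identical argument applies with $\Phi(s_k^{-1})$ in place of $\Phi(s_k)$.
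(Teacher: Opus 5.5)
Your reduction to the two boundary cases is correct and matches how the paper starts: for $j=i+2$ you must exclude type $\{i-1,i\}$, and for $j=i-1$ you must exclude type $\{i+1,i+2\}$. The gap is in how you then handle $j=i+2$. Everything rests on the claim $\delta\leq\gamma$, which you leave unproved, and nothing available at this point supports it.

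Consider a curve $\epsilon\in\crs{\Phi(s_{i+2})}$ with $p\in\Int(\epsilon)$ that lies inside $\eta$ and crosses $\gamma$. Commutation only forces $\epsilon$ to be disjoint from $\crs{\Phi(s_i)}$ (Lemma~\ref{lem:crs:commdisj}). It does not force disjointness from $\gamma$, because $s_{i+2}$ and $s_{i+1}$ do not commute. Configurations of exactly this kind are what $\Delta(\Phi)$ and $\calW(\Phi)$ in Section~\ref{section:curves} are built to handle. They are only excluded later, under the extra hypothesis of irreducibility (Proposition~\ref{prop:curves:valid}), and that argument itself uses the present lemma through Theorem~\ref{thm:alpha:welltypedpuncture}.

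Your fallback fails too. If $p$ lies inside some curve of $\crs{\Phi(s_{i+2})}^{\outmulti}\cup\crs{\Phi(s_k)}^{\outmulti}$, then a $\delta$ shrunk into $\gamma\cap\eta$ is no longer exterior to that multicurve, so Lemma~\ref{lem:alpha:exterioragreement} gives nothing. In addition, for $n=5$ the only generator commuting with both $s_{i+1}$ and $s_{i+2}$ is $s_{i-1}$, which does not commute with $s_i$. So the ``suitable choice of $k$'' that makes $\eta$ disjoint from $\crs{\Phi(s_k)}$ does not exist.

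The missing idea is to apply Lemma~\ref{lem:alpha:externaltypepreservecomm} starting from $q$ rather than from $p$. Suppose $q=\Phi(s_{i+2}^{\pm1})(p)$ had type $\{i-1,i\}$. Since $n\geq5$ gives $[s_{i+2},s_{i-1}]=1$, apply that lemma to $q$, the index $i-1$, and the generator $s_{i+2}^{\mp1}$. This yields $i-1\in\calT_{\Phi}(p)$, contradicting $\calT_{\Phi}(p)=\{i,i+1\}$.

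The case $j=i-1$ is the mirror image. If $\Phi(s_{i-1}^{\pm1})(p)$ had type $\{i+1,i+2\}$, then $[s_{i-1},s_{i+2}]=1$ pulls $i+2$ back into $\calT_{\Phi}(p)$. This is the paper's entire proof, and no curve-nesting analysis is needed.

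Also, conjugating by $\Phi(a_1)$ shifts $i$ and $j$ together, so it cannot exchange the cases $j=i+2$ and $j=i-1$. They must be handled by this mirror argument rather than by Lemma~\ref{lem:alpha:periodictypepreserve}.
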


\begin{proof}
    If $[s_j, s_i] = [s_j,s_{i+1}] = 1$, then this follows from Lemma~\ref{lem:alpha:extremalaction}. Therefore it suffices to deal with the cases $j \in\{i-1, i+2\}$.  We begin with $s_j = s_{i-1}$.  The proof proceeds the same way for $s_{i-1}$ and $s_{i-1}^{-1}$, so we indicate this by $s_{i-1}^{\pm}$.  Let $p' = \Phi(s_{i-1}^{\pm1})(p)$.  Suppose by way of contradiction that $p'$ is not of type $\{i,i+1\}$.  Note that since $n \geq 5$, $s_{i-1}$ and $s_{i+1}$ commute, so $i+1 \in \calT_{\Phi}(p')$ by Lemma~\ref{lem:alpha:externaltypepreservecomm}.  By Lemma~\ref{lem:alpha:welltypednouni} we have $\calT_{\Phi}(p') = \{i+1,i+2\}$.  But then  since $n \geq 5$, we have $[s_{i-1}, s_{i+2}] = 1$, so by Lemma~\ref{lem:alpha:externaltypepreservecomm} and since $p=\Phi(s_{i-1}^{\mp1})(p')$ we have $i+2\in\calT_{\Phi}(p)$.  This contradicts our hypothesis that $\calT_\Phi(p)=\{i,i+1\}$.

    The case that $s_j = s_{i+2}$ follows similarly, except we show that if $\Phi(s_{i+2}^{\pm1})(p)\neq\{i,i+1\}$, then $i-1\in\calT_{\Phi}(p)$ which is a contradiction.
\end{proof}

We now assemble the previous lemmas into the following description of externally periodic and minimally typed homomorphisms.  

\begin{theorem}\label{thm:alpha:welltypedpuncture}
    Let $n \geq 5$ and $m\geq 3$. Let $\Phi:\braid{n} \rightarrow \braid{m}$ be an externally periodic and minimally typed homomorphism.  Let $p \in \disk{m}$ be a puncture.  Then either:
    \begin{itemize}
        \item $\calT_{\Phi}(p) = \emptyset$; or
        \item $\calT_{\Phi}(p) = \{i,i+1\}$ for some $s_i \in \extgen{n}$.
    \end{itemize} Furthermore, suppose that $\calT_{\Phi}(p) = \{i,i+1\}$ for some $s_i \in \extgen{n}$.  The following hold:
    \begin{itemize}
        \item $\Phi(s_{i+1})(p)$ and $\Phi(s_{i+1}^{-1})(p)$ are of type $\{i+1,i+2\}$;
        \item $\Phi(s_{i})(p)$ and $\Phi(s_i^{-1})(p)$ are of type $\{i-1,i\}$; and
        \item $\Phi(s_j)(p)$ and $\Phi(s_j^{-1})(p)$ are of type $\{i,i+1\}$ for all $s_j\in\extgen{n}\setminus\{s_i,s_{i+1}\}$.
    \end{itemize}
\end{theorem}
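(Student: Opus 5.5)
The first bullet is immediate from results already proved. Since $\Phi$ is minimally typed, every puncture $p$ has $\calT_{\Phi}(p)\subseteq\{i,i+1\}$ for some $s_i\in\extgen{n}$, so $\calT_\Phi(p)$ is one of $\emptyset$, a singleton $\{i\}$, or a pair $\{i,i+1\}$. Lemma~\ref{lem:alpha:welltypednouni} rules out singletons. Hence either $\calT_\Phi(p)=\emptyset$ or $\calT_\Phi(p)=\{i,i+1\}$.

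For the third bullet, assume $\calT_\Phi(p)=\{i,i+1\}$ and let $s_j\notin\{s_i,s_{i+1}\}$. Then $j\notin\calT_\Phi(p)$, so Lemma~\ref{lem:alpha:welltypeactionnotintype} applies directly and gives that $\Phi(s_j^{\pm1})(p)$ has type $\{i,i+1\}$.

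The remaining work is the first two bullets, the action of $\Phi(s_{i+1}^{\pm1})$ and $\Phi(s_i^{\pm1})$. I treat $s_{i+1}$; the case $s_i$ is symmetric under $i\mapsto i+1$ reflected, or follows by the same argument.

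Let $p'=\Phi(s_{i+1}^{\pm1})(p)$. Since $[s_{i+1},s_{i-1}]=1$ (as $n\ge5$), Lemma~\ref{lem:alpha:externaltypepreservecomm} applied to the index $i-1$ and the inverse map shows $i-1\notin\calT_\Phi(p')$; otherwise $i-1\in\calT_\Phi(p)$. Likewise $[s_{i+1},s_{i+3}]=1$ for $n\ge5$, so $i+3\notin\calT_\Phi(p')$.

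Next I show $\calT_\Phi(p')\ne\emptyset$. By Lemma~\ref{lem:alpha:nontypepreserve}-style reasoning, I would instead use the following direct argument. Choose $s_k$ commuting with $s_i$ and $s_{i+1}$, say $s_k=s_{i+3}$. If $p'$ had empty type, then applying Lemma~\ref{lem:alpha:welltypeactionnotintype} to $p'$ fails, so I need a different route. Use the cardinality count: $\Phi(a_1)$ permutes punctures and shifts types by Lemma~\ref{lem:alpha:periodictypepreserve}. Hence the number of punctures of type $\{i,i+1\}$ equals the number of type $\{i+1,i+2\}$, and the set $P$ of punctures with nonempty type is finite and $\Phi(a_1)$-invariant. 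I would show $P$ is $\Phi(\braid{n})$-invariant. By the third bullet, each $\Phi(s_j^{\pm1})$ maps nonempty-type punctures of types not involving $j$ into $P$. For the remaining types, consider the set $Q$ of punctures of empty type. Each $\Phi(s_j^{\pm1})$ maps $Q$ into $Q$: if $q\in Q$ had $\Phi(s_j)(q)$ of type $\{l,l+1\}$ with $j\notin\{l,l+1\}$, the third bullet applied to $\Phi(s_j)(q)$ and $s_j^{-1}$ would give $q$ nonempty type. Otherwise $j\in\{l,l+1\}$, and then $l+2$ or $l-1$ (whichever is not $j$'s neighbor in the pair) commutes with $s_j$ after picking an index in the type adjacent appropriately. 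Then Lemma~\ref{lem:alpha:externaltypepreservecomm} transports that index back to $q$, a contradiction. Since $\Phi(s_j)$ is a bijection on finite punctures, $Q$ and hence $P$ are invariant.

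So $p'\in P$, with type $\{l,l+1\}$ avoiding $i-1$ and $i+3$. I would also show $i\notin\calT_\Phi(p')$ (for $+$ sign) using Lemma~\ref{lem:alpha:extremalbraid}. The curve $\Phi(s_is_{i+1})(\gamma_i)\in\extremalarg{s_{i+1}}$ for $\gamma_i\in\extremalarg{s_i}(p)$, and combined with $\gamma_{i+1}\in\extremalarg{s_{i+1}}(p)$, the nesting forces $i+2\in\calT_\Phi(p')$. Together with the exclusions, this pins the type to $\{i+1,i+2\}$.

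The main obstacle is exactly this step: showing the image picks up $i+2$ and loses $i$, rather than staying of type $\{i,i+1\}$. My first attempt is an orbit count. The types $\{i,i+1\}$ and $\{i+1,i+2\}$ are equinumerous, and $\Phi(s_{i+1})$ acting on the union of types $\{i,i+1\},\{i+1,i+2\}$ must be a bijection preserving that union. If it fixed type $\{i,i+1\}$ setwise, then $\Phi(s_{i+1})$ and every other $\Phi(s_j)$ with $j\ne i$ would preserve the punctures of type $\{i,i+1\}$. Using $a_1$-conjugation and the braid relation $s_{i+1}=(s_is_{i+1})s_i(s_is_{i+1})^{-1}$, $\Phi(s_i)$ would then also preserve them. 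That makes the outer boundary of that puncture set an invariant structure, producing a reducing system or a collapse contradicting the hypotheses, which I would push through via Lemma~\ref{lem:alpha:extremalcollision}.
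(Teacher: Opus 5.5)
You handle the first bullet (classification of types) and the third bullet exactly as the paper does. The gap is in the first two bullets, the action of $\Phi(s_{i+1}^{\pm1})$ and $\Phi(s_i^{\pm1})$.

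\textbf{Nonemptiness needs no invariance argument.} Since $s_{i+1}$ commutes with itself and $i+1\in\calT_\Phi(p)$, Lemma~\ref{lem:alpha:externaltypepreservecomm} gives $i+1\in\calT_\Phi(p')$ directly. Lemma~\ref{lem:alpha:welltypednouni} then leaves only the two candidates $\{i,i+1\}$ and $\{i+1,i+2\}$. (Your invariance argument for empty-type punctures also reaches for indices $l-1,l+2$, which lie outside the type; the index that actually works is $j$ itself.) So everything hinges on excluding $\{i,i+1\}$, and your proposal does not do this.

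\textbf{The Lemma~\ref{lem:alpha:extremalbraid} route gives no information.} The curve $\Phi(s_is_{i+1})(\gamma_i)\in\extremalarg{s_{i+1}}$ contains $\Phi(s_i)(p')$, not $p'$. It therefore says nothing about whether $i+2\in\calT_\Phi(p')$.

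\textbf{The orbit-count fallback fails in two ways.}
\begin{enumerate}
\item Ruling out setwise preservation of the type-$\{i,i+1\}$ punctures only shows that \emph{some} such puncture changes type. The theorem is pointwise, and since $\Phi(s_{i+1})$ permutes the union of the two classes, it could a priori mix them.
\item A reducing system is no contradiction here. The theorem assumes only external periodicity and minimal typedness, not irreducibility.
\end{enumerate}
Your appeal to symmetry for $s_i$ is also unsupported. Lemma~\ref{lem:alpha:extremalbraid} has no reversed form; see the remark following it.

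\textbf{The missing idea is a word decomposition that makes the argument pointwise.}
\begin{enumerate}
\item For $s_{i+1}$, write $s_{i+1}=a_1s_{i+n-1}^{-1}\cdots s_{i+2}^{-1}$. Every factor other than $a_1$ has index outside $\{i,i+1\}$, so it preserves type $\{i,i+1\}$ by Lemma~\ref{lem:alpha:welltypeactionnotintype}. Then $\Phi(a_1)$ shifts the type to $\{i+1,i+2\}$ by Lemma~\ref{lem:alpha:periodictypepreserve}. This holds for every such $p$.
\item For $s_i$, write $a_1=s_{i+2}\cdots s_{i+n}$ with $s_{i+n}=s_i$. If $\Phi(s_i)(p)$ had type $\{i,i+1\}$, then so would $\Phi(a_1)(p)$, a contradiction. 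Combined with $i\in\calT_\Phi(\Phi(s_i)(p))$, this forces type $\{i-1,i\}$.
\item The inverses follow because $\Phi(s_k^{-1})$ preserves the type of $p$ if and only if $\Phi(s_k)$ preserves the type of $\Phi(s_k^{-1})(p)$.
\end{enumerate}
Even your setwise case needed no reducing system. Invariance under every $\Phi(s_j)$ with $j\neq i$ already gives invariance under $\Phi(a_1)$, which contradicts Lemma~\ref{lem:alpha:periodictypepreserve} directly.
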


\begin{proof}
    The first list follows from Lemma~\ref{lem:alpha:welltypednouni} and the definition of minimally typed.

    The third point in the second list is the content of Lemma~\ref{lem:alpha:welltypeactionnotintype}, so it suffices to prove the first two points.
    
    In light of Lemmas~\ref{lem:alpha:externaltypepreservecomm} and~\ref{lem:alpha:welltypednouni} it suffices to show that neither $\Phi(s_{i+1})$, $\Phi(s_i)$, nor their inverses, preserve the type of any puncture of type $\{i,i+1\}$. In fact it suffices to just show that $\Phi(s_{i+1})$ and $\Phi(s_i)$ do not preserve the type of any puncture of type $\{i,i+1\}$, since $\Phi(s_k^{-1})$ preserves the type of $p$ if and only if $\Phi(s_k)$ preserves the type of $\Phi(s_k^{-1})(p)$.

    Suppose that $\calT_\Phi(p)=\{i,i+1\}$. We first show $\Phi(s_{i+1})(p)$ does not have type $\{i,i+1\}$. Write
    \[
    s_{i+1} = a_1s_{i+n-1}^{-1}s_{i+n-2}^{-1}\cdots s_{i+2}^{-1}.
    \]
    By Lemma~\ref{lem:alpha:welltypeactionnotintype} all terms in this expression for $s_{i+1}$, except for $a_1$, preserve the type of punctures of type $\{i,i+1\}$. However $\Phi(a_1)$ does not preserve the type of any puncture of type $\{i,i+1\}$ by Lemma~\ref{lem:alpha:periodictypepreserve}. Therefore $\Phi(s_{i+1})(p)$ does not have type $\{i,i+1\}$.

    It remains to show that $\calT_{\Phi}(\Phi(s_i)(p)) \neq \{i,i+1\}$. Suppose for a contradiction that it did. Write
    \[
      a_1 = s_{i+2}\cdots s_{i+n}.
    \]
    Every term other than $s_i=s_{i+n}$ in the above expression for $a_1$ preserves the type of punctures of type $\{i,i+1\}$ by Lemma~\ref{lem:alpha:welltypeactionnotintype}. Since we assumed that $\Phi(s_i)(p)$ has type $\{i,i+1\}$, it follows that $\Phi(a_1)(p)$ is of type $\{i,i+1\}$. However, this contradicts Lemma~\ref{lem:alpha:periodictypepreserve}. Therefore $\Phi(s_i)(p)$ does not have type $\{i,i+1\}$.
\end{proof}

We now recover the following theorem of Castel~\cite[Theorem 1.1.1(ii)]{Cas16} in the case $n \geq 6$ and Chen--Kordek--Margalit~\cite[Theorem 8.1]{CKM} for $n = 5$.

\begin{theorem}\label{thm:case:m:leq:n}
    Let $n\geq 5$ and $m\geq 3$.  Assume that $m \leq n$.  Let $\Phi:\braid{n}\to \braid{m}$ be a non-cyclic homomorphism. Then $m=n$ and $\Phi$ is centrally equivalent to the identity.
\end{theorem}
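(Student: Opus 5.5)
By Theorem~\ref{thm:lin:mlessn}, a non-cyclic $\Phi$ forces $m=n$ and forces $\Phi$ to be irreducible. So it remains to treat an irreducible, non-cyclic $\Phi\colon\braid{n}\to\braid{n}$ and show it is centrally equivalent to the identity. Lemma~\ref{lem:external:anosov} makes $\Phi$ externally periodic, and Lemma~\ref{lem:alpha:nonwelltypedirred} makes it minimally typed, so Theorem~\ref{thm:alpha:welltypedpuncture} applies.

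Next I count punctures. By Lemma~\ref{lem:centralizer:emptycrs} every $\extremalarg{s_i}$ is nonempty, and by Lemma~\ref{lem:alpha:extremalcollision} the sets $\extremalarg{s_i}$ are pairwise disjoint. Each curve in $\extremalarg{s_i}$ contains at least two punctures. A puncture has type either $\emptyset$ or $\{i,i+1\}$, so each puncture lies inside curves of $\extremalarg{s_i}$ for at most two indices $i$. Summing over $i$ gives
\[
2n \le \sum_{i}\sum_{\gamma\in \extremalarg{s_i}} |\Int(\gamma)| \le 2\cdot\#\{p:\calT_\Phi(p)\neq\emptyset\} \le 2m = 2n .
\]
All inequalities are therefore equalities, which yields the following picture:
\begin{itemize}
\item every puncture has type $\{i,i+1\}$ for some $i$;
\item each $\extremalarg{s_i}$ is a single curve $\gamma_i$ surrounding exactly two punctures;
\item exactly one puncture $p_i$ has type $\{i,i+1\}$;
\item $\Int(\gamma_i)=\{p_{i-1},p_i\}$;
\item $\crs{\Phi(s_i)}=\{\gamma_i\}$, since curves interior to a two-punctured curve are inessential.
\end{itemize}

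Now I identify $\Phi(s_i)$. By Theorem~\ref{thm:alpha:welltypedpuncture}, $\Phi(s_i)$ swaps $p_{i-1}$ and $p_i$ and fixes every other puncture. Every puncture lies in $\Int(\gamma_i)$ or is fixed, and there are $n-2\geq 3$ fixed punctures outside $\gamma_i$. Then $\Ext_{\gamma_i}(\Phi(s_i))$ is periodic and fixes at least two punctures of $\disk{\{\gamma_i\}}$, namely a fixed puncture together with the collapsed $\gamma_i$. By Lemma~\ref{lem:crs:fixtwopunc} it is central. Inside $\gamma_i$, the restriction is an element of $\braid{2}$ that swaps the two punctures, so it is an odd power of the half-twist $h_i$ about $\gamma_i$. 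Hence
\[
\Phi(s_i)=h_i^{k_i}\,T_{\partial\disk{n}}^{\ell_i}, \qquad k_i \text{ odd}.
\]
Conjugacy of the $s_i$ makes $k_i=k$ and $\ell_i=\ell$ independent of $i$.

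The curves $\gamma_i$ form a chain: consecutive curves intersect and non-consecutive curves are disjoint. Consecutive curves cannot be disjoint, since they share the puncture $p_i$ and neither is interior to the other. For this reason the $\gamma_i$ are the boundaries of arcs forming an $n$-gon of arcs in the sense of the $s_i$, up to a homeomorphism. Applying that homeomorphism, i.e.\ an automorphism of $\braid{n}$, lets me assume $h_i=s_i$.

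It remains to show that $k=\pm1$, which is the main obstacle. The braid relation for $\Phi(s_i)$ and $\Phi(s_{i+1})$ reduces modulo the center to $s_i^k s_{i+1}^k s_i^k = s_{i+1}^k s_i^k s_{i+1}^k$ in $\braid{n}/Z(\braid{n})$. This relation fails for odd $|k|\ge 3$. I would prove that failure by mapping to $\braid{3}$ (the arcs $\alpha_i,\alpha_{i+1}$ span a three-punctured subdisk, so I can restrict to it) and then to $\PSL_2(\mathbb{Z})$, where the relation visibly fails, or else by citing the known fact that powers of half-twists satisfy the braid relation only when the exponent is $\pm1$. Once $k=\pm1$, composing with $\kappa$ if necessary gives $\overline{\Phi}=\overline{\mathrm{id}}$, so $\Phi$ is centrally equivalent to the identity.

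A secondary point needing care is the realization of the chain $\{\gamma_i\}$ as the standard one. There one must check that the cyclic chain of arcs bounds a disk containing all $n$ punctures in the standard configuration. This should follow from the equivariance $\Phi(a_1)(\gamma_i)=\gamma_{i+1}$ of Lemma~\ref{lem:alpha:standardrootaction} together with the fact that $\Phi(a_1)$ is periodic of order $n$, by Lemma~\ref{lem:crs:centralclassification}, and hence conjugate to a power of $a_1$.
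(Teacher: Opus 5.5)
\textbf{There is a genuine gap in the standardization step.} Your reductions (to $m=n$ with $\Phi$ irreducible, externally periodic and minimally typed), the puncture count, and the identification $\Phi(s_i)=h_i^{k}T_{\partial\disk{n}}^{\ell}$ with $k$ odd all follow the paper's proof. There is one small omission there. Writing $c_i$ for the number of punctures of type $\{i,i+1\}$, the count alone only gives $c_{i-1}+c_i=2$. To get $c_i=1$ you also need that $\Phi(s_i)$ carries the type $\{i-1,i\}$ punctures bijectively onto the type $\{i,i+1\}$ punctures, which Theorem~\ref{thm:alpha:welltypedpuncture} provides.

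The gap is your claim that the pattern ``consecutive $\gamma_i$ intersect, non-consecutive $\gamma_i$ are disjoint, $\Int(\gamma_i)=\{p_{i-1},p_i\}$'' determines the chain up to homeomorphism. It does not. Start from the standard arcs $\alpha_1,\dots,\alpha_n$, with $\alpha_i$ joining $p_{i-1}$ to $p_i$. Let $c$ be the boundary of a regular neighbourhood of $\alpha_3\cup\cdots\cup\alpha_n$, and replace $\alpha_2$ by $T_c^N(\alpha_2)$. The non-consecutive partners $\alpha_4,\dots,\alpha_n$ of $\alpha_2$ lie inside $c$ and are fixed by $T_c$, so all non-consecutive pairs stay disjoint. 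Consecutive arcs still share exactly one endpoint. But the twist inequality gives $\iota(T_c^N(\gamma_2),\gamma_1)\ge 4|N|-2>2$ for $|N|\ge 2$, whereas the standard chain has $\iota(\gamma_i,\gamma_{i+1})=2$. So what you actually need is $\iota(\gamma_i,\gamma_{i+1})=2$, and nothing in your argument supplies it.

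That input has to come from the braid relation, applied before any normalization. After cancelling the central factors, $h_i^{k}h_{i+1}^{k}h_i^{k}=h_{i+1}^{k}h_i^{k}h_{i+1}^{k}$ holds exactly in $\braid{n}$, for a priori arbitrary intersecting $\gamma_i,\gamma_{i+1}$. The paper lifts through the Birman--Hilden double cover, where the $h_i$ become Dehn twists $T_{\widehat{\alpha}_i}$ about nonseparating curves. It then uses McCarthy's lemma: $T_{\widehat{\alpha}_i}^{k}$ and $T_{\widehat{\alpha}_{i+1}}^{k}$ satisfy the braid relation only if $\iota(\widehat{\alpha}_i,\widehat{\alpha}_{i+1})=1$ and $k=\pm1$. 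This yields $\iota(\gamma_i,\gamma_{i+1})=2$ and $k=\pm1$ at once, and change of coordinates finishes the proof.

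Your $\PSL_2(\ZZ)$ computation is correct, but it presupposes the standard configuration: you need $\alpha_i\cup\alpha_{i+1}$ to span a three-punctured subdisk. So it cannot be run first, and it says nothing about the intersection number. Your fallback via $\Phi(a_1)$ only shows that the chain is the orbit of one arc under a periodic braid conjugate, modulo the center, to some $a_1^{j}$ with $\gcd(j,n)=1$. Without further argument it rules out neither interior crossings of consecutive arcs nor $j\neq\pm1$.
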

\begin{proof} By Theorem~\ref{thm:lin:mlessn}, $\Phi$ is irreducible and $m=n$. By Lemmas~\ref{lem:external:anosov} and~\ref{lem:alpha:nonwelltypedirred}, $\Phi$ is externally periodic and minimally typed. By Theorem~\ref{thm:alpha:welltypedpuncture}, for every puncture $p\in\disk{m}$ either $\calT_{\Phi}(p) = \emptyset$ or $\calT_{\Phi}(p) = \{i,i+1\}$ for some $s_i \in \extgen{n}$.  Since $\Phi$ is non-cyclic, Lemma~\ref{lem:centralizer:emptycrs} gives $\crs{\Phi(s_i)} \neq \emptyset$ for all $s_i \in \extgen{n}$.  Therefore for each $i \in \extgen{n}$ there are at least two $p \in \disk{m}$ with $i \in \calT_{\Phi}(p)$.  Since $m = n$ we conclude that:
    \begin{itemize}
        \item there are no distinct $p, p' \in \disk{m}$ with $\calT_{\Phi}(p) = \calT_{\Phi}(p')$;
        \item there are no $p\in\disk{m}$ with $\calT_{\Phi}(p)=\emptyset$;
        \item each $\crs{\Phi(s_i)}$ is a singleton; and
        \item the unique $\gamma_i \in \crs{\Phi(s_i)}$ contains exactly two punctures.
    \end{itemize}
    Theorem~\ref{thm:alpha:welltypedpuncture} and the first point above imply that the element $\Ext(\Phi(s_i))$ fixes at least two punctures, since $n\geq5$.  Since $\Phi$ is externally periodic, $\Ext(\Phi(s_i))\in\Ext_{\{\gamma_i\}}(Z(\braid{m}))$ by Lemma~\ref{lem:centralizer:fixtwocurves}. By Theorem~\ref{thm:alpha:welltypedpuncture}, $\Phi(s_i)$ exchanges the punctures in $\Int(\gamma_i)$, hence $\Phi(s_i)$ is the product $T_{\partial \disk{m}}^k$ with an odd power $h_{\gamma_i}^\ell$ of the half-twist $h_{\gamma_i}$ about $\gamma_i$, with $k$ and $\ell$ independent of $s_i\in\extgen{n}$. By passing to a double cover of $\disk{m}$ and applying the work of Birman--Hilden~\cite{BH71}, one obtains an injective homomorphism $\braid{m}\to\Mod_g$ with $g=\lceil m/2\rceil$, which maps $h_{\gamma_i}$ to a Dehn twist $T_{\alpha_i}$ about a non-separating curve $\alpha_i\subset S_g$ for each $s_i\in\extgen{n}$. Furthermore, if $\iota(\alpha_i,\alpha_{i+1})=1$ then $\iota(\gamma_i,\gamma_{i+1})=2$. By the braid relation and McCarthy's proof of~\cite[Lemma 4.3]{McCarthyDehnint} (see also~\cite[Section 3.5.1]{FM}) we indeed have that $\iota(\alpha_i,\alpha_{i+1})=1$ and also that $\ell\in\{-1,1\}$. Applying the change of coordinates principle we find that $\Phi$ is centrally equivalent to the identity.
\end{proof}

\section{Interior regions of maximal curves}\label{section:curves} 
Let $\Phi:\braid{n} \rightarrow \braid{m}$ be an externally periodic and minimally typed homomorphism with $n \geq 5$ and $m\geq 3$. Given two curves $\gamma, \gamma' \subset \disk{m}$  we define the finite set
\[\Delta(\gamma, \gamma') \coloneqq \left(\{\delta:\delta \leq \gamma \}\cap \{\delta:\delta \leq \gamma'\}\right)^{\outmulti}. \]  Note that if $\epsilon$ is a curve that intersects some $\delta \in \Delta(\gamma, \gamma')$ then $\epsilon$ intersects at least one of $\gamma$ or $\gamma'$. Indeed, such a $\delta$ must be the boundary of some connected component of $\disk{m}\cut(\gamma\cup\gamma')$. The main object in this section is the finite set
\[\Delta(\Phi) \coloneqq \bigcup_{\substack{\gamma, \gamma'\in \extremal \\ \gamma\ne \gamma'}} \Delta(\gamma,\gamma').\] If a homomorphism $\Phi$ has $\Delta(\Phi) = \emptyset$, then no curve $\delta$ is interior to two distinct $\gamma, \gamma' \in \extremal$.  We prove in Proposition~\ref{prop:curves:valid} that if $\Delta(\Phi)$ is non-empty, then $\Phi(\braid{n})$ is reducible.  See Figure~\ref{fig:typecurveex} for an example of a curve $\delta \in \Delta(\Phi)$.

\begin{figure}[ht]
    \centering
    \begin{tikzpicture}
        \node at (0,0)[anchor = south west]{\includegraphics[scale=0.6]{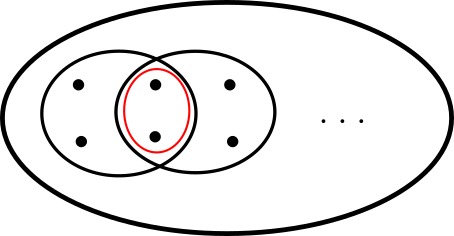}};
        \node at (1.1,2.1){$\gamma_1$};
        \node at (4.2,2.1){$\gamma_2$};
        \node at (2.3,2.1){{\color{red} $\delta$}};
    \end{tikzpicture}
    \caption{If $\gamma_1 \in \extremalarg{s_1}$ and $\gamma_2 \in \extremalarg{s_2}$, then $\{{\color{red}\delta}\} \in \Delta(\Phi)_{1,2}$}\label{fig:typecurveex}
\end{figure}

\p{Types of curves} Let $\delta \in \Delta(\Phi)$. We define $\extremalarg{s_i}(\delta)\coloneqq\{\gamma\in\extremalarg{s_i} : \delta\leq\gamma \}$, and $\extremal(\delta)\coloneqq\bigcup_{s_i\in\extgen{n}}\extremalarg{s_i}(\delta)$. Note that $\extremalarg{s_i}(\delta)$ is either empty or a singleton. We define the \defn{type} of $\delta$ to be
\[
  \calT_{\Phi}(\delta)\coloneqq\{i:\extremalarg{s_i}(\delta)\neq\emptyset\}.
\]
We show that this notion of type behaves similarly to the notion of the type of a puncture defined in Section~\ref{section:alpha}.

\begin{lemma}\label{lem:curves:mintype}
Let $n \geq 5$ and $m \geq 3$.  Let $\Phi\colon\braid{n}\to\braid{m}$ be an externally periodic and minimally typed homomorphism. Let $\delta\in\Delta(\Phi)$.  Then there is an $s_i \in \extgen{n}$ such that $\calT_{\Phi}(\delta) = \{i,i+1\}$.  Furthermore, $\calT_{\Phi}(p) = \calT_{\Phi}(\delta)$ for any $p \in \Int(\delta)$.
\end{lemma}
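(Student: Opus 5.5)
The plan is to compare the type of $\delta$ with the type of a puncture inside it, and then use the classification of puncture types in Theorem~\ref{thm:alpha:welltypedpuncture}. Since $\delta\in\Delta(\Phi)$, there are distinct $\gamma,\gamma'\in\extremal$ with $\delta\leq\gamma$ and $\delta\leq\gamma'$. Pick $s_i,s_j$ with $\gamma\in\extremalarg{s_i}$ and $\gamma'\in\extremalarg{s_j}$. These indices must be distinct: if $s_i=s_j$, then $\gamma,\gamma'$ would be distinct non-nested curves of the multicurve $\crs{\Phi(s_i)}^{\outmulti}$, so no essential curve could be interior to both. In particular $\calT_{\Phi}(\delta)$ contains at least two elements.

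Next fix any puncture $p\in\Int(\delta)$; such a puncture exists because $\delta$ is essential. Any $\epsilon\in\extremal(\delta)$ satisfies $\delta\leq\epsilon$, hence $p\in\Int(\epsilon)$, and so $\calT_{\Phi}(\delta)\subseteq\calT_{\Phi}(p)$. Because $\Phi$ is externally periodic and minimally typed, Theorem~\ref{thm:alpha:welltypedpuncture} says $\calT_{\Phi}(p)$ is either empty or equal to $\{k,k+1\}$ for some $k$. It has at least two elements, so it equals $\{k,k+1\}$. Combined with $|\calT_{\Phi}(\delta)|\geq2$, this forces $\calT_{\Phi}(\delta)=\calT_{\Phi}(p)=\{k,k+1\}$. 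This gives both claims at once, and the equality holds for every $p\in\Int(\delta)$ because $p$ was arbitrary.

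The step I expect to need the most care is the distinctness of the two indices, namely that a single $\extremalarg{s_i}$ cannot contain two curves sharing a common interior curve $\delta$. The argument relies on the curves in $\extremal$ being pairwise non-nested and, within one canonical reduction system, disjoint. Two disjoint, non-nested curves in $\disk{m}$ have disjoint interiors, so there is no essential curve interior to both. Even if the indices $i,j$ were a priori only determined up to the ambiguity of a curve lying in several $\extremalarg{s_\ell}$, the argument still goes through. The inclusion $\calT_{\Phi}(\delta)\subseteq\calT_{\Phi}(p)$ and the fact that $\calT_{\Phi}(p)$ has exactly two elements already pin everything down once we know $\calT_{\Phi}(\delta)$ contains two distinct residues, and that is exactly what the distinctness step supplies.
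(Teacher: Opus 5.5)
Your proof is correct, but it gets the adjacency of the two indices from a different place than the paper does.

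\textbf{The paper's route.} It takes $\gamma\in\extremalarg{s_i}$ and $\gamma'\in\extremalarg{s_j}$ and applies Lemma~\ref{lem:crs:commdisj}. If $[s_i,s_j]=1$, which includes the case $s_i=s_j$, then $\extremalarg{s_i}\cup\extremalarg{s_j}$ is a non-nested multicurve, so $\Delta(\gamma,\gamma')=\emptyset$. Hence $j=i\pm1$, which gives $\{i,i+1\}\subseteq\calT_\Phi(\delta)\subseteq\calT_\Phi(p)$, and minimal typedness closes the chain.

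\textbf{Your route.} You only show $i\neq j$. This needs nothing beyond the fact that a single $\extremalarg{s_i}$ is a non-nested multicurve. You then let the constraint on $\calT_\Phi(p)$ force the two indices to be adjacent.

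\textbf{A simplification.} You do not need the full classification in Theorem~\ref{thm:alpha:welltypedpuncture}. The definition of minimally typed already gives $\calT_\Phi(p)\subseteq\{k,k+1\}$ for some $k$. Together with $|\calT_\Phi(\delta)|\geq2$, this forces $\calT_\Phi(\delta)=\calT_\Phi(p)=\{k,k+1\}$. If you do cite the theorem, there is no circularity: it precedes this lemma and has the same hypotheses.

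\textbf{Comparison.} Your argument avoids Lemma~\ref{lem:crs:commdisj} and any commutation analysis. The paper's argument reads off the adjacent pair directly from the witnesses $\gamma,\gamma'$ of $\delta\in\Delta(\Phi)$, rather than from a puncture inside $\delta$. Both are equally short.
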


\begin{proof}
    Let $s_i,s_j\in\extgen{n}$, $\gamma\in\extremalarg{s_i}$, and $\gamma'\in\extremalarg{s_j}$ with $\gamma\neq\gamma'$ and $\delta\in\Delta(\gamma,\gamma')$. If $[s_i,s_j]=1$, then $\extremalarg{s_i}\cup\extremalarg{s_j}$ is a multicurve by Lemma~\ref{lem:crs:commdisj}, hence $\Delta(\gamma,\gamma')=\emptyset$. Therefore, without loss of generality we must have $j=i+1$, so $\{i,i+1\}\subseteq\calT_{\Phi}(\delta)$.

    Let $p\in\Int(\delta)$. By the definition of types we have $\calT_\Phi(\delta)\subseteq\calT_\Phi(p)$, and so
    \[
    \{i,i+1\} \subseteq \calT_\Phi(\delta) \subseteq \calT_\Phi(p).
    \]
    Since $\Phi$ is minimally typed, we must in fact have $\{i,i+1\}=\calT_\Phi(\delta)=\calT_\Phi(p)$, as required.
\end{proof} 
We let
\[
  \Delta(\Phi)_{i,i+1} = \bigcup_{\gamma \in \extremalarg{s_i}, \gamma' \in \extremalarg{s_{i+1}}} \Delta(\gamma, \gamma').
\]
As a consequence of Lemma~\ref{lem:curves:mintype} we have $\Delta(\Phi)_{i,i+1} = \left\{\delta \in \Delta(\Phi): \calT_{\Phi}(\delta) = \{i,i+1\}\right\}$.  Furthermore we have
\[
  \Delta(\Phi) = \bigcup_{s_i \in \extgen{n}}\Delta(\Phi)_{i,i+1}.
\]
We develop some more consequences of Lemma~\ref{lem:curves:mintype}.

\begin{lemma}\label{lem:curves:typeexterior}
Let $n \geq 5$ and $m \geq 3$.  Let $\Phi:\braid{n} \rightarrow \braid{m}$ be an externally periodic and minimally typed homomorphism.  Let $s_i \in \extgen{n}$ and $s_j\in\extgen{n}\setminus\{s_{i-1},s_i\}$. Then $\Delta(\Phi)_{i-1,i}$ is exterior to $\extremalarg{s_j}$.
\end{lemma}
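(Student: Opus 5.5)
Fix $\delta\in\Delta(\Phi)_{i-1,i}$ and $\gamma\in\extremalarg{s_j}$. We must show two things: that $\delta$ is disjoint from $\gamma$, and that $\delta$ is not strictly interior to $\gamma$, that is, $\delta<\gamma$ fails.

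Start with the setup. By definition there are $\gamma_{i-1}\in\extremalarg{s_{i-1}}$ and $\gamma_i\in\extremalarg{s_i}$ with $\delta\in\Delta(\gamma_{i-1},\gamma_i)$. By Lemma~\ref{lem:curves:mintype}, $\calT_\Phi(\delta)=\{i-1,i\}$, and every puncture $p\in\Int(\delta)$ also has $\calT_\Phi(p)=\{i-1,i\}$. Since $s_j\notin\{s_{i-1},s_i\}$, we have $j\notin\calT_\Phi(p)$ for such $p$.

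\emph{Non-nesting.} Suppose $\delta<\gamma$. Then any $p\in\Int(\delta)$ lies in $\Int(\gamma)$. Since $\gamma\in\extremalarg{s_j}$, this gives $j\in\calT_\Phi(p)$, a contradiction. The same argument rules out $\delta=\gamma$, although equality is already impossible here: if $\delta=\gamma$ were $\Phi$-maximal, then $\delta=\gamma_{i-1}=\gamma_i$ would lie in $\extremalarg{s_{i-1}}\cap\extremalarg{s_i}$, which is excluded by minimal typedness together with Lemma~\ref{lem:alpha:extremalcollision}.

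\emph{Disjointness.} This is the main point. Suppose $\delta$ intersects $\gamma$. As noted after the definition of $\Delta(\gamma,\gamma')$, any curve meeting $\delta$ must meet $\gamma_{i-1}$ or $\gamma_i$, so $\gamma$ meets one of them. If $[s_j,s_{i-1}]=1$, then $\gamma$ and $\gamma_{i-1}$ are disjoint by Lemma~\ref{lem:crs:commdisj}, and likewise $\gamma$ and $\gamma_i$ are disjoint if $[s_j,s_i]=1$. Since $j\notin\{i-1,i\}$, $s_j$ commutes with at least one of $s_{i-1},s_i$, and it commutes with both unless $j\in\{i-2,i+1\}$.

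\begin{itemize}
\item If $s_j$ commutes with both, $\gamma$ is disjoint from $\gamma_{i-1}$ and $\gamma_i$, hence from $\delta$, and we are done.
\item Take $j=i+1$. Then $\gamma$ is disjoint from $\gamma_{i-1}$, so it must meet $\gamma_i$. Since $\gamma$ is disjoint from $\gamma_{i-1}$ and both are $\Phi$-maximal, they are mutually exterior, so $\Int(\gamma)\cap\Int(\gamma_{i-1})=\emptyset$. But $\gamma$ meets $\delta$ and $\delta\leq\gamma_{i-1}$, so $\gamma$ enters the region inside $\gamma_{i-1}$. That region is disjoint from $\gamma$ except possibly via punctures, since $\gamma$ is disjoint from $\gamma_{i-1}$ and neither is nested in the other. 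Hence $\gamma$ cannot cross $\delta$, a contradiction.
\item Take $j=i-2$. The argument is symmetric, using $\gamma_i$ in place of $\gamma_{i-1}$.
\end{itemize}

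Making the last two cases rigorous is the main obstacle. The claim to pin down is that a curve disjoint from and exterior to $\gamma_{i-1}$ cannot intersect any curve $\delta\leq\gamma_{i-1}$. This holds because, after isotoping $\gamma$ off $\gamma_{i-1}$, $\gamma$ lies in the component of $\disk{m}\cut\gamma_{i-1}$ containing $\partial\disk{m}$, while $\delta$ can be isotoped into the interior component.

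With disjointness and non-nesting established, $\delta$ is exterior to $\extremalarg{s_j}$.
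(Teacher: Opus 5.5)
Your proof is correct, and its core is the paper's argument. Choose whichever of $s_{i-1},s_i$ commutes with $s_j$; the corresponding $\Phi$-maximal curve containing $\delta$ is then disjoint from $\gamma$ and not nested with it, so $\delta$ is exterior to $\gamma$. The paper gets non-nesting from this same observation, since disjoint non-nested curves have disjoint interiors, so your separate puncture-type argument via Lemma~\ref{lem:curves:mintype} and your preliminary case split are valid but not needed.
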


\begin{proof}
    Let $\delta\in\Delta(\Phi)_{i-1,i}$ and $\gamma_j\in\extremalarg{s_j}$. Since $n \geq 5$, we have $[s_j,s_i] = 1$ or $[s_j, s_{i-1}] = 1$.  Without loss of generality, assume that  $[s_j,s_i] = 1$.  Pick $\gamma_i \in \extremalarg{s_i}$ with $\delta \leq \gamma_i$.  Then $\gamma_j$ is exterior to $\gamma_i$ by Lemma~\ref{lem:crs:commdisj} and the definition of $\extremal$.  Since $\delta \leq \gamma_i$, we conclude that $\delta$ is exterior to $\gamma_j$, as desired.
\end{proof}

We start by studying the action of $\Phi(s_j)$ on $\Delta(\Phi)_{i,i+1}$.  

\begin{lemma}\label{lem:fixing:mcal1}
    Let $n \geq 5$ and $m \geq 3$.  Let $\Phi:\braid{n}\to \braid{m}$ be an externally periodic and minimally typed homomorphism. Let $s_i, s_j \in \extgen{n}$ such that $[s_j, s_i]=[s_j, s_{i+1}]=1$.  If $\delta \in \Delta(\Phi)$ has type $\calT_{\Phi}(\delta)=\{i, i+1\}$, then $\Phi(s_j)(\delta)\in \Delta(\Phi)$ and $\calT_{\Phi}(\Phi(s_j)(\delta))=\calT_{\Phi}(\delta)$. 
\end{lemma}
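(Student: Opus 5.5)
The plan is to realize $\delta$ as an element of some $\Delta(\gamma,\gamma')$ with $\gamma\in\extremalarg{s_i}$ and $\gamma'\in\extremalarg{s_{i+1}}$, and then transport everything by $\Phi(s_j)$. By Lemma~\ref{lem:curves:mintype} and the description $\Delta(\Phi)_{i,i+1}=\{\delta\in\Delta(\Phi):\calT_\Phi(\delta)=\{i,i+1\}\}$, there are such $\gamma,\gamma'$ with $\gamma\neq\gamma'$ and $\delta\in\Delta(\gamma,\gamma')$. Since $[s_j,s_i]=1$ and $[s_j,s_{i+1}]=1$, Lemma~\ref{lem:alpha:extremalaction} gives $\Phi(s_j)(\gamma)\in\extremalarg{s_i}$ and $\Phi(s_j)(\gamma')\in\extremalarg{s_{i+1}}$. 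These two curves are distinct because $\Phi(s_j)$ acts bijectively on curves.

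Next I would use that $\Phi(s_j)$ is a mapping class preserving the partial order $\leq$ on curves in both directions. It therefore carries the set $\{\epsilon:\epsilon\leq\gamma\}\cap\{\epsilon:\epsilon\leq\gamma'\}$ bijectively onto the corresponding set for $\Phi(s_j)(\gamma)$ and $\Phi(s_j)(\gamma')$. It also carries maximal elements to maximal elements, which gives
\[
\Phi(s_j)\bigl(\Delta(\gamma,\gamma')\bigr)=\Delta\bigl(\Phi(s_j)(\gamma),\Phi(s_j)(\gamma')\bigr).
\]
Hence $\Phi(s_j)(\delta)\in\Delta(\Phi)_{i,i+1}\subseteq\Delta(\Phi)$.

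For the type, I would again invoke Lemma~\ref{lem:curves:mintype}. Since $\Phi(s_j)(\delta)\in\Delta(\Phi)$ lies below a curve of $\extremalarg{s_i}$ and a curve of $\extremalarg{s_{i+1}}$, we get $\{i,i+1\}\subseteq\calT_\Phi(\Phi(s_j)(\delta))$. That lemma also says the type is exactly a pair of consecutive indices, so $\calT_\Phi(\Phi(s_j)(\delta))=\{i,i+1\}=\calT_\Phi(\delta)$.

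The only potentially delicate point is the step $\Phi(s_j)(\gamma)\in\extremal$, i.e.\ that $\Phi$-maximality is preserved. This is exactly what Lemma~\ref{lem:alpha:extremalaction} provides, because $s_j$ commutes with both $s_i$ and $s_{i+1}$. Everything else is naturality of $\leq$ and of the operation $(\cdot)^{\outmulti}$ under mapping classes, so I expect no real obstacle.
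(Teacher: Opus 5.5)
Your proposal is correct and takes the same approach as the paper. Lemma~\ref{lem:alpha:extremalaction} shows that $\Phi(s_j)$ preserves $\extremalarg{s_i}$ and $\extremalarg{s_{i+1}}$ setwise, so $\Phi(s_j)$ stabilizes $\Delta(\Phi)_{i,i+1}=\{\delta\in\Delta(\Phi):\calT_\Phi(\delta)=\{i,i+1\}\}$; your argument just spells out the naturality of $\leq$ and $(\cdot)^{\outmulti}$ that the paper leaves implicit.
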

\begin{proof}
    If $[s_j,s_i]=[s_j,s_{i+1}]=1$ then $\Phi(s_j)$ fixes both $\extremalarg{s_i}$ and $\extremalarg{s_{i+1}}$ setwise by Lemma~\ref{lem:alpha:extremalaction}. As a consequence, $\Phi(s_j)$ stabilizes the set of curves $\{\delta\in \Delta(\Phi):\;\calT_{\Phi}(\delta)=\{i, i+1\}\}$.
\end{proof}

\p{Reducing system} Suppose  that $\Phi:\braid{n} \rightarrow \braid{m}$ is an externally periodic and minimally typed homomorphism with $n \geq 5$ and $m \geq 3$.  Let $\delta \in \Delta(\Phi)_{i,i-1}$.  Consider the disconnected surface
\[
  \Sigma = \disk{m} \cut \left(\crs{\Phi(s_{i-2})} \cup \crs{\Phi(s_{i+1})} \cup \Delta(\Phi)_{i-1,i}\right).
\]
Let $\mu_\delta\subset\disk{m}$ be denote the unique curve (if it exists) satisfying the following:
\begin{itemize}
    \item $\mu_{\delta}$ is isotopic to a boundary component of $\partial \Sigma$;
    \item $\mu_{\delta}$ is exterior to $\crs{\Phi(s_{i-2})}$ and $\crs{\Phi(s_{i+1})}$;
    \item $\mu_{\delta} \geq \delta$; and
    \item $\mu_{\delta}$ is minimal over all curves that satisfy the above three properties.
\end{itemize} See Figure~\ref{fig:calWexample} for an example.  Note that \emph{a priori} $\mu_{\delta}$ might not exist.  In particular, we might have $\mu_{\delta}$ inessential by virtue of being homotopic to $\partial \disk{m}$.  We verify that this does not occur in the proof of Lemma~\ref{lem:curves:mumulticurve}.

\p{Alternative description of $\mu_{\delta}$} One can also think of $\mu_\delta$ in the following way.  Let $\delta \in \Delta(\Phi)_{i-1,i}$ for some $s_i \in \extgen{n}$.  Consider the finite set
\[
  K_{i-1,i}\coloneqq \crs{\Phi(s_{i-2})} \cup \Delta(\Phi)_{i-1,i} \cup \crs{\Phi(s_{i+1})}.
\]
Let $\calC_{\delta} \subseteq K_{i-1,i}$ denote the minimal subset of $K_{i-1,i}$ such that the following hold:
\begin{itemize}
    \item $\delta \in \calC_{\delta}$; 
    \item if $\epsilon \in \calC_{\delta}$ and $\epsilon' \in K_{i-i,i}$ satisfies $\epsilon \leq \epsilon'$ or $\epsilon' \leq \epsilon$ then $\epsilon' \in \calC_{\delta}$; and
    \item if $\epsilon \in\calC_{\delta}$ and $\epsilon' \in K_{i-1,i}$ such that $\epsilon$ intersects $\epsilon'$ then $\epsilon' \in \calC_{\delta}$.
\end{itemize} Then $\mu_{\delta}$ is the minimal curve (if it exists) that satisfies:
\begin{itemize}
    \item $\mu_{\delta} \geq \delta$; and
    \item $\mu_{\delta}$ is isotopic to a boundary component of $\disk{m} \cut \calC_{\delta}$.
\end{itemize}Note that we  have $\mu_{\delta} = \mu_{\delta'}$ for $\delta, \delta' \in \Delta(\Phi)$ with $\calC_{\delta} = \calC_{\delta'}$.  

\begin{figure}[ht]
    \centering
    \begin{tikzpicture}
        \node at (0,0)[anchor = south west]{\includegraphics[scale=0.6]{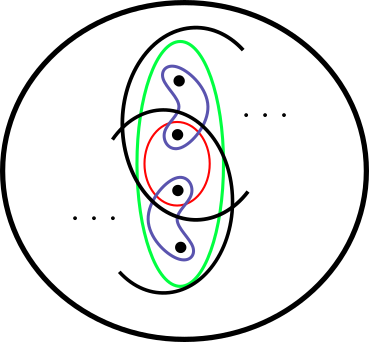}};
        \node at (3.5,1) {$\gamma_1$};
        \node at (1.9, 4) {$\gamma_2$};
        \node at (4.1, 4) {{\color{green}$\mu_{\delta}$}};
        \node at (3, 3) {{\color{red}$\delta$}};
        \node at (3.2, 4) {{\color{blue}$\epsilon_n$}};
        \node at (2.85, 2) {{\color{blue}$\epsilon_3$}};
    \end{tikzpicture}
    \caption{If {\color{red}$\delta$} $\in \Delta(\Phi)_{1,2}$ and {\color{blue} $\epsilon_n$} $\in \crs{\Phi(s_n)}$ and {\color{blue} $\epsilon_3$} $\in \crs{\Phi(s_3)}$ then {\color{green} $\mu_\delta$} $\in \calW(\Phi)_{1,2}$.  We have $\gamma_1 \in \extremalarg{s_1}$ and $\gamma_2\in \extremalarg{s_2}$}\label{fig:calWexample}
\end{figure} 

Let
\[
  \calW(\Phi)_{i,i-1} = \{\mu_\delta: \delta \in \Delta(\Phi)_{i,i-1}\}
\]
and let
\[
  \calW(\Phi) = \bigsqcup_{s_i \in \extgen{n}} \calW(\Phi)_{i,i-1}.
\]
Note that the set $\calW(\Phi)_{i,i-1}$ is finite since it is a subset of the set boundary components of a surface.  We will show that if $\Delta(\Phi) \neq \emptyset$ then $\calW(\Phi)$ is a reducing system for $\Phi$.  We begin with the following result.

\begin{lemma}\label{lem:curves:intersectioninterior}
Let $n \geq 5$ and $m \geq 3$.  Let $\Phi:\braid{n}\rightarrow\braid{m}$ be an externally periodic and minimally typed homomorphism. Let $s_i\in\extgen{n}$ and suppose there is some $\delta\in\Delta(\Phi)_{i-1,i}$. Suppose further that there is some $s_j\in\extgen{n}$ and $\epsilon\in\crs{\Phi(s_j)}$ such that $\epsilon$ intersects $\delta$. If $[s_i,s_j]=1$, then $s_j\in\{s_{i-2},s_i\}$ and there is a $\gamma_i\in\extremalarg{s_i}$ such that $\epsilon\leq\gamma_i$. Similarly, if $[s_{i-1},s_j]=1$, then $s_j\in\{s_{i-1},s_{i+1}\}$ and there is a $\gamma_{i-1}\in\extremalarg{s_{i-1}}$ such that $\epsilon\leq\gamma_{i-1}$.
\end{lemma}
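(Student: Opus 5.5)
We handle the case $[s_i,s_j]=1$; the case $[s_{i-1},s_j]=1$ is symmetric, exchanging the roles of $s_i$ and $s_{i-1}$. Fix $\delta\in\Delta(\Phi)_{i-1,i}$. By Lemma~\ref{lem:curves:mintype} we have $\calT_\Phi(\delta)=\{i-1,i\}$, so there are $\gamma_{i-1}\in\extremalarg{s_{i-1}}$ and $\gamma_i\in\extremalarg{s_i}$ with $\delta\leq\gamma_{i-1}$ and $\delta\leq\gamma_i$. Recall that $\delta$ bounds a component of $\disk{m}\cut(\gamma_{i-1}\cup\gamma_i)$. Hence any curve intersecting $\delta$ must intersect $\gamma_{i-1}$ or $\gamma_i$.

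Since $[s_i,s_j]=1$, Lemma~\ref{lem:crs:commdisj} says that $\epsilon$ is disjoint from $\crs{\Phi(s_i)}\ni\gamma_i$. So $\epsilon$ must intersect $\gamma_{i-1}$. In particular $[s_j,s_{i-1}]\neq 1$, and $s_j\neq s_{i-1}$ because $\crs{\Phi(s_{i-1})}$ is a multicurve. This leaves $s_j\in\{s_{i-2},s_i\}$; here we use $s_j\neq s_{i+1}$, which holds because $s_{i+1}$ does not commute with $s_i$.

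Next we place $\epsilon$ relative to $\gamma_i$. The curves $\epsilon$ and $\gamma_i$ are disjoint, and $\epsilon$ meets $\delta\leq\gamma_i$. A curve disjoint from $\gamma_i$ lies either inside or outside $\gamma_i$, and if it were outside (or exterior to) $\gamma_i$ it could not meet $\delta$, which lies in the interior region of $\gamma_i$. Therefore $\epsilon<\gamma_i$, or $\epsilon=\gamma_i$ when $s_j=s_i$. This gives $\epsilon\leq\gamma_i$.

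The main point to check is this last topological step. Since $\epsilon$ and $\gamma_i$ are disjoint, $\epsilon$ lies in one component of $\disk{m}\cut\gamma_i$. The curve $\delta$ lies in the inner component, since $\delta\leq\gamma_i$, and $\epsilon$ has positive intersection with $\delta$. So $\epsilon$ also lies in the inner component, and therefore $\epsilon\leq\gamma_i$.
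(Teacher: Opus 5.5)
Your proof is correct and is essentially the paper's argument. Lemma~\ref{lem:crs:commdisj} makes $\epsilon$ disjoint from $\gamma_i$, so meeting $\delta\leq\gamma_i$ forces $\epsilon<\gamma_i$; and since any curve meeting $\delta$ must meet $\gamma_{i-1}$ or $\gamma_i$, you get $[s_j,s_{i-1}]\neq 1$, i.e.\ $s_j\in\{s_{i-2},s_i\}$, which the paper phrases as a contrapositive. Two minor points: the remark about $s_{i+1}$ is unnecessary, and $\epsilon=\gamma_i$ can never occur because $\epsilon$ meets $\delta\leq\gamma_i$, so in fact $\epsilon<\gamma_i$ strictly.
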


\begin{proof}
    Without loss of generality, assume $[s_i,s_j]=1$. Let $\gamma_i \in \extremalarg{s_i}$ such that $\delta \leq \gamma_i$. Since $[s_i,s_j] = 1$,  we have $\epsilon$ disjoint from $\crs{\Phi(s_i)}$ by Lemma~\ref{lem:crs:commdisj}. In particular, $\epsilon$ is disjoint from $\gamma_i$, so the fact that $\epsilon$ intersects $\delta$ implies $\epsilon \leq \gamma_i$.  Furthermore, if $s_j \notin \{s_{i -2},s_i\}$, then $[s_j, s_{i-1}] = 1$ so $\epsilon$ would be disjoint from $\crs{\Phi(s_{i-1})}$ by Lemma~\ref{lem:crs:commdisj}.  In particular $\epsilon$ would be disjoint from $\extremalarg{s_{i-1}}$ and $\extremalarg{s_i}$, so we would have $\epsilon$ disjoint from $\delta$, a contradiction.
\end{proof}  We now combine the previous results to prove the following.

\begin{lemma}\label{lem:curves:muexterior}
Let $n \geq 5$ and $m \geq 3$.  Let $\Phi:\braid{n} \rightarrow \braid{m}$ be an externally periodic and minimally typed homomorphism. Let $s_i\in\extgen{n}$ and $s_j\in\extgen{n}\setminus\{s_{i-1},s_i\}$. Then $\calW(\Phi)_{i-1,i}$ is exterior to $\extremalarg{s_j}$.
\end{lemma}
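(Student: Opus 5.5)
Fix $\mu=\mu_\delta\in\calW(\Phi)_{i-1,i}$ with $\delta\in\Delta(\Phi)_{i-1,i}$, and fix $\gamma_j\in\extremalarg{s_j}$. The plan is to show two things: that $\mu$ is disjoint from $\gamma_j$, and that $\mu<\gamma_j$ is impossible. Together these say that $\mu$ is exterior to $\extremalarg{s_j}$, as the statement requires. The natural tool is the alternative description of $\mu_\delta$ as the minimal boundary curve $\geq\delta$ of $\disk{m}\cut\calC_\delta$. The set $\calC_\delta$ is the connected closure of $\delta$ in $K_{i-1,i}=\crs{\Phi(s_{i-2})}\cup\Delta(\Phi)_{i-1,i}\cup\crs{\Phi(s_{i+1})}$ under nesting and intersection. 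It therefore suffices to control every curve of $\calC_\delta$ relative to $\gamma_j$.

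\textbf{Step 1: locate $\calC_\delta$ inside $\extremalarg{s_{i-1}}\cup\extremalarg{s_i}$.} I will prove, by induction along the chains defining $\calC_\delta$, that every $\epsilon\in\calC_\delta$ satisfies $\epsilon\leq\gamma_{i-1}$ or $\epsilon\leq\gamma_i$ for some $\gamma_{i-1}\in\extremalarg{s_{i-1}}$ or $\gamma_i\in\extremalarg{s_i}$ whose interior meets the original configuration of $\delta$.

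For $\epsilon\in\Delta(\Phi)_{i-1,i}$ this holds by definition of $\Delta$.

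For $\epsilon\in\crs{\Phi(s_{i-2})}$ we have $[s_{i-2},s_i]=1$, so $\epsilon$ is disjoint from $\crs{\Phi(s_i)}$ by Lemma~\ref{lem:crs:commdisj}. If $\epsilon$ intersects some curve already known to lie interior to $\gamma_i$, then, as in Lemma~\ref{lem:curves:intersectioninterior}, $\epsilon\leq\gamma_i$. If instead $\epsilon$ is nested with such a curve, then either $\epsilon$ is interior to it, or $\epsilon>$ it. In the latter case $\epsilon$ is nested with $\gamma_i$, and $\epsilon\leq\gamma_i$ follows from $\Phi$-maximality of $\gamma_i$.

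The case $\epsilon\in\crs{\Phi(s_{i+1})}$ is symmetric, using $[s_{i+1},s_{i-1}]=1$ and $\gamma_{i-1}$.

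The subtle point is a curve of $\crs{\Phi(s_{i-2})}$ that intersects a curve interior to $\gamma_{i-1}$ rather than $\gamma_i$. I expect to handle it by noting that such a curve must intersect $\gamma_{i-1}$ or lie inside it. If it intersects $\gamma_{i-1}$, I would pass to the $\Phi$-maximal curve in $\extremalarg{s_{i-2}}$ above it and use Lemma~\ref{lem:curves:intersectioninterior} again. Consequently the whole union $\bigcup\calC_\delta$ lies in the region bounded by curves of $\extremalarg{s_{i-1}}\cup\extremalarg{s_i}$, and $\mu_\delta$ is bounded above by some $\gamma_{i-1}$ or $\gamma_i$ containing $\delta$, or at least by the boundary of the union of such a pair.

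\textbf{Step 2: conclude exteriority.} By Lemma~\ref{lem:curves:typeexterior}, $\delta$ is exterior to $\extremalarg{s_j}$. Since $n\geq5$, $s_j$ commutes with $s_{i-1}$ or with $s_i$; say with $s_i$. Then $\gamma_j$ is disjoint from $\crs{\Phi(s_i)}$ and mutually exterior to the curves of $\extremalarg{s_i}$, because both are $\Phi$-maximal and distinct by Lemma~\ref{lem:alpha:extremalcollision} and minimal typing. If $\gamma_j$ intersected some $\epsilon\in\calC_\delta$ that is interior to $\gamma_i$, then $\gamma_j$ would intersect $\gamma_i$, which is a contradiction. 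Curves of $\calC_\delta$ interior to $\gamma_{i-1}$ are handled the same way, unless $s_j\in\{s_{i-2},s_{i+1}\}$. In that case $\gamma_j$ is $\Phi$-maximal while the curves of $\crs{\Phi(s_j)}$ appearing in $\calC_\delta$ lie strictly inside $\gamma_{i-1}$ or $\gamma_i$. Hence $\gamma_j$ is not itself in $\calC_\delta$ and cannot intersect it, since otherwise types of punctures would violate Theorem~\ref{thm:alpha:welltypedpuncture}.

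It follows that $\gamma_j$ is disjoint from $\calC_\delta$ and lies in a complementary component. Because $\gamma_j$ is a $\Phi$-maximal curve not containing $\delta$ (as $j\notin\calT_\Phi(\delta)$ by Lemma~\ref{lem:curves:mintype}), it cannot lie strictly above the minimal boundary curve $\mu_\delta$ without containing $\delta$. This gives $\mu_\delta\not<\gamma_j$.

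\textbf{Main obstacle.} I expect the main difficulty to be Step 1: the closure $\calC_\delta$ can chain through intersections that alternate between $\crs{\Phi(s_{i-2})}$ and $\crs{\Phi(s_{i+1})}$. One then has to ensure the chain never escapes above $\gamma_{i-1}\cup\gamma_i$. The place to spend care is the case where a curve of $\crs{\Phi(s_{i-2})}$ meets a curve interior to $\gamma_{i-1}$.
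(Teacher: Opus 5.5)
Your overall strategy matches the paper's. You show that the curves building $\mu_\delta$ avoid each $\gamma_j\in\extremalarg{s_j}$ by locating them inside curves of $\extremalarg{s_{i-1}}\cup\extremalarg{s_i}$, using the argument of Lemma~\ref{lem:curves:intersectioninterior}, and then using that $s_j$ commutes with $s_{i-1}$ or with $s_i$. Your use of Lemma~\ref{lem:curves:typeexterior} on $\delta$ to rule out $\mu_\delta<\gamma_j$ is also fine. What is missing is the one fact that makes both delicate spots work. Since $n\geq5$, we have $[s_{i-2},s_{i+1}]=1$, so $\crs{\Phi(s_{i-2})}\cup\crs{\Phi(s_{i+1})}$ is a multicurve by Lemma~\ref{lem:crs:commdisj}. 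Without this fact, one of your delicate steps is left unresolved and the other is wrongly justified.

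\textbf{Step 1.} The ``subtle point'' you single out cannot occur. Inside $K_{i-1,i}$, a curve of $\crs{\Phi(s_{i-2})}$ can only cross curves of $\Delta(\Phi)_{i-1,i}$. Each of these also lies inside some $\gamma_i\in\extremalarg{s_i}$, so Lemma~\ref{lem:curves:intersectioninterior} applies directly, exactly as in the paper. Your proposed fix would not have worked anyway. Passing to a curve of $\extremalarg{s_{i-2}}$ above the offending curve places it inside $\gamma_{i-2}$, not inside a curve of $\extremalarg{s_{i-1}}\cup\extremalarg{s_i}$. Your induction also does not cover some nesting steps. For example, a curve of $\crs{\Phi(s_{i-2})}$ may contain a curve of $\crs{\Phi(s_{i+1})}$ that is only known to lie inside some $\gamma_{i-1}$. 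The paper avoids running through the whole closure $\calC_\delta$ by working only with the curves of $\crs{\Phi(s_{i-2})}\cup\crs{\Phi(s_{i+1})}$ that cross $\Delta(\Phi)_{i-1,i}$.

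\textbf{Step 2.} Your justification of the exceptional case $s_j\in\{s_{i-2},s_{i+1}\}$ via puncture types does not work. Nothing in Theorem~\ref{thm:alpha:welltypedpuncture} prevents $\gamma_{i-2}$ from crossing a curve lying inside $\gamma_{i-1}$; indeed $\gamma_{i-1}$ itself may cross $\gamma_{i-2}$. The correct reason is commutation. For $s_j=s_{i-2}$:
\begin{itemize}
\item $\gamma_j$ is disjoint from $\crs{\Phi(s_{i-2})}$, since it belongs to that canonical reduction system;
\item $\gamma_j$ is disjoint from $\crs{\Phi(s_{i+1})}$, since $s_{i-2}$ and $s_{i+1}$ commute;
\item $\gamma_j$ is exterior to $\Delta(\Phi)_{i-1,i}$ by Lemma~\ref{lem:curves:typeexterior}.
\end{itemize}
The case $s_j=s_{i+1}$ is symmetric. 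This is what the paper does. A curve of $\crs{\Phi(s_{i+1})}$ crossing $\Delta(\Phi)_{i-1,i}$ is disjoint from $\extremalarg{s_{i-2}}$ by this commutation. It cannot lie inside $\extremalarg{s_{i-2}}$, because it crosses $\extremalarg{s_i}$, which is mutually exterior to $\extremalarg{s_{i-2}}$.
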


\begin{proof}
  Let $\mu\in\calW(\Phi)_{i-1,i}$. Recall $\mu$ is by definition isotopic to a boundary component of
  \[
    \Sigma \coloneqq \disk{m} \cut \left(\Delta(\Phi)_{i-1,i} \cup \crs{\Phi(s_{i-2})} \cup \crs{\Phi(s_{i+1})}\right).
  \]
  Furthermore, $\mu \geq \delta$ for some $\delta \in \Delta(\Phi)_{i-1,i}$.  Now, $\Delta(\Phi)_{i-1,i}$ is exterior to $\extremalarg{s_j}$ by Lemma~\ref{lem:curves:typeexterior}.  It therefore suffices to show that if $\epsilon \in \crs{\Phi(s_{i-2})} \cup \crs{\Phi(s_{i+1})}$ is chosen such that $\epsilon$ intersects $\Delta(\Phi)_{i-1,i}$, then $\epsilon$ is also exterior to $\extremalarg{s_j}$.  We prove this for $\epsilon \in \crs{\Phi(s_{i+1})}$ and then explain how to change the argument for $\epsilon_{i-2}$.

    Let $\epsilon_{i+1} \in \crs{\Phi(s_{i+1})}$ such that $\epsilon_{i+1}$ intersects $\Delta(\Phi)_{i-1,i}$.  We have $[s_{i-1}, s_{i+1}] = 1$, so by Lemma~\ref{lem:curves:intersectioninterior}, $\epsilon_{i+1}$ is interior to some $\gamma_{i-1} \in \extremalarg{s_{i-1}}$.   We first deal with the case that $s_j = s_{i-2}$.  Since $n \geq 5$ we also have $[s_{i+1}, s_{i-2}]$ so $\epsilon_{i+1}$ is disjoint from $\extremalarg{s_{i-2}}$.  Now, since $\extremalarg{s_{i-2}}$ and $\extremalarg{s_i}$ are disjoint by Lemma~\ref{lem:crs:commdisj} and $\extremalarg{s_i} \cap \extremalarg{s_j} = \emptyset$ by Lemma~\ref{lem:alpha:extremalcollision}, we see that if $\epsilon_{i+1}$ were interior to $\extremalarg{s_{i-2}}$ then $\epsilon_{i+1}$ could not intersect $\extremalarg{s_i}$.  Therefore $\epsilon_{i+1}$ is exterior to $\extremalarg{s_{i-2}}$.  It remains to deal with the case that $s_j \neq s_{i-2}$.  Since $[s_{i-1},s_j] =  1$ for all such $s_j$ we have $\gamma_{i-1}$ disjoint from $\extremalarg{s_j}$ by Lemma~\ref{lem:crs:commdisj}.  The fact that $\Phi$ is minimally typed implies that $\gamma_{i-1} \not \in \extremalarg{s_j}$. In particular, $\epsilon_{i+1} \leq \gamma_{i-1}$ implies that $\epsilon_{i+1}$ is exterior to $\extremalarg{s_j}$, as desired.

    For $s_{i-2}$,  make the following changes in the above paragraph:
    \begin{itemize}
        \item $s_{i-2}$ replaces $s_{i+1}$;
        \item $s_{i-1}$ replaces $s_{i}$
        \item $s_{i}$ replaces $s_{i-1}$; and
        \item $s_{i+1}$ replaces $s_{i-2}$.
    \end{itemize}  The same argument as above then holds.
\end{proof} 
We provide conditions ensuring curves in $\crs{\Phi(s_{i-2})}$ and $\crs{\Phi(s_{i+2})}$ are disjoint. 

\begin{lemma}\label{lem:curves:munequals5}
    Let $n \geq 5$ and $m \geq 3$.  Let $\Phi:\braid{n} \rightarrow \braid{m}$ be an externally periodic and minimally typed homomorphism.  Let $s_i \in \extgen{n}$ and $\gamma_i \in \extremalarg{s_i}$.  Suppose there are $\epsilon_{i-2} \in \crs{\Phi(s_{i-2})}$ and $\epsilon_{i+2} \in \crs{\Phi(s_{i+2})}$ such that:
    \begin{itemize}
        \item $\epsilon_{i-2}$ and $\epsilon_{i+2}$ are interior to $\gamma_i$; 
        \item $\epsilon_{i-2}$ intersects $\extremalarg{s_{i-1}}$; and
        \item $\epsilon_{i+2}$ intersects $\extremalarg{s_{i+1}}$.
    \end{itemize} Then $\epsilon_{i-2}$ and $\epsilon_{i+2}$ are distinct and mutually exterior.
\end{lemma}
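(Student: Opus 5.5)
The plan splits into three parts: distinctness, disjointness, and then mutual exteriority. Distinctness and exteriority should follow directly from commutation relations and the maximality of $\Phi$-maximal curves. Disjointness is automatic for $n\geq6$ and is the real issue when $n=5$.

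\emph{Distinctness.} Since $n\geq5$, the indices $i+2$ and $i-1$ are not adjacent modulo $n$: they differ by $3$, which for $n=5$ is the same as differing by $2$. Hence $[s_{i+2},s_{i-1}]=1$. By Lemma~\ref{lem:crs:commdisj}, every curve of $\crs{\Phi(s_{i+2})}$ is disjoint from $\crs{\Phi(s_{i-1})}\supseteq\extremalarg{s_{i-1}}$. In particular $\epsilon_{i+2}$ is disjoint from $\extremalarg{s_{i-1}}$, whereas $\epsilon_{i-2}$ intersects it, so $\epsilon_{i-2}\neq\epsilon_{i+2}$. Symmetrically, $[s_{i-2},s_{i+1}]=1$, so $\epsilon_{i-2}$ is disjoint from $\extremalarg{s_{i+1}}$.

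\emph{Mutual exteriority, assuming disjointness.} Suppose $\epsilon_{i-2}$ and $\epsilon_{i+2}$ are disjoint and, for a contradiction, $\epsilon_{i-2}<\epsilon_{i+2}$. Choose $\gamma_{i-1}\in\extremalarg{s_{i-1}}$ meeting $\epsilon_{i-2}$.
\begin{itemize}
\item Then $\gamma_{i-1}$ meets the interior region of $\epsilon_{i+2}$.
\item Since $\gamma_{i-1}$ is disjoint from $\epsilon_{i+2}$ by the previous paragraph, $\gamma_{i-1}$ lies inside $\epsilon_{i+2}$, i.e. $\gamma_{i-1}<\epsilon_{i+2}$ (they are distinct, as one meets $\epsilon_{i-2}$ and the other does not).
\item This contradicts the $\Phi$-maximality of $\gamma_{i-1}$, because $\epsilon_{i+2}\in\crs{\Phi(s_{i+2})}$.
\end{itemize}
The case $\epsilon_{i+2}<\epsilon_{i-2}$ is symmetric, using $\extremalarg{s_{i+1}}$.

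\emph{Disjointness.} For $n\geq6$ we have $[s_{i-2},s_{i+2}]=1$, so Lemma~\ref{lem:crs:commdisj} gives disjointness immediately. The main obstacle is $n=5$, where $s_{i-2}=s_{i+3}$ and $s_{i+2}$ satisfy the braid relation rather than commuting. Here my first attempt would use the puncture types from Theorem~\ref{thm:alpha:welltypedpuncture}.
\begin{itemize}
\item Since $\epsilon_{i-2}\leq\gamma_i$ and $\epsilon_{i-2}$ crosses some $\gamma_{i-1}$ while staying disjoint from $\extremalarg{s_{i+1}}$, its interior contains punctures of type $\{i-1,i\}$ and punctures of type $\{i,i+1\}$ (minimal typedness forces the latter for punctures inside $\gamma_i$ but outside $\extremalarg{s_{i-1}}$); similarly for $\epsilon_{i+2}$.
\item By Theorem~\ref{thm:alpha:welltypedpuncture}, $\Phi(s_{i-2})^{\pm1}$ preserves both types, since $i-2\notin\{i-1,i,i+1\}$, and it also preserves $\extremalarg{s_i}$ and $\extremalarg{s_{i+1}}$ by Lemma~\ref{lem:alpha:extremalaction}.
\item If $\epsilon_{i+2}$ met $\epsilon_{i-2}$, Lemma~\ref{lem:crs:infdisj} would make the $\Phi(s_{i-2})$-orbit of $\epsilon_{i+2}$ infinite.
\end{itemize}
I would try to contradict this infinite orbit as follows. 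Apply Lemma~\ref{lem:alpha:exterioragreement} to the commuting pair $s_{i-2},s_{i+1}$ to replace the action of $\Phi(s_{i-2})$ on the relevant curves by that of $\Phi(s_{i+1})$, and control the latter using the structure of $\crs{\Phi(s_{i+1})}$ inside $\gamma_i$. Failing that, the fallback is to conjugate by $\Phi(s_{i+2}s_{i+3})$ via Lemma~\ref{lem:crs:functorial} and the braid relation, transporting $\epsilon_{i+2}$ into $\crs{\Phi(s_{i+3})}$ and comparing positions relative to $\gamma_i$. This $n=5$ disjointness step is the one I expect to require the most work.
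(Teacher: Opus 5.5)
Your distinctness argument and your exclusion of the two nested configurations are correct. In the nested case, a curve $\gamma_{i-1}\in\extremalarg{s_{i-1}}$ that crosses $\epsilon_{i-2}$ but is disjoint from $\epsilon_{i+2}$ is forced inside $\epsilon_{i+2}$, contradicting $\Phi$-maximality. This is more direct than the paper's argument, which shows every puncture inside $\epsilon_{i+2}$ has type $\{i,i+1\}$.

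However, the proposal does not prove the lemma. Its only nontrivial content is disjointness of $\epsilon_{i-2}$ and $\epsilon_{i+2}$ when $n=5$, where $s_{i-2}=s_{i+3}$ does not commute with $s_{i+2}$, and you leave this as a list of attempts. Your first attempt cannot succeed. Lemma~\ref{lem:alpha:exterioragreement} for the pair $s_{i-2},s_{i+1}$ only controls curves exterior to $\crs{\Phi(s_{i-2})}^{\outmulti}\cup\crs{\Phi(s_{i+1})}^{\outmulti}$, and $\epsilon_{i+2}$ crosses $\extremalarg{s_{i+1}}$ by hypothesis. Worse, for that same reason Lemma~\ref{lem:crs:infdisj} already makes the $\Phi(s_{i+1})$-orbit of $\epsilon_{i+2}$ infinite. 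So trading $\Phi(s_{i-2})$ for $\Phi(s_{i+1})$ can never produce the finite orbit you need. The conjugation by $\Phi(s_{i+2}s_{i+3})$ is not developed into an argument. A symptom of the gap is that you never use the hypothesis that $\epsilon_{i\pm2}$ lie inside $\gamma_i$.

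The missing idea is to move both curves by the single braid $\Phi(s_{i-1}^{-1})$ and show that one image is forced outside $\extremalarg{s_i}$ while the other stays inside it.
\begin{enumerate}
\item First reduce to the case that $\epsilon_{i\pm2}$ is outermost in $\crs{\Phi(s_{i\pm2})}$. An outermost curve above $\epsilon_{i-2}$ is still $\leq\gamma_i$ and still crosses $\extremalarg{s_{i-1}}$. This makes Lemma~\ref{lem:alpha:exterioragreement} applicable to these curves. Note also that $\epsilon_{i-2}$ is exterior to $\crs{\Phi(s_{i+1})}$ and $\epsilon_{i+2}$ is exterior to $\crs{\Phi(s_{i-1})}$.
\item Put $g=s_{i-1}^{-1}s_{i+1}$; then $gs_ig^{-1}$ is a half-twist commuting with $s_i$. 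Let $\gamma_i'=\Phi(s_{i-2}^{-1})(\gamma_i)\in\extremalarg{s_i}$. By Theorem~\ref{thm:alpha:welltypedpuncture}, every puncture inside $\Phi(g)(\gamma_i')\in\crs{\Phi(gs_ig^{-1})}$ has type $\{i+1,i+2\}$ or $\{i-2,i-1\}$. Together with Lemma~\ref{lem:crs:commdisj}, this makes $\Phi(g)(\gamma_i')$ disjoint from, and not nested with, every curve of $\extremalarg{s_i}$.
\item Exterior agreement for the commuting pair $s_{i-2},s_{i+1}$ gives $\Phi(s_{i+1})\Phi(s_{i-2}^{-1})(\epsilon_{i-2})=\epsilon_{i-2}$. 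Hence $\Phi(s_{i-1}^{-1})(\epsilon_{i-2})=\Phi(g)\Phi(s_{i-2}^{-1})(\epsilon_{i-2})\leq\Phi(g)(\gamma_i')$.
\item Exterior agreement for the commuting pair $s_{i-1},s_{i+2}$ gives $\Phi(s_{i-1}^{-1})(\epsilon_{i+2})=\Phi(s_{i+2}^{-1})(\epsilon_{i+2})$. This curve is still interior to $\extremalarg{s_i}$ by Lemma~\ref{lem:alpha:extremalaction}; this is where $\epsilon_{i+2}\leq\gamma_i$ is used.
\end{enumerate}
If $\epsilon_{i-2}$ and $\epsilon_{i+2}$ crossed, these two images would cross. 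That is impossible, since one lies inside a curve non-nested with $\extremalarg{s_i}$ and the other lies inside a curve of $\extremalarg{s_i}$.
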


\begin{proof}
    If $n \geq 6$ then $[s_{i-2}, s_{i+2}] = 1$ so the result follows immediately from Lemma~\ref{lem:crs:commdisj}.  It therefore suffices to prove this in the case that $n = 5$.  Note that the following proof works for all $n \geq 5$. We begin with the following simpifying claim.
    
    \p{Claim} We may assume without loss of generality that there is no $\overline{\epsilon} \in \crs{\Phi(s_{i-2})}$ with $\overline{\epsilon} > \epsilon_{i-2}$ and similarly for $\epsilon_{i+2}$.  
    
    \p{Proof of claim} Indeed, since $\gamma_i\in\extremalarg{s_i}$ any $\overline{\epsilon} \in \crs{\Phi(s_{i-2})}$ with $\overline{\epsilon} > \epsilon_{i-2}$ has $\overline{\epsilon} \leq \gamma_i$, by Lemma~\ref{lem:crs:commdisj} and the definition of $\extremal$.  Since $\epsilon_{i-2}$ intersects $\extremalarg{s_{i-1}}$ we would have $\overline{\epsilon}$ still intersecting $\extremalarg{s_{i-1}}$.  Then if the lemma holds for $\overline{\epsilon}$ and  $\epsilon_{i+2}$ we have $\overline{\epsilon}$ exterior to $\epsilon_{i+2}$ and vice versa.  Since $\epsilon_{i-2} \leq \overline{\epsilon}$ we have $\epsilon_{i-2}$ and $\epsilon_{i+2}$ exterior to each other, as desired.\qed{}
    
    We now proceed with the proof.  Let $\epsilon' = \Phi(s_{i-2}^{-1})(\epsilon_{i-2})$ and let $\gamma_i' = \Phi(s_{i-2}^{-1})(\gamma_i)$.  Let ${\gamma}' = \Phi(s_{i-1}^{-1}s_{i+1})(\gamma_i')$.  The element
    \[
      s \coloneqq s_{i+1}s_{i-1}^{-1}s_is_{i-1}s_{i+1}^{-1}
    \]
    satisfies $[s, s_i] = 1$ so $\gamma'\in\crs{\Phi(s)}$ is disjoint from $\extremalarg{s_i}$ by Lemma~\ref{lem:crs:commdisj}.  Furthermore, since $\Phi$ is minimally typed, Theorem~\ref{thm:alpha:welltypedpuncture} implies that any $p \in \Int(\gamma')$ has $\calT_{\Phi}(p) = \{i+1,i+2\}$ or $\calT_{\Phi}(p) = \{i-2, i-1\}$.  Therefore, for any $p\in\Int(\gamma')$ one has $i \not \in \calT_{\Phi}(p)$, so $p$ is exterior to $\extremalarg{s_i}$.  Therefore $\gamma'$ is exterior to $\extremalarg{s_i}$, and thus the curve $\Phi(s_{i-1}^{-1}s_{i+1})(\epsilon')$ is also exterior to $\extremalarg{s_i}$.

    We first show that $\epsilon_{i-2}$ and $\epsilon_{i+2}$ are disjoint.  By way of contradiction, suppose that $\epsilon_{i-2}$ and $\epsilon_{i+2}$ intersect. See Figure~\ref{fig:5crossingexample} for the sort of crossing that might occur.

    \begin{figure}[ht]
    \centering
    \begin{tikzpicture}
        \node at (0,0)[anchor = south west]{\includegraphics[scale=0.6]{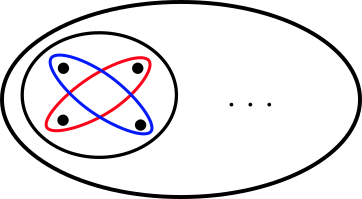}};
        \node at (2.8,2.6){$\gamma_3$};
        \node at (1.4,1.1){{\color{red}$\epsilon_1$}};
        \node at (1.4,2.5){{\color{blue}$\epsilon_5$}};
    \end{tikzpicture}
    \caption{We have {\color{red}$\epsilon_1$} $\in \crs{\Phi(s_1)}$ and {\color{blue} $\epsilon_5$} $\in \crs{\Phi(s_5)}$ and $\gamma_3\in \extremalarg{s_3}$.  The punctures interior to $\epsilon_5$ are of type $\{4,3\}$ and the punctures interior to $\epsilon_1$ are of type $\{2,3\}$}\label{fig:5crossingexample}
\end{figure}
    
Note that since $\epsilon_{i-2}$ intersects $\extremalarg{s_{i-1}}$ we must have $\epsilon_{i-2}$ exterior to $\crs{\Phi(s_{i+1})}$, since $\epsilon_{i-2}$ and $\extremalarg{s_{i-1}}$ are disjoint from $\crs{\Phi(s_{i+1})}$ by Lemma~\ref{lem:crs:commdisj}, and $\extremalarg{s_{i-1}}$ is exterior to $\crs{\Phi(s_{i+1})}$ by definition. Similarly, we have $\epsilon_{i+2}$ exterior to $\crs{\Phi(s_{i-1})}$. Since $n \geq 5$ we have $[s_{i+1}, s_{i-2}] = 1$, so by Lemma~\ref{lem:alpha:exterioragreement} we obtain
\[
  \Phi(s_{i+1})(\epsilon') = \Phi(s_{i-2})(\epsilon') = \epsilon_{i-2}.
\]
Note also that $\Phi(s_{i-1}^{-1})(\epsilon_{i-2})$ intersects $\Phi(s_{i-1}^{-1})(\epsilon_{i+2})$ since we have assumed that $\epsilon_{i-2}$ and $\epsilon_{i+2}$ intersect.   Since $n \geq 5$ we have $[s_{i-1}, s_{i+2}] = 1$, so by Lemma~\ref{lem:alpha:exterioragreement}, we see that
\[
  \Phi(s_{i-1}^{-1})(\epsilon_{i+2}) = \Phi(s_{i+2}^{-1})(\epsilon_{i+2}).
\]
Since $[s_{i+2}, s_i] = 1$, we see that $\Phi(s_{i+2})(\epsilon_{i+2})$ is still interior to $\extremalarg{s_i}$ by Lemma~\ref{lem:alpha:exterioragreement}.  By construction, we therefore have $\Phi(s_{i-1}^{-1}s_{i+1})(\epsilon')=\Phi(s_{i-1}^{-1})(\epsilon_{i-2})$ intersects $\Phi(s_{i-1}^{-1})(\epsilon_{i+2})=\Phi(s_{i+2}^{-1})(\epsilon_{i+2})$, which is a curve interior to $\extremalarg{s_i}$.  This  contradicts our prior observation that $\Phi(s_{i-1}^{-1}s_{i+1})(\epsilon')$ is exterior to $\extremalarg{s_i}$.

It now remains to show $\epsilon_{i-2}$ and $\epsilon_{i+2}$ are distinct and mutually exterior.  By way of contradiction, suppose that $\epsilon_{i-2} \leq \epsilon_{i+2}$.  Since $\epsilon_{i+2}$ intersects $\extremalarg{s_{i+1}}$ by assumption and is disjoint from $\extremalarg{s_{i-1}}$ by Lemma~\ref{lem:crs:commdisj}, every $p \in \Int(\epsilon_{i+2})$ has $\calT_{\Phi}(p) = \{i,i+1\}$.  In particular, $\epsilon_{i+2}$ must be exterior to $\extremalarg{s_{i-1}}$.  But then $\epsilon_{i-2} \leq \epsilon_{i+2}$ implies that $\epsilon_{i-2}$ is also exterior to $\extremalarg{s_{i-1}}$ which contradicts our hypothesis.
\end{proof}

We can now show $\calW(\Phi)$ is a multicurve.

\begin{lemma}\label{lem:curves:mumulticurve}
    Let $n \geq 5$ and $m \geq 3$.  Let $\Phi:\braid{n} \rightarrow \braid{m}$ be an externally periodic and minimally typed homomorphism.  Then $\calW(\Phi)$ is a multicurve.   Furthermore, if $\Delta(\Phi) \neq \emptyset$ then $\calW(\Phi) \neq \emptyset$.
\end{lemma}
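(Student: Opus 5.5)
We need to show three things: each $\mu_\delta$ exists as an essential curve (in particular is not homotopic to $\partial\disk{m}$), any two curves of $\calW(\Phi)$ are disjoint, and $\calW(\Phi)\neq\emptyset$ when $\Delta(\Phi)\neq\emptyset$. The plan is to localize everything inside a single $\Phi$-maximal curve.

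\textbf{Step 1: locating $\mu_\delta$.} Fix $\delta\in\Delta(\Phi)_{i-1,i}$, with $\delta\leq\gamma_{i-1}\in\extremalarg{s_{i-1}}$ and $\delta\leq\gamma_i\in\extremalarg{s_i}$.
\begin{itemize}
\item Take the candidate $\gamma_{i-1}$. It is disjoint from $\crs{\Phi(s_{i+1})}$ because $[s_{i-1},s_{i+1}]=1$ (Lemma~\ref{lem:crs:commdisj}), and it is exterior to it by maximality.
\item With respect to $\crs{\Phi(s_{i-2})}$, the curve $\gamma_{i-1}$ may intersect, so it is the wrong candidate for that side. Lemma~\ref{lem:curves:intersectioninterior} says that every curve of $\crs{\Phi(s_{i-2})}$ meeting $\delta$ lies under $\gamma_i$, and every curve of $\crs{\Phi(s_{i+1})}$ meeting $\delta$ lies under $\gamma_{i-1}$.
\item Hence the connected cluster $\calC_\delta$ in the alternative description stays inside the region bounded by $\gamma_{i-1}\cup\gamma_i$.
\end{itemize}
Concretely, I would show by induction along $\calC_\delta$ that every element of $\calC_\delta$ is $\leq\gamma_{i-1}$ or $\leq\gamma_i$. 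The inductive step uses Lemma~\ref{lem:curves:munequals5} to rule out a chain in which a curve of $\crs{\Phi(s_{i-2})}$ crosses a curve of $\crs{\Phi(s_{i+2})}$. This is relevant when $n=5$, since then $s_{i-2}=s_{i+3}$ and the cluster could otherwise escape.

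Given this, $\calC_\delta$ lies in the finite-type subsurface cut out by $\gamma_{i-1}\cup\gamma_i$. The boundary component of $\disk{m}\cut\calC_\delta$ enclosing $\delta$ is then carried inside some curve of the exterior boundary of a regular neighbourhood of $\gamma_{i-1}\cup\gamma_i$. Such a curve is essential and not homotopic to $\partial\disk{m}$, because:
\begin{itemize}
\item $\extremal$ has curves of other types, namely $\extremalarg{s_{i+2}}$, which is nonempty by Lemma~\ref{lem:centralizer:emptycrs};
\item these curves are exterior to $\calW(\Phi)_{i-1,i}$ by Lemma~\ref{lem:curves:muexterior} and Lemma~\ref{lem:alpha:extremalcollision};
\item so the punctures interior to them lie outside $\mu_\delta$.
\end{itemize}
This gives existence of $\mu_\delta$, and the same reasoning gives nonemptiness of $\calW(\Phi)$ whenever $\Delta(\Phi)\neq\emptyset$.

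\textbf{Step 2: disjointness within one index.} For $\mu,\mu'\in\calW(\Phi)_{i-1,i}$, both are boundary components of the same cut surface $\Sigma$. Boundary components of a single cut surface are pairwise disjoint, so $\mu$ and $\mu'$ are disjoint.

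\textbf{Step 3: disjointness across indices.} Take $\mu\in\calW(\Phi)_{i-1,i}$ and $\mu'\in\calW(\Phi)_{j-1,j}$ with $j\neq i$.
\begin{itemize}
\item By Step 1, $\mu$ is built from pieces of curves in $\crs{\Phi(s_{i-2})}$, $\Delta(\Phi)_{i-1,i}$ and $\crs{\Phi(s_{i+1})}$. All of these lie under $\gamma_{i-1}$ or $\gamma_i$, and every puncture inside them has type $\{i-1,i\}$ (Lemma~\ref{lem:curves:mintype} and Theorem~\ref{thm:alpha:welltypedpuncture}).
\item Likewise every puncture inside $\mu'$ has type $\{j-1,j\}$.
\item Since these types differ, $\Int(\mu)\cap\Int(\mu')=\emptyset$.
\item Lemma~\ref{lem:curves:muexterior} shows that $\mu$ is exterior to $\extremalarg{s_j}$ for $s_j\notin\{s_{i-1},s_i\}$.
\end{itemize}
Combining these, I would argue that $\mu$ lies in a regular neighbourhood of the $\{i-1,i\}$-part of the configuration, $\mu'$ lies in that of the $\{j-1,j\}$-part, and these neighbourhoods can be chosen disjoint.

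\textbf{Expected obstacle.} The delicate point is the claim that $\mu$ only encloses punctures of type $\{i-1,i\}$ and does not wander across the other curves it is defined from. I would rely on Lemma~\ref{lem:curves:munequals5} and Lemma~\ref{lem:curves:intersectioninterior} to handle this, with $n=5$ the hardest case.
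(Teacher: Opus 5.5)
Your Step 2 is the paper's argument, but the essentiality part of Step 1 has a genuine gap.

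\textbf{The essentiality argument in Step 1.} Lemma~\ref{lem:curves:muexterior} says each $\mu\in\calW(\Phi)_{i-1,i}$ is \emph{exterior to} $\extremalarg{s_{i+2}}$. In the paper's terminology this only means $\mu$ is disjoint from, and not strictly inside, a curve of $\extremalarg{s_{i+2}}$. It does not say that curves of $\extremalarg{s_{i+2}}$, or their punctures, lie outside $\mu$, which is the direction you need. The lemma is also useless in the bad case. There $\mu_\delta$ is not a curve of $\calW(\Phi)$ at all, and a curve enclosing everything is trivially not inside any $\gamma_{i+2}$.

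Types do not rescue you either. A curve $\gamma_{i+2}$ is disjoint from, and non-nested with, $\gamma_{i-1}$ and $\gamma_i$. But nothing proved so far prevents it from lying in a bounded complementary region of $\gamma_{i-1}\cup\gamma_i$ (compare the bounded punctures of Section~\ref{section:onemaximal}). So the outer boundary of a neighbourhood of $\gamma_{i-1}\cup\gamma_i$ may be peripheral. Types only control punctures interior to individual curves, not punctures enclosed by a union of crossing curves. (Also, $\calC_\delta$ can a priori chain through several curves of $\extremalarg{s_{i-1}}\cup\extremalarg{s_i}$, not just your fixed pair.)

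The paper's missing idea is a symmetry argument:
\begin{enumerate}
\item Put $\delta'=\Phi(a_1)(\delta)\in\Delta(\Phi)_{i,i+1}$. By equivariance under $\Phi(a_1)$, if $\mu_\delta$ were peripheral then so would $\mu_{\delta'}$ be.
\item Curves of $\calC_\delta$ and $\calC_{\delta'}$ never cross; this is the same argument as the adjacent-index disjointness case.
\item They share no curve, because every puncture inside a curve of $\calC_\delta$ has type $\{i-1,i\}$, while every puncture inside a curve of $\calC_{\delta'}$ has type $\{i,i+1\}$.
\item Hence $\calC_{\delta'}$ lies in a component of $\disk{m}\cut\calC_\delta$ other than the outer annulus. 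That component's outer boundary is essential and contains $\mu_{\delta'}$, a contradiction.
\end{enumerate}
Some global argument of this kind is needed; the local picture around $\gamma_{i-1}\cup\gamma_i$ does not suffice.

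\textbf{Step 3 is also incomplete.} Two curves with disjoint sets of interior punctures can still intersect; the paper remarks on this explicitly. Your claim that every puncture inside $\mu$ has type $\{i-1,i\}$ is unproved: only punctures inside the individual curves of $\calC_\delta$ are controlled, not punctures enclosed by their union.

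The paper instead pins down where a crossing could occur.
\begin{itemize}
\item If $\{s_{j-1},s_j\}\cap\{s_{i-1},s_i\}=\emptyset$: Lemmas~\ref{lem:curves:intersectioninterior} and~\ref{lem:curves:muexterior} show that $\mu$ is built from curves interior to $\extremalarg{s_{i-1}}\cup\extremalarg{s_i}$, while $\mu'$ is exterior to these.
\item If $j=i+1$: any crossing is interior to $\extremalarg{s_i}$. After Lemma~\ref{lem:crs:commdisj} excludes the other pairings, the crossing must be between a curve of $\crs{\Phi(s_{i-2})}$ meeting $\Delta(\Phi)_{i-1,i}$ and a curve of $\crs{\Phi(s_{i+2})}$ meeting $\Delta(\Phi)_{i,i+1}$. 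Lemma~\ref{lem:curves:munequals5} forbids this.
\end{itemize}
That is where Lemma~\ref{lem:curves:munequals5} belongs. In your Step 1 it plays no role: $\crs{\Phi(s_{i+2})}$ is not part of $K_{i-1,i}$, and $\crs{\Phi(s_{i-2})}\cup\crs{\Phi(s_{i+1})}$ is already a multicurve since $[s_{i-2},s_{i+1}]=1$.
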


\begin{proof}
    Let $\delta, \delta' \in \Delta(\Phi)$ and $\mu_{\delta}, \mu_{\delta'} \in \calW(\Phi)$. We first show that $\mu_{\delta}$ and $\mu_{\delta'}$ are disjoint.  If $\calT_{\Phi}(\delta) = \calT_{\Phi}(\delta')$ then $\mu_{\delta}$ and $\mu_{\delta'}$ are isotopic to boundary components of the same surface and therefore disjoint.  Otherwise, let $s_i, s_j \in \extgen{n}$ such that $s_i \neq s_j$ and $\calT_{\Phi}(\delta) = \{i-1,i\}$, $\calT_{\Phi}(\delta') = \{j-1,j\}$. 
    
    Suppose by way of contradiction that $\mu_{\delta}$ intersects $\mu_{\delta'}$.  Since $n \geq 5$, we may assume without loss of generality that $[s_{i-1},s_j] = 1$ and $s_{i-1} \neq s_j$. Any curves in $\Delta(\Phi)_{i-1,i}$ are interior to $\extremalarg{s_{i-1}}$ and curves in $\Delta(\Phi)_{j-1,j}$ are interior to $\extremalarg{s_j}$.  Since $\Phi$ is minimally typed, we have $\extremalarg{s_{i-1}} \cap \extremalarg{s_j} = \emptyset$.  Since $[s_{i-1},s_j] =1$ the multicurves $\extremalarg{s_{i-1}}$ and $\extremalarg{s_j}$ are disjoint by Lemma~\ref{lem:crs:commdisj}.  In particular, any curve in $\Delta(\Phi)_{i-1,i}$ is disjoint from any curve in $\Delta(\Phi)_{j-1,j}$.  Without loss of generality, we now have two cases to consider.

    \p{Case 1: $\{s_{j-1},s_j\} \cap \{s_{i-1},s_i\} = \emptyset$}  In this case no intersection can occur.  Indeed, by Lemma~\ref{lem:curves:typeexterior} and Lemma~\ref{lem:curves:intersectioninterior}, the curve $\mu_{\delta}$ is the boundary of a neighborhood of curves interior to at least one of $\extremalarg{s_i}$ or $\extremalarg{s_{i-1}}$.  But then no intersections can occur as a consequence of Lemma~\ref{lem:curves:muexterior} since $\mu_{\delta'}$ is exterior to $\extremalarg{s_{i-1}}$ and $\extremalarg{s_{i}}$.

    \p{Case 2: $\{s_{j-1},s_j\} = \{s_i,s_{i+1}\}$}  This is equivalent to saying that $s_j = s_{i+1}$.  If $\mu_{\delta}$ and $\mu_{\delta'}$ intersect, then all intersections are interior to $\extremalarg{s_i}$.  Indeed, by Lemma~\ref{lem:curves:intersectioninterior} $\mu_{\delta}$ is the boundary of a neighborhood of curves interior to at least one of $\extremalarg{s_{i-1}}$ or $ \extremalarg{s_{i}}$ and similarly for $\mu_{\delta'}$.  Suppose by way of contradiction that $\mu_{\delta}$ and $\mu_{\delta'}$ intersect. Now, $\mu_{\delta}$ is exterior to $\extremalarg{s_{i+1}}$ and $\mu_{\delta'}$ is exterior to $\extremalarg{s_{i-1}}$ by Lemma~\ref{lem:curves:muexterior}. This means that if $\mu_{\delta}$ and $\mu_{\delta'}$ intersect, then there is an intersection among:
    \begin{itemize}
        \item a curve in $\crs{\Phi(s_{i-2})}$ with a curve in $\Delta(\Phi)_{i,i+1}$; or
        \item a curve in $\crs{\Phi(s_{i+2})}$ with a curve in $\Delta(\Phi)_{i-1,i}$; or
        \item a curve in $\crs{\Phi(s_{i-2})}$ that intersects $\Delta(\Phi)_{i-1,i}$, with a curve in $\crs{\Phi(s_{i+2})}$ that intersects $\Delta(\Phi)_{i,i+1}$.
    \end{itemize}  The first two cannot occur since the fact that $n \geq 5$ implies that such an intersection would contradict Lemma~\ref{lem:crs:commdisj}. Suppose that $\epsilon_{i-2}\in\crs{\Phi(s_{i-2})}$ and $\epsilon_{i+2}\in\crs{\Phi(s_{i+2})}$ satisfy the conditions of the last case. By Lemmas~\ref{lem:crs:commdisj} and~\ref{lem:curves:intersectioninterior} the curves $\epsilon_{i-2}$ and $\epsilon_{i+2}$ are both interior to, and individually disjoint from, $\extremalarg{s_i}$. Since $\epsilon_{i-2}$ and $\epsilon_{i+2}$ intersect, they must both be simultaneously interior to some $\gamma_i\in\extremalarg{s_i}$. But they also satisfy the hypotheses of Lemma~\ref{lem:curves:munequals5}, whose conclusion states that $\epsilon_{i-2}$ and $\epsilon_{i+2}$ are disjoint: a contradiction.

    It now remains to show that $\calW(\Phi)$ is nonempty when $\Delta(\Phi) \neq \emptyset$.  Let $s_{i} \in \extgen{n}$ and let $\delta \in \Delta(\Phi)_{i-1,i}$.  We show that $\mu_{\delta}$ is essential, i.e., that the definition of $\mu_{\delta}$ given above does not produce a curve homotopic to $\partial \disk{m}$.  Suppose by way of contradiction that $\mu_{\delta}$ is inessential.  Let us recall the alternate definition of $\mu_{\delta}$ discussed earlier.  Let $\calC_{\delta}$ denote the smallest subset of
    \[
      K_{i-1,i} \coloneqq \Delta(\Phi)_{i-1,i} \cup \crs{\Phi(s_{i-2})} \cup \crs{\Phi(s_{i+1})}
    \]
    that satisfies:
    \begin{itemize}
        \item item if $\epsilon \in \calC_{\delta}$ and $\epsilon' \in K_{i-i,i}$ satisfies     $\epsilon \leq \epsilon'$ or $\epsilon' \leq \epsilon$ then $\epsilon' \in \calC_{\delta}$; and
        \item $\delta \in \calC_{\delta}$; and
        \item if $\epsilon \in \calC_{\delta}$ and $\epsilon' \in \calV_{i-1,i}$ such that $\epsilon'$ intersects $\epsilon$ then $\epsilon' \in \calC_{\delta}$.
        \end{itemize}  As a consequence of the definition, $\mu_{\delta}$ is a boundary component of $\disk{m} \cut \calC_{\delta}$.  Now, let $\delta' = \Phi(a_1)(\delta)$.  As a consequence of Lemmas~\ref{lem:alpha:periodictypepreserve} and~\ref{lem:curves:mintype} we see that $\Phi(a_1)(\Delta(\Phi)_{i-1,i})=\Delta(\Phi)_{i,i+1}$, and in particular $\delta' \in \Delta(\Phi)_{i,i+1}$.  If we define $\calC_{\delta'}$ as above except with the set
        \[
          K_{i,i+1} \coloneqq \Delta(\Phi)_{i,i+1} \cup \crs{\Phi(s_{i-1})} \cup \crs{\Phi(s_{i+2})}
        \]
        then any $\epsilon \in \calC_{\delta}$ is disjoint from any $\epsilon' \in \calC_{\delta'}$ via the argument in Case 2 above.  On the other hand, if $\mu_{\delta}$ were homotopic to $\partial \disk{m}$, then $\mu_{\delta'}$ would also be homotopic to $\partial \disk{m}$ since $\Phi(a_1)(\mu_{\delta}) = \mu_{\Phi(a_1)(\delta)} = \mu_{\delta'}$ by applying Lemma~\ref{lem:crs:functorial} and Lemma~\ref{lem:alpha:standardrootaction} along with the definition of $\mu_{\delta}$ and $\mu_{\delta'}$. We have the following claim.

    \p{Claim} For any $\epsilon \in \calC_{\delta}$ and any $p \in \Int(\epsilon)$ we also have $\calT_{\Phi}(p) = \{i-1,i\}$.

    \p{Proof of claim} If $\epsilon\in\calC_{\delta}\cap\Delta(\Phi)_{i-1,i}$, then this follows from Lemma~\ref{lem:curves:mintype}. Suppose now that $\epsilon\in\calC_{\delta}\setminus\Delta(\Phi)_{i-1,i}$. Without loss of generality, we may assume that $\epsilon \in \calC_{\delta} \cap \crs{\Phi(s_{i+1})}$.  By construction, $\epsilon$ either contains, is contained in, or intersects some other curve in $\calC_{\delta}$. Since every curve in $\Delta(\Phi)_{i-1,i}$ is interior to $\extremalarg{s_{i-1}}$ by definition, the same is true for $\epsilon$ by Lemma~\ref{lem:crs:commdisj} since $[s_{i+1}, s_{i-1}] = 1$.  Therefore any $p \in \Int(\epsilon)$ has type $\{i-1,i\}$ or $\{i-2,i-1\}$ by Theorem~\ref{thm:alpha:welltypedpuncture}.  Suppose for a contradiction that some $p \in \Int(\epsilon)$ has $\calT_{\Phi}(p) = \{i-1,i-2\}$. Then $\epsilon$ must be interior to $\extremalarg{s_{i-2}}$ by Lemma~\ref{lem:crs:commdisj} since $[s_{i+1}, s_{i-2}] = 1$. By Lemma~\ref{lem:curves:typeexterior}, $\Delta(\Phi)_{i-1,i}$ is exterior to $\extremalarg{s_{i-2}}$.  In particular, the fact that $\epsilon$ is interior to $\extremalarg{s_{i-2}}$ implies that $\epsilon$ could not contain, be contained in, or intersect a curve in $\Delta(\Phi)_{i-1,i}$, a contradiction.\qed{}
    
    We now proceed with the proof.  Let
    \[
      \Sigma_{\delta} = \disk{m} \cut \calC_{\delta}.
    \]
    Given the claim, we have $\calC_{\delta} \cap \calC_{\delta'} = \emptyset$, since if $\epsilon \in \calC_{\delta} \cap \calC_{\delta'}$ then any $p \in \Int(\epsilon)$ would have both $\calT_\Phi(p)=\{i-1,i\}$ and $\calT_\Phi(p)=\{i,i+1\}$, which is not possible. Therefore every curve of $\calC_{\delta'}$ is supported on $\Sigma_{\delta}$.  We have supposed by way of contradiction that $\mu_{\delta}$ is homotopic to $\partial \disk{m}$.  Therefore the connected component of $\Sigma_{\delta}$ that contains $\mu_{\delta}$ is an annulus.  This component cannot contain any curves in $\calC_{\delta'}$, so there is a different connected component $\Sigma'$ of $\Sigma_{\delta}$ that contains $\calC_{\delta'}$.  The curve $\nu$ homotopic to the outermost boundary component of $\Sigma'$ contains the set of curves $\calC_{\delta'}$.  In particular, $\nu \geq \mu_{\delta'}$ by the definition of $\mu_{\delta'}$  Since $\nu$ is homotopic to a boundary component of $\Sigma' \subsetneq \Sigma_{\delta}$ that is not the annular component containing $\mu_\delta$, we have $\nu$ is essential.  Since $\nu \geq \mu_{\delta'}$, we have $\mu_{\delta'}$ essential, so $\mu_{\delta'}$ is not homotopy equivalent to $\partial \disk{m}$, a contradiction.
\end{proof}  Recall by Lemma~\ref{lem:curves:muexterior} that $\mu_{\delta} \in \calW(\Phi)_{i-1,i}$ is exterior to $\extremalarg{s_j}$ for $s_j \in \extgen{n} \setminus \{s_{i-1}, s_i\}$.  We extend this to the following.  

\begin{lemma}\label{lem:curves:mudisjointness}
    Let $n \geq 5$ and $m \geq 3$.  Let $\Phi:\braid{n} \rightarrow \braid{m}$ be an externally periodic and minimally typed homomorphism. Let $s_i\in\extgen{n}$ and $s_j\in\extgen{n}\setminus\{s_{i-1},s_i\}$. Then $\calW(\Phi)_{i-1,i}$ is exterior to $\crs{\Phi(s_j)}$.
\end{lemma}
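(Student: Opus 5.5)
We must show: for $\mu\in\calW(\Phi)_{i-1,i}$ and $s_j\notin\{s_{i-1},s_i\}$, $\mu$ is disjoint from every $\epsilon\in\crs{\Phi(s_j)}$, and there is no $\epsilon\in\crs{\Phi(s_j)}$ with $\mu<\epsilon$.

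The plan is to reduce the statement to Lemma~\ref{lem:curves:muexterior}. Every curve of $\crs{\Phi(s_j)}$ is either a $\Phi$-maximal curve or is interior to one, and we handle these two situations separately. So fix $\mu=\mu_\delta\in\calW(\Phi)_{i-1,i}$, $s_j\notin\{s_{i-1},s_i\}$ and $\epsilon\in\crs{\Phi(s_j)}$. By definition of $\extremal$, there is some $s_k\in\extgen{n}$ and $\gamma\in\extremalarg{s_k}$ with $\epsilon\leq\gamma$.

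\textbf{Case $s_k\notin\{s_{i-1},s_i\}$.} Lemma~\ref{lem:curves:muexterior} shows that $\mu$ is exterior to $\gamma$. Since $\epsilon\leq\gamma$, the curve $\mu$ cannot be interior to $\epsilon$. It remains to exclude an intersection of $\mu$ with $\epsilon$. Suppose $\mu$ intersects $\epsilon$. Since $\mu$ is disjoint from $\gamma$ and $\epsilon\leq\gamma$, the curve $\mu$ must also satisfy $\mu\le\gamma$, which is a contradiction.

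\textbf{Case $s_k\in\{s_{i-1},s_i\}$.} This is the main obstacle. Here $\epsilon$ lies inside some $\gamma_{i-1}\in\extremalarg{s_{i-1}}$ or $\gamma_i\in\extremalarg{s_i}$. By Lemma~\ref{lem:crs:commdisj}, $\epsilon$ is disjoint from $\crs{\Phi(s_k)}$ whenever $[s_j,s_k]=1$.

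If $s_j$ commutes with both $s_{i-1}$ and $s_i$, then $\epsilon$ is disjoint from all of $\extremalarg{s_{i-1}}\cup\extremalarg{s_i}$. It is also disjoint from $\crs{\Phi(s_{i-2})}$ and $\crs{\Phi(s_{i+1})}$ whenever $s_j$ commutes with those as well. The only exceptions occur for $n=5$, and they need to be checked using Lemma~\ref{lem:curves:intersectioninterior}.

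Then $\epsilon$ is disjoint from every curve of $\calC_\delta$ except possibly curves of $\Delta(\Phi)_{i-1,i}$. However, a curve intersecting some $\delta'\in\Delta(\Phi)_{i-1,i}$ must intersect $\extremalarg{s_{i-1}}$ or $\extremalarg{s_i}$, by the remark after the definition of $\Delta$. So $\epsilon$ is disjoint from all of $\calC_\delta$. Hence $\epsilon$ either lies in a single complementary component of $\calC_\delta$, or it contains it, or it is nested with it.

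We now use the claim in Lemma~\ref{lem:curves:mumulticurve}: all punctures inside curves of $\calC_\delta$ have type $\{i-1,i\}$. If $\epsilon\ge\mu$, then $\epsilon$ contains a puncture of type $\{i-1,i\}$ together with $\delta$, hence also $\gamma$. For the nesting this forces $\epsilon$ to be $\geq$ a curve of $\extremalarg{s_{i-1}}$ or $\extremalarg{s_i}$, contradicting maximality. The minimality in the definition of $\mu_\delta$ handles the component case: $\epsilon$ would lie inside a region not containing $\mu$ or outside $\mu$ disjointly.

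If instead $s_j\in\{s_{i-2},s_{i+1}\}$, then $\epsilon\in K_{i-1,i}$. In this case $\epsilon$ is either a curve of $\calC_\delta$, and then $\mu$ is a boundary of $\disk{m}\cut\calC_\delta$ lying outside it, or it is disjoint from and unnested with $\calC_\delta$. In both situations $\mu$ is disjoint from $\epsilon$ and not interior to it. The interiority part again follows from Lemma~\ref{lem:curves:muexterior} applied to the $\Phi$-maximal curve above $\epsilon$, together with the type claim.

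The step I expect to be hardest is this second case: carefully showing that $\epsilon$ cannot strictly contain $\mu$ when $\epsilon$ sits inside $\gamma_{i-1}$ or $\gamma_i$. For this I would first try the puncture-type argument via Theorem~\ref{thm:alpha:welltypedpuncture}, as in the claim of Lemma~\ref{lem:curves:mumulticurve}.
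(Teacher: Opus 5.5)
\textbf{There is a genuine gap.} Your first case is correct, and it is a cleaner reduction than the paper's. For any $s_j$, if $\epsilon\in\crs{\Phi(s_j)}$ lies under some $\gamma\in\extremalarg{s_k}$ with $s_k\notin\{s_{i-1},s_i\}$, Lemma~\ref{lem:curves:muexterior} finishes at once. For $s_j\in\{s_{i-2},s_{i+1}\}$ the statement is literally the second bullet in the definition of $\mu_\delta$.

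The gap is in the remaining case, where $\epsilon$ lies under a curve of $\extremalarg{s_{i-1}}\cup\extremalarg{s_i}$. You assert that a generator commuting with $s_{i-1}$ and $s_i$ fails to commute with $s_{i-2}$ or $s_{i+1}$ only when $n=5$. This is false. For every $n\geq5$, $s_{i+2}$ (resp.\ $s_{i-3}$) commutes with $s_{i-1}$ and $s_i$ but not with $s_{i+1}$ (resp.\ $s_{i-2}$). For $n\in\{5,6\}$ these are the only generators left after removing $s_{i-2},\dots,s_{i+1}$, so your commuting argument covers nothing there. For $\epsilon\in\crs{\Phi(s_{i+2})}$, nothing established so far prevents $\epsilon$ from crossing curves of $\crs{\Phi(s_{i+1})}$ that belong to $\calC_\delta$. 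So your step ``$\epsilon$ is disjoint from all of $\calC_\delta$'' fails. Puncture types alone cannot rescue it, since two crossing curves may have disjoint sets of interior punctures. This is exactly the case on which the paper's proof spends its effort, and your proposal only says it ``needs to be checked''. The real difficulty is disjointness here, not the non-containment step you flagged.

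What is missing is an argument of roughly the following shape.
\begin{enumerate}
\item Suppose $\epsilon_{i+2}\in\crs{\Phi(s_{i+2})}$ meets $\mu_\delta$. It misses $\extremalarg{s_{i-1}}\cup\extremalarg{s_i}$, hence misses $\Delta(\Phi)_{i-1,i}$. For $n\geq6$ it also misses $\crs{\Phi(s_{i-2})}$.
\item Therefore $\epsilon_{i+2}$ must cross some $\epsilon_{i+1}\in\calC_\delta\cap\crs{\Phi(s_{i+1})}$. For $n=5$, a crossing with a curve of $\crs{\Phi(s_{i-2})}$ must be excluded separately via Lemma~\ref{lem:curves:munequals5}.
\item The curve $\epsilon_{i+1}$ lies inside some $\gamma_{i-1}\in\extremalarg{s_{i-1}}$. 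This follows from Lemma~\ref{lem:curves:intersectioninterior}, or from the type claim in the proof of Lemma~\ref{lem:curves:mumulticurve} together with $[s_{i+1},s_{i-1}]=1$. Hence $\epsilon_{i+2}$ also lies inside $\gamma_{i-1}$.
\item Puncture types then place $\epsilon_{i+2}$ either under a curve of $\Delta(\Phi)_{i-1,i}$ that lies in $\calC_\delta$, hence under $\mu_\delta$, or under a curve of $\Delta(\Phi)_{i-2,i-1}$. The latter is incompatible with $\epsilon_{i+1}\in\calC_\delta$; for $n=5$ this again needs Lemma~\ref{lem:curves:munequals5}.
\end{enumerate}

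Separately, your non-containment argument is garbled: ``hence also $\gamma$'' does not follow. The correct reasoning is as follows. If $\mu_\delta<\epsilon$, then $\epsilon$ is disjoint from and nested with the two curves $\gamma_{i-1},\gamma_i$ for which $\delta\in\Delta(\gamma_{i-1},\gamma_i)$. It cannot contain either of them, by $\Phi$-maximality together with Lemma~\ref{lem:alpha:extremalcollision} and minimal typedness. So $\epsilon$ lies under both while strictly above $\delta$, contradicting that $\delta$ is outermost in $\Delta(\gamma_{i-1},\gamma_i)$.
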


\begin{proof}
     Let $\delta \in \Delta(\Phi)_{i-1,i}$ and let $\mu_{\delta} \in \calW(\Phi)_{i-1,i}$. Let $\epsilon_j \in \crs{\Phi(s_j)}$. By the definition of $\mu_{\delta}$, if $s_j \in \{s_{i-2}, s_{i+1}\}$ then $\mu_{\delta} \geq \epsilon_j$. Suppose that $s_j$ commutes with $s_{i-2}$, $s_{i-1}$, $s_i$, and $s_{i+1}$. Then $\epsilon_j$ is disjoint from $\mu_\delta$ by Lemma~\ref{lem:crs:commdisj}. If $\epsilon_j$ is interior to $\extremalarg{s_{j'}}$ for some $s_{j'}\in\extgen{n}\setminus\{s_{i-1},s_i\}$, then $\mu_\delta$ is is exterior to $\extremalarg{s_{j'}}$ by Lemma~\ref{lem:curves:muexterior}, and therefore also to $\epsilon_j$.  If not, then $\epsilon_j$ is interior to one or both of $\extremalarg{s_i}$ and $\extremalarg{s_{i-1}}$.  Since $s_j$ commutes with $s_{i-2}, s_{i-1}, s_{i}, s_{i+1}$ we have $\epsilon_j$ disjoint from $\extremalarg{s_{i-2}},\ldots,\extremalarg{s_{i+1}}$ by Lemma~\ref{lem:crs:commdisj}.  Therefore $\epsilon_j$ is interior to $\Delta(\Phi)_{i-1,i}$ since $\Phi$ is minimally typed. Since $\mu_{\delta}$ is exterior to $\Delta(\Phi)_{i-1,i}$  we conclude that $\mu_{\delta}$ is exterior to $\epsilon_j$.
    
    It therefore suffices to consider the situation where $s_j \in \{s_{i-3}, s_{i+2}\}$.  Without loss of generality, suppose that $s_j = s_{i+2}$. By way of contradiction, suppose $\epsilon_j=\epsilon_{i+2}$ intersects some $\mu_{\delta}$ non-trivially.  We have $\epsilon_{i+2}$ disjoint from $\extremalarg{s_i}$ and $\extremalarg{s_{i-1}}$ by Lemma~\ref{lem:crs:commdisj}.  Now, if $n \geq 6$ we have $[s_{i+2}, s_{i-2}] = 1$.  In particular, $\epsilon_{i+2}$ is disjoint from $\crs{\Phi(s_{i-2})}$ by Lemma~\ref{lem:crs:commdisj}, so we have that $\epsilon_{i+2}$ intersects some $\epsilon_{i+1} \in \crs{\Phi(s_{i+1})}$ with $\epsilon_{i+1}$ intersecting $\extremalarg{s_i}$.  In the case that $n = 5$, we could also have $\epsilon_{i+2}$ intersect some $\epsilon_{i-2} \in \crs{\Phi(s_{i-2})}$.  The latter situation contradicts Lemma~\ref{lem:curves:munequals5}, so it suffices to rule out that $\epsilon_{i+2}$ intersects $\epsilon_{i+1}$. 
    
    By Lemma~\ref{lem:curves:intersectioninterior}, there is some $\gamma_{i-1} \in \extremalarg{s_{i-1}}$ with $\epsilon_{i+1} \leq \gamma_{i-1}$.  In particular, if $\epsilon_{i+2}$ intersects $\epsilon_{i+1}$, then $\epsilon_{i+2}$ is interior to $\gamma_{i-1}$.  Now, if $n \geq 6$, we see that $[s_{i-2}, s_{i+2}] = 1$ so $\epsilon_{i+2}$ is interior to some $\delta \in \Delta(\Phi)_{i-1,i}$ or $\delta \in \Delta(\Phi)_{i-2,i-1}$ by Lemma~\ref{lem:curves:mintype}.  If $n = 5$, we have $\epsilon_{i+2}$ and $\epsilon_{i+1}$ intersecting in $\gamma_{i-1}$.  Now, if $\epsilon_{i+2}$ intersects $\extremalarg{s_{i-2}}$, then we contradict Lemma~\ref{lem:curves:munequals5}.  In particular, we must have $\epsilon_{i+2}$ disjoint from $\extremalarg{s_{i-2}}$, so we still have have $\epsilon_{i+2}$ interior to $\delta \in \Delta(\Phi)_{i-1,i}$ or $\Delta(\Phi)_{i,i+1}$.  In the latter case $\epsilon_{i-1}$ is exterior to $\Delta(\Phi)_{i-1,i}$ by the definition of $\Delta(\Phi)$ while in the former case $\epsilon_{i+2}$ is interior to $\mu_\delta$ by the definition of $\mu_{\delta}$.  In particular, we see that this situation cannot occur.  
 \end{proof}  The next two results describe the action of $\Phi(s_j)$ on $\mu_{\delta} \in \calW(\Phi)_{i-1,i}$ in the case that $s_j \in\extgen{n}\setminus\{s_{i-1},s_i\}$.

\begin{lemma}\label{lem:curves:muadjindexreducible}
Let $n \geq 5$ and $m \geq 3$.  Let $\Phi:\braid{n} \rightarrow \braid{m}$ be an externally periodic and minimally typed homomorphism. Let $s_j \in \{s_{i-2},s_{i+1}\}\subset\extgen{n}$.  Then $\Phi(s_j)\left(\calW(\Phi)_{i-1,i}\right)=\calW(\Phi)_{i-1,i}$.
\end{lemma}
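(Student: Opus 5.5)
By symmetry (the substitution $s_{i-2}\leftrightarrow s_{i+1}$, $s_{i-1}\leftrightarrow s_i$ used at the end of the proof of Lemma~\ref{lem:curves:muexterior}), it suffices to treat $s_j=s_{i+1}$. Since $\calW(\Phi)_{i-1,i}$ is finite, it is enough to show $\Phi(s_{i+1})(\calW(\Phi)_{i-1,i})\subseteq\calW(\Phi)_{i-1,i}$. The plan is to show that $\Phi(s_{i+1})$ preserves each of the three pieces of the set $K_{i-1,i}=\crs{\Phi(s_{i-2})}\cup\Delta(\Phi)_{i-1,i}\cup\crs{\Phi(s_{i+1})}$ used to define $\mu_\delta$. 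Granting this, $\Phi(s_{i+1})$ maps the cut surface $\Sigma$ to itself, sends each $\calC_\delta$ to $\calC_{\Phi(s_{i+1})(\delta)}$, and respects the partial order $\leq$ and exteriority to $\crs{\Phi(s_{i-2})}\cup\crs{\Phi(s_{i+1})}$. The minimality characterisation of $\mu_\delta$ then gives $\Phi(s_{i+1})(\mu_\delta)=\mu_{\Phi(s_{i+1})(\delta)}\in\calW(\Phi)_{i-1,i}$.

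The piece $\crs{\Phi(s_{i+1})}$ is preserved by Lemma~\ref{lem:crs:commdisj}, since $s_{i+1}$ commutes with itself. The piece $\crs{\Phi(s_{i-2})}$ is preserved by the same lemma, because $[s_{i+1},s_{i-2}]=1$ when $n\geq5$.

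The main work is showing $\Phi(s_{i+1})(\Delta(\Phi)_{i-1,i})=\Delta(\Phi)_{i-1,i}$. Lemma~\ref{lem:fixing:mcal1} does not apply here, since $s_{i+1}$ does not commute with $s_i$. I would instead use the exterior-agreement trick from Lemmas~\ref{lem:alpha:extremalaction} and~\ref{lem:alpha:nontypepreserve}. Let $\delta\in\Delta(\gamma_{i-1},\gamma_i)$ with $\gamma_{i-1}\in\extremalarg{s_{i-1}}$ and $\gamma_i\in\extremalarg{s_i}$. Since $\delta$ lies interior to $\extremalarg{s_{i-1}}$, and $\crs{\Phi(s_{i+1})}$ is disjoint from $\extremalarg{s_{i-1}}$ by Lemma~\ref{lem:crs:commdisj}, the curve $\Phi(s_{i+1})(\delta)$ is still interior to $\Phi(s_{i+1})(\gamma_{i-1})\in\extremalarg{s_{i-1}}$ by Lemma~\ref{lem:alpha:extremalaction}.

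For the $\extremalarg{s_i}$ side, choose $s_k$ commuting with both $s_i$ and $s_{i+1}$; for instance $s_k=s_{i+3}$ works when $n\geq5$. Then $\Phi(s_k)$ preserves $\extremalarg{s_i}$ by Lemma~\ref{lem:alpha:extremalaction}. Lemma~\ref{lem:curves:typeexterior} makes $\delta$ exterior to $\extremalarg{s_{i+1}}$ and $\extremalarg{s_k}$. One wants to upgrade this to exteriority to the full $\crs{\Phi(s_{i+1})}$ and $\crs{\Phi(s_k)}$, or else replace $\delta$ by the outermost curve of $\calN_{i+1}\cup\calN_k$ below it, as in Lemma~\ref{lem:alpha:nontypepreserve}. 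Once that is in place, Lemma~\ref{lem:alpha:exterioragreement} gives $\Phi(s_{i+1})(\delta)=\Phi(s_k)(\delta)$, and this curve is interior to $\Phi(s_k)(\gamma_i)\in\extremalarg{s_i}$.

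It then remains to show that $\Phi(s_{i+1})(\delta)$ is outermost among curves interior to both a curve of $\extremalarg{s_{i-1}}$ and a curve of $\extremalarg{s_i}$. I would argue this using the characterisation of $\delta$ as a boundary component of a component of $\disk{m}\cut(\gamma_{i-1}\cup\gamma_i)$. That cut surface is carried homeomorphically by $\Phi(s_k)$ on the $\gamma_i$ side and by $\Phi(s_{i+1})$ on the $\gamma_{i-1}$ side, and the two maps agree near $\delta$. Together with Lemma~\ref{lem:curves:mintype} and a cardinality count, this gives equality. I expect this step to be the main obstacle, because the agreement $\Phi(s_{i+1})(\delta)=\Phi(s_k)(\delta)$ may fail when curves of $\crs{\Phi(s_{i+1})}$ cross $\delta$. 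If so, I would abandon the claim of preserving $\Delta(\Phi)_{i-1,i}$ and argue directly on $\mu_\delta$. By Lemma~\ref{lem:curves:mudisjointness}, $\mu_\delta$ is exterior to $\crs{\Phi(s_{i+1})}$ and $\crs{\Phi(s_k)}$, so Lemma~\ref{lem:alpha:exterioragreement} gives $\Phi(s_{i+1})(\mu_\delta)=\Phi(s_k)(\mu_\delta)$. Since $s_k$ commutes with $s_{i-2},s_{i-1},s_i,s_{i+1}$ once $n\geq6$ (the case $n=5$ being handled with Lemma~\ref{lem:curves:munequals5} as in the previous lemma), $\Phi(s_k)$ preserves all the defining data, hence preserves $\calW(\Phi)_{i-1,i}$ directly.
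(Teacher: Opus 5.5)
There is a genuine gap. Your overall architecture is the right one: find an element that preserves the data $K_{i-1,i}$ defining $\mu_\delta$ and agrees with $\Phi(s_{i+1})$ on $\mu_\delta$. But the element you propose does not exist for $n\in\{5,6\}$.

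\textbf{The main plan is unsupported.} You want $\Phi(s_{i+1})$ itself to preserve $\Delta(\Phi)_{i-1,i}$. The step $\Phi(s_{i+1})(\delta)=\Phi(s_k)(\delta)$ needs $\delta$ to be exterior to $\crs{\Phi(s_{i+1})}$. However, Lemma~\ref{lem:curves:intersectioninterior} explicitly allows curves of $\crs{\Phi(s_{i+1})}$ to cross $\delta$ inside some $\gamma_{i-1}$. That is exactly why $\crs{\Phi(s_{i+1})}$ is part of $K_{i-1,i}$. The outermost-curve replacement from Lemma~\ref{lem:alpha:nontypepreserve} works for punctures, not for a curve that is crossed. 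So everything rests on your fallback.

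\textbf{The fallback works only for $n\geq 7$.} When $n\geq 7$ it is correct and clean. Take $s_k=s_{i+3}$. Then $\Phi(s_k)$ preserves $\crs{\Phi(s_{i-2})}$, $\crs{\Phi(s_{i+1})}$, $\extremalarg{s_{i-1}}$ and $\extremalarg{s_i}$, hence $K_{i-1,i}$, so $\Phi(s_k)(\mu_\delta)=\mu_{\Phi(s_k)(\delta)}$. Lemmas~\ref{lem:curves:mudisjointness} and~\ref{lem:alpha:exterioragreement} then give $\Phi(s_{i+1})(\mu_\delta)=\Phi(s_k)(\mu_\delta)$.

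The trouble is that $s_k$ must commute with all of $s_{i-2},s_{i-1},s_i,s_{i+1}$. So it must avoid the six generators $s_{i-3},\dots,s_{i+2}$, which is possible only when $n\geq 7$, not $n\geq6$.
\begin{itemize}
\item For $n=6$, the only candidate $s_{i+3}=s_{i-3}$ does not commute with $s_{i-2}$.
\item For $n=5$, Lemma~\ref{lem:curves:munequals5} only says that certain curves of $\crs{\Phi(s_{i-2})}$ and $\crs{\Phi(s_{i+2})}$ are disjoint. It supplies no element playing the role of $\Phi(s_k)$.
\end{itemize}
You also cannot relax to $s_k\notin\{s_{i-2},\dots,s_{i+1}\}$ and cite Lemma~\ref{lem:curves:mootherindexirreducible}. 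Its proof invokes the present lemma whenever $s_k$ fails to commute with $s_{i-2}$ or $s_{i+1}$, which is exactly the $n\in\{5,6\}$ situation.

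\textbf{The missing idea is to build the substitute for $\Phi(s_k)$ by gluing, not by choosing a generator.}
\begin{itemize}
\item Since $[s_{i-2},s_{i+1}]=1$, Lemma~\ref{lem:alpha:exterioragreement} says $\Phi(s_{i-2})$ and $\Phi(s_{i+1})$ have the same external part beyond $M=\crs{\Phi(s_{i-2})}\cup\crs{\Phi(s_{i+1})}$.
\item Lemma~\ref{lem:centralizer:sheaf} then gives a braid $f$ acting as $\Phi(s_{i-2})$ away from $\crs{\Phi(s_{i-2})}^{\outmulti}$ and as $\Phi(s_{i+1})$ away from $\crs{\Phi(s_{i+1})}^{\outmulti}$.
\item $f$ preserves $\extremalarg{s_i}$: this set is exterior to $\crs{\Phi(s_{i-2})}$, where $f$ agrees with $\Phi(s_{i-2})$, and $[s_{i-2},s_i]=1$, so Lemma~\ref{lem:alpha:extremalaction} applies.
\item Symmetrically, via $\Phi(s_{i+1})$, $f$ preserves $\extremalarg{s_{i-1}}$. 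Hence $f$ preserves $\Delta(\Phi)_{i-1,i}$.
\item The two-sided minimality argument then gives $f(\mu_\delta)=\mu_{f(\delta)}$.
\item By Lemma~\ref{lem:curves:mudisjointness}, $\mu_\delta$ is exterior to $M$, so $\Phi(s_j)(\mu_\delta)=f(\mu_\delta)$.
\end{itemize}
This is the paper's route, and it works uniformly for all $n\geq 5$.
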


\begin{proof}
  Let $M = \crs{\Phi(s_{i-2})} \cup \crs{\Phi(s_{i+1})}$.  Since $n \geq 5$ we have $[s_{i-2}, s_{i+1}] = 1$, so $M$ is a multicurve by Lemma~\ref{lem:crs:commdisj}.  Let
  \[
    M_{j} = \crs{\Phi(s_{j})} \cap M^{\outmulti}
  \]
  for $j \in \{i-2,i+1\}$.  As a consequence of Lemma~\ref{lem:alpha:exterioragreement} and Lemma~\ref{lem:crs:commdisj} we have
  \[
    \Phi(s_{i-2})(M_j) = \Phi(s_{i+1})(M_j)
  \]
  for $j \in \{i-2,i+1\}$.  Let $\overline{M} = M_{i-2} \cap M_{i+1}$.  Recall the map $\Ext_{\overline{M}}$ in Section~\ref{section:external}.  Let $\overline{f} \in \braid{\overline{M}}$ denote the unique element from Lemma~\ref{lem:centralizer:sheaf} that satisfies
  \[
    \Ext_{M_{i-2}}(\overline{f}) = \Ext_{M_{i-2}}(\Phi(s_{i-2})) \text{ and }\Ext_{M_{i+1}}(\overline{f}) = \Ext_{M_{i+1}}(\Phi(s_{i+1})).
  \]
  Such an element exists by Lemma~\ref{lem:alpha:exterioragreement}.  Note that since $M_{i-2}$ and $M_{i+1}$ might share curves, this element $\overline{f}$ is only defined in $\braid{\overline{M}}$.  Let $f\in\Stab_{\braid{m}}(\overline{M})\leq\braid{m}$ denote a lift of $\overline{f}\in\braid{\overline{M}}$. Let $\delta \in \Delta(\Phi)_{i-1,i}$ and $\mu_{\delta} \in \calW(\Phi)_{i-1,i}$.  By Lemma~\ref{lem:curves:mudisjointness} we have $\mu_{\delta}$ exterior to $\overline{M}$.  In particular, by Lemma~\ref{lem:alpha:exterioragreement} have
  \[
    \Phi(s_{i-2})(\mu_{\delta}) = \Phi(s_{i+1})(\mu_{\delta}).
  \]
  Furthermore, $f(M_{i-2}\cup M_{i+1})=M_{i-2}\cup M_{i+1}$ by construction.  Now, any $\gamma_i \in \extremalarg{s_i}$ is exterior to $\crs{\Phi(s_{i-2})}$ by the definition of $\extremalarg{s_i}$ and Lemma~\ref{lem:crs:commdisj}, so $f(\gamma_i) = \Phi(s_{i-2})(\gamma_i)$ for all $\gamma_i \in \extremalarg{s_i}$ by Lemma~\ref{lem:alpha:exterioragreement}.  By Lemma~\ref{lem:alpha:extremalaction} this implies that $f(\extremalarg{s_i})=\extremalarg{s_i}$.  Similarly $f(\extremalarg{s_{i-1}})=\extremalarg{s_{i-1}}$ since $\extremalarg{s_{i-1}}$ is exterior to $\crs{\Phi(s_{i+1})}$. Thus $f(\Delta(\Phi)_{i-1,i})=\Delta(\Phi)_{i-1,i}$.
    
    All of this implies, by the definition of $\mu_{f(\delta)}$, that $\mu_{f(\delta)}\leq f(\mu_\delta)$. By running the same argument with $f$ replaced by $f^{-1}$ and $\delta$ replaced by $f(\delta)$, we find that $\mu_\delta=\mu_{f^{-1}(f(\delta))}\leq f^{-1}(\mu_{f(\delta)})$, and hence $f(\mu_\delta)\leq\mu_{f(\delta)}$. Therefore $f(\mu_\delta)=\mu_{f(\delta)}$. Therefore $\Phi(s_j)(\mu_\delta)=f(\mu_\delta)\in\calW(\Phi)_{i-1,i}$, and thus $\Phi(s_j)(\calW(\Phi)_{i-1,i}) = \calW(\Phi)_{i-1,i}$.
\end{proof} We leverage the above to prove the following result.

\begin{lemma}\label{lem:curves:mootherindexirreducible}
Let $n \geq 5$ and $m \geq 3$.  Let $\Phi:\braid{n} \rightarrow \braid{m}$ be an externally periodic and minimally typed homomorphism.  Let $s_j \in \extgen{n}\setminus\{s_{i-2},s_{i-1},s_i,s_{i+1}\}$. Then $\Phi(s_j)(\calW(\Phi)_{i-1,i}) = \calW(\Phi)_{i-1,i}$.
\end{lemma}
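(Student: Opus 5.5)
The plan is to separate the indices $s_j$ that commute with all four of $s_{i-2},s_{i-1},s_i,s_{i+1}$ from the two remaining indices $s_{i-3}$ and $s_{i+2}$. For the latter, the idea is to transfer the action of $\Phi(s_j)$ on $\calW(\Phi)_{i-1,i}$ to the action of an element already handled by Lemma~\ref{lem:curves:muadjindexreducible}. In every case it suffices to show $\Phi(s_j)(\calW(\Phi)_{i-1,i})\subseteq\calW(\Phi)_{i-1,i}$, since the set is finite. Moreover, the argument applies verbatim to $s_j^{-1}$.

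\emph{Generic case.} Suppose $s_j$ commutes with $s_{i-2},s_{i-1},s_i,s_{i+1}$. By Lemma~\ref{lem:crs:commdisj} and Lemma~\ref{lem:crs:functorial}, $\Phi(s_j)$ preserves $\crs{\Phi(s_{i-2})}$ and $\crs{\Phi(s_{i+1})}$ setwise. By Lemma~\ref{lem:alpha:extremalaction}, it also preserves $\extremalarg{s_{i-1}}$ and $\extremalarg{s_i}$. Hence it preserves $\Delta(\Phi)_{i-1,i}$, as in Lemma~\ref{lem:fixing:mcal1}. Therefore $\Phi(s_j)$ preserves the whole set $K_{i-1,i}$ together with the relations of nesting and intersection among its members, so $\Phi(s_j)(\calC_\delta)=\calC_{\Phi(s_j)(\delta)}$. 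Since mapping classes respect $\leq$ and boundary components of cut surfaces, the alternative description of $\mu_\delta$ gives $\Phi(s_j)(\mu_\delta)=\mu_{\Phi(s_j)(\delta)}\in\calW(\Phi)_{i-1,i}$.

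\emph{The indices $s_{i+2}$ and $s_{i-3}$.} By symmetry, take $s_j=s_{i+2}$. If $n\geq6$, then $[s_{i+2},s_{i-2}]=1$ and $s_{i+2}\neq s_{i-2}$. By Lemma~\ref{lem:curves:mudisjointness}, every $\mu\in\calW(\Phi)_{i-1,i}$ is exterior to both $\crs{\Phi(s_{i+2})}$ and $\crs{\Phi(s_{i-2})}$. So Lemma~\ref{lem:alpha:exterioragreement} gives $\Phi(s_{i+2})(\mu)=\Phi(s_{i-2})(\mu)$, and the right-hand side lies in $\calW(\Phi)_{i-1,i}$ by Lemma~\ref{lem:curves:muadjindexreducible}. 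Similarly, for $s_{i-3}$ one pairs it with $s_{i+1}$.

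\emph{The case $n=5$.} Here $s_{i+2}=s_{i-3}$, and the elements of $\extgen{5}$ commuting with $s_{i+2}$ are exactly $s_{i-1}$ and $s_i$. These are the indices for which $\mu$ need not be exterior to the canonical reduction system, so the comparison above breaks down; this is the main obstacle. My first attempt would be a conjugation trick. Namely, write $s_{i+2}=g\,s_{i-2}\,g^{-1}$ (or $g\,s_{i+1}\,g^{-1}$) with $g$ a word in $s_{i-2}^{\pm1},s_{i+1}^{\pm1}$ and generators already shown to preserve $\calW(\Phi)_{i-1,i}$; for example, when $n=5$ one has $s_{i+2}=a_1^{-1}s_{i-2}a_1$ in suitable indexing. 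One must then control $\Phi(a_1)$ using Lemma~\ref{lem:alpha:standardrootaction} and the observation, from the proof of Lemma~\ref{lem:curves:mumulticurve}, that $\Phi(a_1)(\mu_\delta)=\mu_{\Phi(a_1)(\delta)}$. Failing that, I would imitate the proof of Lemma~\ref{lem:curves:muadjindexreducible}. That is, build via Lemma~\ref{lem:centralizer:sheaf} an auxiliary braid $f$ agreeing with $\Phi(s_{i+2})$ exterior to $\crs{\Phi(s_{i+2})}$ and with $\Phi(s_{i-1})$ or $\Phi(s_i)$ exterior to their reduction systems. Then check, using Lemma~\ref{lem:curves:munequals5} to exclude the bad crossings with $\crs{\Phi(s_{i-2})}$, that $f$ preserves $\extremalarg{s_{i-1}}$, $\extremalarg{s_i}$ and $K_{i-1,i}$. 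The equivariance $f(\mu_\delta)=\mu_{f(\delta)}$ would then follow exactly as before.
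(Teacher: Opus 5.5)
\emph{Verdict: the proof has a genuine gap at $n=5$.}

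\textbf{What works.} Your generic case is correct. So is your treatment of $s_{i+2}$ and $s_{i-3}$ when $n\geq6$. There, $\mu\in\calW(\Phi)_{i-1,i}$ is exterior to both $\crs{\Phi(s_{i+2})}$ and $\crs{\Phi(s_{i-2})}$ by Lemma~\ref{lem:curves:mudisjointness}. So Lemma~\ref{lem:alpha:exterioragreement} gives $\Phi(s_{i+2})(\mu)=\Phi(s_{i-2})(\mu)$, and Lemma~\ref{lem:curves:muadjindexreducible} finishes. This is a clean alternative to the paper's argument for those indices.

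\textbf{The gap.} When $n=5$, the index $s_{i+2}=s_{i-3}$ is the \emph{only} one the lemma concerns, and neither of your fallbacks works.

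The conjugation trick is circular. $\Phi(a_1)$ carries $\calW(\Phi)_{i-1,i}$ onto $\calW(\Phi)_{i,i+1}$. So writing $s_{i+2}=a_1^{-1}s_{i-2}a_1$ just turns the claim into the identical claim that $\Phi(s_{i+3})$ preserves $\calW(\Phi)_{i,i+1}$. Any other $g$ is also unavailable. For $n=5$ the only generators you have shown to preserve $\calW(\Phi)_{i-1,i}$ are $s_{i-2}$ and $s_{i+1}$, and these commute. Hence no word $g$ in them conjugates $s_{i-2}$ or $s_{i+1}$ to $s_{i+2}$.

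The hybrid braid $f$ from Lemma~\ref{lem:centralizer:sheaf} does not help either.
\begin{itemize}
\item Since $\mu_\delta$ is exterior to $\crs{\Phi(s_{i+2})}$, you get $f(\mu_\delta)=\Phi(s_{i+2})(\mu_\delta)$. So passing to $f$ changes nothing about the curve in question.
\item You would still need $f$ to preserve $\crs{\Phi(s_{i+1})}$ and $\crs{\Phi(s_{i-2})}$, or at least their parts in $\calC_\delta$.
\item In Lemma~\ref{lem:curves:muadjindexreducible} this came for free, because $f$ was glued from $\Phi(s_{i-2})$ and $\Phi(s_{i+1})$ themselves.
\item Here you glue $\Phi(s_{i+2})$ with $\Phi(s_i)$ or $\Phi(s_{i-1})$. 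None of these preserves both reduction systems: $s_{i+2}$ is adjacent to both $s_{i+1}$ and $s_{i-2}$, $s_i$ is adjacent to $s_{i+1}$, and $s_{i-1}$ is adjacent to $s_{i-2}$.
\item Lemma~\ref{lem:curves:munequals5} only gives disjointness of certain curves of $\crs{\Phi(s_{i-2})}$ and $\crs{\Phi(s_{i+2})}$. It gives no invariance.
\end{itemize}

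\textbf{The missing idea.} You never need $\Phi(s_j)$, or any $f$, to preserve $K_{i-1,i}$. It is enough that the single curve $\Phi(s_j)(\mu_\delta)$ avoids $\crs{\Phi(s_{i+1})}$ and $\crs{\Phi(s_{i-2})}$, and this follows from the braid relation. Since $s_j^{-1}s_{i+1}s_j=s_{i+1}s_js_{i+1}^{-1}$, Lemma~\ref{lem:crs:functorial} gives
\[
\iota\bigl(\Phi(s_j)(\mu_\delta),\crs{\Phi(s_{i+1})}\bigr)=\iota\bigl(\Phi(s_{i+1}^{-1})(\mu_\delta),\crs{\Phi(s_j)}\bigr)=0.
\]
The last equality holds because $\Phi(s_{i+1}^{-1})(\mu_\delta)\in\calW(\Phi)_{i-1,i}$ by Lemma~\ref{lem:curves:muadjindexreducible}, and $\calW(\Phi)_{i-1,i}$ is disjoint from $\crs{\Phi(s_j)}$ by Lemma~\ref{lem:curves:mudisjointness}. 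The same computation works for $s_{i-2}$, which is adjacent to $s_j=s_{i-3}$ when $n=5$.

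Since $s_j$ commutes with $s_{i-1}$ and $s_i$, Lemma~\ref{lem:fixing:mcal1} shows that $\Phi(s_j)$ preserves $\Delta(\Phi)_{i-1,i}$. So $\Phi(s_j)(\mu_\delta)$ is a curve avoiding $K_{i-1,i}$ and containing $\Phi(s_j)(\delta)\in\Delta(\Phi)_{i-1,i}$. Hence $\Phi(s_j)(\mu_\delta)\geq\mu_{\Phi(s_j)(\delta)}$. Running the same argument with $s_j^{-1}$ and $\mu_{\Phi(s_j)(\delta)}$ gives the reverse inequality, so $\Phi(s_j)(\mu_\delta)=\mu_{\Phi(s_j)(\delta)}$. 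This is how the paper handles every index failing to commute with $s_{i+1}$ or $s_{i-2}$, uniformly in $n$.
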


\begin{proof}
  Let $\delta \in\Delta(\Phi)_{i-1,i}$ and let $\mu_{\delta} \in \calW(\Phi)_{i-1,i}$.  Since $\calW(\Phi)_{i-1,i}$ is a finite set, it suffices to prove that $\Phi(s_j)(\mu_{\delta}) \in \calW(\Phi)_{i-1,i}$.  By Lemma~\ref{lem:curves:muexterior} we have $\mu_{\delta}$  exterior to $\crs{\Phi(s_j)}$.  Furthermore, since $[s_j,s_{i-1}] = [s_j, s_i] = 1$, $\Phi(s_j)$ stabilizes $\Delta(\Phi)_{i-1,i}$ by Lemma~\ref{lem:fixing:mcal1}.  In particular,
  \[
    \Phi(s_j)(\mu_{\delta}) \geq \Phi(s_j)(\delta) \in \Delta(\Phi)_{i-1,i}.
  \]
  Our first goal is to show that $\Phi(s_j)(\mu_{\delta})$ is disjoint from $\calW(\Phi)_{i-1,i}$.  
    
  If $[s_j, s_{i+1}] = [s_j, s_{i-2}] = 1$ then $\Phi(s_j)(\mu_{\delta})$ is also disjoint from both $\crs{\Phi(s_{i-2})}$ and $\crs{\Phi(s_{i+1})}$ by Lemma~\ref{lem:crs:commdisj}.   Suppose now without loss of generality that $[s_j, s_{i+1}] \neq 1$.  Now, observe that
  \[
    \iota(\Phi(s_j)(\mu_{\delta}), \crs{\Phi(s_{i+1})}) = \iota(\mu_{\delta}, \Phi(s_{i+1})(\crs{\Phi(s_j)}))
  \]
  by the braid relation.  Applying $\Phi(s_{i+1}^{-1})$ on both sides and using Lemmas~\ref{lem:crs:commdisj},~\ref{lem:alpha:exterioragreement}, and~\ref{lem:curves:muexterior} we have
  \[
    \iota(\mu_{\delta}, \Phi(s_{i+1})(\crs{\Phi(s_j)})) =\iota(\Phi(s_{i+1}^{-1})(\mu_{\delta}), \crs{\Phi(s_j)}).
  \]
  This last expression equals $0$ since $\Phi(s_{i+1}^{-1})(\mu_{\delta}) \in \calW(\Phi)_{i-1,i}$ by Lemma~\ref{lem:curves:muadjindexreducible} and $\calW(\Phi)_{i-1,i}$ is disjoint from $\crs{\Phi(s_j)}$ by Lemma~\ref{lem:curves:mudisjointness}. Hence $\Phi(s_j)(\mu_{\delta})$ is disjoint from $\crs{\Phi(s_{i+1})}$.  By the first paragraph we have $\Phi(s_j)(\mu_{\delta})$ disjoint from $\Delta(\Phi)_{i-1,i}$.  By applying the above argument if $[s_j,s_{i-2}] \neq 1$ or by applying Lemma~\ref{lem:crs:commdisj} if $[s_j, s_{i-2}] = 1$, we have $\Phi(s_j)(\mu_{\delta})$ disjoint from $\crs{\Phi(s_{i-2})}$.  In particular, $\Phi(s_j)(\mu_{\delta})$ is disjoint from $\calW(\Phi)_{i-1,i}$.  
    
  It now remains to show that $\Phi(s_j)(\mu_{\delta}) = \mu_{\Phi(s_j)(\delta)}$.  Let $\delta' = \Phi(s_j)(\delta)$.  Since $\Phi(s_j)(\mu_{\delta}) \geq \delta'$ and $\delta'\in \Delta(\Phi)_{i-1,i}$ by Lemma~\ref{lem:fixing:mcal1}, we have
  \[
    \Phi(s_j)(\mu_{\delta}) \geq \mu_{\delta'}
  \]
  by the definition of $\calW(\Phi)$.  Now by the argument in the previous two paragraphs with $s_j$ replaced by $s_j^{-1}$ and $\mu_{\delta}$ replaced by $\mu_{\delta'}$, we see that $\Phi(s_j^{-1})(\mu_{\delta'})$ is disjoint from $\calW(\Phi)_{i-1,i}$. As above, we have
  \[
    \Phi(s_j^{-1})(\mu_{\delta'}) \geq \mu_{\Phi(s_j^{-1})(\delta')}.
  \]
  By definition, we have $\Phi(s_j^{-1})(\delta') = \delta$ so we have $\Phi(s_j^{-1}(\mu_{\delta'})) \geq \mu_{\delta}$.  Applying $\Phi(s_j)$ to both sides yields $\mu_{\delta'} \geq \Phi(s_j)(\mu_{\delta})$. Therefore $\Phi(s_j)(\mu_{\delta}) = \mu_{\delta'} \in \calW(\Phi)_{i-1,i}$, as desired.
\end{proof} We show in Lemma~\ref{lem:curves:mureducible} that $\Phi(s_i)$ and $\Phi(s_{i-1})$ send $\calW(\Phi)_{i-1,i}$ into $\calW(\Phi)$.  We need two auxiliary lemmas.

\begin{lemma}\label{lem:curves:kappamintype}
Let $n \geq 5$ and $m \geq 3$.  Let $\Phi:\braid{n} \rightarrow \braid{m}$ be an externally periodic and minimally typed homomorphism.  Let $\kappa \in \Aut(\braid{n})$ be defined by $\kappa(s_i) = s_i^{-1}$ for all $s_i \in \extgen{n} \setminus \{s_n\}$.  Then the homomorphism $\Phi \circ \kappa$ is also minimally typed.
\end{lemma}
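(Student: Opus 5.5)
The plan is to compare the canonical reduction systems of $\Phi\circ\kappa$ with those of $\Phi$, and then show that the whole type structure of $\Phi\circ\kappa$ is the type structure of $\Phi$ moved by a single braid.

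\textbf{Step 1 (the standard generators).} For $1\leq i\leq n-1$ we have $(\Phi\circ\kappa)(s_i)=\Phi(s_i)^{-1}$. Lemma~\ref{lem:crs:power} then gives
\[
\crs{(\Phi\circ\kappa)(s_i)}=\crs{\Phi(s_i)}.
\]

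\textbf{Step 2 (the extra generator $s_n$).} Here $\kappa(s_n)$ is generally not $s_n^{-1}$. Since $s_n=a_1s_{n-1}a_1^{-1}$, we get $\kappa(s_n)=\kappa(a_1)s_{n-1}^{-1}\kappa(a_1)^{-1}$, where $\kappa(a_1)=s_1^{-1}\cdots s_{n-1}^{-1}$. One checks that $\kappa(a_1)$ is conjugate to $a_1^{-1}$. More concretely, $\kappa$ is realized by conjugation by an orientation-reversing reflection $r$ of $\disk{n}$ that preserves the standard arcs $\alpha_1,\ldots,\alpha_{n-1}$. Under this reflection, $\kappa(s_n)$ is the inverse half-twist about the reflected arc $r(\alpha_n)$. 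This is again a half-twist about an arc joining punctures $n$ and $1$, but it runs on the other side of the remaining punctures.

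\textbf{Step 3 (avoiding a direct computation).} Rather than compare $\crs{\Phi(\kappa(s_n))}$ with $\crs{\Phi(s_n)}$ directly, I would reindex. Take the extended generating set $\{s_1^{-1},\ldots,s_{n-1}^{-1},\kappa(s_n)\}$; it is the image under $\kappa$ of $\extgen{n}$. So $\extremal$, $\extremalarg{s_i}$ and $\calT$ for $\Phi\circ\kappa$ are built from $\crs{\Phi(s_1)},\ldots,\crs{\Phi(s_{n-1})}$ together with $\crs{\Phi(\kappa(s_n))}$.

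The key observation is this. There is an element $w\in\braid{n}$ (for instance $w=\kappa(a_1)a_1$, or a suitable element built from the Garside element) satisfying
\begin{itemize}
\item $w s_i w^{-1}=s_i$ for $2\leq i\leq n-2$, and
\item $\kappa(s_n)=w s_n w^{-1}$.
\end{itemize}
A cleaner route is to pick $w$ with $w\,\extgen{n}\,w^{-1}=\{s_1,\ldots,s_{n-1},\kappa(s_n)\}$ as a set, preserving the cyclic adjacency structure of $\Comm_{\braid{n}}(\extgen{n})$. A natural candidate is the Garside element $\Delta_n$. It satisfies $\Delta_n s_i\Delta_n^{-1}=s_{n-i}$ for $1\leq i\leq n-1$, so I would check whether $\Delta_n s_n\Delta_n^{-1}=\kappa(s_n)$. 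Geometrically, $\Delta_n$ is a rotation by $\pi$, which flips the side on which the arc $\alpha_n$ passes. This matches the description of $\kappa(s_n)$ from Step 2.

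If this holds, then Lemma~\ref{lem:crs:functorial} gives the following. The family $\{\crs{(\Phi\circ\kappa)(s_i)}\}_{s_i\in\extgen{n}}$ equals $\Phi(\Delta_n)$ applied to the family $\{\crs{\Phi(s_{n-i})}\}$, with indices reversed as $i\mapsto n-i \pmod n$.

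\textbf{Step 4 (transporting types).} Since $\Phi(\Delta_n)$ preserves the partial order $\leq$, it carries $\Phi$-maximal curves to $(\Phi\circ\kappa)$-maximal curves. It therefore gives
\[
\calT_{\Phi\circ\kappa}\bigl(\Phi(\Delta_n)(p)\bigr)=\{\,n-i : i\in\calT_\Phi(p)\,\}.
\]
The reflection $i\mapsto -i$ sends a set $\{i,i+1\}$ to $\{-i-1,-i\}$, which is again a set of two cyclically adjacent indices. So the minimally typed condition transfers from $\Phi$ to $\Phi\circ\kappa$.

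\textbf{Main obstacle.} The hard part is Step 3: verifying the exact identity relating $\kappa(s_n)$ to a conjugate of $s_n$ by an element that permutes the remaining generators compatibly with the cyclic order. I would do this by the geometric reflection picture, or else by checking that $\Delta_n$ conjugates $a_1$ to $\kappa(a_1)^{-1}$ and then using $s_n=a_1s_{n-1}a_1^{-1}$.
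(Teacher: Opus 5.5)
Your argument is correct once one identity is fixed by an inverse, and it takes a different route from the paper's proof.

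\textbf{The sign correction.} You propose to check $\Delta_n s_n\Delta_n^{-1}=\kappa(s_n)$. This is false, and so is any relation $\kappa(s_n)=ws_nw^{-1}$ or $w\,\extgen{n}\,w^{-1}=\{s_1,\ldots,s_{n-1},\kappa(s_n)\}$. The reason is that the abelianization $\braid{n}\to\Z$ sends $s_n$ to $1$ but $\kappa(s_n)$ to $-1$. Your Step 2 picture already shows this: $\kappa(s_n)$ is the \emph{inverse} half-twist about $r(\alpha_n)$.

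The computation you suggest gives the correct identity. From $\Delta_n s_i\Delta_n^{-1}=s_{n-i}$ you get $\Delta_n a_1^{-1}\Delta_n^{-1}=s_1^{-1}\cdots s_{n-1}^{-1}=\kappa(a_1)$. Writing $s_n=a_1^{-1}s_1a_1$, this gives
\[
\Delta_n s_n\Delta_n^{-1}=\kappa(a_1)\,s_{n-1}\,\kappa(a_1)^{-1}=\kappa\bigl(a_1s_{n-1}^{-1}a_1^{-1}\bigr)=\kappa(s_n)^{-1}.
\]
Hence $\kappa(s_i)=\Delta_n s_{-i}^{-1}\Delta_n^{-1}$ for every $i$.

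The inverse does no harm, because only canonical reduction systems enter. Lemmas~\ref{lem:crs:functorial} and~\ref{lem:crs:power} give $\crs{(\Phi\circ\kappa)(s_i)}=\Phi(\Delta_n)\bigl(\crs{\Phi(s_{-i})}\bigr)$ for all $i$. Your Step 4 then goes through as written, giving
\[
\calE^{s_i}_{\Phi\circ\kappa}=\Phi(\Delta_n)(\extremalarg{s_{-i}})
\quad\text{and}\quad
\calT_{\Phi\circ\kappa}(\Phi(\Delta_n)(p))=\{-j: j\in\calT_\Phi(p)\}.
\]

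\textbf{Comparison with the paper.} The paper argues by contradiction.
\begin{itemize}
\item A collision of $(\Phi\circ\kappa)$-maximal curves is spread by Lemma~\ref{lem:alpha:extremalcollision} to a curve $\gamma$ lying in every $\calE^{s_k}_{\Phi\circ\kappa}$.
\item Minimal typedness of $\Phi$ forces $\gamma$ to lie strictly below some $\gamma'\in\extremal$.
\item Since $\crs{(\Phi\circ\kappa)(s_k)}=\crs{\Phi(s_k)}$ for $k\neq n$, this $\gamma'$ must lie in $\extremalarg{s_n}$.
\item Theorem~\ref{thm:alpha:welltypedpuncture} then puts $\gamma$ under a curve of $\extremalarg{s_1}\cup\extremalarg{s_{n-1}}$, contradicting $(\Phi\circ\kappa)$-maximality.
\end{itemize}
That proof never identifies $\kappa(s_n)$. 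In exchange it relies on external periodicity (also of $\Phi\circ\kappa$, to apply the collision lemma), on $n\geq5$, and on the type classification of Section~\ref{section:punctures}.

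Your proof isolates exactly why $s_n$ is the only obstruction and removes it with a single conjugation. It uses neither external periodicity nor $n\geq5$. It also yields a stronger conclusion: the entire maximal-curve and type structure of $\Phi\circ\kappa$ is that of $\Phi$ moved by $\Phi(\Delta_n)$, with indices reflected by $i\mapsto -i$.
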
  

\begin{proof}
    Suppose by way of contradiction that $\Phi \circ \kappa$ is not minimally typed.  Then there are $s_i, s_j \in \extgen{n}$ with $s_i\neq s_j$ and $[s_i,s_j]=1$ such that $\calE_{\Phi \circ \kappa}^{s_i} \cap \calE_{\Phi \circ \kappa}^{s_j} \neq \emptyset$.  Lemma~\ref{lem:alpha:extremalcollision} implies that any $\gamma \in \calE_{\Phi \circ \kappa}^{s_i} \cap \calE_{\Phi \circ \kappa}^{s_j}$ satisfies $\gamma \in\calE_{\Phi \circ \kappa}^{s_k}$ for all $s_k \in \extgen{n}$.  Therefore, since $\Phi$ is minimally typed, we see that $\gamma \not \in \extremal$. Therefore, there is some $\gamma' \in \extremal$ with $\gamma < \gamma'$.  If $\gamma' \in \extremalarg{s_\ell}$ for $s_\ell \in \extgen{n} \setminus \{s_n\}$ then $\gamma' \in \crs{(\Phi \circ\kappa)(s_\ell)}$ by Lemma~\ref{lem:crs:power} so $\gamma \not \in \calE_{\Phi \circ \kappa}$, a contradiction.  Therefore, $\gamma' \in \extremalarg{s_n}$. Now $\gamma \in \crs{\Phi(s_1)}$ by Lemma~\ref{lem:crs:power}, since above we showed that $\gamma \in \calE_{\Phi\circ\kappa}^{s_1}$.  In particular, since $n \geq 5$ we have $[s_1,s_{n-1}] = 1$, so $\gamma$ is disjoint from $\extremalarg{s_1}$ and $\extremalarg{s_{n-1}}$ by Lemma~\ref{lem:crs:commdisj}.  But now $\Phi$ is minimally typed, so by Theorem~\ref{thm:alpha:welltypedpuncture} any $p \in \Int(\gamma')$
    has $\calT_{\Phi}(p) = \{n-1,n\}$ or $\{1,n\}$.  In particular, $\gamma \leq \gamma''$ for some $\gamma'' \in \extremalarg{s_1} \cup \extremalarg{s_{n-1}}$.  But then
    \[
      \gamma'' \in \crs{(\Phi \circ \kappa)(s_1)} \cup \crs{(\Phi \circ \kappa)(s_{n-1})}
    \]
    by Lemma~\ref{lem:crs:power}.  Therefore $\gamma$ is not $(\Phi \circ \kappa)$--maximal,  a contradiction.
 \end{proof}

We now continue with another auxiliary result we use to prove Lemma~\ref{lem:curves:mureducible}.

\begin{lemma}\label{lem:curves:outerauto}
Let $n \geq 5$ and $m \geq 3$.  Let $\Phi:\braid{n} \rightarrow \braid{m}$ be an externally periodic and minimally typed homomorphism.  Let $\kappa \in \Aut(\braid{n})$ be defined by $\kappa(s_i) = s_i^{-1}$ for all $s_i \in \extgen{n} \setminus \{s_n\}$.  Then $\extremalarg{s_i} = \calE_{\Phi \circ \kappa}^{s_i}$ for all $s_i \in \extgen{n} \setminus \{s_n\}$
\end{lemma}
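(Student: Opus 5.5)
We compare the two unions whose exterior parts define the maximal sets. Write $U_\Phi=\bigcup_{s_k\in\extgen{n}}\crs{\Phi(s_k)}$, so $\extremal=U_\Phi^{\outmulti}$. By Lemma~\ref{lem:crs:power}, $\crs{(\Phi\circ\kappa)(s_k)}=\crs{\Phi(s_k)^{-1}}=\crs{\Phi(s_k)}$ for every $s_k\in\extgen{n}\setminus\{s_n\}$. Hence $U_\Phi$ and $U_{\Phi\circ\kappa}$ can differ only in the contributions $\crs{\Phi(s_n)}$ and $\crs{(\Phi\circ\kappa)(s_n)}$. Here $(\Phi\circ\kappa)(s_n)=\Phi(\kappa(a_1)s_{n-1}^{-1}\kappa(a_1)^{-1})$ is generally not $\Phi(s_n)^{\pm1}$.

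Since the partial order is the same for both homomorphisms, it suffices to prove two things. First, every curve of $\crs{(\Phi\circ\kappa)(s_n)}$ is $\leq$ some curve of $\bigcup_{k\neq n}\crs{\Phi(s_k)}$. Second, the same holds for every curve of $\crs{\Phi(s_n)}$ with respect to $\bigcup_{k\ne n}\crs{(\Phi\circ\kappa)(s_k)}$. Granting these, $\extremal$ and $\calE_{\Phi\circ\kappa}$ both equal $\bigl(\bigcup_{k\neq n}\crs{\Phi(s_k)}\bigr)^{\outmulti}$. Intersecting with $\crs{\Phi(s_i)}=\crs{(\Phi\circ\kappa)(s_i)}$ for $i\neq n$ then gives $\extremalarg{s_i}=\calE_{\Phi\circ\kappa}^{s_i}$.

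For the second claim, which concerns $\Phi$, take $\epsilon\in\crs{\Phi(s_n)}$. We may assume $\epsilon\in\extremalarg{s_n}$, since otherwise it already lies under a maximal curve, and we must rule out the case that this maximal curve belongs only to $\extremalarg{s_n}$. Since $\Phi$ is minimally typed, Theorem~\ref{thm:alpha:welltypedpuncture} shows that every puncture of $\Int(\epsilon)$ has type $\{n-1,n\}$ or $\{n,1\}$. Hence each such puncture lies inside some curve of $\extremalarg{s_{n-1}}\cup\extremalarg{s_1}$. Since $[s_1,s_{n-1}]=1$, these curves are pairwise disjoint by Lemma~\ref{lem:crs:commdisj}, and they are distinct from $\epsilon$ by Lemma~\ref{lem:alpha:extremalcollision}. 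So each puncture of $\epsilon$ is contained in such a curve, and these curves cross $\epsilon$.

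The plan is therefore to reduce to the puncture-level statement that will be needed: $\extremal$ and $\calE_{\Phi\circ\kappa}$ agree away from index $n$. Concretely, a curve $\gamma\in\extremalarg{s_i}$ with $i\neq n$ fails to lie in $\calE_{\Phi\circ\kappa}$ only if $\gamma<\gamma'$ for some $\gamma'\in\crs{(\Phi\circ\kappa)(s_n)}$, which we may take to be $(\Phi\circ\kappa)$-maximal. By Lemma~\ref{lem:curves:kappamintype}, $\Phi\circ\kappa$ is minimally typed, and it is also externally periodic. So Theorem~\ref{thm:alpha:welltypedpuncture} applied to $\Phi\circ\kappa$ forces every puncture of $\Int(\gamma')$ to have $(\Phi\circ\kappa)$-type $\{n-1,n\}$ or $\{n,1\}$. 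The punctures of $\gamma$ have $\Phi$-type containing $i$.

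The argument then mirrors the proof of Lemma~\ref{lem:curves:kappamintype}. Since $[s_1,s_{n-1}]=1$, the curves of $\crs{\Phi(s_1)}\cup\crs{\Phi(s_{n-1})}$ are disjoint from $\gamma'$ by Lemma~\ref{lem:crs:commdisj}; this uses that $\kappa$ preserves commutation with $s_1$ and $s_{n-1}$ up to the powers in Lemma~\ref{lem:crs:power}. Because $\gamma'$ is $(\Phi\circ\kappa)$-maximal, these curves cannot contain $\gamma'$. So they are interior to $\gamma'$ or exterior to it, while still covering its punctures. We then have $\gamma<\gamma'$, with $\gamma$ disjoint from the $\extremalarg{s_1},\extremalarg{s_{n-1}}$ curves when $i\in\{2,\dots,n-2\}$ (by commutation). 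Since $\gamma$'s punctures have type containing $i$ and lie in $\Int(\gamma')$, they must have type $\{n-1,n\}$ or $\{n,1\}$, forcing $i\in\{1,n-1\}$. In that case, $\gamma\in\extremalarg{s_1}$ shares punctures with curves of $\extremalarg{s_{n-1}}$ inside $\gamma'$, giving two distinct $\Phi$-maximal curves whose punctures are all of type $\{n,1\}$ or $\{n-1,n\}$.

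The main obstacle is this last step: excluding $\gamma'$ as a $(\Phi\circ\kappa)$-maximal curve of index $n$ that strictly encloses a $\Phi$-maximal curve of index $1$ or $n-1$. The first attempt will be a symmetry argument. Apply the whole analysis to $\Phi\circ\kappa$ in place of $\Phi$, noting $\kappa^2=\mathrm{id}$. This shows that $\gamma'$ is enclosed by some $\Phi$-maximal curve $\gamma''$, possibly after relabelling. Then $\gamma<\gamma'\leq\gamma''$ with both $\gamma,\gamma''\in\extremal$, which contradicts maximality unless $\gamma=\gamma''$, and that would contradict $\gamma<\gamma'$.
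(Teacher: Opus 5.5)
Your proposal has a genuine gap. The first problem is that the opening reduction targets a false statement. Claims (1) and (2) would make every curve of $\extremalarg{s_n}$ lie in some $\crs{\Phi(s_k)}$ with $k\neq n$. In the nontrivial case, however, $\extremalarg{s_n}\neq\emptyset$, because all $\extremalarg{s_k}$ have the same size by Lemma~\ref{lem:alpha:standardrootaction}. Moreover, by minimal typedness and Lemma~\ref{lem:alpha:extremalcollision}, a curve of $\extremalarg{s_n}$ neither belongs to nor lies strictly inside any $\crs{\Phi(s_k)}$ with $k\neq n$. For $\Phi=\mathrm{id}$, the curve around the punctures $n$ and $1$ is a counterexample, and your computation with $\epsilon$ is already running into this.

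The reduction you then pivot to is the right one, and it is how the paper starts (after using $\kappa^2=1$ and Lemma~\ref{lem:curves:kappamintype} to prove only $\extremalarg{s_i}\subseteq\calE^{s_i}_{\Phi\circ\kappa}$). A bad $\gamma\in\extremalarg{s_i}$ satisfies $\gamma<\widehat\gamma$ for some $\widehat\gamma\in\calE^{s_n}_{\Phi\circ\kappa}$, and every puncture of $\Int(\widehat\gamma)$ has $(\Phi\circ\kappa)$-type $\{n-1,n\}$ or $\{n,1\}$.

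The argument breaks at the next step. Lemma~\ref{lem:crs:commdisj} does not make $\crs{\Phi(s_1)}\cup\crs{\Phi(s_{n-1})}$ disjoint from $\widehat\gamma\in\crs{(\Phi\circ\kappa)(s_n)}$, because $s_n$ braids with both $s_1$ and $s_{n-1}$. In fact the curves of $\calE^{s_1}_{\Phi\circ\kappa}\cup\calE^{s_{n-1}}_{\Phi\circ\kappa}$ containing punctures of $\widehat\gamma$ must cross it; for the identity, the $s_n$-curve crosses the $s_1$- and $s_{n-1}$-curves.

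What commutation does give is that $\gamma$ itself is disjoint from $\crs{\Phi(s_1)}\cup\crs{\Phi(s_{n-1})}$ whenever $s_i\notin\{s_2,s_{n-2}\}$, including $s_i\in\{s_1,s_{n-1}\}$. In that case, take a curve $c$ of $\calE^{s_1}_{\Phi\circ\kappa}\cup\calE^{s_{n-1}}_{\Phi\circ\kappa}\subseteq\crs{\Phi(s_1)}\cup\crs{\Phi(s_{n-1})}$ containing a puncture of $\gamma$. It cannot satisfy $c\leq\gamma$, since $\gamma<\widehat\gamma$. So $c$ strictly contains $\gamma$, contradicting $\gamma\in\extremal$.

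So your easy and hard cases are reversed. For $s_i=s_2$ (resp.\ $s_{n-2}$), $\gamma$ may cross $\extremalarg{s_1}$ (resp.\ $\extremalarg{s_{n-1}}$), and the disjointness you assert there ``by commutation'' is false. Your step ``forcing $i\in\{1,n-1\}$'' also conflates $\Phi$-types with $(\Phi\circ\kappa)$-types, which is exactly what might differ.

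The closing symmetry argument does not fill the gap. Rerunning the analysis for $\Phi\circ\kappa$ only controls $(\Phi\circ\kappa)$-maximal curves of index other than $n$, while $\widehat\gamma$ has index $n$. Nothing places $\widehat\gamma$ under a $\Phi$-maximal curve, and that is essentially what has to be proved.

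The missing idea, which the paper uses for $s_i\in\{s_2,s_{n-2}\}$, is that every $\gamma_0\in\extremalarg{s_i}$ crosses some curve of $\extremalarg{s_{i+1}}$ and some curve of $\extremalarg{s_{i-1}}$. If $\gamma_0$ missed $\extremalarg{s_{i+1}}$, Lemmas~\ref{lem:alpha:exterioragreement}, \ref{lem:alpha:extremalaction} and~\ref{lem:alpha:extremalbraid} would put $\Phi(s_is_{i+1})(\gamma_0)$ in $\extremalarg{s_i}\cap\extremalarg{s_{i+1}}$, contradicting minimal typedness via Lemma~\ref{lem:alpha:extremalcollision}. The $s_{i-1}$ side is symmetric, using inverses and Lemma~\ref{lem:crs:power}.
\begin{itemize}
\item For $s_i=s_2$ and $n\geq6$, this produces $\gamma''\in\extremalarg{s_3}$ crossing $\gamma$. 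Since $[s_3,s_n]=1$, $\gamma''$ is disjoint from $\widehat\gamma$ and so lies inside it. Since $s_3$ commutes with $s_1$ and $s_{n-1}$, $\gamma''$ falls into the easy case, a contradiction.
\item For $n=5$, where $[s_3,s_4]\neq1$, the paper instead chains through $\extremalarg{s_1}$, $\extremalarg{s_3}$ and $\extremalarg{s_4}$, and finishes with Lemma~\ref{lem:curves:munequals5}.
\end{itemize}
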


\begin{proof} Since $\kappa^2 = 1$ it suffices to show that $\extremalarg{s_i} \subseteq \calE_{\Phi \circ \kappa}^{s_i}$.
  We have
  \[
    \crs{\Phi(s_i)} = \crs{(\Phi \circ \kappa)(s_i)}
  \]
  for $s_i \in \extgen{n} \setminus \{s_n\}$ by Lemma~\ref{lem:crs:power}.  Suppose by way of contradiction that $\gamma \in \extremalarg{s_i}$ satisfies $\gamma \not \in \calE_{\Phi \circ \kappa}^{s_i}$.  By definition we have $\widehat{\gamma} \in \calE_{\Phi \circ \kappa}$ with $\widehat{\gamma} > \gamma$.  Now by Lemma~\ref{lem:crs:power} we see that if $\widehat{\gamma} \notin \calE_{\Phi \circ \kappa}^{s_n}$ then $\gamma \not \in \extremal$.  In particular, we conclude that $\widehat{\gamma} \in \calE_{\Phi \circ \kappa}^{s_n}$.  By Lemma~\ref{lem:curves:kappamintype} we have $\Phi \circ \kappa$ minimally typed.  In particular, by Theorem~\ref{thm:alpha:welltypedpuncture} every $p \in \Int(\widehat{\gamma})$ has $\calT_{\Phi \circ \kappa}(p) = \{1,n\}$ or $\{n-1,n\}$.  In particular, if $\gamma$ is disjoint from $\extremalarg{s_{n-1}}$ and $\extremalarg{s_1}$, then $\gamma$ is interior to $\extremalarg{s_{n-1}} \cup \extremalarg{s_1}$, so $\gamma \not \in \extremal$.  Note that this always occurs if $[s_i, s_{n-1}] = [s_1, s_i] = 1$.

    Therefore, $\gamma$ intersects $\extremalarg{s_{n-1}} \cup \extremalarg{s_1}$, and so by Lemma~\ref{lem:crs:commdisj} we have $s_i\in\{s_2,s_{n-2}\}$.  We prove the following claim.
    
    \p{Claim} Let $\gamma' \in \extremalarg{s_i}$.  Then $\gamma'$ intersects $\extremalarg{s_{i+1}}$ and $\extremalarg{s_{i-1}}$.

    \p{Proof of claim} Without loss of generality, suppose by way of contradiction that $\gamma'$ is disjoint from $\extremalarg{s_{i+1}}$.  Since $n \geq 5$, we have $[s_{i-2},s_i] = [s_{i-2}, s_{i+1}] = 1$.  By Lemmas~\ref{lem:alpha:exterioragreement} and~\ref{lem:alpha:extremalaction} we have $\Phi(s_{i+1})(\gamma') \in \extremalarg{s_i}$.  By Lemma~\ref{lem:alpha:exterioragreement} again we have $\Phi(s_is_{i+1})(\gamma') \in \extremalarg{s_i}$.  On the other hand, we have $\Phi(s_is_{i+1})(\gamma') \in \extremalarg{s_{i+1}}$ by Lemma~\ref{lem:alpha:extremalbraid}.  We conclude that $\extremalarg{s_i} \cap \extremalarg{s_{i+1}} \neq \emptyset$. By Lemma~\ref{lem:alpha:extremalcollision} we see that $\extremalarg{s_i} \cap \extremalarg{s_{i+1}} \subseteq \extremalarg{s_j}$ for all $s_j \in \extgen{n}$, so $\Phi$ is not minimally typed, a contradiction. Note that in order to apply Lemma~\ref{lem:alpha:extremalbraid} in the case that $\gamma'$ is disjoint from $\extremalarg{s_{i-1}}$ we must use inverses of standard generators and invoke Lemma~\ref{lem:crs:power}.\qed{}

    Given the claim, we now proceed with the proof as follows.  Suppose first that $n \geq 6$.  By the claim, $\gamma$ intersects some $\gamma'' \in \extremalarg{s_j}$ for some $s_j \in \extgen{n}$ with $[s_j, s_{n-1}] = [s_j, s_n] = [s_j, s_{1}] = 1$.  Then by Lemma~\ref{lem:crs:commdisj} we are in the situation in the first paragraph, so we arrive at a contradiction.

    We now handle the case $n = 5$.  Without loss of generality, we assume that $s_i=s_2$, since the case $s_i=s_3$ is the same argument with $s_4$ exchanged for $s_1$ and $s_2$ exchanged for $s_3$.  Since $s_i=s_2$, we have $\gamma$ intersects $\extremalarg{s_1}$.  That is, there is some $\gamma_1 \in \extremalarg{s_1}$ such that $\gamma_1$ intersects $\gamma$.  Now, if $\gamma_1 \not \in \calE_{\Phi \circ \kappa}$, then since also $[s_1,s_1] = [s_1, s_{n-1}] = 1$ we are again in the situation described by the first paragraph which we have already resolved.  Therefore $\gamma_1 \in \calE_{\Phi \circ \kappa}^{s_1}$.  Similarly, applying the claim twice yields some $\gamma_3 \in \extremalarg{s_3}$ that intersects $\gamma$ and some $\gamma_4 \in \extremalarg{s_4}$ that intersects $\gamma_3$, such that $\gamma_4 \in \calE_{\Phi \circ \kappa}^{s_4}$.  But then $\gamma_3$ and $\gamma$ intersect interior to $\widehat{\gamma} \in \calE_{\Phi \circ \kappa}^{s_5}$, which contradicts Lemma~\ref{lem:curves:munequals5}.
\end{proof}  We now construct reducing systems for minimally typed homomorphisms.

\begin{lemma}\label{lem:curves:mureducible}
Let $n \geq 5$ and $m \geq 3$.  Let $\Phi:\braid{n} \rightarrow \braid{m}$ be an externally periodic and minimally typed homomorphism.  The set $\calW(\Phi)$ is preserved by $\Phi(\braid{n})$.
\end{lemma}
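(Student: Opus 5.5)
Since $\extgen{n}\setminus\{s_n\}$ generates $\braid{n}$ and $\calW(\Phi)$ is finite, it suffices to show $\Phi(s_j)(\calW(\Phi))\subseteq\calW(\Phi)$ for every $s_j\in\extgen{n}$. Equivalently, for each $i$ we must show $\Phi(s_j)(\calW(\Phi)_{i-1,i})\subseteq\calW(\Phi)$. Lemmas~\ref{lem:curves:muadjindexreducible} and~\ref{lem:curves:mootherindexirreducible} already give $\Phi(s_j)(\calW(\Phi)_{i-1,i})=\calW(\Phi)_{i-1,i}$ whenever $s_j\notin\{s_{i-1},s_i\}$. So the remaining cases are $s_j\in\{s_{i-1},s_i\}$.

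For these I would use the identity $a_1=s_{i+1}s_{i+2}\cdots s_{i+n}$ together with the conjugation $\Phi(a_1)(\calW(\Phi)_{i-1,i})=\calW(\Phi)_{i,i+1}$. The latter follows from Lemma~\ref{lem:alpha:standardrootaction}, Lemma~\ref{lem:crs:functorial} and the definition of $\mu_\delta$, as noted in the proof of Lemma~\ref{lem:curves:mumulticurve}. Write
\[
s_i=s_{i+n}=(s_{i+1}\cdots s_{i+n-1})^{-1}a_1.
\]
Then $\Phi(s_i)(\mu)=\Phi(s_{i+n-1}^{-1}\cdots s_{i+1}^{-1})(\Phi(a_1)(\mu))$ with $\Phi(a_1)(\mu)\in\calW(\Phi)_{i,i+1}$. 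Each factor $s_k^{-1}$ with $k\in\{i+2,\ldots,i+n-1\}$ lies outside $\{s_i,s_{i+1}\}$, so it preserves $\calW(\Phi)_{i,i+1}$ by the two lemmas above, since they are stated for $s_j$ and therefore also for $s_j^{-1}$ by finiteness. The one problematic factor is $s_{i+1}^{-1}$, which acts last and belongs to the bad pair for $\calW(\Phi)_{i,i+1}$.

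To avoid this, I would apply the trick with the inverse. Use $a_1^{-1}$ and the expression $a_1=s_{i}s_{i+1}\cdots s_{i+n-2}$, solved for $s_{i-1}$ or $s_i$ so that every remaining factor avoids the bad pair of the target $\calW(\Phi)_{k-1,k}$. For $\Phi(s_i)$, I would write $s_i=a_1 s_{i+n-2}^{-1}\cdots s_{i+1}^{-1}$ using $a_1=s_i s_{i+1}\cdots s_{i+n-2}$. I then apply the factors to $\calW(\Phi)_{i-1,i}$, first $s_{i+1}^{-1},\ldots,s_{i+n-2}^{-1}=s_{i-2}^{-1}$. None of these is $s_{i-1}$ or $s_i$, so $\calW(\Phi)_{i-1,i}$ is preserved, and then $\Phi(a_1)$ sends it to $\calW(\Phi)_{i,i+1}\subseteq\calW(\Phi)$. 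Note that $n-2\ge3$ and $s_{i+n-2}=s_{i-2}\ne s_{i-1}$. This settles $s_i$.

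For $s_{i-1}$ the analogous expression $s_{i-1}=a_1s_{i+n-3}^{-1}\cdots s_{i}^{-1}$ contains $s_i^{-1}$ first. Here I would instead invoke Lemmas~\ref{lem:curves:kappamintype} and~\ref{lem:curves:outerauto}, which are evidently designed for this purpose. Replace $\Phi$ by $\Phi\circ\kappa$, which is minimally typed and, away from $s_n$, has the same canonical reduction systems (Lemma~\ref{lem:crs:power}) and the same $\extremalarg{s_i}$. Consequently it has the same $\Delta$ and the same $\calW$ sets, after conjugating by a power of $a_1$ to keep the relevant indices away from $n$. Running the $s_i$-argument for $\Phi\circ\kappa$ with the reversed orientation of indices (using $\kappa(a_1)$ and the mirror identity) handles $\Phi(s_{i-1})^{\pm1}$.

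The main obstacle is this last step. One must check carefully that $\calW(\Phi\circ\kappa)_{i-1,i}=\calW(\Phi)_{i-1,i}$, which requires equality of the $\Delta$ sets and of the surfaces cut along $\crs{\Phi(s_{i-2})}\cup\crs{\Phi(s_{i+1})}$. One must also manage the indices involving $s_n$, where Lemma~\ref{lem:curves:outerauto} gives no direct information; I would handle those by conjugating by $\Phi(a_1)^k$.
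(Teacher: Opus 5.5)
Your proposal follows the paper's proof. The generators outside $\{s_{i-1},s_i\}$ are handled by Lemmas~\ref{lem:curves:muadjindexreducible} and~\ref{lem:curves:mootherindexirreducible}. The generator $s_i$ is handled via $s_i=a_1s_{i+n-2}^{-1}\cdots s_{i+1}^{-1}$, exactly as the paper does. The generator $s_{i-1}$ is handled by passing to $\Phi\circ\kappa$ with Lemmas~\ref{lem:curves:kappamintype} and~\ref{lem:curves:outerauto}. (Your discarded first attempt uses $a_1=s_{i+1}\cdots s_{i+n}$, which has one factor too many.)

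No reversal of indices is needed in the last step. The $s_i$-case applied verbatim to $\Phi\circ\kappa$ gives $\Phi(s_i)^{-1}\bigl(\calW(\Phi\circ\kappa)_{i-1,i}\bigr)=\calW(\Phi\circ\kappa)_{i,i+1}$. Once these sets are identified with $\calW(\Phi)_{i-1,i}$ and $\calW(\Phi)_{i,i+1}$, you get $\Phi(s_i)(\calW(\Phi)_{i,i+1})=\calW(\Phi)_{i-1,i}$, which is the $s_{i-1}$-case after shifting the index. The paper does precisely this for $i=3$, unpacked through $\kappa(a_1^{-1})=s_{n-1}\cdots s_1$.

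The step you single out as the main obstacle is, however, a genuine gap when $n=5$. The set $\calW(\Phi)_{i-1,i}$ is built from $\crs{\Phi(s_{i-2})}$, $\crs{\Phi(s_{i+1})}$, $\extremalarg{s_{i-1}}$ and $\extremalarg{s_i}$. So identifying both $\calW_{i-1,i}$ and $\calW_{i,i+1}$ for $\Phi$ and $\Phi\circ\kappa$ requires $s_{i-2},\dots,s_{i+2}$ all to differ from $s_n$. Lemmas~\ref{lem:crs:power} and~\ref{lem:curves:outerauto} give nothing at $s_n$, because $\kappa(s_n)\neq s_n^{-1}$: it is the inverse of a half-twist about a different arc joining $n$ and $1$.

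Conjugating by a power of $a_1$ achieves this only for $n\geq 6$. For $n=5$ the five indices exhaust $\ZZ/5\ZZ$, so at least one needed identification compares $\crs{\Phi(s_5)}$ with $\crs{\Phi(\kappa(s_5))}$, or $\extremalarg{s_5}$ with $\calE_{\Phi\circ\kappa}^{s_5}$. Your sketch gives no argument for that comparison.

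The paper's written proof has the same limitation. It justifies $\calW(\Phi\circ\kappa)_{j-1,j}=\calW(\Phi)_{j-1,j}$ only for $3\leq j\leq n-2$, but then uses it for $j=3$ and $j=4$. So for $n=5$ an extra argument is needed in either version.
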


\begin{proof}
 By Lemma~\ref{lem:curves:muadjindexreducible} and Lemma~\ref{lem:curves:mootherindexirreducible} we have $\Phi(s_j)\left(\calW(\Phi)_{i-1,i}\right) = \calW(\Phi)_{i-1,i}$ for $s_j \in \extgen{n}$ with $s_j\in\extgen{n}\setminus\{s_{i-1},s_i\}$.  It therefore suffices to show that $\Phi(s_i)(\calW(\Phi)_{i-1,i}) \subseteq  \calW(\Phi)$ and $\Phi(s_{i-1})(\calW(\Phi)_{i-1,i}) \subseteq  \calW(\Phi)$.

 We have $a_1 = s_{i}s_{i+1} \ldots s_{i+n-2}$. We have $\Phi(a_1)(\extremalarg{s_j}) = \extremalarg{s_{j+1}}$ by Lemma~\ref{lem:alpha:periodictypepreserve} and
 \[
   \Phi(a_1)(\crs{\Phi(s_j)}) = \crs{\Phi(s_{j+1})}
 \]
 by Lemma~\ref{lem:crs:functorial}.  Therefore, as a consequence of the definition of $\calW(\Phi)$ and the fact that $\calW(\Phi)$ is finite, we have $\Phi(a_1)(\calW(\Phi)_{i-1,i}) = \calW(\Phi)_{i+1,i}$.  Combining this with Lemma~\ref{lem:curves:muadjindexreducible} and Lemma~\ref{lem:curves:mootherindexirreducible} we conclude that $\Phi(s_i)(\calW(\Phi)_{i-1,i}) =\calW(\Phi)_{i,i+1} \subseteq \calW(\Phi)$ as desired.
    
 It now remains to show that if $\mu_{\delta} \in \calW(\Phi)_{i,i+1}$ then $\Phi(s_i)(\mu_{\delta}) \in \calW(\Phi)_{i-1,i}$.  Without loss of generality we prove this for $i = 3$.  Let $\kappa:\braid{n} \rightarrow \braid{n}$ denote the inversion automorphism given by $\kappa(s_i) = s_i^{-1}$ for all $s_i \in \extgen{n} \setminus \{s_n\}$.  Lemma~\ref{lem:curves:outerauto} says that $\extremalarg{s_j} = \calE_{\Phi \circ \kappa}^{s_j}$ for $s_j \in \extgen{n} \setminus \{ s_n\}$, so
 \[
   \Delta(\Phi)_{j-1,j} = \Delta(\Phi \circ \kappa)_{j-1,j}
 \]
 for $2 \leq j \leq n-1$.  We also have $\crs{\Phi(s_j)} = \crs{\Phi(\kappa(s_j))}$ for $s_j \in \extgen{n} \setminus \{s_n\}$ by Lemma~\ref{lem:crs:power}.  We therefore have have
 \[
   \calW(\Phi \circ \kappa)_{j-1,j} = \calW(\Phi)_{j-1,j}
 \]
 for all $3 \leq j \leq n-2$.  In particular, we have $\Phi(\kappa(a_1^{-1}))(\mu_{\delta}) \in \calW(\Phi)_{2,3}$ for $\mu_{\delta} \in \calW(\Phi)_{3,4}$.  Now, we may write
 \[
   \kappa(a_1^{-1}) = s_{n-1} \cdot \cdots \cdot s_1.
 \]
 Let $\mu_{\delta} \in \calW(\Phi)_{3,4}$.  By Lemma~\ref{lem:curves:muadjindexreducible} and Lemma~\ref{lem:curves:mootherindexirreducible} we have
 \[
   \Phi(s_2s_1)\left(\calW(\Phi)_{3,4}\right) \subseteq \calW(\Phi)_{3,4}.
 \]
 Since $\calW(\Phi)_{3,4}$ is finite, we have $\Phi(s_2s_1)(\calW(\Phi)_{3,4}) = \calW(\Phi)_{3,4}$.  In particular, this implies that $\Phi(s_1^{-1}s_2^{-1})(\mu_{\delta}) \in \calW(\Phi)_{3,4}$.  Let
 \[
   \mu_{\delta'} = \Phi(s_1^{-1} s_2^{-1})(\mu_{\delta})
 \]
 so we have
 \[
   \Phi(s_2s_1)(\mu_{\delta'}) = \mu_{\delta}.
 \]
 In particular, we have
 \[
   \Phi(s_{n-1}s_{n-2} \cdot \cdots \cdot s_3)(\mu_{\delta}) = \mu_{\delta''} \in \calW(\Phi)_{2,3}
 \]
 by our observation about $\kappa(a_1^{-1})$.  Then by Lemma~\ref{lem:curves:mootherindexirreducible} the elements $\Phi(s_{j})$ preserve the set $\calW(\Phi)_{2,3}$ for $4\leq j\leq n-1$.  In particular, we see that
 \[
   \Phi(s_3)(\mu_{\delta}) = \Phi(s_4^{-1} \cdot \cdots \cdot s_{n-1}^{-1})(\mu_{\delta''}) \in \calW(\Phi)_{2,3}.
 \]
 In particular we have $\Phi(s_3)(\mu_\delta) \in \calW(\Phi)_{2,3}$.  The result now follows.
\end{proof}

We are now ready to show $\Delta(\Phi) = \emptyset$ for minimally typed irreducible homomorphisms.

\begin{proposition}\label{prop:curves:valid}
   Let $n \geq 5$ and $m \geq 3$. Let $\Phi:\braid{n}\to \braid{m}$ be an externally periodic and minimally typed homomorphism. If $\Phi$ is irreducible then $\Delta(\Phi) = \emptyset$.
\end{proposition}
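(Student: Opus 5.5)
The plan is to argue by contraposition. Suppose $\Delta(\Phi)\neq\emptyset$; we will exhibit a reducing system for $\Phi$, namely $\calW(\Phi)$.

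The first step is to check that $\calW(\Phi)$ is a nonempty multicurve. Lemma~\ref{lem:curves:mumulticurve} gives exactly this: $\calW(\Phi)$ is a multicurve, and since $\Delta(\Phi)\neq\emptyset$ it contains an essential curve $\mu_\delta$. In particular $\mu_\delta$ is not homotopic to $\partial\disk{m}$ and is not a puncture curve.

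The second step is invariance, which is the content of Lemma~\ref{lem:curves:mureducible}: $\Phi(s_j)(\calW(\Phi))\subseteq\calW(\Phi)$ for every $s_j\in\extgen{n}$. Because $\calW(\Phi)$ is finite, each $\Phi(s_j)$ in fact permutes it. Since $\extgen{n}$ generates $\braid{n}$, the whole image $\Phi(\braid{n})$ preserves the nonempty multicurve $\calW(\Phi)$. This makes $\calW(\Phi)$ a reducing system for $\Phi$ and contradicts irreducibility, so $\Delta(\Phi)=\emptyset$.

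The main point needing care is bookkeeping rather than new geometry. The paper writes both $\calW(\Phi)_{i,i-1}$ and $\calW(\Phi)_{i-1,i}$ for the same set, so I will check that the union over $s_i\in\extgen{n}$ really covers all of them. I will also confirm that the combination of Lemmas~\ref{lem:curves:muadjindexreducible}, \ref{lem:curves:mootherindexirreducible} and~\ref{lem:curves:mureducible} handles all four cases: $s_j\in\{s_{i-1},s_i\}$, $s_j\in\{s_{i-2},s_{i+1}\}$, and the remaining generators. If Lemma~\ref{lem:curves:mureducible} only yields an inclusion for $\Phi(s_{i-1})$ and $\Phi(s_i)$, I will use finiteness and apply the same argument to inverses, or invoke Lemma~\ref{lem:crs:power} via $\Phi\circ\kappa$, to upgrade the inclusion to invariance of the whole set.
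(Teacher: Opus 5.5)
Your proposal is correct and is exactly the paper's argument: assuming $\Delta(\Phi)\neq\emptyset$, Lemma~\ref{lem:curves:mumulticurve} makes $\calW(\Phi)$ a nonempty multicurve, and Lemma~\ref{lem:curves:mureducible} shows it is preserved by $\Phi(\braid{n})$, so it is a reducing system, contradicting irreducibility. The extra bookkeeping you plan (upgrading inclusion to equality by finiteness, checking the index cases) is unnecessary here, since Lemma~\ref{lem:curves:mureducible} already asserts invariance under all of $\Phi(\braid{n})$.
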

\begin{proof}
    Suppose that $\Delta(\Phi) \neq \emptyset$.  Then $\calW(\Phi)$ is a nonempty multicurve by Lemma~\ref{lem:curves:mumulticurve}, so $\Phi$ is reducible by Lemma~\ref{lem:curves:mureducible}.  
\end{proof}  An important consequence of Proposition~\ref{prop:curves:valid} is that $\Phi$-maximal curves contain no curves in the canonical reduction system of other generators. 

\begin{lemma}\label{lem:curves:nonesting}
    Let $n \geq 5$ and $m \geq 3$.  Let $\Phi:\braid{n} \rightarrow \braid{m}$ be an externally periodic and minimally typed homomorphism with $\Delta(\Phi)=\emptyset$. Let $s_i,s_j \in \extgen{n}$.  Let $\gamma \in \extremalarg{s_i}$ and let $\delta \in \crs{\Phi(s_j)}$ such that $\delta\leq\gamma$.  Then $s_i = s_j$. In particular, $\extremalarg{s_i}=\crs{\Phi(s_i)}^{\outmulti}$.
\end{lemma}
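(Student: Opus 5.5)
The plan is to argue by contradiction. Suppose $s_i \neq s_j$, $\gamma \in \extremalarg{s_i}$ and $\delta \in \crs{\Phi(s_j)}$ with $\delta \leq \gamma$. The goal is to produce a curve lying in $\Delta(\Phi)$, contradicting $\Delta(\Phi)=\emptyset$.

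First, $\delta \neq \gamma$. Otherwise $\gamma \in \crs{\Phi(s_j)}$ is $\Phi$-maximal, so $\gamma \in \extremalarg{s_i}\cap\extremalarg{s_j}$. Lemma~\ref{lem:alpha:extremalcollision} would then give $\gamma \in \extremalarg{s_k}$ for all $s_k$, contradicting minimal typedness. So $\delta < \gamma$.

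Since $\delta \in \crs{\Phi(s_j)}$ is not itself maximal, choose $\gamma' \in \extremal$ with $\delta \leq \gamma'$. Such a curve exists because the set of curves in $\bigcup_k \crs{\Phi(s_k)}$ lying above $\delta$ is finite and nonempty. We may take $\gamma' \in \extremalarg{s_j}$, because $\crs{\Phi(s_j)}^{\outmulti}$ contains a curve $\epsilon \geq \delta$ and some maximal curve lies above $\epsilon$. To keep the index $j$ attached, I use $\epsilon$: let $\epsilon$ be the outermost curve of $\crs{\Phi(s_j)}$ above $\delta$.

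\textbf{Case A: $\epsilon \in \extremal$.} Then $\epsilon \in \extremalarg{s_j}$. Also $\gamma \neq \epsilon$, since $\gamma\in\extremalarg{s_i}$, $\epsilon\in\extremalarg{s_j}$, and $\extremalarg{s_i}\cap\extremalarg{s_j}=\emptyset$ by Lemma~\ref{lem:alpha:extremalcollision} and minimal typedness. So $\delta$ lies below both of the distinct maximal curves $\gamma$ and $\epsilon$. Hence $\{\eta : \eta\leq\gamma\}\cap\{\eta:\eta\leq\epsilon\}$ is nonempty, and its exterior part $\Delta(\gamma,\epsilon)\subseteq\Delta(\Phi)$ is nonempty, a contradiction.

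\textbf{Case B: $\epsilon\notin\extremal$.} Then some $\gamma'' \in \extremalarg{s_k}$ satisfies $\gamma''>\epsilon \geq \delta$. If $\gamma''\neq\gamma$, the same argument as in Case A with $\gamma''$ in place of $\epsilon$ puts $\delta$ below two distinct maximal curves, so $\Delta(\Phi)\neq\emptyset$.

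It remains to handle $\gamma''=\gamma$, that is, $\epsilon<\gamma$ with $\epsilon \in \crs{\Phi(s_j)}^{\outmulti}$ and $s_j \neq s_i$. This is the main obstacle. If $[s_i,s_j]=1$, I would consider $\crs{\Phi(s_j)}^{\outmulti}$, which is preserved appropriately by $\Phi(s_i)$. By Lemma~\ref{lem:alpha:exterioragreement}, $\Phi(s_j)$ agrees with $\Phi(s_i)$ outside, so $\Phi(s_j)$ preserves $\gamma$. Lemma~\ref{lem:crs:commdisj} then shows that $\gamma$ is a reducing curve of $\Phi(s_j)$ disjoint from $\crs{\Phi(s_j)}$. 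Since $\epsilon$ is outermost in $\crs{\Phi(s_j)}$, $\gamma$ does not lie in it.

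For this subcase, I would use types of punctures. Every $p\in\Int(\epsilon)\subseteq\Int(\gamma)$ has type $\{i-1,i\}$ or $\{i,i+1\}$ by Theorem~\ref{thm:alpha:welltypedpuncture}. So every $p\in\Int(\epsilon)$ also lies under some $\gamma_{i\pm1}\in\extremalarg{s_{i\pm1}}$. If $s_j\notin\{s_{i-1},s_{i+1}\}$, then $s_j$ commutes with one of $s_{i\pm1}$, and the relevant $\gamma_{i\pm1}$ is disjoint from $\epsilon$ by Lemma~\ref{lem:crs:commdisj}. It contains a puncture of $\Int(\epsilon)$, so either $\epsilon\le\gamma_{i\pm1}$, which puts $\epsilon$ under two distinct maximal curves and gives $\Delta(\Phi)\ne\emptyset$, or $\gamma_{i\pm1}<\epsilon<\gamma$, which contradicts maximality. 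If $s_j=s_{i\pm1}$, say $s_{i+1}$, I would apply $\Phi(s_is_{i+1})$, which sends $\extremalarg{s_i}$ to $\extremalarg{s_{i+1}}$ by Lemma~\ref{lem:alpha:extremalbraid}. Braid-relation conjugation via Lemma~\ref{lem:crs:functorial} turns $\epsilon$ into a curve of $\crs{\Phi(s_i)}$, which reduces to the commuting configuration or to Case A. I expect this adjacent-index subcase to need the most care.

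Finally, $\extremalarg{s_i}=\crs{\Phi(s_i)}^{\outmulti}$ follows from the main claim. Any $\eta\in\crs{\Phi(s_i)}^{\outmulti}$ lies below some maximal $\gamma\in\extremalarg{s_k}$, so $s_k=s_i$. Since $\eta$ is outermost in $\crs{\Phi(s_i)}$, this forces $\eta=\gamma$.
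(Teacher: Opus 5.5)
You have a genuine gap in the case $s_j\in\{s_{i-2},s_{i+2}\}$. Your preliminary reductions are correct: you rule out $\delta=\gamma$ via Lemma~\ref{lem:alpha:extremalcollision}, pass to the outermost $\epsilon\in\crs{\Phi(s_j)}$ above $\delta$, and use $\Delta(\Phi)=\emptyset$ to make $\gamma$ the only maximal curve above $\epsilon$. Your puncture-type nesting argument is also exactly the paper's argument when $s_j$ commutes with both $s_{i-1}$ and $s_{i+1}$.

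The gap, concretely: take $s_j=s_{i-2}$. It commutes with $s_{i+1}$ but not with $s_{i-1}$, so your argument needs a puncture of $\Int(\epsilon)$ of type $\{i,i+1\}$, and nothing supplies one. A priori every $p\in\Int(\epsilon)$ may have type $\{i-1,i\}$. The curves of $\extremalarg{s_{i-1}}$ containing those punctures may then intersect $\epsilon$, since $[s_{i-2},s_{i-1}]\neq1$. So the dichotomy ``$\epsilon\le\gamma_{i-1}$ or $\gamma_{i-1}<\epsilon$'' is unavailable. Your sentence ``It contains a puncture of $\Int(\epsilon)$'' silently assumes the favorable type. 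This is precisely the configuration the paper isolates after its easy case: $\delta$ is exterior to $\extremalarg{s_{i+1}}$, so all its punctures have type $\{i-1,i\}$.

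The missing idea is to move $\epsilon$ by $\Phi(s_i)$:
\begin{enumerate}
\item Since $[s_i,s_{i-2}]=1$, Lemma~\ref{lem:crs:commdisj} gives $\Phi(s_i)(\epsilon)\in\crs{\Phi(s_{i-2})}$.
\item $\Phi(s_i)(\epsilon)$ lies below $\Phi(s_i)(\gamma)\in\extremalarg{s_i}$ by Lemma~\ref{lem:alpha:extremalaction}.
\item By Theorem~\ref{thm:alpha:welltypedpuncture}, its interior punctures now have type $\{i,i+1\}$.
\item Since $[s_{i-2},s_{i+1}]=1$ for $n\geq5$, your nesting argument against $\extremalarg{s_{i+1}}$ now applies to $\Phi(s_i)(\epsilon)$ and gives the contradiction.
\end{enumerate}
The case $s_j=s_{i+2}$ is symmetric. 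Again apply $\Phi(s_i)$, which turns type $\{i,i+1\}$ into $\{i-1,i\}$, and use $[s_{i+2},s_{i-1}]=1$.

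Conversely, the case $s_j=s_{i\pm1}$, which you flag as the hard one, is immediate. For instance, $s_{i+1}$ commutes with $s_{i-1}$, and $\epsilon$ is disjoint from $\extremalarg{s_{i+1}}$ simply because both lie in the multicurve $\crs{\Phi(s_{i+1})}$. So your argument already handles every puncture of $\Int(\epsilon)$.

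Your sketched route for that case does not work as stated. $\Phi(s_is_{i+1})$ carries $\crs{\Phi(s_i)}$ to $\crs{\Phi(s_{i+1})}$, not the reverse. The element that does the reverse, $\Phi(s_{i+1}s_i)$, is not known to send $\extremalarg{s_i}$ to maximal curves (see the Remark after Lemma~\ref{lem:alpha:extremalbraid}).

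Also, in your commuting paragraph, Lemma~\ref{lem:alpha:exterioragreement} only gives $\Phi(s_j)(\gamma)=\Phi(s_i)(\gamma)\in\extremalarg{s_i}$, not $\Phi(s_j)(\gamma)=\gamma$. That paragraph is not used later, but the claim is false as stated.
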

\begin{proof}
    Seeking a contradiction suppose that $s_i \neq s_j$. Since $s_i \neq s_j$ and $n \geq 5$ element $s_j$ commutes either with $s_{i-1}$ or $s_{i+1}$.  Observe that if $[s_{j},s_{i-1}] = [s_{j}, s_{i+1}] = 1$, then $\delta$ is interior to one of $\extremalarg{s_{i-1}}$ or $\extremalarg{s_{i+1}}$ since any $p \in \Int(\delta)$ has $\calT_{\Phi}(p) = \{i,i+1\}$ or $\{i-1,i\}$ by Theorem~\ref{thm:alpha:welltypedpuncture}.  Without loss of generality, suppose that $[s_j, s_{i-1}] \neq 1$, so $[s_j, s_{i+1}] = 1$ and therefore $s_j = s_{i-2}$.  
    
    Observe that if $\delta$ is interior to some $\gamma' \in \extremalarg{s_{i+1}}$ then $\Delta(\gamma, \gamma') \neq \emptyset$, a contradiction.  By Lemma~\ref{lem:crs:commdisj} we therefore have $\delta$ exterior to $\extremalarg{s_{i+1}}$.  Since $\delta \leq \gamma$, Theorem~\ref{thm:alpha:welltypedpuncture} and the assumption that $\Phi$ is minimally typed implies that any $p \in \Int(\delta)$ has $\calT_{\Phi}(p) = \{i-1,i\}$.  However, by Lemma~\ref{lem:crs:commdisj} we have $\Phi(s_i)(\delta) \in \crs{\Phi(s_j)}$.  By Lemma~\ref{lem:alpha:extremalaction} we have $\Phi(s_i)(\delta)$ interior to $\extremalarg{s_i}$.  Then by Theorem~\ref{thm:alpha:welltypedpuncture} we have $\calT_{\Phi}(\Phi(s_i)(p)) = \{i,i+1\}$.  But then $\Phi(s_i)(\delta)$ is interior to $\extremalarg{s_{i+1}}$, so as discussed above this is a contradiction.
\end{proof}  We also obtain the following property about punctures. 

\begin{lemma}\label{lem:curves:ext_puncs}
    Let $n\geq 5$ and $m\geq3$. Let $\Phi:\braid{n} \rightarrow \braid{m}$ be an externally periodic and minimally typed homomorphism.  Assume that $\Delta(\Phi)=\emptyset$. If $p$ is a puncture of type $\calT_{\Phi}(p) = \{i, i+1\}$ and $s_j\in \extgen{n}$ is such that $j\not \in \calT_{\Phi}(p)$, then $p$ is exterior to $\crs{\Phi(s_j)}$. 
\end{lemma}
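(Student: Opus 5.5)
We argue by contradiction. Suppose $p$ has type $\{i,i+1\}$, $j\notin\{i,i+1\}$, and $p$ is interior to some $\delta\in\crs{\Phi(s_j)}$; we may take $\delta$ maximal in $\calN_j(p)$. The plan is to show that $\delta$ must lie inside a $\Phi$-maximal curve of index $i$ or $i+1$. This would contradict Lemma~\ref{lem:curves:nonesting}, which applies because $\Delta(\Phi)=\emptyset$ and says that a curve of $\crs{\Phi(s_j)}$ lying inside a curve of $\extremalarg{s_k}$ forces $s_j=s_k$.

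First, since $n\geq5$ and $s_j\notin\{s_i,s_{i+1}\}$, the large neighbourhoods property shows that $s_j$ commutes with at least one of $s_i,s_{i+1}$. Say $[s_j,s_k]=1$ with $k\in\{i,i+1\}$. Since $k\in\calT_\Phi(p)$, there is $\gamma_k\in\extremalarg{s_k}$ with $p\in\Int(\gamma_k)$. By Lemma~\ref{lem:crs:commdisj}, $\delta$ is disjoint from $\crs{\Phi(s_k)}$, and in particular from $\gamma_k$. Both $\delta$ and $\gamma_k$ contain $p$, so they are nested. Since $\gamma_k$ is $\Phi$-maximal and $\delta\in\crs{\Phi(s_j)}$, we cannot have $\delta>\gamma_k$, and so $\delta\leq\gamma_k$.

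Lemma~\ref{lem:curves:nonesting} applied to $\gamma_k\in\extremalarg{s_k}$ and $\delta\in\crs{\Phi(s_j)}$ with $\delta\leq\gamma_k$ then gives $s_j=s_k\in\{s_i,s_{i+1}\}$. This contradicts $j\notin\calT_\Phi(p)$.

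The case $\delta=\gamma_k$ is covered by the same lemma, and it would also contradict minimal typing via Lemma~\ref{lem:alpha:extremalcollision}. So $p$ is not interior to any curve of $\crs{\Phi(s_j)}$. Since $p$ is a puncture, it is disjoint from every curve, and hence it is exterior to $\crs{\Phi(s_j)}$.

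The main obstacle is the nesting step: two disjoint curves enclosing a common puncture must be comparable under $\leq$. This is standard, because in a disk two disjoint simple closed curves bound either nested or disjoint subdisks. Everything else is a direct application of Lemmas~\ref{lem:crs:commdisj} and~\ref{lem:curves:nonesting}.
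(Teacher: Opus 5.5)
Your proof is correct and follows the paper's argument. You use the fact that $s_j$ commutes with one of $s_i$ or $s_{i+1}$, then Lemma~\ref{lem:crs:commdisj} together with $\Phi$-maximality to get $\delta\leq\gamma_k$, and this contradicts Lemma~\ref{lem:curves:nonesting}.
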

\begin{proof}
By definition $p$ is interior to a curve $\gamma_i \in \extremalarg{s_i}$ and to a curve $\gamma_{i+1}\in \extremalarg{s_{i+1}}$. Looking for a contradiction, assume $p$ is interior to a curve $\delta_j \in \crs{\Phi(s_j)}$.  Since $j\not \in \calT_{\Phi}(p)$ the element $s_j$ commutes with either $s_i$ or $s_{i+1}$. Without loss of generality, assume $s_j$ commutes with $s_i$. It follows from Lemma~\ref{lem:crs:commdisj} and the definition of $\extremal$ that $\delta_j\leq \gamma_i$, but this contradicts Lemma~\ref{lem:curves:nonesting}. 
\end{proof}  We also have the following useful observation.

\begin{lemma}\label{lem:curves:trivialnestingthing}
Let $n \geq 5$ and $m \geq 3$.  Let $\Phi:\braid{n} \rightarrow \braid{m}$ be an externally periodic and minimally typed homomorphism.  Assume that $\Delta(\Phi) = \emptyset$.  Then any curve $\delta \subset \disk{m}$ is interior to at most one $\gamma \in \extremal$.
\end{lemma}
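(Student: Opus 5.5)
The plan is to argue by contradiction, directly from the definition of $\Delta(\Phi)$. Suppose some curve $\delta \subset \disk{m}$ satisfies $\delta \leq \gamma$ and $\delta \leq \gamma'$ for two distinct $\gamma, \gamma' \in \extremal$. Then $\delta$ lies in the set $\{\epsilon : \epsilon \leq \gamma\} \cap \{\epsilon : \epsilon \leq \gamma'\}$.

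This set is nonempty, since it contains $\delta$. It is also finite. Every curve in it is disjoint from both $\gamma$ and $\gamma'$ and is interior to both, so every such curve lives in the interior components of $\disk{m}\cut(\gamma\cup\gamma')$, and there are only finitely many curves there up to isotopy. Hence its maximal elements with respect to $\leq$ exist.

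Concretely, I would pick a maximal element $\delta'$ of this set with $\delta \leq \delta'$. Such a $\delta'$ exists because $\leq$ is a partial order on the finite set and $\delta$ itself is a member. Then $\delta' \in \Delta(\gamma,\gamma') \subseteq \Delta(\Phi)$, which contradicts the hypothesis $\Delta(\Phi) = \emptyset$.

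The only point needing care is the degenerate case $\gamma \leq \gamma'$ (or the reverse). Here $\delta' = \gamma$ is the maximal element, and $\Delta(\gamma,\gamma')$ is still nonempty because it contains $\gamma$. The definition of $\Delta(\Phi)$ only requires $\gamma \neq \gamma'$, so this case is covered as well. If one prefers to exclude it independently, two distinct elements of $\extremal$ cannot be nested by the definition of $\Phi$-maximality, since $\gamma < \gamma'$ with $\gamma' \in \crs{\Phi(s_j)}$ would contradict $\gamma \in \extremal$.

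I expect no real obstacle. The statement is essentially a restatement of $\Delta(\Phi)=\emptyset$, as already remarked after the definition of $\Delta(\Phi)$; the mild subtlety is just the finiteness and existence of maximal elements in the intersection.
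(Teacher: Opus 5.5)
Your proof is the paper's proof, which simply observes that a curve interior to two distinct $\gamma,\gamma'\in\extremal$ forces $\Delta(\gamma,\gamma')\neq\emptyset$, hence $\Delta(\Phi)\neq\emptyset$. One justification, however, is false. The set $\{\epsilon:\epsilon\leq\gamma\}\cap\{\epsilon:\epsilon\leq\gamma'\}$ is in general \emph{infinite}. Any component of $\disk{m}\cut(\gamma\cup\gamma')$ that lies inside both curves and contains at least three punctures supports infinitely many isotopy classes of curves. In your nested case the set is all of $\{\epsilon:\epsilon\leq\gamma\}$, which is infinite once $|\Int(\gamma)|\geq 3$. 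What is finite is the set of maximal elements $\Delta(\gamma,\gamma')$, not the intersection itself.

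The repair is a bounded-chain argument. If $\epsilon_1<\epsilon_2$, then $|\Int(\epsilon_1)|<|\Int(\epsilon_2)|$: otherwise $\epsilon_1$ and $\epsilon_2$ cobound an annulus with no punctures and would be isotopic. So the interior puncture count strictly increases along chains and is bounded by $m$. Choose $\delta'$ in the intersection with $\delta'\geq\delta$ and $|\Int(\delta')|$ as large as possible. By the above and transitivity, $\delta'$ is maximal, so $\delta'\in\Delta(\gamma,\gamma')\subseteq\Delta(\Phi)$. With this replacement your argument is complete.
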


\begin{proof}
    Suppose by way of contradiction that we have $\delta$ interior to two distinct $\gamma, \gamma' \in \extremal$.  This implies $\Delta(\gamma, \gamma') \neq \emptyset$.  By definition $\Delta(\Phi) \neq \emptyset$ which contradicts our assumption.
\end{proof}

\section{Homomorphisms with unique maximal curves}\label{section:onemaximal}

Our goal in this section is to prove a technical result that we leverage in the proof of Theorem~\ref{thm:extcent}.  Let $\Phi:\braid{n} \rightarrow \braid{m}$ be a homomorphism.  We say that an element $f \in \braid{m}$ is \defn{externally central} if $\Ext(f)\in\langle T_{\partial\disk{\crs{f}^{\outmulti}}}\rangle$, or equivalently $\Ext(f)=\Ext_{\crs{f}^{\outmulti}}(z)$ for some $ z\in Z(\braid{m})$. We say that a homomorphism $\Phi:\braid{n} \rightarrow \braid{m}$ is \defn{externally central} if some (hence all) $s_i \in \extgen{n}$ is (are) externally central.  We say that $\Phi$ is \defn{simple} if the following conditions hold:
\begin{itemize}
    \item $\Phi$ is externally central;
    \item $\Phi$ is minimally typed;
    \item $\Delta(\Phi) = \emptyset$; and
    \item for any $s_i \in \extgen{n}$, the set $\extremalarg{s_i}$ is a singleton.
\end{itemize}

Let us observe how these conditions relate to the assumptions of Theorem~\ref{mainthm:natleast5}. For $n \geq 5$, any non-cyclic irreducible homomorphism is minimally typed by Lemma~\ref{lem:external:anosov}. Likewise, we can see from Proposition~\ref{prop:curves:valid} that if $\Phi$ is non-cyclic and irreducible then $\Delta(\Phi)$ is empty.  The first and last conditions are not \emph{a priori} satisfied by irreducible homomorphisms.

Our goal in this section is to prove Lemma~\ref{lem:onemaximal:twopunc}, which is a technical result that we use in Section~\ref{section:extcent}. Let $p \in \disk{m}$ be a puncture and let $\gamma, \gamma' \subset \disk{m}$ be two curves. We  say that $p$ is \defn{bounded} by $\gamma$ and $\gamma'$ if for all arcs $\alpha$ with one endpoint $p$ and the other endpoint on $\partial \disk{m}$, $\alpha$ intersects one of $\gamma$ or $\gamma'$.  Note that $p$ does \emph{not} have to lie in $\Int(\gamma) \cup \Int(\gamma')$.  See Figure~\ref{fig:punctureboundingex} for an example.

\begin{figure}[ht]
    \centering
    \begin{tikzpicture}
        \node at (0,0)[anchor = south west]{\includegraphics[scale=0.6]{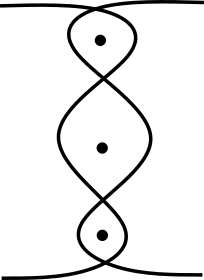}};
        \node at (1.8,2.5){$p$};
        \node at (0.9,2.5){$\gamma$};
        \node at (2.75,2.5){$\gamma'$};
    \end{tikzpicture}
    \caption{The curves $\gamma$ and $\gamma'$ bound the puncture $p$}\label{fig:punctureboundingex}
\end{figure}We now investigate intersections of arcs with $\Phi$-maximal curves.

\begin{lemma}\label{lem:onemaximal:arcint}
Let $n \geq 5$ and $m \geq 3$.  Let $\Phi:\braid{n} \rightarrow \braid{m}$ be a simple homomorphism.  Let $s_i \in \extgen{n}$ and $\gamma_i \in \extremalarg{s_i}$.  Let $\alpha$ be an arc in $\disk{m}$ with distinct endpoints $p,p' \in \Int(\gamma_i)$ that is disjoint from $\gamma_i$.  Then $\alpha$ intersects the unique $\gamma_{i-1} \in \extremalarg{s_{i-1}}$.
\end{lemma}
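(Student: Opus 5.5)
We argue by contradiction: we assume $\alpha$ is disjoint from the unique curve $\gamma_{i-1}\in\extremalarg{s_{i-1}}$ and derive a collision of maximal curves. The picture to keep in mind is the identity homomorphism. There $\gamma_i$ surrounds the two punctures $i,i+1$, and $\gamma_{i-1}$ surrounds $i-1,i$. An arc inside $\gamma_i$ joining $i$ and $i+1$ must cross $\gamma_{i-1}$, because $\gamma_{i-1}$ separates $i$ from everything outside it. In general the role of ``$\gamma_{i-1}$ separates $p$ from $p'$'' will be played by types. Every puncture $q\in\Int(\gamma_i)$ has $\calT_\Phi(q)\in\{\{i-1,i\},\{i,i+1\}\}$ by Theorem~\ref{thm:alpha:welltypedpuncture}. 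Those of type $\{i-1,i\}$ lie inside $\gamma_{i-1}$ and those of type $\{i,i+1\}$ lie outside it. The last claim holds because by Lemma~\ref{lem:curves:trivialnestingthing} and $\Delta(\Phi)=\emptyset$, $\gamma_{i-1}$ and $\gamma_{i+1}$ cannot share an interior puncture with $\gamma_i$ unless that puncture is in the intersection region, which is forced to be a ``half'' of $\gamma_i$.

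\textbf{Step 1.} If $p$ and $p'$ have different types, then since $\alpha$ runs inside $\gamma_i$ from a puncture interior to $\gamma_{i-1}$ to one exterior to it, $\alpha$ must cross $\gamma_{i-1}$, and we are done. So we may assume $\calT_\Phi(p)=\calT_\Phi(p')$. By symmetry (replacing $\Phi$ by $\Phi\circ\kappa$ and using Lemma~\ref{lem:curves:outerauto}, or conjugating by $\Phi(a_1)$), we treat two cases: both have type $\{i-1,i\}$, or both have type $\{i,i+1\}$.

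\textbf{Step 2.} Suppose both have type $\{i,i+1\}$ and $\alpha$ misses $\gamma_{i-1}$. Since $\alpha$ also misses $\gamma_i$, it is disjoint from $\crs{\Phi(s_j)}$ for every $s_j$ commuting with $s_i$. To see this, note that those canonical reduction systems lie interior to maximal curves other than $\gamma_i$, by Lemma~\ref{lem:curves:nonesting} and Lemma~\ref{lem:curves:ext_puncs}, and those curves are disjoint from or exterior to $\gamma_i$. Apply Lemma~\ref{lem:alpha:exterioragreement} with $s_{i-2}$ (or any $s_j$ commuting with both $s_{i-1}$ and $s_i$) to compare the action of $\Phi(s_{i-1})$ and $\Phi(s_j)$ on a regular neighborhood $\beta$ of $\alpha$; this $\beta$ is a curve surrounding $p,p'$ and exterior to the relevant reduction systems. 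Combine this with the braid-relation identity $\Phi(s_{i-1}s_i)(\extremalarg{s_{i-1}})=\extremalarg{s_i}$ from Lemma~\ref{lem:alpha:extremalbraid}. The goal is to produce a curve $\beta'$, the image of $\beta$, that is interior to both some curve of $\extremalarg{s_{i}}$ and some curve of $\extremalarg{s_{i+1}}$ yet different from $\gamma_i$. Alternatively, the goal is to show $\beta$ itself lies in $\Delta(\gamma_i,\gamma_{i+1})$, contradicting $\Delta(\Phi)=\emptyset$. Since $\Phi$ is simple, $\Phi(s_i)$ acts externally centrally, so $\Ext(\Phi(s_i))$ is a power of the boundary twist. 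This is what lets us control the image of $\beta$ under $\Phi(s_i)$: it is a twist of $\beta$ about $\gamma_i$ composed with the interior action.

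\textbf{Step 3.} The case of type $\{i-1,i\}$ is analogous, using $\gamma_{i-1}$ directly: both endpoints are interior to $\gamma_{i-1}$ and $\alpha$ misses $\gamma_{i-1}$. Then $\beta$ is a curve interior to both $\gamma_i$ and $\gamma_{i-1}$, hence interior to some element of $\Delta(\gamma_{i-1},\gamma_i)$, unless $\beta$ is peripheral in a way excluded by the two endpoints being distinct. This contradicts $\Delta(\Phi)=\emptyset$ via Lemma~\ref{lem:curves:trivialnestingthing}.

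The main obstacle is Step 2, the same-type case $\{i,i+1\}$. Here $\alpha$ can avoid $\gamma_{i-1}$ trivially, so the contradiction must come from dynamics, not topology. I would try first to show that such a pair $p,p'$ joined inside $\gamma_i$ away from $\gamma_{i-1}$ is enclosed by a curve that is interior to both $\gamma_i$ and $\gamma_{i+1}$, and hence lies in $\Delta(\Phi)$. The key input is that the uniqueness of $\gamma_{i+1}$ (simplicity) forces all type-$\{i,i+1\}$ punctures inside one curve $\gamma_{i+1}$.
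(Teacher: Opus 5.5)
Your Steps~1 and~3 are correct, and Step~3 is exactly the paper's first observation. There, an arc missing both $\gamma_i$ and $\gamma_{i-1}$ and lying inside both yields a curve below two distinct $\Phi$-maximal curves, which contradicts Lemma~\ref{lem:curves:trivialnestingthing}. Step~1 only records that an arc missing $\gamma_{i-1}$ lies entirely inside or entirely outside it.

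\textbf{Gap.} Step~2, where $\alpha$ lies outside $\gamma_{i-1}$, is the whole content of the lemma, and you give goals rather than an argument.
\begin{itemize}
\item The goal you single out is not reachable from the hypotheses. You want a curve around $p,p'$ lying below both $\gamma_i$ and $\gamma_{i+1}$. But $\alpha$ is not assumed to miss $\gamma_{i+1}$, and by the mirror image of this lemma it in fact must cross it.
\item Your appeal to external centrality of $\Phi(s_i)$ is aimed at the wrong generator. Since $\alpha$ is interior to $\gamma_i$, it tells you nothing about $\Phi(s_i)(\alpha)$ beyond $\Phi(s_i)(\gamma_i)=\gamma_i$.
\item A smaller error: $s_{i-2}$ does not commute with $s_{i-1}$, so it cannot play the role you assign it.
\end{itemize}

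\textbf{Missing idea.} Use external centrality of the \emph{other} generator, $\Phi(s_{i-1})$, together with the braid relation, to move $\alpha$ into the situation of Step~3.
\begin{itemize}
\item By simplicity and Lemma~\ref{lem:curves:nonesting}, $\crs{\Phi(s_{i-1})}^{\outmulti}=\{\gamma_{i-1}\}$. So $\Phi(s_{i-1})$ fixes any arc exterior to $\gamma_{i-1}$, including $\alpha$.
\item Hence $\Phi(s_is_{i-1})(\alpha)=\Phi(s_i)(\alpha)$, which lies inside $\Phi(s_i)(\gamma_i)=\gamma_i$ and misses it.
\item We have $(s_is_{i-1})s_i(s_is_{i-1})^{-1}=s_{i-1}$. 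With Lemma~\ref{lem:crs:functorial} this gives $\Phi(s_is_{i-1})(\gamma_i)\in\crs{\Phi(s_{i-1})}$, so $\Phi(s_is_{i-1})(\gamma_i)\leq\gamma_{i-1}$.
\item Equality holds: $\Phi(a_1)(\gamma_{i-1})=\gamma_i$ by Lemma~\ref{lem:alpha:standardrootaction}, which forces $\left|\Int(\gamma_{i-1})\right|=\left|\Int(\gamma_i)\right|$.
\item Therefore $\Phi(s_is_{i-1})(\alpha)$ is an arc with distinct endpoints, lying inside and missing both $\gamma_i$ and $\gamma_{i-1}$. Your Step~3 already excludes this.
\end{itemize}
With this, the type analysis is unnecessary: the inside/outside dichotomy for $\alpha$ relative to $\gamma_{i-1}$ suffices.
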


\begin{proof}
    Note first that since $\Delta(\Phi)$ is empty by the definition of simple, the arc $\alpha$ cannot be interior to $\gamma_{i-1} \in \extremalarg{s_{i-1}}$, since otherwise $\alpha$ would be interior to a curve $\delta$ with $\delta \leq \gamma_i$ and $\delta \leq \gamma_{i-1}$.  By Lemma~\ref{lem:curves:trivialnestingthing} this contradicts the hypothesis that $\Delta(\Phi) = \emptyset$.
    
    Suppose by way of contradiction that $\alpha$ is exterior to $\gamma_{i-1}$.  Since $\Phi$ is externally central by the definition of simple, we have $\Phi(s_{i-1})(\alpha) = \alpha$.  Therefore $\Phi(s_is_{i-1})(\alpha)\leq\Phi(s_i)(\gamma_i)=\gamma_{i}$.  By the braid relation and Lemma~\ref{lem:crs:functorial} we have $\Phi(s_is_{i-1})(\gamma_i)  \in \crs{\Phi(s_{i-1})}$.  Since $\extremalarg{s_{i-1}} = \{\gamma_{i-1}\}$ by the definition of simple and $\extremalarg{s_{i-1}} = \crs{\Phi(s_{i-1})}^{\outmulti}$ by Lemma~\ref{lem:curves:nonesting}, we see that
    \[
      \Phi(s_is_{i-1})(\gamma_i) \leq \gamma_{i-1}.
    \]
    But $\Phi(a_1)(\gamma_{i-1}) = \gamma_i$ by Lemma~\ref{lem:alpha:standardrootaction}.  We conclude that
    \[
      \left|\Int(\gamma_{i-1})\right| = \left|\Int(\gamma_i)\right|
    \]
    and therefore
    \[
      \Phi(s_is_{i-1})(\gamma_i) = \gamma_{i-1}.
    \]
    We conclude that $\Phi(s_is_{i-1})(\alpha)$ is disjoint from and has endpoints interior to both $\gamma_i$ and $\gamma_{i-1}$.  But we established above that no such arc exists, a contradiction. 
\end{proof}  We now construct punctures bounded by $\Phi$-maximal curves.  We begin by fixing some notation.  For any $s_j \in\extgen{n}$ and any simple homomorphism $\Phi:\braid{n} \rightarrow \braid{m}$ we let $\gamma_j$ denote the unique curve in $\extremalarg{s_j}$.

 \begin{lemma}\label{lem:onemaximal:twopunc}
Let $n \geq 5$ and $m \geq 3$.  Let $\Phi:\braid{n} \rightarrow \braid{m}$ be a simple homomorphism.  Suppose that $q,q'$ are distinct punctures in $\disk{m}$ with $\calT_{\Phi}(q) = \calT_{\Phi}(q')$ and $\calT_{\Phi}(q) \neq \emptyset$.  Then there are $s_i, s_{i+1} \in \extgen{n}$ and $\gamma_i ,\gamma_{i+1} \in \extremal$ such that $\gamma_i$ and $\gamma_{i+1}$ bound a puncture $p$ with $\calT_{\Phi}(p) \cap \{i,i+1\} = \emptyset$.
 \end{lemma}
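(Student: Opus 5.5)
The plan is a planar-topology argument applied directly to the two $\Phi$-maximal curves that contain $q$ and $q'$; the puncture $p$ will be found in a bounded complementary region of their union. Since $\Phi$ is simple, it is minimally typed, so Theorem~\ref{thm:alpha:welltypedpuncture} shows $\calT_{\Phi}(q)=\calT_{\Phi}(q')=\{i,i+1\}$ for some $s_i\in\extgen{n}$. Let $\gamma_i$ and $\gamma_{i+1}$ be the unique curves in $\extremalarg{s_i}$ and $\extremalarg{s_{i+1}}$. Then $q,q'\in\Int(\gamma_i)\cap\Int(\gamma_{i+1})$. These are the curves I intend to use; the goal is a puncture $p$ bounded by $\gamma_i$ and $\gamma_{i+1}$ that lies in neither $\Int(\gamma_i)$ nor $\Int(\gamma_{i+1})$. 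Such a $p$ automatically has $\calT_{\Phi}(p)\cap\{i,i+1\}=\emptyset$, because $\gamma_i$ and $\gamma_{i+1}$ are the only curves in $\extremalarg{s_i}$ and $\extremalarg{s_{i+1}}$.

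First, $\gamma_i\neq\gamma_{i+1}$: otherwise $\extremalarg{s_i}\cap\extremalarg{s_{i+1}}\neq\emptyset$, which contradicts minimal typedness via Lemma~\ref{lem:alpha:extremalcollision}. Choose representatives of $\gamma_i$ and $\gamma_{i+1}$ in minimal position, and let $D_i$ and $D_{i+1}$ be the closed disks in the disk they bound, away from $\partial\disk{m}$. They intersect, since neither curve can be interior to the other: if, say, $\gamma_{i+1}<\gamma_i$, then $\gamma_{i+1}$ would not be $\Phi$-maximal, and the mutually exterior disjoint case is impossible because $q$ lies inside both. Now consider the components $R_1,\dots,R_k$ of $D_i\cap D_{i+1}$.

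\textbf{Step 1.} Each $R_j$ contains at most one puncture. If some $R_j$ contained two punctures, the boundary of a regular neighbourhood of $R_j$ would be an essential curve $\delta$ with $\delta\leq\gamma_i$ and $\delta\leq\gamma_{i+1}$. This contradicts Lemma~\ref{lem:curves:trivialnestingthing}, because $\Delta(\Phi)=\emptyset$ for simple $\Phi$. Hence $q$ and $q'$ lie in different components, so $k\geq2$.

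\textbf{Step 2.} The union $D_i\cup D_{i+1}$ has a bounded complementary region containing a puncture. By Mayer--Vietoris (or an Euler characteristic count), $\chi(D_i\cup D_{i+1})=2-k\leq0$, so the planar set $D_i\cup D_{i+1}$ has $k-1\geq1$ holes, i.e.\ bounded components of its complement. Each hole is bounded by one arc of $\gamma_i$ and one arc of $\gamma_{i+1}$. A hole containing no puncture would be a bigon, which is impossible by minimal position. So some hole contains a puncture $p$.

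\textbf{Step 3.} Any arc from $p$ to $\partial\disk{m}$ must cross $\partial(D_i\cup D_{i+1})\subseteq\gamma_i\cup\gamma_{i+1}$, so $p$ is bounded by $\gamma_i$ and $\gamma_{i+1}$. Moreover $p\notin\Int(\gamma_i)\cup\Int(\gamma_{i+1})$, which gives the required type condition.

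The main care is needed in Step 2, in making the topology of two minimally intersecting disks rigorous: the count of holes, the bigon argument in the punctured disk, and the point that "interior" is measured relative to $\partial\disk{m}$. Step 1 is also delicate, since it must be checked that the neighbourhood boundary of a component containing at least two punctures is essential and genuinely interior to both curves. I expect Lemma~\ref{lem:onemaximal:arcint} is not needed for this argument, though it offers an alternative route: an arc from $q$ to $q'$ inside $D_i$ must meet $\gamma_{i-1}$.
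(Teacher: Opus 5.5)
Your Steps 1 and 3 are fine (in Step 1, take $\delta$ just inside $R_j$ rather than the boundary of a regular neighbourhood). The gap is in Step 2. The assertion that every hole of $D_i\cup D_{i+1}$ is bounded by exactly one arc of $\gamma_i$ and one arc of $\gamma_{i+1}$ is false. A hole can be a $2r$-gon with $r\geq 2$, so an empty hole need not be a bigon.

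Here is a concrete configuration. Let $\gamma_i$ be round, and let $D_{i+1}$ consist of the following pieces:
\begin{itemize}
\item a band outside $D_i$ joining two arcs $t,u\subset\gamma_i$;
\item a finger leaving $D_i$ along an arc $v\subset\gamma_i$ lying between $t$ and $u$ on the side enclosed by the band;
\item a cap $X\subset D_i$ attached along $t$;
\item a region $Y\subset D_i$ attached along both $v$ and $u$.
\end{itemize}
The unique hole is the region enclosed by the band, the finger and $\gamma_i$, and it is a quadrilateral with two arcs from each curve. Put one puncture in each of $X$, $Y$, the finger, and the bigon of $D_i\setminus D_{i+1}$ between $v$ and $u$. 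Then there are no empty bigons, and $D_i\cap D_{i+1}=X\sqcup Y$ with one puncture each. So everything you use holds (minimal position, Step 1, $q\in X$, $q'\in Y$), yet the hole contains no puncture. Planar topology plus Step 1 therefore cannot produce $p$; you need genuine input from $\Phi$.

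That input is exactly what the paper supplies, and it is where Lemma~\ref{lem:onemaximal:arcint}, which you expected not to need, enters. With $\calT_{\Phi}(q)=\{j,j+1\}$, the paper takes an arc $\alpha$ from $q$ to $q'$ inside $\gamma_j$. Lemma~\ref{lem:onemaximal:arcint} forces $\alpha$ to cross $\gamma_{j-1}$. The paper then examines the regions cut off outside $\gamma_j$ by the arcs of $\gamma_{j-1}$ adjacent to that crossing, and gets one of two outcomes:
\begin{itemize}
\item one of these regions contains a puncture of type disjoint from $\{j-1,j\}$, which is then bounded by $\gamma_{j-1}$ and $\gamma_j$; or
\item the relevant punctures have type $\{j-2,j-1\}$, so they lie inside $\gamma_{j-2}$, which is disjoint from $\gamma_j$. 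Then $\gamma_{j-2}$ and $\gamma_{j-1}$ bound $q$ or $q'$, and the lemma holds with index $j-2$.
\end{itemize}

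Your intended intermediate claim is true a posteriori: there is a puncture bounded by the pair $\gamma_i,\gamma_{i+1}$ that both contain $q,q'$. This is because $\Phi(a_1)$ shifts both the maximal curves and the types, so the lemma for one index gives it for every index. But Step 2 as written does not prove this claim, and any correct proof has to use the interaction of $\Phi$ with a third maximal curve, for example $\gamma_{i-1}$ or $\gamma_{i-2}$.
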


 \begin{proof} For all $s_j \in \extgen{n}$, let $\extremalarg{s_j} = \{\gamma_j\}$.  Since $\Phi$ is minimally typed Theorem~\ref{thm:alpha:welltypedpuncture} implies that $\calT_{\Phi}(q) = \calT_{\Phi}(q') = \{j,j+1\}$ for some $s_j \in \extgen{n}$.  Let $\alpha$ be an arc interior to $\gamma_{j}$ with endpoints $q$ and $q'$.  By Lemma~\ref{lem:onemaximal:arcint}, $\alpha$ intersects $\gamma_{j-1}$.  Choose minimally intersecting representatives $a$ for $\alpha$ and $c_j$ and $c_{j-1}$ and $c_{j-2}$ for $\gamma_j$ and $\gamma_{j-1}$ and $\gamma_{j-2}$ respectively.  Let $a_{j-1} \subseteq c_{j-1}$ be a subarc such that:
 \begin{itemize}
     \item $a_{j-1}$ intersects $a$ exactly once;
     \item the endpoints of $a_{j-1}$ lie on $c_j$; and
     \item the interior of $a_{j-1}$ is disjoint from $c_j$.
 \end{itemize}that intersects $a$ exactly once and whose endpoints lie on $c_j$.  Note that since $\Delta(\Phi) = \emptyset$ by the definition of simple, the fact that $q,q' \in \Int(\gamma_j) \cap \Int(\gamma_{j+1})$ implies that $\iota(\gamma_j, \gamma_{j+1})\geq 4$.  Indeed, otherwise $\iota(\gamma_j, \gamma_{j+1}) = 2$, which implies that there is an arc $\alpha$ with endpoints $q$ and $q'$ that is disjoint from $\gamma_j$ and $\gamma_{j+1}$.  But the existence of such an arc implies that there is a curve $\delta$ with  $\delta \leq \gamma_j$ and $\delta \leq \gamma_{j+1}$, which by Lemma~\ref{lem:curves:trivialnestingthing} contradicts our assumption that $\Delta(\Phi) = \emptyset$.   Similarly, $\iota(\gamma_j, \gamma_{j-1})\geq 4$   by conjugating with $a_1^{-1}$ and applying Lemma~\ref{lem:alpha:standardrootaction}. In particular,  $|c_j\cap c_{j-1}| \geq 4$.  Now, let $a_{j-1}^+$ be a subarc of $c_{j-1}$ such that:
     \begin{itemize}
         \item $a_{j-1} \subseteq a_{j-1}^+$;
         \item the endpoints of $a_{j-1}^+$ lie on $c_j$;
         \item $a_{j-1}^+$ intersects $c_j$ exactly four times, including its endpoints; and
         \item the endpoints of $a_{j-1}$ lie in the interior of $a_{j-1}^+$.
     \end{itemize}  See Figure~\ref{fig:onemaximal:arcintpicsimple}.  
     \begin{figure}[ht]
         \begin{tikzpicture}
             \node[anchor = south west] at (0,0) {\includegraphics[scale=0.6]{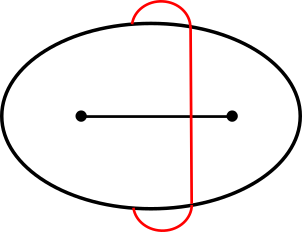}};
             \node at (1,3.4) {\large $c_j$};
             \node at (2.2,2.2) {\large $a$};
             \node at (3.7,2.6) {\large $a_{j-1}$};
             \node at (3,4.1) {\large $a_{j-1}'$};
             \node at (3,-0.2){\large $a_{j-1}''$};
         \end{tikzpicture}
         \caption{the arc $a_{j-1}^+$ and the curve $c_j$}\label{fig:onemaximal:arcintpicsimple}
     \end{figure}
     Now, let $a_{j-1}'$ and $a_{j-1}''$ be the two maximal subarcs of $a_{j-1}^+$ whose interior lies entirely outside of $c_j$.  Let $a_{j}'$ and $a_{j}''$ respectively be disjoint subarcs of $c_j$ such that $a_{j}'$ and $a_{j-1}'$ have the same endpoints, and similarly for $a_j''$ and $a_{j-1}''$.  Since $c_j$ and $c_{j-1}$ were chosen to intersect minimally, we see that $a_{j-1}'$ and $a_{j}'$ cannot bound a bigon, and similarly for $a_{j-1}''$ and $a_j''$.  Let $\delta'$ denote the isotopy class of embedded circles of $a_{j-1}' \cup a_j'$ and $\delta''$ denote the isotopy class of $a_{j-1}'' \cup a_j''$.  Note that $\delta'$ and $\delta''$ are have disjoint representatives and are possibly inessential by virtue of being homotopy equivalent to a puncture.  We have three cases to consider.

     \p{Case 1: there is $r' \in \Int(\delta')$ with $\calT_{\Phi}(r') \cap \{j-1,j\} = \emptyset$ or $r'' \in \Int(\delta'')$ with $\calT_{\Phi}(r'') \cap \{j-1,j\} = \emptyset$} In this case, we set $p$ equal to the appropriate $r'$ or $r''$.  Then $\gamma_j$ and $\gamma_{j-1}$ bound a puncture $p$ with $\calT_{\Phi}(p) \cap \{j-1,j\} = \emptyset$, so the lemma holds.
     
     \p{Case 2: $\delta'$ and $\delta''$ are non-nested} In this case, there are distinct $r' \in \Int(\delta')$ and $r'' \in \Int(\delta'')$.  Assuming we are not in Case 1, we see that $\calT_{\Phi}(r') = \{j-1,j-2\}$ and $\calT_{\Phi}(r'') = \{j-1,j-2\}$ by Theorem~\ref{thm:alpha:welltypedpuncture}, since $\Phi$ is minimally typed and $r'$ and $r''$ are exterior to $\gamma_{j}$ by construction.  Since $\Phi$ is simple, we have $r', r'' \in \Int(\gamma_{j-2})$.  Since $n \geq 5$ we have $[s_j,s_{j-2}] = 1$ so by Lemma~\ref{lem:crs:commdisj} the curves $\gamma_{j-2}$ and $\gamma_j$ are disjoint.  In particular, we see that $\gamma_{j-2}$ and $\gamma_{j-1}$ bound at least one endpoint of $\alpha$ which by construction have type $\{j,j+1\}$.  Setting $i = j-2$ yields the desired result. See Figure~\ref{fig:onemaximal:arcintpiccomplicated} for an example.

     \begin{figure}[ht]
        \begin{tikzpicture}
            \node[anchor = south west] at (0,0) {\includegraphics[scale=0.6]{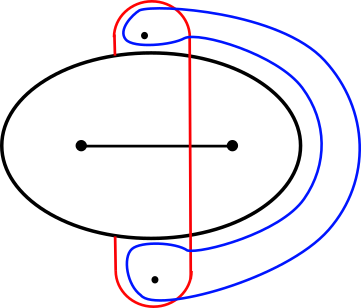}};
             \node at (0.75,3.95) {\large $c_j$};
             \node at (2.3,2.5) {\large $a$};
             \node at (4.1, 2.5) {\large $q$};
             \node at (6.4, 2.3) {\large $c_{j-2}$};
             \node at (3.65, 3.3) {\large $c_{j-1}$};
             \node at (2.5, 0.8) {\large $r''$};
             \node at (2.5, 4.7) {\large $r'$};
        \end{tikzpicture}
         \caption{Case 2: $q$ is bounded}\label{fig:onemaximal:arcintpiccomplicated}
     \end{figure}

     \p{Case 3: $\delta'$ and $\delta''$ are nested} See Figure~\ref{fig:onemaximal:arcintpicmorecomplicated} for an example of this case.  
      \begin{figure}[ht]
        \begin{tikzpicture}
            \node[anchor = south west] at (0,0) {\includegraphics[scale=0.6]{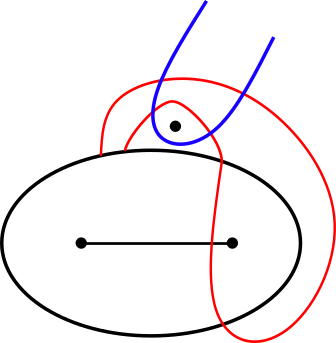}};
             \node at (0.8,3.1) {\large $c_j$};
             \node at (3.1,2.4) {\large $a_{j-1}$};
             \node at (4.1,3.3) {\large $a_{j-1}'$};
             \node at (6, 2) {\large $a_{j-1}''$};
              \node at (4.2, 1.8) {\large $q$};
              \node at (4.8, 4.4) {\large $c_{j-2}$};
        \end{tikzpicture}
         \caption{Case 3: $q$ is also bounded}\label{fig:onemaximal:arcintpicmorecomplicated}
     \end{figure} Without loss of generality, assume that $\delta' \leq \delta''$.  Since $c_j$ and $c_{j-1}$ intersect minimally, there is an $r' \in \Int(\delta')$.  Assuming we are not in Case 1, the same line of reasoning as Case 2 implies that we have $\calT_{\Phi}(r') = \{j-2,j-1\}$, so $r' \in \Int(\gamma_{j-2})$. Now, if $c_{j-2}$ does not intersect $a_{j-1}'$, we see that $\delta' \geq \gamma_{j-2}$ and thus $\gamma_{j-1}$ and $\gamma_j$ bound a puncture in $\gamma_{j-2}$ that is exterior to $\gamma_{j-1}$, and so specifically this puncture is of type $\{j-2,j-3\}$ by Theorem~\ref{thm:alpha:welltypedpuncture}.  Since $n \geq 5$ we have $\{j-2,j-3\} \cap \{j-1,j\} = \emptyset$, so we are in Case 1.  Similarly, if $c_{j-2}$ intersects $a_{j-1}'$ but does not intersect $a_{j-1}''$, we are in Case 1.  On the other hand if $\gamma_{j-2}$ intersects both $a_{j-1}'$ and $a_{j-1}''$, the fact that $\gamma_{j-2}$ does not intersect $\gamma_j$ implies that $\gamma_{j-1}$ and $\gamma_{j-2}$ bound at least one endpoint of $\alpha$.  The endpoints of $\alpha$ have type $\{j,j+1\}$ by construction.  Setting $i = j-2$ and using $n \geq 5$, we have we have$\{i,i+1\} \cap \{j,j+1\} = \emptyset$, so the lemma holds.  
 \end{proof}

\section{Externally central homomorphisms}\label{section:extcent}

Recall that a homomorphism $\Phi:\braid{n} \rightarrow \braid{m}$ is \defn{externally central} if $\Ext(\Phi(s_i))\in\Ext_{\crs{\Phi(s_i)}^{\outmulti}}(Z(\braid{m}))$ for some (and hence all) $s_i \in \extgen{n}$.  Our goal in Section~\ref{section:extcent} is to prove Theorem~\ref{thm:extcent}, which classifies externally central, irreducible, and non-cyclic homomorphisms $\Phi:\braid{n} \rightarrow \braid{m}$ with $n \geq 5$ and $m \geq 3$.  The proof of Theorem~\ref{thm:extcent} proceeds by examining the canonical reduction system of the element $s_is_{i+1}$.  We begin with the following basic structure property of externally central homomorphisms.

\begin{lemma}\label{lem:extremal:structural}
    Let $n\geq 5$ and $m\geq 3$.  Let $\Phi:\braid{n}\to \braid{m}$ be an externally central minimally typed homomorphism. Assume that $\Delta(\Phi) = \emptyset$.  Let $s_i,s_j \in \extgen{n}$ such that $[s_i,s_j]  = 1$.  Let $\gamma \in \extremalarg{s_j}$.  Then $\Phi(s_i)(\gamma) = \gamma$.  Furthermore, if $p \in \disk{m}$ has $i \not \in \calT_{\Phi}(p)$ then $\Phi(s_i)(p) = p$
\end{lemma}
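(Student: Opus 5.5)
The plan is to exploit external centrality to compute the action of $\Phi(s_j)$ on anything exterior to its own canonical reduction system, and then transfer that computation to $\Phi(s_i)$ via Lemma~\ref{lem:alpha:exterioragreement}.

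For the first claim, let $M_j=\crs{\Phi(s_j)}^{\outmulti}$. By Lemma~\ref{lem:curves:nonesting} we have $M_j=\extremalarg{s_j}$, so $\gamma\in M_j$. Set $M=\crs{\Phi(s_i)}^{\outmulti}\cup\crs{\Phi(s_j)}^{\outmulti}$, which is a multicurve by Lemma~\ref{lem:crs:commdisj}.

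I first check that $\gamma$ is exterior to $M$. Since $\gamma$ is $\Phi$-maximal, no curve of $\crs{\Phi(s_i)}$ lies strictly above $\gamma$. So $\gamma$ is a curve of $M^{\outmulti}$, and Lemma~\ref{lem:alpha:exterioragreement} gives $\Ext_M(\Phi(s_i))=\Ext_M(\Phi(s_j))$.

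Next I note that $\Phi(s_j)(\gamma)=\gamma$. Indeed, $\Phi(s_j)$ fixes $\crs{\Phi(s_j)}$ setwise. Because $\Phi(s_j)$ is externally central, $\Ext_{M_j}(\Phi(s_j))$ is a power of the boundary twist. This twist fixes every puncture of $\disk{M_j}$, in particular the puncture obtained by collapsing $\gamma$. Hence $\Phi(s_j)(\gamma)=\gamma$.

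To pass this to $\Phi(s_i)$, I view $\gamma$ as a degenerate puncture in $\disk{M}$. The elements $\Phi(s_i)$ and $\Phi(s_j)$ induce the same permutation of the components/punctures of $\disk{M}$, because their images under $\Ext_M$ agree. So $\Phi(s_i)(\gamma)=\Phi(s_j)(\gamma)=\gamma$. If needed, I would instead apply the "in particular" clause of Lemma~\ref{lem:alpha:exterioragreement} to a curve slightly outside $\gamma$; the cleanest route is to treat $\gamma$ as a puncture of $\disk{M}$.

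For the second claim, let $p$ be a puncture with $i\notin\calT_\Phi(p)$. The first step is to show that $p$ is exterior to $\crs{\Phi(s_i)}$. If $p$ were interior to some curve of $\crs{\Phi(s_i)}$, then it would be interior to the outermost such curve. That curve lies in $\crs{\Phi(s_i)}^{\outmulti}=\extremalarg{s_i}$ by Lemma~\ref{lem:curves:nonesting}, which would force $i\in\calT_\Phi(p)$, a contradiction.

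Since $p$ is exterior to $M_i=\crs{\Phi(s_i)}^{\outmulti}$, it is a genuine puncture of $\disk{M_i}$. By external centrality, $\Ext_{M_i}(\Phi(s_i))$ is central, and by the equivariance of $\Ext$ this gives $\Phi(s_i)(p)=p$. Note that this part does not use $s_j$ at all.

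The main obstacle I anticipate is the bookkeeping in the first claim: making sure that $\gamma$, which belongs to the multicurve $M$ itself rather than being exterior to it, is handled correctly by $\Ext_M$. To deal with this, I would use the degenerate-multicurve convention to identify $\gamma$ with a puncture of $\disk{M}$, and then invoke equality of the two external parts on punctures.
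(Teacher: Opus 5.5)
Your proof is correct. The second half is the paper's argument. The paper notes that, by external centrality of $\Phi(s_i)$, it suffices to show that $\gamma$ and $p$ are exterior to $\extremalarg{s_i}=\crs{\Phi(s_i)}^{\outmulti}$, and for $p$ it cites Lemma~\ref{lem:curves:ext_puncs}. Your direct argument via Lemma~\ref{lem:curves:nonesting} (the outermost curve of $\crs{\Phi(s_i)}$ containing $p$ would lie in $\extremalarg{s_i}$) has the same content, and it also covers punctures of empty type without extra comment.

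For $\gamma$ you take a detour: you fix $\gamma$ using external centrality of $\Phi(s_j)$, then transfer this to $\Phi(s_i)$ through Lemma~\ref{lem:alpha:exterioragreement}. This works, since equal images under $\Ext_M$ induce the same permutation of $M^{\outmulti}$, but it is unnecessary. To place $\gamma$ in $M^{\outmulti}$ you already verified two facts: $\gamma$ is disjoint from $\crs{\Phi(s_i)}$ (Lemma~\ref{lem:crs:commdisj}), and by $\Phi$-maximality it lies strictly below no curve of $\crs{\Phi(s_i)}$. Together these say exactly that $\gamma$ is exterior to $\crs{\Phi(s_i)}^{\outmulti}$. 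So external centrality of $\Phi(s_i)$ fixes $\gamma$ directly, just as you argue for $p$.

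Relatedly, the bookkeeping issue you flag is not an obstacle. Under the paper's convention a curve is exterior to itself, so $\gamma\in M^{\outmulti}$ is exterior to $M$. Hence the ``in particular'' clause of Lemma~\ref{lem:alpha:exterioragreement} applies to $\gamma$ as stated, with no degenerate-puncture reinterpretation needed; the paper uses it this way in Lemma~\ref{lem:alpha:extremalaction}.
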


\begin{proof}
    By the definition of externally central, it is enough to show that $\gamma$ and $p$ as above are exterior to $\extremalarg{s_i}$.  The former follows by Lemma~\ref{lem:crs:commdisj} and the definition of $\extremal$ while the latter follows from Lemma~\ref{lem:curves:ext_puncs}.  
\end{proof}  We now derive several consequences of Lemma~\ref{lem:extremal:structural}

\begin{lemma}\label{lem:extremal:sharepuncs:unique}
    Let $n\geq 5$ and $m\geq 3$.  Let $\Phi:\braid{n}\to \braid{m}$ be an externally central minimally typed homomorphism. Assume that $\Delta(\Phi) = \emptyset$.  Let $s_i \in \extgen{n}$ and let $\gamma\in \extremalarg{s_i}$. Then there is a unique curve $\gamma'$ in $\extremalarg{s_{i-1}}$ such that $|\Int(\gamma')\cap \Int(\gamma)|\neq \emptyset$.  Similarly, there is a unique $\gamma' \in \extremalarg{s_{i+1}}$ with $|\Int(\gamma')\cap \Int(\gamma)|\neq \emptyset$.  In particular $\Int(\gamma)$ contains punctures of type $\{i-1,i\}$ and $\{i,i+1\}$.
 \end{lemma}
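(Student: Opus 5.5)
The plan is to prove existence of $\gamma'$ first, then uniqueness, and to read off the final sentence from existence. Fix $\gamma\in\extremalarg{s_i}$. By symmetry (conjugating by $\Phi(a_1)$ via Lemma~\ref{lem:alpha:standardrootaction}, or using Lemma~\ref{lem:curves:kappamintype} and Lemma~\ref{lem:curves:outerauto} to pass to $\Phi\circ\kappa$, which swaps the roles of $i-1$ and $i+1$), it suffices to treat $\extremalarg{s_{i-1}}$.

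\emph{Existence.} Suppose first that $\gamma$ is disjoint from every curve of $\extremalarg{s_{i-1}}$. Because $\gamma$ is $\Phi$-maximal and, by Lemma~\ref{lem:curves:trivialnestingthing}, distinct maximal curves are not nested, $\gamma$ is then exterior to $\extremalarg{s_{i-1}}$. Since $\Phi$ is externally central, $\Phi(s_{i-1})(\gamma)=\gamma$, and hence $\Phi(s_is_{i-1})(\gamma)=\Phi(s_i)(\gamma)=\gamma$. The braid relation and Lemma~\ref{lem:crs:functorial} give $\Phi(s_is_{i-1})(\crs{\Phi(s_i)})=\crs{\Phi(s_{i-1})}$. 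Therefore $\gamma\in\crs{\Phi(s_{i-1})}$, and maximality puts $\gamma\in\extremalarg{s_i}\cap\extremalarg{s_{i-1}}$. This contradicts minimal typing via Lemma~\ref{lem:alpha:extremalcollision}. So $\gamma$ meets some $\gamma'\in\extremalarg{s_{i-1}}$.

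It remains to produce a puncture in $\Int(\gamma)\cap\Int(\gamma')$. Take minimally intersecting representatives. If no complementary region lying inside both curves contained a puncture, then every arc of $\gamma'$ inside $\gamma$ would cut off only unpunctured pieces on one side, so we could isotope $\gamma'$ off $\gamma$ (a bigon argument). That contradicts minimal position. Any puncture $p$ found this way has $\{i-1,i\}\subseteq\calT_\Phi(p)$, so $\calT_\Phi(p)=\{i-1,i\}$ by Theorem~\ref{thm:alpha:welltypedpuncture}. This yields the final sentence of the lemma once the $\extremalarg{s_{i+1}}$ case is done symmetrically.

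\emph{Uniqueness.} This is the step I expect to be the main obstacle. Suppose $\gamma',\gamma''\in\extremalarg{s_{i-1}}$ are distinct and both share punctures $p'$, $p''$ respectively with $\gamma$; both punctures have type $\{i-1,i\}$. I would track $\Int(\gamma)$ under $g\coloneqq\Phi(s_is_{i-1})$, which sends $\gamma$ to a curve $\delta\in\crs{\Phi(s_{i-1})}$. By Lemma~\ref{lem:curves:nonesting}, $\delta$ lies below a \emph{single} curve $\gamma'_0\in\extremalarg{s_{i-1}}$, so all of $g(\Int(\gamma))$ lies in $\Int(\gamma'_0)$.

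The next step is to compute $g$ on types using Theorem~\ref{thm:alpha:welltypedpuncture} and Lemma~\ref{lem:extremal:structural}. A puncture of type $\{i,i+1\}$ is fixed by $\Phi(s_{i-1})$ and then sent by $\Phi(s_i)$ to type $\{i-1,i\}$. A puncture of type $\{i-1,i\}$ is sent by $\Phi(s_{i-1})$ to type $\{i-2,i-1\}$ and then fixed by $\Phi(s_i)$. Since $\Phi$ is externally central, $\Phi(s_{i-1})$ fixes each curve of $\extremalarg{s_{i-1}}$, so $\Phi(s_{i-1})(p')\in\Int(\gamma')$ and $\Phi(s_{i-1})(p'')\in\Int(\gamma'')$. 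The point $\Phi(s_i)$ then fixes these images, because $i$ is not in their type, so $g(p')\in\Int(\gamma')$ and $g(p'')\in\Int(\gamma'')$.

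Both $g(p')$ and $g(p'')$ lie in $\Int(\gamma'_0)$. Since distinct curves of $\extremalarg{s_{i-1}}$ have disjoint interiors, this forces $\gamma'=\gamma'_0=\gamma''$, a contradiction. The delicate point to verify carefully is that $\Phi(s_i)$ genuinely fixes the punctures of type $\{i-2,i-1\}$ (Lemma~\ref{lem:extremal:structural} with $i\notin\calT_\Phi$). Also, if $n=5$ one must check that the index bookkeeping $\{i-2,i-1\}\cap\{i,i+1\}=\emptyset$ still holds, which it does.
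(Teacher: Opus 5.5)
Your uniqueness argument is correct, but the existence half has a genuine gap. The step ``if no complementary region lying inside both curves contained a puncture, we could isotope $\gamma'$ off $\gamma$'' is false. Two curves in minimal position in $\disk{m}$ can intersect without sharing any interior puncture. For example, let $\gamma$ be a round curve around punctures $p_1,p_2$. Let $\gamma'$ be the boundary of a regular neighborhood of an arc from $p_3$ to $p_4$ that runs between $p_1$ and $p_2$. These curves meet in four points. The complementary regions are a puncture-free square, four bigons each containing one of $p_1,\dots,p_4$, and the outer region. So there is no empty bigon and the curves are in minimal position, yet $\Int(\gamma)\cap\Int(\gamma')=\emptyset$. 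The paper itself remarks on this just before Lemma~\ref{lem:extcent:maxintersection}. Nothing in the hypotheses prevents $\gamma$ from crossing several curves of $\extremalarg{s_{i-1}}$, only one of which shares punctures with it. Your first step, that $\gamma$ crosses some curve of $\extremalarg{s_{i-1}}$, is correct. But the curve it produces need not share a puncture with $\gamma$. So existence, and with it the final sentence of the lemma, is not established.

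The missing idea is to argue with types rather than intersections, as the paper does. Pick any $p\in\Int(\gamma)$. Since $i\in\calT_\Phi(p)$, Theorem~\ref{thm:alpha:welltypedpuncture} gives $\calT_\Phi(p)=\{i-1,i\}$ or $\{i,i+1\}$. Since $\Phi(s_i)$ fixes $\gamma$ (Lemma~\ref{lem:extremal:structural}), the puncture $\Phi(s_i)(p)$ stays in $\Int(\gamma)$ and has the other type. Hence $\Int(\gamma)$ contains punctures of both types. These lie inside curves of $\extremalarg{s_{i-1}}$ and $\extremalarg{s_{i+1}}$ respectively, which gives existence directly and makes your crossing argument unnecessary.

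For uniqueness, your route via $\Phi(s_is_{i-1})$ is a valid alternative to the paper's. Yours only lands $\gamma$ in $\crs{\Phi(s_{i-1})}$ and so needs Lemma~\ref{lem:curves:nonesting}. The paper instead applies $\Phi(s_{i-1}s_i)$, uses Lemma~\ref{lem:alpha:extremalbraid} to send $\gamma'$ and $\gamma''$ into $\extremalarg{s_i}$, and concludes that both images equal $\gamma$.

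Finally, neither conjugation by $\Phi(a_1)$ (which shifts indices) nor $\kappa$ exchanges the roles of $i-1$ and $i+1$. Your argument is nonetheless directly symmetric: use $\Phi(s_is_{i+1})$, together with the fact that $\Phi(s_{i+1})$ sends type $\{i,i+1\}$ to type $\{i+1,i+2\}$, which $\Phi(s_i)$ fixes.
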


\begin{proof}
    Suppose there are $p, q \in \Int(\gamma)$ such that $p\in \Int(\gamma')$ and $q\in \Int(\gamma'')$ with $\gamma', \gamma''\in \extremalarg{s_{i-1}}$. By Lemma~\ref{lem:extremal:structural}, it follows that $\Phi(s_i)(\{p,q\})\subset \Int(\gamma)$ and by Theorem~\ref{thm:alpha:welltypedpuncture} these punctures are of type $\{i+1, i\}$. We conclude that $\Phi(s_{i-1}s_i)(\{p, q\}) \subset \Int(\gamma)$ by Lemma~\ref{lem:extremal:structural}. Observe that $\Phi(s_{i-1}s_i)(p)$ is also interior to the curve $\Phi(s_{i-1}s_i)(\gamma') \in \extremalarg{s_i}$. However we have $\Phi(s_{i-1}s_i)(\gamma') \in \extremalarg{s_i}$ by Lemma~\ref{lem:alpha:extremalbraid}.  Therefore we have $\Phi(s_{i-1}s_i)(\gamma')=\gamma$. The same argument yields $\Phi(s_{i-1}s_i)(\gamma'')=\gamma$. We conclude that $\Phi(s_{i-1}s_i)(\gamma')=\Phi(s_{i-1}s_i)(\gamma'')$ and then $\gamma'=\gamma''$, as desired. 

    For the second part of the lemma, we apply some automorphism of $\braid{n}$ that sends $s_i$ to $s_{i}^{-1}$, and sends $s_{i-1}$ to $s_{i+1}^{-1}$ and vice versa.  The same line of reasoning as above then holds, so the result follows by combining this with Lemma~\ref{lem:crs:power}.

    For the third part of the lemma, observe that if $p \in \Int(\gamma)$ with $\calT_{\Phi}(p) = \{i,i+1\}$ then $\Phi(s_i)(p) \in \Int(\gamma)$ by Lemma~\ref{lem:extremal:structural} and also $\calT_{\Phi}(\Phi(s_i)(p)) = \{i-1,i\}$ by Theorem~\ref{thm:alpha:welltypedpuncture}.  The same argument holds for $\calT_{\Phi}(p) = \{i-1,i\}$.
\end{proof}  We can use Lemma~\ref{lem:extremal:sharepuncs:unique} to restrict the possible ways that $\Phi(a_1)$ can act on $\extremal$.  

\begin{lemma}\label{lem:extremal:forward}
    Let $n\geq 5$ and $m\geq 3$.  Let $\Phi:\braid{n}\to \braid{m}$ be an externally central minimally typed homomorphism. Assume that $\Delta(\Phi) = \emptyset$.  Let $s_i \in \extgen{n}$ and let $\gamma\in \extremalarg{s_i}$. Then $\Phi(s_{i}s_{i+1})(\gamma) = \Phi(a_1)(\gamma)$, and this is the unique $\gamma' \in \extremalarg{s_{i+1}}$ with $\Int(\gamma) \cap \Int(\gamma') \neq \emptyset$.  
\end{lemma}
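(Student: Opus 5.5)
We argue along the lines of the proof of Lemma~\ref{lem:alpha:extremalbraid}. First write
\[
a_1 = s_{i+3}\cdots s_{i+n-1}\,s_{i+n}s_{i+n+1} = s_{i+3}\cdots s_{i+n-1}\,s_is_{i+1},
\]
so that $\Phi(a_1)(\gamma)=\Phi(s_{i+3}\cdots s_{i+n-1})(\Phi(s_is_{i+1})(\gamma))$. By Lemma~\ref{lem:alpha:extremalbraid}, $\gamma'':=\Phi(s_is_{i+1})(\gamma)\in\extremalarg{s_{i+1}}$. Each $s_k$ with $i+3\le k\le i+n-1$ commutes with $s_{i+1}$, since $n\geq5$ and $k\not\equiv i,i+1,i+2$. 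Lemma~\ref{lem:extremal:structural} then gives $\Phi(s_k)(\gamma'')=\gamma''$, and hence $\Phi(a_1)(\gamma)=\Phi(s_is_{i+1})(\gamma)$. This is the advantage over Lemma~\ref{lem:alpha:extremalbraid}, which only gives setwise preservation: the externally central hypothesis lets each $\Phi(s_k)$ fix $\gamma''$ individually, so we get equality of the curves themselves.

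Next we show that $\gamma''$ shares a puncture with $\gamma$. Lemma~\ref{lem:extremal:sharepuncs:unique} supplies a puncture $p\in\Int(\gamma)$ of type $\{i-1,i\}$. By Theorem~\ref{thm:alpha:welltypedpuncture}, $\Phi(s_{i+1})(p)$ again has type $\{i-1,i\}$. Moreover $i+1\notin\calT_\Phi(p)$, so Lemma~\ref{lem:extremal:structural} gives $\Phi(s_{i+1})(p)=p$. By the same theorem, $\Phi(s_i)(p)$ then has type $\{i,i+1\}$. Since $p\in\Int(\gamma)$, the puncture $\Phi(s_is_{i+1})(p)=\Phi(s_i)(p)$ lies in $\Int(\gamma'')$.

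It remains to see that $\Phi(s_i)(p)\in\Int(\gamma)$. We have $\Phi(s_i)(\gamma)=\gamma$: both $\Phi(s_i)(\gamma)$ and $\gamma$ lie in $\extremalarg{s_i}$, which is preserved by $\Phi(s_i)$ (the analogue of Lemma~\ref{lem:alpha:extremalaction} for $s_i$ itself, using Lemma~\ref{lem:crs:functorial} with $\crs{\Phi(s_i)}$ and the fact that $\Phi(s_i)$ preserves $\extremal$ near $\gamma$). Because $\Ext(\Phi(s_i))$ is central, $\Phi(s_i)$ fixes each curve of $\crs{\Phi(s_i)}^{\outmulti}=\extremalarg{s_i}$ (Lemma~\ref{lem:curves:nonesting}), as an externally central element acts as a power of the boundary twist on the exterior. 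Hence $\Phi(s_i)(p)\in\Int(\Phi(s_i)(\gamma))=\Int(\gamma)$, and so $\Int(\gamma)\cap\Int(\gamma'')\neq\emptyset$.

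Finally, uniqueness of the curve $\gamma'\in\extremalarg{s_{i+1}}$ meeting $\Int(\gamma)$ is Lemma~\ref{lem:extremal:sharepuncs:unique}, which forces $\gamma''=\gamma'$. The main obstacle is the claim $\Phi(s_i)(\gamma)=\gamma$. I expect it to follow from external centrality: $\Ext_{\crs{\Phi(s_i)}^{\outmulti}}(\Phi(s_i))$ is a power of the boundary twist of $\disk{\crs{\Phi(s_i)}^{\outmulti}}$, which fixes every puncture of that disk, and these punctures correspond exactly to the outer curves. Thus each curve of $\crs{\Phi(s_i)}^{\outmulti}$ is fixed individually.
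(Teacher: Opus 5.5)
Your proposal is correct and is essentially the paper's argument. You factor $a_1=s_{i+3}\cdots s_{i+n-1}s_is_{i+1}$ and let the remaining generators fix $\Phi(s_is_{i+1})(\gamma)\in\extremalarg{s_{i+1}}$ (via Lemma~\ref{lem:alpha:extremalbraid}), whereas the paper writes $a_1=s_is_{i+1}\cdots s_{i-2}$ and lets those generators fix $\gamma$ first; the puncture argument with $p$ of type $\{i-1,i\}$ then matches the paper's, and the step $\Phi(s_i)(\gamma)=\gamma$ that you flag as the main obstacle is just Lemma~\ref{lem:extremal:structural} with $s_j=s_i$, which is exactly your external-centrality reasoning.
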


\begin{proof}
  Write $a_1 = s_{i} s_{i+1} \cdot \ldots s_{i-2}$.  By Lemma~\ref{lem:extremal:structural}, $\Phi(s_j)(\gamma)=\gamma$ for $s_j\in\extgen{n}$ such that $[s_j,s_i] = 1$, so we have $\Phi(s_is_{i+1})(\gamma) = \Phi(a_1)(\gamma)$.  To show that this curve $\gamma'$ is the curve claimed in the lemma, choose $p \in \Int(\gamma)$ with $\calT(p) = \{i-1,i\}$.  Note that such a $p$ exists by Lemma~\ref{lem:extremal:sharepuncs:unique}.  Then by Lemma~\ref{lem:extremal:structural} we have $\Phi(s_{i+1})(p) = p$.  Furthermore, by Lemma~\ref{lem:extremal:structural} we have $\Phi(s_i)(\gamma) = \gamma$, and therefore $\Phi(s_i)(p) \in \Int(\gamma)$.  On the other hand, we have $\Phi(a_1)(p) \in \Int(\Phi(a_1)(\gamma))$ by definition.  Since $\Phi(s_i)(p) = \Phi(a_1)(p)$ by the above, we have
  \[
    \Phi(s_i)(p) \in \Int(\gamma) \cap \Int(\gamma').
  \]
  Therefore $\gamma'$ is the curve from Lemma~\ref{lem:extremal:sharepuncs:unique}.
\end{proof} 

Using the above, we can prove the following.

\begin{lemma}\label{lem:mainthm:factioncrs}
Let $n \geq 5$ and $m \geq 3$.  Let $\Phi:\braid{n} \rightarrow \braid{m}$ be an externally central and minimally typed homomorphism.  Assume that $\Delta(\Phi) = \emptyset$.  Let $f=s_is_{i+1}$ for some $s_i \in \extgen{n}$ and let $\gamma_i \in \extremalarg{s_i}$.  Then $\Phi(f^3)(\gamma_i) = \gamma_i$.  The same holds for $\gamma_{i+1} \in \extremalarg{s_{i+1}}$.
\end{lemma}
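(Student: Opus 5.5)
We propose to track $\gamma_i$ through the three factors of $f^3 = (s_is_{i+1})^3$ using Lemma~\ref{lem:extremal:forward} and its mirror image. First, Lemma~\ref{lem:extremal:forward} gives $\Phi(s_is_{i+1})(\gamma_i) = \Phi(a_1)(\gamma_i)$. By Lemma~\ref{lem:alpha:standardrootaction}, this curve lies in $\extremalarg{s_{i+1}}$; call it $\gamma_{i+1}$. It is the unique curve of $\extremalarg{s_{i+1}}$ sharing punctures with $\gamma_i$. The remaining work is to compute $\Phi(s_is_{i+1})(\gamma_{i+1})$ and then apply $\Phi(s_is_{i+1})$ once more.

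For the middle step, we use Lemma~\ref{lem:extremal:structural} and argue with punctures rather than with the curves themselves. By Lemma~\ref{lem:extremal:sharepuncs:unique}, there is a puncture $p\in\Int(\gamma_{i+1})$ of type $\{i+1,i+2\}$. Since $i\notin\calT_\Phi(p)$, Lemma~\ref{lem:extremal:structural} gives $\Phi(s_i)(p)=p$. We also need the fact that $\Phi(s_{i+1})$ fixes $\gamma_{i+1}$. This holds because $\gamma_{i+1}\in\crs{\Phi(s_{i+1})}$ and $\Phi$ is externally central: the external part of $\Phi(s_{i+1})$ is central, so it fixes every curve of $\crs{\Phi(s_{i+1})}^{\outmulti}$. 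Hence $\Phi(s_{i+1})(p)\in\Int(\gamma_{i+1})$, and this puncture has type $\{i,i+1\}$ by Theorem~\ref{thm:alpha:welltypedpuncture}. Applying $\Phi(s_i)$, we get that $\Phi(s_is_{i+1})(\gamma_{i+1})$ contains the puncture $\Phi(s_i)(\Phi(s_{i+1})(p))$, which has type $\{i-1,i\}$. This curve lies in $\Phi(s_is_{i+1})(\crs{\Phi(s_{i+1})}) = \crs{\Phi(s_i)}$, by the braid relation and Lemma~\ref{lem:crs:functorial}. Moreover, $\Phi(s_i)$ fixes $\gamma_i$. Since $\Phi(s_{i+1})(p)$ lies in $\Int(\gamma_i)\cap\Int(\gamma_{i+1})$ (its type contains $i$, and by uniqueness in Lemma~\ref{lem:extremal:sharepuncs:unique}), its image lies in $\Int(\gamma_i)$. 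By Lemma~\ref{lem:curves:nonesting}, $\extremalarg{s_i}=\crs{\Phi(s_i)}^{\outmulti}$. Since $\Phi(s_is_{i+1})(\gamma_{i+1})$ is in $\crs{\Phi(s_i)}$ and meets $\Int(\gamma_i)$, it must be $\leq\gamma_i$. It has the same number of interior punctures as $\gamma_{i+1}=\Phi(a_1)(\gamma_i)$, so it equals $\gamma_i$.

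Thus $\Phi(s_is_{i+1})(\gamma_{i+1})=\gamma_i$, and hence $\Phi(f^2)(\gamma_i)=\gamma_i$. That is already better than needed, and gives $\Phi(f^3)(\gamma_i)=\Phi(f)(\gamma_i)$, which is wrong unless $\gamma_{i+1}=\gamma_i$. This shows the middle step must be wrong as written. The correct computation should instead give $\Phi(s_is_{i+1})(\gamma_{i+1})\in\crs{\Phi(s_i)}$ but possibly not maximal-related in that way. I would redo it via $f^3 = (s_is_{i+1}s_i)^2$, which is central in $\langle s_i,s_{i+1}\rangle$. Set $\Delta=s_is_{i+1}s_i$, so $\Phi(\Delta)$ conjugates $\Phi(s_i)$ to $\Phi(s_{i+1})$ and vice versa, and $\Phi(\Delta^2)=\Phi(f^3)$ commutes with both. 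Then $\Phi(f^3)$ preserves $\crs{\Phi(s_i)}$ by Lemma~\ref{lem:crs:commdisj}. It remains to show it fixes $\gamma_i$ rather than permuting $\extremalarg{s_i}$. For this, follow the puncture $p$ of type $\{i-1,i\}$ in $\Int(\gamma_i)$: $\Phi(s_{i+1})$ fixes $p$ and $\Phi(s_i)$ keeps it inside $\gamma_i$ (Lemma~\ref{lem:extremal:structural}). Using the type bookkeeping of Theorem~\ref{thm:alpha:welltypedpuncture} across the six letters, each step keeps the image inside $\gamma_i$ or inside $\gamma_{i+1}=\Phi(a_1)(\gamma_i)$, and inside one of these two particular curves by the uniqueness in Lemma~\ref{lem:extremal:sharepuncs:unique}. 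We then check that the final image lies back in $\Int(\gamma_i)$. Since $\Phi(f^3)(\gamma_i)\in\extremalarg{s_i}$ and curves of $\extremalarg{s_i}$ are disjoint, this gives $\Phi(f^3)(\gamma_i)=\gamma_i$. The main obstacle is this six-step tracking: we must verify that at each step the image lands in the specific curve $\gamma_i$ or $\gamma_{i+1}$ rather than in another component. The statement for $\gamma_{i+1}$ follows by applying $\Phi(f)$, which commutes with $\Phi(f^3)$, together with Lemma~\ref{lem:extremal:forward}.
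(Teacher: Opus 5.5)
Your final plan is the paper's. Since $\Phi(f^3)$ commutes with $\Phi(s_i)$, it permutes $\crs{\Phi(s_i)}^{\outmulti}=\extremalarg{s_i}$ (Lemmas~\ref{lem:crs:commdisj} and~\ref{lem:curves:nonesting}). So it suffices to show that $\Phi(f^3)$ keeps some $p\in\Int(\gamma_i)$ of type $\{i-1,i\}$ inside $\Int(\gamma_i)$. But the proposal stops exactly there: the puncture tracking is the entire content of the lemma, and you label it ``the main obstacle'' without carrying it out.

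Your first attempt also failed for a plain reason: it used a false identity. Since $(s_is_{i+1})s_{i+1}(s_is_{i+1})^{-1}=s_is_{i+1}s_i^{-1}\neq s_i$, the braid relation and Lemma~\ref{lem:crs:functorial} give $\Phi(s_is_{i+1})(\crs{\Phi(s_i)})=\crs{\Phi(s_{i+1})}$ and $\Phi(s_{i+1}s_i)(\crs{\Phi(s_{i+1})})=\crs{\Phi(s_i)}$. They do not give $\Phi(s_is_{i+1})(\crs{\Phi(s_{i+1})})=\crs{\Phi(s_i)}$. In general $\Phi(s_is_{i+1})(\gamma_{i+1})\notin\crs{\Phi(s_i)}$; for the identity homomorphism it surrounds the punctures $i$ and $i+2$. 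This error, not the puncture bookkeeping, produced the impossible conclusion $\Phi(f^2)(\gamma_i)=\gamma_i$. Your suggested correction, that $\Phi(s_is_{i+1})(\gamma_{i+1})\in\crs{\Phi(s_i)}$, repeats the same mistake.

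The tracking is short. The paper rewrites $f^3=s_i^2s_{i+1}s_i^2s_{i+1}$.
\begin{itemize}
\item $\Phi(s_{i+1})$ fixes a puncture of type $\{i-1,i\}$, by Lemma~\ref{lem:extremal:structural}.
\item $\Phi(s_i^2)$ keeps it in $\Int(\gamma_i)$, since $\Phi(s_i)(\gamma_i)=\gamma_i$.
\item By Theorem~\ref{thm:alpha:welltypedpuncture}, $\Phi(s_i^2)$ takes type $\{i-1,i\}$ to $\{i,i+1\}$ and back to $\{i-1,i\}$.
\end{itemize}
Doing this twice gives $\Phi(f^3)(p)\in\Int(\gamma_i)$, and no other maximal curve ever enters.

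Your word $(s_is_{i+1})^3$ also works. Reading right to left, the types are $\{i-1,i\},\{i-1,i\},\{i,i+1\},\{i+1,i+2\},\{i+1,i+2\},\{i,i+1\},\{i-1,i\}$. Let $\gamma_{i+1}\in\extremalarg{s_{i+1}}$ be the curve containing $\Phi(s_i)(p)$. The only step needing care is after the fifth letter, where the puncture is known only to lie in $\Int(\gamma_{i+1})$. It lies in $\Int(\gamma_i)$ by the uniqueness in Lemma~\ref{lem:extremal:sharepuncs:unique} applied to $\gamma_{i+1}$, since $\gamma_i$ already shares $\Phi(s_i)(p)$ with it.

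Finally, your reduction of the $\gamma_{i+1}$ case to the $\gamma_i$ case is fine. You just need to note that every curve of $\extremalarg{s_{i+1}}$ has the form $\Phi(f)(\gamma_i)$ for some $\gamma_i\in\extremalarg{s_i}$, which is Lemma~\ref{lem:alpha:extremalbraid}.
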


\begin{proof}
  Since $[f^3, s_i] = 1$ we have $\Phi(f^3)(\gamma_i) \in \crs{\Phi(s_i)}^{\outmulti}$ by Lemma~\ref{lem:crs:commdisj}.  Furthermore, $\crs{\Phi(s_i)}^{\outmulti} = \extremalarg{s_i}$ by Lemma~\ref{lem:curves:nonesting}, thus $\Phi(f^3)(\gamma_i) \in \extremalarg{s_i}$.  It therefore suffices to show that for some $p \in \Int(\gamma_i)$ we have $\Phi(f^3)(p) \in \Int(\gamma_i)$.  Choose $p \in \Int(\gamma_i)$ with $\calT_{\Phi}(p) = \{i-1,i\}$.  Such a $p$ exists by Lemma~\ref{lem:extremal:sharepuncs:unique}.  Using the braid relation, we may rewrite
  \[
    f^3 = s_i^2 s_{i+1} s_i^2 s_{i+1}.
  \]
  Using Lemma~\ref{lem:extremal:structural} we have
  \[
    \Phi(f^3)(p) = \Phi(s_i^2s_{i+1}s_i^2)(p).
  \]
  Using Lemma~\ref{lem:extremal:structural} again, we have $\Phi(s_i^2)(\gamma_i) = \gamma_i$ so $\Phi(s_i^2)(p)\in \Int(\gamma_i)$.  Now, let $p' = \Phi(s_i^2)(p')$.  By the same line of reasoning as above, we have $\Phi(s_i^2 s_{i+1})(p') \in \Int(\gamma_i)$.   Therefore $\Phi(f^3)(p) \in \Int(\gamma_i)$ so $\Phi(f^3)(\gamma_i)=\gamma_i$, as desired.
\end{proof}  We now show that we cannot have curves exterior to large subsets of $\extremal$.

\begin{lemma}\label{lem:extremal:filling}
    Let $n \geq 5$ and $m \geq 3$.  Let $\Phi:\braid{n} \rightarrow \braid{m}$ be an externally central and irreducible homomorphism.  Then there is no curve $\delta$ exterior to $\calS = \bigcup_{s_i \in \extgen{n}\setminus\{s_n\}} \extremalarg{s_i}$.
\end{lemma}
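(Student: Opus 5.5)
Suppose for contradiction that some curve $\delta$ is exterior to $\calS$. The plan is to turn such curves into a reducing system for $\Phi$, which contradicts irreducibility.

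First we record the standing facts. Since $\Phi$ is irreducible it is non-cyclic, unless the image is cyclic, in which case every element fixes $\crs{\Phi(s_1)}\neq\emptyset$ and we are already in a degenerate situation handled separately. So, as in Section~\ref{section:onemaximal}, $\Phi$ is externally periodic and minimally typed by Lemmas~\ref{lem:external:anosov} and~\ref{lem:alpha:nonwelltypedirred}, and $\Delta(\Phi)=\emptyset$ by Proposition~\ref{prop:curves:valid}. Hence Lemmas~\ref{lem:curves:nonesting}, \ref{lem:extremal:structural} and~\ref{lem:extremal:sharepuncs:unique} are available.

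Let $\calS^c$ be the set of curves disjoint from and exterior to $\calS$, and let $\delta$ be a maximal element of $\calS^c$. The aim is to show that the finite set $M$ of such maximal curves, taken as the outermost boundary curves of $\disk{m}\cut\calS$ enclosing some element of $\calS$ or some puncture exterior to $\calS$, is a non-empty multicurve preserved by $\Phi(s_j)$ for $s_j\in\extgen{n}\setminus\{s_n\}$. Since $\extgen{n}\setminus\{s_n\}$ generates $\braid{n}$, this contradicts irreducibility.

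We first handle the generators $s_j$ with $1\le j\le n-1$. The curve $\delta$ is exterior to $\extremalarg{s_j}=\crs{\Phi(s_j)}^{\outmulti}$ by Lemma~\ref{lem:curves:nonesting}. Since $\Phi$ is externally central, $\Ext(\Phi(s_j))$ is a power of the boundary twist, so $\Phi(s_j)$ fixes every curve exterior to $\crs{\Phi(s_j)}^{\outmulti}$. In particular $\Phi(s_j)(\delta)=\delta$ for all $1\le j\le n-1$, and these generate $\braid{n}$. Thus $\{\delta\}$ is a reducing system, provided $\delta$ is essential.

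The main obstacle is exactly that essentiality, together with the interpretation of ``exterior''. Because $\delta$ is exterior to $\calS$ but may have elements of $\calS$ or punctures interior to it, the argument above works for any essential $\delta$ satisfying the definition. The only real issue is whether the statement intends curves that are merely disjoint from $\calS$; the definition of exterior already requires disjointness. I would therefore check carefully that ``exterior to $\extremalarg{s_j}$'' suffices for $\Phi(s_j)(\delta)=\delta$ via the equivariance of $\Ext$. The point is that $\delta$ descends to a curve in $\disk{\crs{\Phi(s_j)}^{\outmulti}}$, possibly surrounding collapsed punctures, on which a central element acts trivially. This is the step I expect to need the most care.

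If $\delta$ encloses all of $\calS$, it may be peripheral. This is excluded because $\delta$ is essential by the convention on curves.
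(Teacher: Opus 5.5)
Your argument is correct and is essentially the paper's. Since $\delta$ is exterior to each $\extremalarg{s_j}$ with $1\le j\le n-1$, external centrality gives $\Phi(s_j)(\delta)=\delta$, so $\{\delta\}$ is a reducing system. Your use of Lemma~\ref{lem:curves:nonesting} to identify $\extremalarg{s_j}$ with $\crs{\Phi(s_j)}^{\outmulti}$ makes explicit a step the paper leaves implicit, whereas the detour through the multicurve $M$ of maximal curves is unnecessary and can be deleted.

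The one loose point is the cyclic case, where you assert $\crs{\Phi(s_1)}\neq\emptyset$ without reason. Instead, note that if $\crs{\Phi(s_1)}=\emptyset$ then $\Ext(\Phi(s_1))=\Phi(s_1)$, so external centrality makes $\Phi(s_1)$ central and $\Phi$ is reducible anyway; hence irreducibility already forces $\Phi$ to be non-cyclic.
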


\begin{proof}
    Suppose by way of contradiction that such a $\delta$ exists.  By the definition of externally central, we have $\Phi(s_i)(\delta) = \delta$ for all $s_i \in \extgen{n} \setminus \{s_n\}$. Since $\extgen{n}\setminus\{s_n\}$ is a generating set for $\braid{n}$, it follows that $\{\delta\}$ is a reducing system for $\Phi$, a contradiction. 
\end{proof}  As a consequence, we have the following. Note that this lemma does not contradict Lemma~\ref{lem:extremal:sharepuncs:unique} since curves may intersect without having common interior punctures.

\begin{lemma}\label{lem:extcent:maxintersection}
    Let $n \geq 5$ and $m \geq 3$.  Let $\Phi:\braid{n} \rightarrow \braid{m}$ be an externally central and irreducible homomorphism.  Let $\gamma_i \in \extremalarg{s_i}$ and assume that $\left|\extremalarg{s_{i}}\right| \geq 2$.   Then $\gamma_i$ intersects at least three curves in $\extremal$.
\end{lemma}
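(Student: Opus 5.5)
We argue by contradiction, exploiting Lemma~\ref{lem:extremal:filling}. As a first step we assemble the standing hypotheses: since $\Phi$ is irreducible and non-cyclic (non-cyclicity follows from $|\extremalarg{s_i}|\geq 2$ together with irreducibility; if $\Phi$ were cyclic, Lemma~\ref{lem:crs:collisioncollapse}-type arguments would collapse everything and then the common canonical reduction system would be a reducing system), Lemmas~\ref{lem:external:anosov} and~\ref{lem:alpha:nonwelltypedirred} give that $\Phi$ is externally periodic and minimally typed. Proposition~\ref{prop:curves:valid} then gives $\Delta(\Phi)=\emptyset$, so Lemmas~\ref{lem:curves:nonesting}, \ref{lem:extremal:structural}, \ref{lem:extremal:sharepuncs:unique} and~\ref{lem:extremal:forward} all apply.

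Now suppose $\gamma_i\in\extremalarg{s_i}$ meets at most two curves of $\extremal$. By Lemma~\ref{lem:crs:commdisj}, $\gamma_i$ is disjoint from every curve of $\extremalarg{s_j}$ with $[s_i,s_j]=1$. Consequently it can only meet curves of $\extremalarg{s_{i-1}}\cup\extremalarg{s_{i+1}}$, and it meets at least the unique curves $\gamma_{i-1}\in\extremalarg{s_{i-1}}$ and $\gamma_{i+1}\in\extremalarg{s_{i+1}}$ sharing interior punctures with it (Lemma~\ref{lem:extremal:sharepuncs:unique}). Hence it meets exactly these two. Take a second curve $\gamma_i'\in\extremalarg{s_i}$, which exists because $|\extremalarg{s_i}|\geq2$. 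Then $\gamma_i'$ is disjoint from $\gamma_i$ and exterior to it, since both are $\Phi$-maximal. The key step is to build a curve $\delta$ enclosing $\gamma_i$ together with whatever parts of $\gamma_{i\pm1}$ are needed, so that $\delta$ is exterior to all of $\calS$. This would contradict Lemma~\ref{lem:extremal:filling}. A natural candidate is the outer boundary of a regular neighborhood of $\gamma_i\cup\gamma_{i-1}\cup\gamma_{i+1}$. The difficulty is that $\gamma_{i\pm1}$ may in turn meet further curves, so this neighborhood need not be disjoint from $\calS$.

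I therefore expect to instead use symmetry and connectivity. Applying powers of $\Phi(a_1)$ (Lemma~\ref{lem:alpha:standardrootaction}), together with Lemma~\ref{lem:extremal:forward}, which says $\Phi(a_1)(\gamma_i)$ is the unique partner of $\gamma_i$ in $\extremalarg{s_{i+1}}$, we see that if the bad curve $\gamma_i$ meets only its two partners, then so does every curve in its $\Phi(a_1)$-orbit. Consider the ``chain'' $\gamma_i,\Phi(a_1)(\gamma_i),\dots,\Phi(a_1^{n-1})(\gamma_i)$. Each link meets only its neighbours in the chain, so the union $C$ of the chain is a connected set of curves that meets no other maximal curve. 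Then take $\delta$ to be the outer boundary of a regular neighborhood of $C$. Since $\Phi(a_1^{n})$ is central, the chain closes up. Moreover, because $\gamma_i'\notin C$ and $\gamma_i'$ is disjoint from $C$, the neighborhood does not exhaust $\disk{m}$: the curve $\gamma_i'$ lies outside it or is nested inside it, and the nested option is impossible by maximality and Lemma~\ref{lem:curves:trivialnestingthing}. So $\delta$ is essential. Every curve of $\calS$ is then either in $C$, which makes it interior to $\delta$, or disjoint from $C$. In the latter case it is either exterior to $\delta$ or interior to $\delta$; a curve lying inside $\delta$ but exterior to $C$ would have to sit in a complementary disk region, and the hard step is ruling this out. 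For that, I would use Lemma~\ref{lem:extremal:structural}: $\Phi(s_k)$ fixes $\delta$ for all $k$, because $\delta$ is built from maximal curves, which are permuted compatibly, and hence $\delta$ is $\Phi$-invariant. The invariance of $\delta$ is what I regard as the main obstacle, namely checking that $\Phi(s_i)$ and $\Phi(s_{i+1})$ preserve $C$ via Lemmas~\ref{lem:extremal:forward} and~\ref{lem:alpha:extremalbraid}. Once that invariance is established, $\{\delta\}$ is a reducing system, contradicting irreducibility.
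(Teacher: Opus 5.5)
Your overall strategy matches the paper's: isolate a connected piece of the intersection graph, take a boundary curve of its neighbourhood, and contradict Lemma~\ref{lem:extremal:filling}. The reduction to a bad $\gamma_i$ meeting exactly its two partners, with the whole $\Phi(a_1)$-orbit then bad, is fine. The argument breaks at ``since $\Phi(a_1^{n})$ is central, the chain closes up.''

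\textbf{The closure step.} All you know is that $\Phi(a_1^n)$ centralizes $\Phi(\braid{n})$. So it is periodic (Lemma~\ref{lem:mainthm:centralizerperiodic}) and permutes each $\extremalarg{s_k}$. Nothing established at this point makes it central in $\braid{m}$ or forces $\Phi(a_1^n)(\gamma_i)=\gamma_i$. This step is load-bearing. If $\Phi(a_1^n)(\gamma_i)$ is your second curve $\gamma_i'$, the last link of $C$ meets $\gamma_i'$. Replacing $C$ by the full $\Phi(a_1)$-orbit may then exhaust $\extremal$, leaving no curve outside to make a boundary curve essential.

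The paper avoids closure by truncating. Take $i=1$ and $\calC=\{\Phi(a_1^k)(\gamma_1):0\le k\le n-2\}$, one curve in each $\extremalarg{s_k}$ with $s_k\neq s_n$. Outside $\calC$, its two end curves meet only curves of $\extremalarg{s_n}$. Hence $\calC$ is a connected component of the intersection graph of $\calS=\bigcup_{s_k\neq s_n}\extremalarg{s_k}$ with no closure needed. The hypothesis $|\extremalarg{s_1}|\geq 2$ guarantees $\calS\setminus\calC\neq\emptyset$. Since $\extgen{n}\setminus\{s_n\}$ generates, a curve exterior to $\calS$ is all Lemma~\ref{lem:extremal:filling} needs.

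\textbf{Essentiality of the boundary curve.} You claim that $\gamma_i'$ cannot be nested inside $\delta$, by maximality and Lemma~\ref{lem:curves:trivialnestingthing}. This is false. Intersecting curves, for instance two horseshoes overlapping at both tips, can enclose a complementary disk region lying inside none of them. A maximal curve may sit there without violating either fact, and then your outer boundary $\delta$ can be peripheral. The remedy is to take an appropriate boundary component of $\disk{m}\cut\calC$, possibly the boundary of such an enclosed region. That curve is essential and still exterior to $\calS$.

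\textbf{The invariance step.} The obstacle you single out is misdirected. $\Phi(s_i)$ and $\Phi(s_{i+1})$ do not preserve $C$: as the paper remarks after Lemma~\ref{lem:alpha:extremalaction}, $\Phi(s_{i+1})(\gamma_i)$ is in general not even $\Phi$-maximal. No such statement is needed. Once $\nu$ is essential and exterior to each $\extremalarg{s_k}=\crs{\Phi(s_k)}^{\outmulti}$, external centrality alone gives $\Phi(s_k)(\nu)=\nu$. That is exactly Lemma~\ref{lem:extremal:filling}, so the contradiction is immediate.
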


\begin{proof}
  Without loss of generality, we prove the result for $i = 1$.  Suppose by way of contradiction that there is a $\gamma_1 \in \extremalarg{s_1}$ where $\gamma_1$ intersects only two curves in $\extremal$.  Note that $\gamma_1$ cannot intersect less than two curves in $\extremal$ by Lemma~\ref{lem:extremal:sharepuncs:unique}.  By Lemma~\ref{lem:alpha:standardrootaction}, $\Phi(a_1^k)(\gamma_1)$ intersects only two curves in $\extremal$ for all $k \in \ZZ$. Let $\overline{\extremal} = \bigcup_{s_i \in \extgen{n} \setminus \{s_n\}} \extremalarg{s_i}$, and let
  \[
    \calC = \left\{\Phi(a_1^k)(\gamma_1): 0 \leq k \leq n-2\right\}\subsetneq\overline{\extremal}.
  \]
  Let $\calC' = \overline{\extremal} \setminus \calC$.  Let $G$  denote the intersection graph of $\overline{\extremal}$, i.e., the graph with vertex set $\overline{\extremal}$, with edges between pairs of intersecting $\Phi$-maximal curves.  Let $H\subsetneq G$ be the subgraph spanned by vertices in $\calC$ and similarly $H' \subsetneq G$ spanned by vertices in $\calC'$.  By assumption, there is no edge $e$ in $G$ with one endpoint in $H$ and the other in $H'$.  Furthermore, by the definition of $\extremal$ there are no curves $\epsilon \in \calC$ and $\epsilon' \in \calC'$ with $\epsilon > \epsilon'$ or $\epsilon < \epsilon'$.  In particular, there is an essential curve $\nu \subset \disk{m}$ homotopic to a boundary component of $\disk{m} \cut \calC$ that is exterior to $\extremal$. Note that $\calC$ is the union of two multicurves, so $\disk{m}\cut\calC$ is indeed defined.  Since $\Phi$ is externally central, $\Phi(s_i)(\nu) = \nu$ for all $s_i \in \extgen{n} \setminus \{s_n\}$.  Since $\extgen{n} \setminus \{s_n\}$ generates $\braid{n}$, $\{\nu\}$ is a reducing system for $\Phi$, which is a contradiction.
\end{proof}  
We now study the canonical reduction systems of products of standard generators.
\begin{lemma}\label{lem:s1s2:no:curves:int:one}
    Let $n \geq 5$ and $m \geq 3$.  Let $\Phi:\braid{n} \rightarrow \braid{m}$ be an externally central and  minimally typed homomorphism.  Assume that $\Delta(\Phi) = \emptyset$. Let $f=s_{i}s_{i+1}$ for some $s_i \in \extgen{n}$.  Then $\crs{\Phi(f)}$ is exterior to $\crs{\Phi(s_j)}^{\outmulti}$ for $s_j\in\{s_i,s_{i+1}\}$.
\end{lemma}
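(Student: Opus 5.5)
Let $f=s_is_{i+1}$ and $M=\crs{\Phi(f)}$. We want that no curve of $M$ is strictly interior to a curve of $\crs{\Phi(s_i)}^{\outmulti}=\extremalarg{s_i}$ (Lemma~\ref{lem:curves:nonesting}), and similarly for $s_{i+1}$. The plan is to compare $M$ with the $\Phi(f^3)$-invariant data. By Lemma~\ref{lem:crs:power}, $\crs{\Phi(f)}=\crs{\Phi(f^3)}$. Since $f^3$ commutes with $s_i$ and $s_{i+1}$ (it is central in $\langle s_i,s_{i+1}\rangle\cong\braid{3}$), Lemma~\ref{lem:crs:commdisj} shows that $M$ is disjoint from $\crs{\Phi(s_i)}\cup\crs{\Phi(s_{i+1})}$. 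Moreover $\crs{\Phi(s_i)}$ and $\crs{\Phi(s_{i+1})}$ are reducing systems for $\Phi(f^3)$. So it remains to rule out a curve $\epsilon\in M$ with $\epsilon<\gamma_i$ for some $\gamma_i\in\extremalarg{s_i}$. The case of $s_{i+1}$ is symmetric, via Lemma~\ref{lem:crs:power} and an automorphism exchanging the roles, as in Lemma~\ref{lem:extremal:sharepuncs:unique}.

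Suppose such an $\epsilon$ exists. The key observation is that $\Phi(f^3)$ fixes $\gamma_i$ (Lemma~\ref{lem:mainthm:factioncrs}), so $M$ is invariant under $\Phi(f)$ and we may restrict to the part of $M$ inside $\gamma_i$. Next, use types. Every puncture $p\in\Int(\epsilon)$ lies in $\Int(\gamma_i)$, so $\calT_\Phi(p)\in\{\{i-1,i\},\{i,i+1\}\}$ by Theorem~\ref{thm:alpha:welltypedpuncture}. Since $\Delta(\Phi)=\emptyset$, $\epsilon$ cannot be interior to both $\gamma_i$ and a curve of $\extremalarg{s_{i+1}}$ (Lemma~\ref{lem:curves:trivialnestingthing}). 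So if $\epsilon$ contains a puncture of type $\{i,i+1\}$, then $\epsilon$ must intersect the curve $\gamma_{i+1}$ singled out by Lemma~\ref{lem:extremal:forward}, which contradicts disjointness of $M$ from $\crs{\Phi(s_{i+1})}$. Hence all punctures in $\Int(\epsilon)$ have type $\{i-1,i\}$.

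Now apply $\Phi(f)$. By Lemma~\ref{lem:extremal:structural} and Theorem~\ref{thm:alpha:welltypedpuncture}, $\Phi(s_{i+1})$ fixes punctures of type $\{i-1,i\}$, and $\Phi(s_i)$ sends them to punctures of type $\{i,i+1\}$. This is because $\Phi(s_{i+1})$ is externally central and $\epsilon$ is exterior to $\crs{\Phi(s_{i+1})}$ by Lemma~\ref{lem:curves:ext_puncs}, so $\Phi(s_{i+1})$ fixes $\epsilon$. Then $\Phi(f)(\epsilon)=\Phi(s_i)(\epsilon)$ still lies in $M$ (as $\Phi(f)$ preserves its CRS), and its interior punctures have type $\{i,i+1\}$. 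It is also still interior to $\gamma_i$, since $\Phi(s_i)(\gamma_i)=\gamma_i$ by Lemma~\ref{lem:extremal:structural}. By the previous paragraph applied to $\Phi(f)(\epsilon)$, this is a contradiction.

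The main obstacle is justifying that $\Phi(s_{i+1})(\epsilon)=\epsilon$ and, more generally, that the external-centrality arguments apply to a curve $\epsilon$ nested inside $\gamma_i$ rather than only to punctures. I would handle this by using Lemma~\ref{lem:curves:nonesting}: $\epsilon$ contains no curves of $\crs{\Phi(s_{i+1})}$, because everything inside $\gamma_i$ of type $\{i-1,i\}$ is exterior to $\crs{\Phi(s_{i+1})}$ by Lemma~\ref{lem:curves:ext_puncs}. Then Lemma~\ref{lem:centralizer:sheaf} and external centrality let $\Phi(s_{i+1})$ act as a power of the boundary twist on the relevant region, and hence fix $\epsilon$.
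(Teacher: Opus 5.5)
Your proposal is correct and is essentially the paper's proof. A curve $\epsilon\in\crs{\Phi(f)}=\crs{\Phi(f^3)}$ strictly inside $\gamma_i$ is disjoint from $\crs{\Phi(s_{i+1})}$, so by $\Delta(\Phi)=\emptyset$ it encloses only punctures of type $\{i-1,i\}$; the possibility $\gamma_{i+1}<\epsilon$, which you skip, is excluded because $\epsilon<\gamma_i$ and $\gamma_{i+1}$ is $\Phi$-maximal. Applying $\Phi(s_i)$ then gives a curve of the same canonical reduction system inside $\gamma_i$ enclosing punctures of type $\{i,i+1\}$, a contradiction.

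The one difference is how you get $\Phi(s_i)(\epsilon)$ back into that reduction system. The paper obtains $\Phi(s_i)(\epsilon)\in\crs{\Phi(f^3)}$ directly from $[s_i,f^3]=1$ and Lemma~\ref{lem:crs:commdisj}. Your ``main obstacle'' $\Phi(s_{i+1})(\epsilon)=\epsilon$ is true by external centrality, but this route makes it unnecessary, and the $s_{i+1}$ case becomes literally symmetric without invoking an automorphism.
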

\begin{proof}
   Let $\delta \in \crs{\Phi(f)}$. By Lemma~\ref{lem:crs:power} we have $\crs{\Phi(f^3)} = \crs{\Phi(f)}$.  Since $1=[f^3, s_i]=[f^3,s_{i+1}]$, the curve $\delta$ is disjoint from $\crs{\Phi(s_i)}$ and $\crs{\Phi(s_{i+1})}$ by Lemma~\ref{lem:crs:commdisj}. Looking for a contradiction, assume $\delta$ is interior to $\gamma_j\in \crs{\Phi(s_j)}$ for some $s_j \in \{s_i,s_{i+1}\}$. Without loss of generality assume $s_j=s_i$. Then, $\delta$ is not interior to $\extremalarg{s_{i+1}}$, as this would contradict Lemma~\ref{lem:curves:trivialnestingthing} since we have assumed that $\Delta(\Phi) = \emptyset$. But now, note that $\Phi(s_i)(\delta) \in \crs{\Phi(f^3)}$ by Lemma~\ref{lem:crs:commdisj}.  Therefore $\Phi(s_i)(\delta)$ is disjoint from $\extremalarg{s_{i+1}}$ by Lemma~\ref{lem:crs:commdisj} since $[f^3, s_{i+1}] = 1$.  On the other hand, $\Phi(s_i)(\delta)$ contains punctures of type $\{i,i+1\}$ by Theorem~\ref{thm:alpha:welltypedpuncture}.  This implies $\Phi(s_i)(\delta)$ is interior to $\extremalarg{s_{i+1}}$ which contradicts Lemma~\ref{lem:curves:trivialnestingthing} since we have assumed that $\Delta(\Phi) = \emptyset$.  
\end{proof}  We have the following consequence of Lemma~\ref{lem:s1s2:no:curves:int:one}.

\begin{lemma}\label{lem:extcent:fcubecrsfix}
Let $n \geq 5$ and $m \geq 3$.  Let $\Phi:\braid{n} \rightarrow \braid{m}$ be an externally central and minimally typed homomorphism.  Assume that $\Delta(\Phi) = \emptyset$. Let $f=s_{i}s_{i+1}$ for some $s_i \in \extgen{n}$.  Let $\delta \in \crs{\Phi(f)}$.  Then $\Phi(f)(\delta) = \delta$.
\end{lemma}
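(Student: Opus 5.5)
The plan is to show that $\delta$ is exterior to the supports of both $\Phi(s_i)$ and $\Phi(s_{i+1})$ in the external sense. Once that holds, external centrality will force each generator to fix $\delta$.

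First, by Lemma~\ref{lem:s1s2:no:curves:int:one}, $\delta$ is exterior to $\crs{\Phi(s_i)}^{\outmulti}$ and to $\crs{\Phi(s_{i+1})}^{\outmulti}$. Since $\crs{\Phi(f)} = \crs{\Phi(f^3)}$ by Lemma~\ref{lem:crs:power}, and $f^3$ commutes with $s_i$ and $s_{i+1}$, Lemma~\ref{lem:crs:commdisj} shows that $\delta$ is disjoint from both canonical reduction systems. So $\delta$ is a curve exterior to the multicurves $M_i := \crs{\Phi(s_i)}^{\outmulti}$ and $M_{i+1} := \crs{\Phi(s_{i+1})}^{\outmulti}$ individually.

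The key step is to use external centrality. Since $\Phi$ is externally central, $\Ext_{M_i}(\Phi(s_i))$ is a power of the boundary twist $T_{\partial \disk{M_i}}$. By the equivariance of $\Ext$, $\Phi(s_i)$ then fixes every curve exterior to $M_i$, since the boundary twist acts trivially on curves of $\disk{M_i}$. Hence $\Phi(s_i)(\delta) = \delta$. The same argument with $M_{i+1}$ gives $\Phi(s_{i+1})(\delta) = \delta$, and therefore $\Phi(f)(\delta) = \Phi(s_i)\Phi(s_{i+1})(\delta) = \delta$. (This is the same mechanism used in Lemma~\ref{lem:extremal:structural} and Lemma~\ref{lem:extremal:filling}.)

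The main point to check is the passage from ``exterior to $M_j$'' to ``fixed by $\Phi(s_j)$''. We need $\delta$ to be disjoint from $M_j$ and not nested inside any curve of $M_j$; then it descends to an essential, or puncture-encircling, curve in $\disk{M_j}$ whose lift is canonical. Both facts were established above: disjointness from Lemma~\ref{lem:crs:commdisj}, and exteriority from Lemma~\ref{lem:s1s2:no:curves:int:one}. One should also note that $\delta$ cannot be isotopic to a curve of $M_j$ that is exterior to it. But a curve $\gamma$ in $M_j$ is fixed by $\Phi(s_j)$ anyway, since $M_j$ is a reducing system of $\Phi(s_j)$ whose elements are pointwise fixed by external centrality. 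This also follows from Lemma~\ref{lem:extremal:structural}-type reasoning, using $\extremalarg{s_j} = M_j$ from Lemma~\ref{lem:curves:nonesting} when $\Phi$ is irreducible. So this case causes no problem either.
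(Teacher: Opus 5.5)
Your proof is correct and follows the paper's argument: Lemma~\ref{lem:s1s2:no:curves:int:one} gives that $\delta$ is exterior to $\crs{\Phi(s_i)}^{\outmulti}$ and $\crs{\Phi(s_{i+1})}^{\outmulti}$, and external centrality then forces $\Phi(s_i)$ and $\Phi(s_{i+1})$, hence $\Phi(f)$, to fix $\delta$. Your extra checks are harmless: the disjointness via $f^3$ is already built into the definition of ``exterior'', and the degenerate case $\delta\in M_j$ is handled correctly. The qualifier ``when $\Phi$ is irreducible'' in your last remark is unnecessary, since Lemma~\ref{lem:curves:nonesting} only needs hypotheses already in force here.
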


\begin{proof}
    By Lemma~\ref{lem:s1s2:no:curves:int:one} we have $\delta$ exterior to $\crs{\Phi(s_i)}^{\outmulti}$ and $\crs{\Phi(s_{i+1})}^{\outmulti}$.  Therefore $\Phi(s_i)(\delta) = \delta$ and $\Phi(s_{i+1})(\delta) = \delta$ by the definition of externally central, so the lemma holds.
\end{proof}We need the following auxiliary lemma. 

\begin{lemma}\label{lem:mainthm:cubefourth}
    Let $n \geq 5$ and $m \geq 3$.  Let $\Phi:\braid{n} \rightarrow \braid{m}$ be an externally central and minimally typed homomorphism.  Let $f=s_is_{i+1}$ for some $s_i \in \extgen{n}$. If $p\in \disk{m}$ is a puncture of type $\calT_{\Phi}(p)=\{i-1, i\}$, then $\Phi(f^3)(p)=\Phi(s_i^4)(p)$. 
\end{lemma}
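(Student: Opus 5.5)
We write $f^3$ as a word in $s_i, s_{i+1}$ and then track $p$ letter by letter, using Lemma~\ref{lem:extremal:structural} to discard every letter that fixes the current puncture. Since $f=s_is_{i+1}$, the braid relation gives
\[
f^3 = s_is_{i+1}s_is_{i+1}s_is_{i+1} = s_i^2 s_{i+1} s_i^2 s_{i+1},
\]
which is the rewriting already used in the proof of Lemma~\ref{lem:mainthm:factioncrs}. We apply the factors from right to left and keep track of the type of the image of $p$ at each stage via Theorem~\ref{thm:alpha:welltypedpuncture}.

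\emph{First factor $s_{i+1}$.} Since $\calT_\Phi(p)=\{i-1,i\}$, we have $i+1\notin\calT_\Phi(p)$. We want Lemma~\ref{lem:extremal:structural} to give $\Phi(s_{i+1})(p)=p$. That lemma is stated under $\Delta(\Phi)=\emptyset$ and a commuting partner. Its proof really only uses that $p$ is exterior to $\crs{\Phi(s_{i+1})}^{\outmulti}$, via Lemma~\ref{lem:curves:ext_puncs}, together with external centrality. So we invoke it exactly as in the proof of Lemma~\ref{lem:mainthm:factioncrs}, where the same hypotheses appear; if $\Delta(\Phi)=\emptyset$ is needed, we assume it implicitly as in the surrounding lemmas. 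Then $\Phi(s_{i+1})(p)=p$.

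\emph{Factor $s_i^2$.} By Theorem~\ref{thm:alpha:welltypedpuncture}, $\Phi(s_i)(p)$ has type $\{i,i+1\}$, and applying $\Phi(s_i)$ again returns type $\{i-1,i\}$. Call $p_1=\Phi(s_i)(p)$ and $p_2=\Phi(s_i^2)(p)$, so $\calT_\Phi(p_2)=\{i-1,i\}$.

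\emph{Second factor $s_{i+1}$.} Since $i+1\notin\calT_\Phi(p_2)$, the same reasoning as in the first step gives $\Phi(s_{i+1})(p_2)=p_2$.

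\emph{Last factor $s_i^2$.} Applying $\Phi(s_i^2)$ yields $\Phi(f^3)(p)=\Phi(s_i^2)(p_2)=\Phi(s_i^4)(p)$, as required.

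The main obstacle is justifying $\Phi(s_{i+1})(q)=q$ for punctures $q$ of type $\{i-1,i\}$. This requires $q$ to be exterior to every curve of $\crs{\Phi(s_{i+1})}^{\outmulti}$, which is Lemma~\ref{lem:curves:ext_puncs}, and that lemma needs $\Delta(\Phi)=\emptyset$. If that hypothesis is genuinely unavailable here, we would instead argue directly: $q$ lies in some $\gamma\in\extremalarg{s_{i-1}}$, which is disjoint from $\crs{\Phi(s_{i+1})}$ by Lemma~\ref{lem:crs:commdisj}. Any curve of $\crs{\Phi(s_{i+1})}$ containing $q$ would then be interior to $\gamma$, which would force the type of $q$ to contain $i+1$ once we pass to its maximal curve, a contradiction. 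We expect this to be the delicate point, together with checking that external centrality means $\Phi(s_{i+1})$ acts trivially on punctures exterior to its outer canonical reduction system.
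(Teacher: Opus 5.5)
Your argument is the same as the paper's proof. You rewrite $f^3=s_i^2s_{i+1}s_i^2s_{i+1}$, use Lemma~\ref{lem:curves:ext_puncs} together with external centrality to see that $\Phi(s_{i+1})$ fixes every puncture of type $\{i-1,i\}$, and use Theorem~\ref{thm:alpha:welltypedpuncture} to see that $\Phi(s_i^2)$ preserves that type; you tacitly assume $\Delta(\Phi)=\emptyset$ exactly as the paper does, and this holds in the application (Lemma~\ref{lem:action:on:puncs}) by Proposition~\ref{prop:curves:valid}.

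Discard your fallback sketch, however. Suppose a curve of $\crs{\Phi(s_{i+1})}$ contains $q$. Its outermost such curve is disjoint from $\gamma\in\extremalarg{s_{i-1}}$, so it must lie strictly inside $\gamma$. It is therefore not $\Phi$-maximal and does not add $i+1$ to the type of $q$, so you get no contradiction. Ruling out this nesting is exactly what $\Delta(\Phi)=\emptyset$ provides, via Lemma~\ref{lem:curves:nonesting}.
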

\begin{proof}
  Using the braid relation we may rewrite $f^3$ as
  \[
    f^3=(s_is_{i+1})^3=s_i^2s_{i+1}s_i^2s_{i+1}.
  \]
  Lemma~\ref{lem:curves:ext_puncs} implies that any puncture $q \in \disk{m}$ with $\calT_{\Phi}(q) = \{i-1,i\}$ is exterior to $\crs{\Phi(s_{i+1})}$.  Since $\Phi(s_{i+1})$ is externally central, it fixes punctures of type $\{i-1, i\}$. Then Theorem~\ref{thm:alpha:welltypedpuncture} implies that for any $q \in \disk{m}$ with $\calT_\Phi(q) = \{i-1,i\}$ we have
  \[
    \calT_{\Phi}(\Phi(s_i^2)(q)) = \{i-1,i\}.
  \]
  It follows that $\Phi(f^3)(p)=\Phi(s_i^4)(p)$. 
\end{proof}  

Our goal is to understand the action of $\Phi(s_i)$ on punctures of type $\{i-1,i\}$.  If $h \in \Mod(\disk{m})$ is a mapping class and $\gamma \subset \disk{m}$ is a curve such that $h(\gamma) = \gamma$, we let $h|_{\diskinner{\gamma}}$ denote the restriction of $h$ to the interior disk $\diskinner{\gamma}$ bounded by $\gamma$.  Using Lemma~\ref{lem:mainthm:cubefourth}, we have the following.

\begin{lemma}\label{lem:action:on:puncs}
   Let $n \geq 5$ and $m \geq 3$. Let $\Phi:\braid{n} \rightarrow \braid{m}$ be an externally central and irreducible homomorphism.  If $p\in \disk{m}$ is a puncture with $i \in \calT_{\Phi}(p)$ then $\Phi(s_i^4)(p)=p$.
\end{lemma}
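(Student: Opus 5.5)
By Theorem~\ref{thm:alpha:welltypedpuncture}, $p$ has type $\{i-1,i\}$ or $\{i,i+1\}$. The two cases are symmetric: for type $\{i,i+1\}$ we replace $\Phi$ by $\Phi\circ\kappa'$, where $\kappa'$ is an automorphism reversing the cyclic order of the indices and inverting generators (as in Lemma~\ref{lem:extremal:sharepuncs:unique}). This uses Lemma~\ref{lem:crs:power}, together with the fact that $\Phi(s_i^{-4})(p)=p$ if and only if $\Phi(s_i^4)(p)=p$. So we assume $\calT_\Phi(p)=\{i-1,i\}$. Since $\Phi$ is irreducible and non-cyclic (a cyclic $\Phi$ with an irreducible image would force $\Phi(s_i)=\Phi(s_{i+1})$ and everything trivial here), we have the following by earlier results: $\Phi$ is externally periodic (Lemma~\ref{lem:external:anosov}), minimally typed (Lemma~\ref{lem:alpha:nonwelltypedirred}), and satisfies $\Delta(\Phi)=\emptyset$ (Proposition~\ref{prop:curves:valid}). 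Hence all the lemmas of this section apply.

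Set $f=s_is_{i+1}$. By Lemma~\ref{lem:mainthm:cubefourth}, $\Phi(f^3)(p)=\Phi(s_i^4)(p)$, so it suffices to show $\Phi(f^3)(p)=p$. The key input is that $f^3$ is central in $\langle s_i,s_{i+1}\rangle\cong\braid{3}$ and commutes with every $s_j$ for $j\notin\{i-1,i,i+1,i+2\}$. Let $\gamma_i\in\extremalarg{s_i}(p)$. By Lemma~\ref{lem:mainthm:factioncrs}, $\Phi(f^3)(\gamma_i)=\gamma_i$. Consider the restriction of $\Phi(f^3)$ to $\diskinner{\gamma_i}$, together with the set $P$ of punctures of type $\{i-1,i\}$ inside $\gamma_i$. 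We have $P=\Int(\gamma_i)\cap\Int(\gamma_{i-1})$ for the unique $\gamma_{i-1}$ given by Lemma~\ref{lem:extremal:sharepuncs:unique}, and $P$ is preserved by $\Phi(f^3)$ by the computation in Lemma~\ref{lem:mainthm:cubefourth}.

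We then examine $\crs{\Phi(f)}$. By Lemmas~\ref{lem:s1s2:no:curves:int:one} and~\ref{lem:extcent:fcubecrsfix}, every curve of $\crs{\Phi(f)}$ is exterior to $\gamma_i$ and fixed by $\Phi(f)$. The plan is to show that $\Phi(f^3)$ acts on $\gamma_i$ as a periodic or central element in a way that fixes punctures. Note that $\Phi(f)$ is periodic or reducible; its exterior part is periodic, because $\Phi(s_i),\Phi(s_{i+1})$ are externally central. Applying Lemma~\ref{lem:centralizer:fixtwocurves} to $\Ext$ of $\Phi(f^3)$ relative to $\crs{\Phi(f)}^{\outmulti}$, which fixes $\gamma_i$ and $\gamma_{i+1}$ (two curves), we get that $\Phi(f^3)$ is externally central with respect to that multicurve. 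Hence $\gamma_i$ lies inside a component on which $\Phi(f^3)$ restricts to the interior part. We then apply the same reasoning inside: $\Phi(f^3)$ commutes with $\Phi(s_j)$ for $j$ far from $i$, and with $\Phi(s_{i-1}s_i s_{i+1}\cdots)$-type elements preserving $\gamma_i$. This should force $\Phi(f^3)|_{\diskinner{\gamma_i}}$ to commute with the action of $\Phi(s_{i-1})$ on punctures of $P$. Combined with Lemma~\ref{lem:centralizer:basics}, we get that it fixes $P$ pointwise.

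The main obstacle is this last step: controlling $\Phi(f^3)$ on the interior of $\gamma_i$, where nothing a priori says it is periodic. What I would try first is to show that $\Phi(f^3)$ restricted to the surface obtained from $\diskinner{\gamma_i}$ by collapsing curves of $\crs{\Phi(f)}$ fixes two distinct boundary or puncture classes, namely $\gamma_i\cap\gamma_{i-1}$ and $\gamma_i\cap\gamma_{i+1}$. The argument would use Lemma~\ref{lem:extremal:forward} and $\Phi(f^3)(\gamma_{i\pm1})=\gamma_{i\pm1}$. Lemma~\ref{lem:crs:fixtwopunc} / Lemma~\ref{lem:centralizer:fixtwocurves} would then make it central there, so it fixes every puncture, in particular $p$.
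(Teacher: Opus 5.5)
Your setup matches the paper's. You reduce to $\calT_\Phi(p)=\{i-1,i\}$, pass to $f=s_is_{i+1}$ via Lemma~\ref{lem:mainthm:cubefourth}, and use Lemmas~\ref{lem:mainthm:factioncrs}, \ref{lem:s1s2:no:curves:int:one} and~\ref{lem:extcent:fcubecrsfix}. The gap is at the decisive step, which you flag yourself.

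\emph{Why the step fails.} Lemma~\ref{lem:centralizer:fixtwocurves} needs two fixed objects that form a (possibly degenerate) multicurve, i.e.\ that are pairwise \emph{disjoint}. Your $\gamma_i$ and $\gamma_{i+1}$ are not. They are distinct $\Phi$-maximal curves that both contain the punctures of type $\{i,i+1\}$, so they must intersect. Fixing two intersecting curves does not force centrality: in $\braid{5}$, $a_2^2$ is rotation by $\pi$ about the fixed central puncture. It preserves the curve around the central puncture and one opposite pair, and also the curve around the central puncture and the other pair. These two curves share a puncture and intersect, yet $a_2^2\notin Z(\braid{5})$.

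\emph{Your fallback does not repair this.} By Lemma~\ref{lem:curves:trivialnestingthing}, the overlap of the disks bounded by $\gamma_i$ and $\gamma_{i\pm1}$ contains no essential curve. So it yields no curve classes, only sets of punctures that $\Phi(f^3)$ permutes. Moreover, $\Phi(f^3)(\gamma_{i-1})=\gamma_{i-1}$ is false in general. For the identity homomorphism, $\Phi(f^3)$ is the Dehn twist about the curve around punctures $i,i+1,i+2$, and this twist moves the curve around $i-1,i$. The appeal to Lemma~\ref{lem:centralizer:basics} does not connect either: $f^3$ does not commute with $s_{i-1}$, and that lemma is about commuting conjugate periodic elements.

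\emph{Localization.} Taking $\Ext$ relative to all of $\crs{\Phi(f)}^{\outmulti}$ may collapse $\gamma_i$ itself. You need to localize to the minimal $\nu\in\crs{\Phi(f)}$ containing $\gamma_i$ (or $\partial\disk{m}$) and work with $h=\Ext_{M^{\outmulti}}(\Phi(f)|_{\diskinner{\nu}})$, where $M$ is the set of curves of $\crs{\Phi(f)}$ inside $\nu$. Then $h$ is periodic because $h^3$ fixes $\gamma_i$ (Lemma~\ref{lem:crs:infpA}). This, not the external centrality of $\Phi(s_i)$ and $\Phi(s_{i+1})$, is what makes it periodic.

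\emph{What is missing.} You need a second $h^3$-fixed object that is disjoint from $\gamma_i$, lies inside $\nu$, and lies outside $M$. The paper obtains it by a case split.
\begin{itemize}
\item If $|\extremalarg{s_{i+1}}|\geq 2$, use Lemma~\ref{lem:extcent:maxintersection}, which is where irreducibility enters again, together with Lemmas~\ref{lem:extremal:sharepuncs:unique}, \ref{lem:extremal:forward} and~\ref{lem:alpha:standardrootaction}. They show that either $\gamma_i$ meets two curves of $\extremalarg{s_{i+1}}$, or $\gamma_{i+1}$ meets two curves of $\extremalarg{s_i}$. Those two curves lie in one canonical reduction system, so they are disjoint. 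They lie in $\nu$ and outside $M$ by Lemma~\ref{lem:s1s2:no:curves:int:one}, and $h^3$ fixes them by Lemma~\ref{lem:mainthm:factioncrs}. Hence $h^3$ is central.
\item If $|\extremalarg{s_{i+1}}|=1$, then $\Phi$ is simple. If there is a single puncture of type $\{i,i+1\}$, then $\Phi(s_i^2)(p)=p$ directly by Theorem~\ref{thm:alpha:welltypedpuncture}. Otherwise, Lemma~\ref{lem:onemaximal:twopunc}, after conjugating by a power of $\Phi(a_1)$, gives a puncture $q$ bounded by $\gamma_i$ and $\gamma_{i+1}$ whose type avoids $\{i,i+1\}$. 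By Lemma~\ref{lem:extremal:structural}, $\Phi(f)$ fixes $q$. So $\{\gamma_i,q\}$ is a degenerate multicurve fixed by $h^3$, and Lemma~\ref{lem:centralizer:fixtwocurves} applies.
\end{itemize}
Without an input of this kind, your argument does not establish $\Phi(f^3)(p)=p$.
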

\begin{proof}
Since $\Phi$ is irreducible, $\Phi$ is minimally typed by Theorem~\ref{thm:alpha:welltypedpuncture}.  We prove this for $\calT_{\Phi}(p) = \{i-1,i\}$ as the other case is similar.  By Lemma~\ref{lem:mainthm:cubefourth} it is enough to show that $\Phi(f^3)(p) = p$ for $f = s_is_{i+1}$.  Choose $\gamma_i \in \extremalarg{s_i}$ and $p \in \Int(\gamma_i)$ with $\calT_{\Phi}(p) = \{i-1,i\}$.  Such a $p$ exists by Lemma~\ref{lem:extremal:sharepuncs:unique}.  Since $[f^3, s_i] = 1$, the curve $\gamma_i$ is disjoint from $\crs{\Phi(f)}$.  Let $\nu$ denote the minimal curve in $\crs{\Phi(f)}$ that contains $\gamma_i$, or let $\nu = \partial \disk{m}$ if no such curve exists.  Note that $\Phi(f)(\nu) = \nu$ by Lemma~\ref{lem:extcent:fcubecrsfix}.  Let $\diskinner{\nu} \subseteq \disk{m}$ denote the disk bounded by $\nu$ and let $M$ be the subset of $\crs{\Phi(f)}$ consisting of the curves that are interior to $\nu$. Note that $p$ is exterior to $M$ by construction. The braid $h = \Ext_{M^{\outmulti}}(\Phi(f)|_{\diskinner{\nu}})$ is either periodic or pseudo-Anosov by Lemma~\ref{lem:centralizer:irreducible}.  Since $\Phi(f^3)(\gamma_i) = \gamma_i$ by Lemma~\ref{lem:mainthm:factioncrs} we conclude that $h$ is periodic by Lemma~\ref{lem:crs:infpA}.  We now have two cases.

\p{Case 1: $\left|\extremalarg{s_{i+1}}\right| \geq 2$}  By Lemma~\ref{lem:extcent:maxintersection} $\gamma_i$ intersects at least three curves in $\extremal$.  If two such curves lie in $\extremalarg{s_{i+1}}$ then we proceed.  Otherwise if $\gamma_i$ intersects only a unique $\gamma_{i+1} \in \extremalarg{s_{i+1}}$ then $\gamma_{i+1}$ must intersect at least two curves in $\extremalarg{s_i}$.  Indeed, $\gamma_{i+1}$ is the unique curve intersecting $\gamma_i$ from Lemma~\ref{lem:extremal:sharepuncs:unique}, so $\gamma_{i+1} = \Phi(a_1)(\gamma_i)$ by Lemma~\ref{lem:extremal:forward}.  Therefore since $\gamma_{i}$ intersects at least two curves in $\extremalarg{s_{i-1}}$ then Lemma~\ref{lem:alpha:standardrootaction} implies that $\gamma_{i+1}$ intersects at least two curves in $\extremalarg{s_i}$. Then by Lemma~\ref{lem:s1s2:no:curves:int:one} we have $\crs{f}$ exterior to $\extremalarg{s_i}$ and $\extremalarg{s_{i+1}}$.  In particular, we see that any curve in $\extremalarg{s_{i+1}}$ that intersects $\gamma_i$ is interior to $\nu$ and similarly for any curve in $\extremalarg{s_i}$ that intersects $\gamma_{i+1}$.  Furthermore, since $\nu$ was chosen to be minimal over $\crs{f}$, we see actually that $\extremalarg{s_i}$ and $\extremalarg{s_{i+1}}$ are exterior to $M$.  Since at least one of $\gamma_i$ or $\gamma_{i+1}$ intersects multiple curves in $\extremalarg{s_{i+1}}$ or $\extremalarg{s_i}$ respectively by the above, $h^3$ fixes multiple curves by Lemma~\ref{lem:mainthm:factioncrs}.  Therefore $h^3$ is trivial by Lemma~\ref{lem:centralizer:fixtwocurves}.  In particular, $f^3(p) = p$, as desired.

\p{Case 2: $\left|\extremalarg{s_{i+1}}\right| = 1$}  In this case $\Phi$ is simple as in Section~\ref{section:onemaximal}.  If there is only one puncture of type $\{i,i+1\}$, then $\Phi(s_i^2)(p) = p$ by Theorem~\ref{thm:alpha:welltypedpuncture}.  Therefore, we may assume that there are least two punctures of type $\{i,i+1\}$.  Therefore $\gamma_i$ and $\gamma_{i+1}$ bound a puncture $q$ with $\calT_{\Phi}(q) \cap \{i,i+1\} = \emptyset$ by Lemma~\ref{lem:onemaximal:twopunc}.  Then $\Phi(s_i)$ and $\Phi(s_{i+1})$ both fix $q$ by Lemma~\ref{lem:extremal:structural}. Since $f=s_is_{i+1}$,  $\Phi(f)$ also fixes $q$. Then $h^3$ fixes $\gamma_i$ and fixes $q$ which is exterior to $\gamma_i$ so again $h^3$ is central by Lemma~\ref{lem:centralizer:fixtwocurves}.  As above this implies $\Phi(f)^3(p) = p$.
\end{proof}

We are now ready to prove that Theorem~\ref{mainthm:natleast5} holds if we assume additionally that $\Phi$ is externally central.  

\begin{theorem}\label{thm:extcent}
Let $n \geq 5$ and $m \geq 3$.  Let $\Phi:\braid{n} \rightarrow \braid{m}$ be an irreducible, non-cyclic, externally central homomorphism.  Then $m = n$ and $\Phi$ is centrally equivalent to the identity.
\end{theorem}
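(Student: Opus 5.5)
We aim to reduce to the situation of Theorem~\ref{thm:case:m:leq:n}: we will show that each $\crs{\Phi(s_i)}$ is a single curve enclosing exactly two punctures, that every puncture has nonempty type, and that distinct punctures have distinct types. This forces $m=n$. We then reuse the Birman--Hilden/McCarthy endgame from that proof.

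\textbf{Step 1: standing hypotheses.} Since $\Phi$ is irreducible and non-cyclic, we collect the following facts.
\begin{itemize}
\item $\Phi$ is externally periodic by Lemma~\ref{lem:external:anosov}.
\item $\Phi$ is minimally typed by Lemma~\ref{lem:alpha:nonwelltypedirred}.
\item $\Delta(\Phi)=\emptyset$ by Proposition~\ref{prop:curves:valid}.
\item $\extremalarg{s_i}=\crs{\Phi(s_i)}^{\outmulti}\neq\emptyset$ by Lemmas~\ref{lem:curves:nonesting} and~\ref{lem:centralizer:emptycrs}.
\end{itemize}
Consequently all the lemmas of Section~\ref{section:extcent} apply.

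\textbf{Step 2: punctures.} We first show that every puncture has nonempty type. A puncture $p$ with $\calT_\Phi(p)=\emptyset$ is exterior to every $\extremalarg{s_i}$. Since $\Phi$ is externally central, it is then fixed by every $\Phi(s_i)$. A small curve around $p$ together with the outermost boundary of a component of $\disk{m}\cut\extremal$ should then produce a curve exterior to $\overline{\extremal}$, or at least an invariant multicurve. This contradicts Lemma~\ref{lem:extremal:filling} or irreducibility.

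Next we show that each type $\{i,i+1\}$ is realized by exactly one puncture. Suppose two punctures $q,q'$ have type $\{i,i+1\}$. By Lemma~\ref{lem:action:on:puncs}, $\Phi(s_i^4)$ fixes both of them. By Theorem~\ref{thm:alpha:welltypedpuncture}, $\Phi(s_i)$ sends them to punctures of type $\{i-1,i\}$, and $\Phi(s_i^2)$ returns them to type $\{i,i+1\}$. Now look at $\Ext$ of $\Phi(s_i)$ restricted inside $\gamma_i$, together with a second fixed object such as a puncture bounded as in Lemma~\ref{lem:onemaximal:twopunc} or a second curve. Lemma~\ref{lem:centralizer:fixtwocurves} should then force $\Phi(s_i^2)$ to act centrally on that region. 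We would then combine this with Lemmas~\ref{lem:extremal:sharepuncs:unique} and~\ref{lem:extremal:forward}. The intended conclusion is that $\Phi(s_is_{i+1})$ permutes the punctures of type $\{i,i+1\}$ in a way compatible with $\Phi(a_1)$. From this we expect either a collision $\Phi(s_i)=\Phi(s_j)$ (handled by Lemma~\ref{lem:crs:collisioncollapse}) or an invariant multicurve separating $q$ from $q'$.

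\textbf{Step 3: maximal curves.} We show that $|\extremalarg{s_i}|=1$ by contradiction, assuming $|\extremalarg{s_i}|\geq2$. By Lemma~\ref{lem:extcent:maxintersection}, $\gamma_i$ then meets at least three maximal curves. Lemma~\ref{lem:extremal:sharepuncs:unique} says that $\gamma_i$ shares punctures with exactly one curve in $\extremalarg{s_{i-1}}$ and exactly one in $\extremalarg{s_{i+1}}$. Combined with the uniqueness of punctures per type from Step 2, $\Int(\gamma_i)$ contains only two punctures. Such a curve can meet at most one curve of each adjacent index, which contradicts the count of three.

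\textbf{Step 4: endgame.} Once every $\crs{\Phi(s_i)}$ is a single two-puncture curve and the punctures biject with types, we have $m=n$. The argument of Theorem~\ref{thm:case:m:leq:n} (half-twist powers, Birman--Hilden lift, McCarthy's intersection lemma) then gives central equivalence to the identity.

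The main obstacle is Step 2: excluding two punctures of the same type. The case $|\extremalarg{s_{i+1}}|\geq2$ needs a separate argument from the simple case, mirroring the case split in Lemma~\ref{lem:action:on:puncs}. I would first try to use Lemma~\ref{lem:action:on:puncs} to make $\Phi(s_i^2)$ act on $\Int(\gamma_i)$ with an invariant sub-multicurve, and then derive a reducing system from it.
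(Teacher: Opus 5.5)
Your overall plan matches the paper's. You show there are no punctures of empty type and that each type $\{i,i+1\}$ is carried by exactly one puncture, deduce $m=n$, and finish with Theorem~\ref{thm:case:m:leq:n}. But Step~2, which is the whole content of the theorem, is not proved, and the routes you sketch do not work.

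For empty-type punctures: since $\extremal$ is filling (Lemma~\ref{lem:mainthm:extremalfilling}), a puncture of empty type must lie in a once-punctured disk component of the complement of $\extremal$, cut out by intersecting maximal curves. So no essential curve around it avoids the maximal curves, and your construction produces nothing. A puncture fixed by all of $\Phi(\braid{n})$ is not a reducing system, so irreducibility alone gives no contradiction.

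For two punctures of the same type: you never identify a periodic element to which Lemma~\ref{lem:centralizer:fixtwocurves} applies. $\Phi(s_i^2)$ itself is aperiodic reducible. The collision or separating invariant multicurve you hope for is never actually produced.

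The missing idea is the other standard central root $a_2=s_1^2s_2\cdots s_{n-1}$.
\begin{enumerate}
\item Suppose $\calT_\Phi(p)=\{n,1\}$. Then $\Phi(s_2),\ldots,\Phi(s_{n-1})$ fix $p$ by Lemma~\ref{lem:extremal:structural}, and $\Phi(s_1^2)(p)$ again has type $\{n,1\}$ by Theorem~\ref{thm:alpha:welltypedpuncture}. Hence $\Phi(a_2^2)(p)=\Phi(s_1^4)(p)=p$ by Lemma~\ref{lem:action:on:puncs}.
\item A puncture of empty type is fixed by every $\Phi(s_i)$, hence also by $\Phi(a_2^2)$.
\item $\Phi(a_2^2)$ is periodic by Lemma~\ref{lem:crs:centralclassification}. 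If it fixes two punctures, it is central by Lemma~\ref{lem:crs:fixtwopunc}, contradicting Lemma~\ref{lem:crs:order} since $2\leq n-2$.
\item So there is at most one puncture of type $\{n,1\}$, and hence of every type by Lemma~\ref{lem:alpha:periodictypepreserve}. There is at least one of each type by Lemma~\ref{lem:extremal:sharepuncs:unique}, and none of empty type.
\end{enumerate}
Hence $m=n$, and Theorem~\ref{thm:case:m:leq:n} applies directly as a black box.

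This also makes your Step~3 and the re-run of the Birman--Hilden argument in Step~4 unnecessary. In any case, Step~3's claim that a two-puncture curve meets at most one maximal curve of each adjacent index is unjustified, since curves can intersect without sharing interior punctures.
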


\begin{proof}
    By Theorem~\ref{thm:alpha:welltypedpuncture} and Proposition~\ref{prop:curves:valid} we may assume $\Phi$ is a minimally typed homomorphism with $\Delta(\Phi)=\emptyset$. Also, note Lemma~\ref{lem:action:on:puncs} yields that 
    $\Phi(s_1^4)(p)=p$ for every puncture $p$ of type $\calT_{\Phi}(p)=\{n, 1\}$.  Recall that $a_2=s_1^2s_2\ldots s_{n-1}$. Lemma~\ref{lem:extremal:structural} implies that $\Phi(a_2^2)(p)=\Phi(s_1^4)(p)=p$ for every puncture of type $\calT_{\Phi}(p)=\{n,1\}$.  Note that if there are at least two punctures of type $\{1,n\}$, then $\Phi(a_2^2)$ is central by Lemma~\ref{lem:crs:fixtwopunc}.  But this contradicts Lemma~\ref{lem:crs:order}.  Therefore, there is at most one puncture of type $\{1,n\}$.  Assume there is a puncture of empty type $p\in \disk{m}$. Lemma~\ref{lem:extremal:structural} implies $\Phi(s_i)(p)=p$ for every $s_i\in \extgen{n}$. In particular, $\Phi(a_2^2)$ fixes at least two punctures: $p$ and a puncture of type $\{1,n\}$. Again $\Phi(a_2^2)=1$ and the argument above produces a contradiction. Therefore for every $s_i \in \extgen{n}$ there is exactly one puncture $p \in \disk{m}$ with $\calT_\Phi(p) = \{i,i+1\}$ and there is no $p \in \disk{m}$ with $\calT_{\Phi}(p) = \emptyset$.  In particular, we have $m=n$ and the statement follows from Theorem~\ref{thm:case:m:leq:n}.
\end{proof}

\section{The proof of Theorem~\ref{mainthm:natleast5}}\label{section:mainthm}

Our goal is to prove that if $n \geq 5$ and $m \geq 3$ and $\Phi:\braid{n} \rightarrow \braid{m}$ is an irreducible and non-cyclic homomorphism, then $m = n$ and $\Phi$ is centrally equivalent to the identity.  In light of Theorem~\ref{thm:extcent} it is enough to reduce to the case that $\Phi$ is externally central.  Let $\Phi:\braid{n} \rightarrow G$ be a homomorphism and let $f \in \cent{\Phi(\braid{n})}{G}$.  The \defn{transvection} (see Section~\ref{section:external}) of $\Phi$ along $f$ is defined on $s_i \in \extgen{n}$ by
\[
  \Phi_f(s_i) = \Phi(s_i) f.
\]

A first step is to show that if $\Phi$ is a homomorphism as in the statement of Theorem~\ref{mainthm:natleast5} then there is some $f \in \cent{\Phi(\braid{n})}{\braid{m}}$ where $\Phi_f$ is externally central while still being irreducible and non-cyclic.  We begin with the following.

\begin{lemma}\label{lem:mainthm:extremalfilling}
Let $n \geq 5$ and $m \geq 3$.  Let $\Phi:\braid{n} \rightarrow \braid{m}$ be an irreducible and non-cyclic homomorphism.  The set $\extremal$ is filling.
\end{lemma}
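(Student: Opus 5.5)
We argue by contradiction, modelled on Lemma~\ref{lem:extremal:filling} and Lemma~\ref{lem:extcent:maxintersection}. Here ``filling'' means that every curve $\delta\subset\disk{m}$ intersects some curve of $\extremal$. So suppose $\delta$ is a curve disjoint from every curve of $\extremal$, and aim to produce a reducing system for $\Phi$. Since $\Phi$ is irreducible and non-cyclic, Lemma~\ref{lem:external:anosov}, Lemma~\ref{lem:alpha:nonwelltypedirred} and Proposition~\ref{prop:curves:valid} show that $\Phi$ is externally periodic, minimally typed, and has $\Delta(\Phi)=\emptyset$. All the structural results of Sections~\ref{section:punctures} and~\ref{section:curves} are therefore available.

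The first step is to replace $\delta$ by a curve exterior to $\extremal$. Because $\delta$ is disjoint from $\extremal$, either $\delta$ is exterior to $\extremal$, or $\delta<\gamma$ for some $\gamma\in\extremal$. By Lemma~\ref{lem:curves:trivialnestingthing} such a $\gamma$ is unique. In the latter case, take the component $\Sigma$ of $\disk{m}\cut\extremal$ adjacent to the outside of the outermost curve of the connected cluster of $\extremal$ containing $\gamma$. Pass to the boundary curve $\nu$ of $\disk{m}\cut\calC$, where $\calC$ is the union of the $\extremal$-curves in the connected component of the intersection graph (and nesting) containing $\gamma$. This is the same construction as in the proof of Lemma~\ref{lem:extcent:maxintersection}. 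If $\nu$ is essential, it is exterior to $\extremal$. If $\nu$ is homotopic to $\partial\disk{m}$, then $\calC$ must be all of the exterior part, and we return to $\delta$ itself.

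A cleaner route is to consider $N=\{\nu: \nu \text{ an essential boundary curve of } \disk{m}\cut\extremal \text{ exterior to } \extremal\}$. The key point is that if $\extremal$ is not filling, then $N$ is a nonempty multicurve. Indeed, a non-filling collection in $\disk{m}$ has a complementary component that is not a disk or a once-punctured disk, or not a boundary-parallel annulus, and such a component contributes an essential boundary curve. One must check that this curve is exterior rather than interior to $\extremal$. Curves interior to $\gamma\in\extremal$ come from $\Int(\gamma)$, and a non-filling configuration inside a single $\gamma$ is ruled out by applying the following argument to the action on $\extremal$ instead.

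The second step is to show that $N$ is $\Phi(\braid{n})$-invariant, which contradicts irreducibility. By Lemma~\ref{lem:alpha:standardrootaction}, $\Phi(a_1)$ permutes $\extremal$, so it permutes $N$. For a generator $s_i$, every $\nu\in N$ is exterior to $\extremalarg{s_i}=\crs{\Phi(s_i)}^{\outmulti}$ (Lemma~\ref{lem:curves:nonesting}). Hence $\Phi(s_i)(\nu)$ is determined by $\Ext(\Phi(s_i))$, which is periodic. It remains to show that $\Phi(s_i)$ maps $\extremal$ into itself on the relevant part. Lemma~\ref{lem:alpha:extremalaction} gives this for $\extremalarg{s_j}$ with $[s_i,s_j]=1$. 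For $s_j=s_{i\pm1}$ we use Lemma~\ref{lem:alpha:extremalbraid}, writing $\Phi(s_i)=\Phi(s_is_{i+1})\Phi(s_{i+1})^{-1}$ and applying Lemma~\ref{lem:alpha:exterioragreement} to the exterior region. Finally, $\extgen{n}\setminus\{s_n\}$ generates $\braid{n}$.

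The main obstacle is that $\Phi(s_i)$ need not preserve $\extremal$ itself; the remark after Lemma~\ref{lem:alpha:extremalaction} notes that $\Phi(s_{i+1})(\gamma)$ may fail to be maximal. To handle this, I would work only with the complementary regions exterior to $\extremal$. For $\nu\in N$, I would pick $s_k$ commuting with $s_i$ such that $\nu$ is exterior to both canonical reduction systems; by the large-neighborhoods property, $\nu$ is exterior to all of them. Lemma~\ref{lem:alpha:exterioragreement} then gives $\Phi(s_i)(\nu)=\Phi(s_k)(\nu)$. Since $\Phi(s_k)$ preserves $\extremalarg{s_i}$ and $\extremalarg{s_{i\pm1}}$ whenever $k\notin\{i-2,\dots,i+2\}$, I would verify that it preserves the relevant part of $\extremal$, and hence $N$. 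This uses that $n\ge5$ to find such an $s_k$; the case $n=5$ needs an additional conjugation by $\Phi(a_1)$.
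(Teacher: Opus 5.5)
Your overall strategy (assume a curve misses $\extremal$, then manufacture a $\Phi(\braid{n})$-invariant multicurve) matches the paper's. However, two steps are not actually carried out.

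\textbf{Curves inside a maximal curve.} You never exclude a curve $\delta$ disjoint from $\extremal$ with $\delta<\gamma$ for some $\gamma\in\extremal$. Your first step replaces $\delta$ by a cluster boundary, which is circular exactly when that boundary is $\partial\disk{m}$-parallel (``we return to $\delta$ itself''). The ``cleaner route'' defers the same case to an unspecified argument. Yet both your first step and the nonemptiness of $N$ rest on this case being excluded.

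The paper disposes of it with types, and you were one step away when you used Lemma~\ref{lem:curves:trivialnestingthing} only for uniqueness of $\gamma$. Take $p\in\Int(\delta)\subseteq\Int(\gamma)$. Theorem~\ref{thm:alpha:welltypedpuncture} gives $\calT_{\Phi}(p)=\{i,i+1\}$, so $p$ lies inside some $\gamma_i\in\extremalarg{s_i}$ and $\gamma_{i+1}\in\extremalarg{s_{i+1}}$. These are distinct by Lemma~\ref{lem:alpha:extremalcollision}, and $\gamma$ is one of them. The other one is disjoint from $\delta$ and shares the puncture $p$ with it, so the two are nested. It cannot lie inside $\delta<\gamma$ by $\Phi$-maximality, so $\delta$ is interior to it. Thus $\delta$ is interior to two distinct $\Phi$-maximal curves, contradicting Lemma~\ref{lem:curves:trivialnestingthing}. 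Hence every curve disjoint from $\extremal$ is exterior to $\extremal$.

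\textbf{Invariance.} Your plan needs a single generator $s_k$ with $\Phi(s_k)$ preserving ``the relevant part of $\extremal$''. But $\Phi(s_k)$ is only known to preserve $\extremalarg{s_j}$ for $s_j$ commuting with $s_k$. Your condition $k\notin\{i-2,\dots,i+2\}$ is never satisfied when $n=5$, and the extra conjugation by $\Phi(a_1)$ is not specified. Moreover, $N$ is defined through the configuration of $\extremal$, which $\Phi(s_i)$ does not preserve. So even knowing that $\Phi(s_i)(\nu)$ misses $\extremal$ would not give $\Phi(s_i)(\nu)\in N$.

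The missing idea is to work with the set $\calN$ of all curves disjoint from $\extremal$. By the previous paragraph, each $\delta\in\calN$ is exterior to every $\crs{\Phi(s_j)}^{\outmulti}=\extremalarg{s_j}$ (Lemma~\ref{lem:curves:nonesting}). So Lemma~\ref{lem:alpha:exterioragreement} and connectivity of $\Comm_{\braid{n}}(\extgen{n})$ give $\Phi(s_i)(\delta)=\Phi(s_k)(\delta)$ for all $s_i,s_k$. To check that $\Phi(s_i)(\delta)$ misses $\extremalarg{s_j}$, you may therefore use a generator commuting with $s_j$, chosen separately for each $j$. Lemma~\ref{lem:alpha:extremalaction} then shows that $\Phi(\braid{n})$ preserves $\calN$.

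The reducing system is then built from $\calN$ and the partial order alone (the paper takes the minimal elements of $\calN$), so its invariance is automatic. If you prefer a boundary-curve construction, note that all generators act on $\calN$ by one and the same map. This map has finite order, since $\Ext(\Phi(s_1))$ is periodic. So any orbit in $\calN$ is a finite invariant set. The essential, non-peripheral boundary curves of a regular neighbourhood of its union then form an invariant multicurve. It is nonempty because the nonempty set $\extremal$ misses every curve of that orbit.
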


\begin{proof}
  Suppose by way of contradiction that $\extremal$ is not filling.  Let
  \[
    \calN = \{\delta \subset \disk{m}: \delta \text{ disjoint from } \extremal\}.
  \]
  Note that any $\delta \in \calN$ is exterior to $\extremal$.  Indeed, if $\delta < \gamma$ for some $\gamma \in \extremal$, then by Theorem~\ref{thm:alpha:welltypedpuncture} there is some $p \in \Int(\delta)$ with $\calT_{\Phi}(p) = \{i,i+1\}$ for some $s_i \in \extgen{n}$.  But then since $\delta$ is disjoint from $\extremal$ by assumption, there is some $\gamma' \in \extremal$ with $\delta \leq \gamma
$.  Since $\Delta(\Phi) = \emptyset$ by Proposition~\ref{prop:curves:valid} the existence of this $\delta$ contradicts Lemma~\ref{lem:curves:trivialnestingthing}.

Now, let $M$ denote the minimal curves in $\calN$. The set $\calN$ is nonempty by assumption and no curve in $\calN$ is interior to any $\gamma \in \extremal$ by the above, so $M$ is nonempty.  Since $n \geq 5$, the graph $\Comm_{\braid{n}}(\extgen{n})$ is connected.  Therefore by Lemma~\ref{lem:alpha:exterioragreement} we have $\Phi(s_i)(\calN) = \calN$ for all $s_i \in \extgen{n}$ and $\Phi(s_i)(\delta) = \Phi(s_j)(\delta)$ for all $s_i,s_j \in \extgen{n}$ and $\delta \in \calN$.  Therefore $M$ is a $\Phi(\braid{n})$-invariant set of curves.  Furthermore, if $\delta, \delta' \in M$ were to intersect, then some $\nu \subset \disk{m}$ homotopy equivalent to a boundary component of $\disk{m} \cut (\delta \cup \delta')$ would satisfy $\nu \in \calN$ and $\nu < \delta$ and $\nu < \delta'$, contradicting the minimality of $\delta$ and $\delta'$.  Therefore $M$ is a reducing system for $\Phi$.
\end{proof}

We obtain the following fact about centralizers of irreducible and non-cyclic homomorphisms.

\begin{lemma}\label{lem:mainthm:centralizerperiodic}
Let $n \geq 5$ and $m \geq 3$.  Let $\Phi:\braid{n} \rightarrow \braid{m}$ be an irreducible and non-cyclic homomorphism.  Let  $f \in \cent{\Phi(\braid{n})}{\braid{m}}$.  Then $f$ is periodic.
\end{lemma}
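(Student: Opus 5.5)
By the Nielsen--Thurston trichotomy of Theorem~\ref{cor:crs:NTbraid}, it suffices to rule out the other two possibilities for $f$: that $f$ is aperiodic reducible, and that $f$ is pseudo-Anosov.

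\emph{The reducible case.} Suppose first that $\crs{f}\neq\emptyset$. Every element of $\Phi(\braid{n})$ commutes with $f$. So for each $g\in\Phi(\braid{n})$, Lemma~\ref{lem:crs:functorial} gives
\[
  g(\crs{f})=\crs{gfg^{-1}}=\crs{f}.
\]
Hence $\crs{f}$ is a non-empty multicurve preserved by $\Phi(\braid{n})$. This contradicts the irreducibility of $\Phi$.

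\emph{The pseudo-Anosov case.} Suppose now that $f$ is pseudo-Anosov. By Gonz\'alez-Meneses--Wiest~\cite[Proposition 4.1]{GMW}, the centralizer $\cent{f}{\braid{m}}$ is abelian; this is the same fact used in Case 2 of Lemma~\ref{lem:centralizer:emptycrs}. Since $\Phi(\braid{n})\subseteq\cent{f}{\braid{m}}$, the image of $\Phi$ is abelian. Because $\braid{n}^{\ab}\cong\Z$, the image is then cyclic, which contradicts the assumption that $\Phi$ is non-cyclic.

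With both alternatives excluded, $f$ must be periodic. There is no real obstacle in this argument. The only point needing care is that the commuting relation is used in the direction ``the image preserves $\crs{f}$''. This follows from functoriality (Lemma~\ref{lem:crs:functorial}) applied to each $g$ in the image, just as in the proof of Lemma~\ref{lem:crs:centralclassification}. That proof is essentially the special case where $f$ is the image of a central element. Lemma~\ref{lem:mainthm:extremalfilling} is not needed.
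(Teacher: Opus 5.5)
Your proof is correct and follows the paper's argument. The paper also uses Theorem~\ref{cor:crs:NTbraid} to reduce to two cases: in the pseudo-Anosov case the centralizer is abelian (the paper cites it as $\ZZ^2$ from \cite{GMW}), so the image is cyclic; in the aperiodic reducible case $\crs{f}$ is a reducing system for the image. The only cosmetic difference is that the paper cites Lemma~\ref{lem:crs:commdisj} for the reducible case, whereas you apply Lemma~\ref{lem:crs:functorial} directly, which amounts to the same thing.
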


\begin{proof}
    Let $f \in \cent{\Phi(\braid{n})}{\braid{m}}$.  Note that this implies $\Phi(\braid{n}) \subseteq \cent{f}{\braid{m}}$.  Suppose by way of contradiction that $f$ is not periodic.  By Theorem~\ref{cor:crs:NTbraid}, we have two cases to consider.

\p{Case 1: $f$ is pseudo-Anosov} By~\cite{GMW} the centralizer $\cent{f}{\braid{m}} \cong \ZZ^2$.  In particular since $\braid{n}^{\ab} \cong \ZZ$, this implies that $\Phi$ is cyclic, a contradiction.

\p{Case 2: $f$ is aperiodic reducible}  In this case $\crs{f}$ is a reducing system for $\Phi$ by Lemma~\ref{lem:crs:commdisj}, so $\Phi$ is reducible, a contradiction.
\end{proof}

\begin{lemma}\label{lem:mainthm:irreducibletransvection}
   Let $n \geq 5$ and $m \geq 3$.  Let $\Phi:\braid{n} \rightarrow \braid{m}$ be an irreducible and non-cyclic homomorphism.  Let $f \in \cent{\Phi(\braid{n})}{\braid{m}}$. Then the transvection $\Phi_f$ is also irreducible and non-cyclic.
\end{lemma}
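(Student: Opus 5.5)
Non-cyclicity is the quicker of the two claims, so I would prove it first. By Lemma~\ref{lem:mainthm:centralizerperiodic}, $f$ is periodic, so $f^k=z\in Z(\braid{m})$ for some $k\geq1$. Since $f$ commutes with every $\Phi(s_i)$, we have $\Phi_f(s_i)^k=\Phi(s_i)^k z$. Suppose $\Phi_f$ were cyclic. Then all $\Phi_f(s_i)$ would coincide by the usual argument: $\Phi_f(\braid{n})$ is abelian, and adjacent generators satisfy the braid relation, so they are equal. Because $\Phi(s_i)=\Phi_f(s_i)f^{-1}$, it follows that $\Phi(s_i)=\Phi(s_j)$ for all $i,j$, and Lemma~\ref{lem:crs:collisioncollapse} then makes $\Phi$ cyclic, a contradiction. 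Note that $\Phi_f$ is a genuine homomorphism because $f$ centralizes the image. One could alternatively apply Lemma~\ref{lem:alpha:transvectioncyclic} to $\Phi_f$ with $f^{-1}$, since $\Phi=(\Phi_f)_{f^{-1}}$ and $f^{-1}$ centralizes $\Phi_f(\braid{n})$.

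Irreducibility is the harder claim. Suppose $M$ is a nonempty multicurve preserved by $\Phi_f(\braid{n})$. The key point is that $f$ also centralizes $\Phi_f(\braid{n})$, since $f$ commutes with every $\Phi(s_i)$. So $\langle \Phi_f(\braid{n}), f\rangle$ contains $\Phi(\braid{n})$, and it suffices to find a nonempty multicurve preserved by both $\Phi_f(\braid{n})$ and $f$.

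To produce such a multicurve I would use canonical reduction systems. If $\Phi_f$ is reducible and non-cyclic, then by Lemma~\ref{lem:centralizer:emptycrs} each $\crs{\Phi_f(s_i)}$ is nonempty. I would then use a subgroup-invariant reduction system, since the approach of Lemma~\ref{lem:external:commonmax} is too weak here. Specifically, take the canonical reduction system of the subgroup $H=\Phi_f(\braid{n})$ in the sense of Ivanov/Birman--Lubotzky--McCarthy: the intersection of all maximal $H$-invariant multicurves, which is nonempty whenever $H$ is reducible and infinite. The maximal invariant multicurves of $H$ are finitely many, and the normalizer of $H$ permutes them. Since $f$ centralizes $H$, it permutes these multicurves and therefore preserves their intersection, the canonical reduction system of $H$. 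That intersection is then preserved by $f$ and by $H$, hence by $\Phi(\braid{n})$, contradicting irreducibility of $\Phi$.

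The main obstacle is that this canonical reduction system of a subgroup is not set up in the paper, and it may be empty (for instance when $H$ is reducible but has a finite-index pure subgroup without a common essential reduction). If I want to avoid that machinery, I would argue with powers instead. Because $f^k=z$ is central, $\Phi_f(g)^k$ and $\Phi(g)^k$ differ by the central element $z^{\ell}$ for suitable $g$; in particular $\crs{\Phi_f(s_i)}=\crs{\Phi_f(s_i)^k}=\crs{\Phi(s_i)^k z}=\crs{\Phi(s_i)}$ by Lemma~\ref{lem:crs:power}, using that central twists do not change the canonical reduction system. Then $\extremal$ is the same for $\Phi$ and $\Phi_f$. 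If $\Phi_f$ were reducible, a reducing multicurve $M$ would be disjoint from, or build upon, this common family, and $f$, which commutes with each $\Phi(s_i)$ and so preserves each $\crs{\Phi(s_i)}$ by Lemma~\ref{lem:crs:commdisj}, would preserve $\extremal$. One then argues as in Lemma~\ref{lem:mainthm:extremalfilling}, using that $\extremal$ is filling for $\Phi$: a $\Phi_f$-invariant multicurve would have to be compatible with $\extremal$ in a way that forces $f$-invariance, giving a reducing system for $\Phi$. Making this compatibility precise is the step I expect to require the most care.
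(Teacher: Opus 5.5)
Your non-cyclicity argument is fine; the alternative via Lemma~\ref{lem:alpha:transvectioncyclic} applied to $(\Phi_f)_{f^{-1}}=\Phi$ is exactly what the paper does. The irreducibility part, however, has a genuine gap: neither of your two routes is completed.

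Your first route rests on a false claim. The intersection of all maximal $H$-invariant multicurves need not be nonempty when $H\le\braid{m}$ is reducible and infinite. Take $H=\langle a_2^2, T_{\partial\disk{5}}\rangle\le\braid{5}$. It is infinite and reducible. Since $T_{\partial\disk{5}}$ acts trivially on curves, that intersection equals $\crs{a_2^2}$, which is empty. Separately, the set of maximal invariant multicurves is in general infinite; this is harmless for the normalizer argument but shows the claim was not checked. Rescuing this route would require Ivanov's theorem for a finite-index pure normal subgroup of $H$, plus a proof that $H$ is infinite modulo $Z(\braid{m})$. You supply neither.

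Your second route has the right ingredients, and they are exactly what the paper uses: $\crs{\Phi_f(s_i)}=\crs{\Phi(s_i)}$, and the fact from Lemma~\ref{lem:mainthm:extremalfilling} that $\extremal$ is filling. But you then try to show that a $\Phi_f$-invariant multicurve is also $f$-invariant. You leave that step as an unspecified ``compatibility'', and it is not needed.

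The missing idea is that $f$ plays no role. Suppose $M$ were a reducing system for $\Phi_f$ and $\delta\in M$. Since $\extremal\subseteq\bigcup_i\crs{\Phi(s_i)}$ is filling, $\delta$ intersects some curve of $\crs{\Phi(s_i)}=\crs{\Phi_f(s_i)}$. By Lemma~\ref{lem:crs:infdisj}, the $\Phi_f(s_i)$-orbit of $\delta$ is then infinite. This contradicts the fact that $\Phi_f(s_i)$ preserves the finite set $M$. So $\Phi_f$ has no reducing system at all, and the argument closes immediately once the equality of canonical reduction systems is in hand.
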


\begin{proof}
    If $\Phi_f$ were cyclic, then Lemma~\ref{lem:alpha:transvectioncyclic} would imply that $(\Phi_f)_{f^{-1}} = \Phi$ would be cyclic, a contradiction.

    Suppose now by way of contradiction that $\Phi_f$ has a reducing system $M$.  The element $f$ is periodic by Lemma~\ref{lem:mainthm:centralizerperiodic}. Note that since canonical reduction systems are preserved under powers by Lemma~\ref{lem:crs:power}, we may choose some $r \in \ZZ_{> 0}$ with $f^r \in Z(\braid{m}). $  By the definition of transvection we have
    \[
      \Phi_f(s_i^r)= \Phi(s_i)^rf^r
    \]
    for all $s_i \in \extgen{n}$, so we conclude
    \[
      \crs{\Phi(s_i)} = \crs{\Phi_f(s_i)}
    \]
    for all $s_i \in \extgen{n}$. Now, let $\delta\in M$. By Lemma~\ref{lem:mainthm:extremalfilling} there is an $s_i\in \extgen{n}$ such that $\delta$ intersects some curve $\gamma\in \crs{\Phi(s_i)}$. Therefore Lemma~\ref{lem:crs:infdisj} implies that $\delta$ has an infinite $\Phi_f(s_i)$--orbit, but this contradicts that $M$ is a reducing system of $\Phi_f$.
  \end{proof}  Let $M\subset \disk{m}$ be a non-nested multicurve. Let $T_M$ denote the subgroup of $\braid{m}$ generated by Dehn twists along curves in $M$.  A multitwist $t \in \braid{m}$ is \defn{supported} on $M$ if $t \in T_M$.  Denote $S_{M}\subseteq \disk{m}$  the union of punctured spheres interior to the sub-disks bounded by curves of $M$. We say that two curves $\gamma, \gamma' \subset \disk{m}$ are \defn{isotopic away from $M$} if there are representatives $c$ of $\gamma$ and $c'$ of $\gamma'$ such that
  \[
    c\cap (\disk{m}\setminus S_{M})=c'\cap (\disk{m}\setminus S_{M}),
  \]
and $c\cap S_M$ is isotopic to $c'\cap S_M$ in $S_M$.  We record the following elementary fact about curves isotopic away from $M$.  

\begin{lemma}\label{lem:mainthm:gluingaction}
Let $M\subset \disk{m}$ be a non-nested multicurve. Let $\gamma, \gamma' \subset \disk{m}$ be two curves isotopic away from $M$.  There is a multitwist $t\in \braid{m}$  supported on $M$ such that $\gamma'=t(\gamma)$.
\end{lemma}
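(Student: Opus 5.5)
The plan is to build $t$ one curve of $M$ at a time, working inside the annular neighborhoods of the curves of $M$. Write $M=\{\mu_1,\ldots,\mu_k\}$, and let $D_j$ be the closed sub-disk bounded by $\mu_j$, so that $S_M$ is the union of the interiors of the $D_j$. Since $M$ is non-nested, the $D_j$ are pairwise disjoint. Fix representatives $c$ of $\gamma$ and $c'$ of $\gamma'$ as in the definition. Then $c$ and $c'$ coincide on $\disk{m}\setminus S_M$, and in particular they cross each $\mu_j$ at exactly the same points.

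First I would reduce to the case of a single disk. For each $j$, the arcs of $c\cap D_j$ and $c'\cap D_j$ are isotopic in $S_M\cap D_j$. I will read this as an isotopy that may move endpoints along $\mu_j$; otherwise $c=c'$ up to isotopy rel boundary and $t=1$ works. Because the $D_j$ are disjoint, these isotopies can be performed one disk at a time. Moreover, a multitwist supported on $M$ is a product of commuting twists $T_{\mu_j}^{e_j}$, each supported in a collar of $\mu_j$. It therefore suffices to show that for each $j$ there is an exponent $e_j$ such that $T_{\mu_j}^{e_j}(c)$ agrees with $c'$ near $D_j$ up to isotopy, while nothing changes elsewhere.

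Next comes the single-disk step. Choose a thin collar $A_j\cong S^1\times[0,1]$ of $\mu_j$ inside $D_j$, so that $D_j\setminus A_j$ contains all the punctures interior to $\mu_j$. Any isotopy of the arc system $c\cap D_j$ inside the punctured disk $D_j$ that lets the endpoints slide along $\mu_j=\partial D_j$ can be decomposed as follows. First, a boundary-sliding motion that rotates all the endpoints around $\mu_j$ simultaneously. This rotation must return the endpoint set to itself, because the endpoints are fixed at the end. Second, an isotopy rel boundary. The net rotation is an integer number $e_j$ of full turns, up to a fractional rotation that permutes the endpoints cyclically. A fractional rotation cannot occur, since $c$ and $c'$ agree outside $S_M$ and so each individual endpoint must return to itself. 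The full-turn part is realized exactly by the twist $T_{\mu_j}^{\pm e_j}$ supported in $A_j$: composing the boundary rotation with its inverse in the collar gives a rel-boundary isotopy from $c\cap D_j$ to $T_{\mu_j}^{e_j}(c)\cap D_j$. The remaining rel-boundary isotopy fixes $\partial D_j$, so $T_{\mu_j}^{e_j}(c)$ and $c'$ are isotopic in $\disk{m}$.

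Finally, I take $t=\prod_j T_{\mu_j}^{e_j}\in T_M$ and conclude $t(\gamma)=\gamma'$. The main obstacle is the justification of the single-disk step: that the endpoint-sliding isotopy in a punctured disk differs from a rel-boundary isotopy exactly by a power of the boundary twist. I would argue this via the fibration $\Homeo(D_j)\to\Homeo(\partial D_j)$, or equivalently via the capping sequence $1\to\langle T_{\partial}\rangle\to\Mod(\disk{k})\to\Mod(\sphere{k})\to1$. Forgetting the boundary parametrization kills precisely $\langle T_{\mu_j}\rangle$.
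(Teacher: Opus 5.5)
Your mechanism is the same as the paper's. The paper takes a homeomorphism $h$ of $S_M$ isotopic to the identity with $h(c\cap S_M)=c'\cap S_M$, extends it by the identity to $\widehat h$ on $\disk{m}$, and identifies $[\widehat h]$ as a multitwist on $M$. Your splitting of the sliding isotopy into a collar rotation and a rel-boundary isotopy, using the capping sequence to identify the kernel of forgetting the boundary as $\langle T_{\mu_j}\rangle$, is that identification made explicit.

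The gap is the sentence that rules out a fractional rotation. Agreement of $c$ and $c'$ on $\disk{m}\setminus S_M$ only tells you that the two arc systems in $D_j$ have the same endpoint \emph{set} $c\cap\mu_j=c'\cap\mu_j$. It does not control which endpoint the isotopy in $S_M$ sends a given endpoint to, so ``each individual endpoint must return to itself'' does not follow. It genuinely fails under your reading of the hypothesis:
\begin{itemize}
\item In $\disk{4}$ with punctures $p_1,\dots,p_4$, take $M=\{\mu\}$ with $\Int(\mu)=\{p_1,p_2\}$. Let $c$ bound a disk around $p_2,p_3$ and meet $\mu$ in two antipodal points $x,y$.
\item Rotate a collar of $\mu$ inside its disk $D$, by angle $0$ at the inner edge up to $\pi$ at $\mu$. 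This is an isotopy of $S_M$ (it fixes $p_1,p_2$). It carries the arc $c\cap D$ to an arc with the same endpoints, but with $x$ and $y$ exchanged.
\item Close this arc up with the unchanged arc $c\setminus\mathrm{int}(D)$. The result is a simple closed curve $c'$ that agrees with $c$ off $S_M$, and $c'\cap S_M$ is isotopic to $c\cap S_M$ in $S_M$.
\item Yet $c'$ encloses $p_1,p_3$; in fact it is the image of $c$ under a half-twist about $\mu$ or its inverse.
\end{itemize}
Since multitwists fix every puncture, no $t\in T_M$ takes $[c]$ to $[c']$.

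So the absence of a fractional rotation cannot be derived; it has to be part of the hypothesis. ``Isotopic in $S_M$'' must mean an isotopy that carries each end of an arc of $c\cap S_M$ to the end of $c'\cap S_M$ at the same point of $M$. This is exactly what the paper's proof uses without comment when it says $h$ can be chosen to extend by the identity over $\disk{m}\setminus S_M$. In the example above no such $h$ exists, since any $h$ isotopic to the identity must send the $x$-end of $c\cap D$ to the $y$-end of $c'\cap D$.

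With the hypothesis strengthened this way, the endpoint correspondence is the identity by assumption. The net boundary motion is then an integral number of turns $e_j$, and the rest of your argument goes through as written.
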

\begin{proof}
    Let $c, c'$ be representatives of $\gamma, \gamma'$ such that $c\cap (\disk{m}\setminus S_{M})=c'\cap (\disk{m}\setminus S_{M})$ and $c\cap S_M$ is isotopic to $c'\cap S_{M}$ in $S_M$. Let $h:S_M\to S_M$ be a homeomorphism isotopic to the identity with $h(c\cap S_M)=c'\cap S_M$. Since $c\cap (\disk{m}\setminus S_{M})=c'\cap (\disk{m}\setminus S_{M})$, we may choose an $h$ that extends to a homeomorphism $\widehat{h}:\disk{m}\to \disk{m}$ acting as the identity on $\disk{m}\setminus S_{M}$. As a consequence, $\widehat{h}(c)=c'$.  Note that the mapping class $[\widehat{h}]$ fixes every curve disjoint from $M$. In particular, $t=[\widehat{h}]\in \braid{m}$ is multitwist supported on $M$ and $\gamma'=t(\gamma)$ as a consequence of Farb--Margalit~\cite[Corollary 13.3]{FM}.
\end{proof}

We now prove the following.

\begin{lemma}\label{lem:mainthm:externallytrivial}
Let $n \geq 5$ and $m \geq 3$.  Let $\Phi:\braid{n} \rightarrow \braid{m}$ be an irreducible and non-cyclic homomorphism.  Then there is an $f \in \cent{\Phi(\braid{n})}{\braid{m}}$ such that the transvection $\Phi_{f^{-1}}$ is externally central.
\end{lemma}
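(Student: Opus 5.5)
Since $\Phi$ is irreducible and non-cyclic, Lemma~\ref{lem:external:anosov} makes $\Phi$ externally periodic, Lemma~\ref{lem:alpha:nonwelltypedirred} makes it minimally typed, and Proposition~\ref{prop:curves:valid} gives $\Delta(\Phi)=\emptyset$. By Lemma~\ref{lem:curves:nonesting} we then have $\extremalarg{s_i}=\crs{\Phi(s_i)}^{\outmulti}$. The plan is to produce $f$ in $\cent{\Phi(\braid{n})}{\braid{m}}$ by gluing together local data, one piece for each $\Phi$-maximal curve. The target is an $f$ that acts like $\Phi(s_i)$ exterior to each $\extremalarg{s_i}$ and is as trivial as possible inside the maximal curves. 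Then $\Ext(\Phi_{f^{-1}}(s_i))=\Ext(\Phi(s_i))\Ext(f)^{-1}$ will be central.

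\textbf{Construct $f$ from the external parts.} For commuting $s_i,s_j$, Lemma~\ref{lem:alpha:exterioragreement} gives $\Ext_M(\Phi(s_i))=\Ext_M(\Phi(s_j))$ with $M=\extremalarg{s_i}\cup\extremalarg{s_j}$. The commuting graph is connected, so these external parts form a compatible family. I would use Lemma~\ref{lem:centralizer:sheaf} iteratively to glue the elements $\Phi(s_i)$ along the multicurves $\extremalarg{s_i}$: start with a commuting pair, then adjoin generators one at a time. At each step the large-neighborhoods property supplies a common commuting generator, which is what certifies compatibility.

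The first problem is that $\extremal$ is not a multicurve, since $\extremalarg{s_i}$ and $\extremalarg{s_{i+1}}$ intersect. The gluing therefore has to be carried out on the curves exterior to all of $\extremal$. Lemma~\ref{lem:mainthm:extremalfilling} says $\extremal$ fills, so this exterior region is too small to be useful. Instead, I would define $f$ through its action on curves. The candidate is the mapping class that agrees with $\Phi(s_i)$ on every curve exterior to $\extremalarg{s_i}$, and that fixes each $\gamma\in\extremalarg{s_i}$ up to where $\Phi(s_j)$ sends it, for $s_j$ commuting with $s_i$.

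\textbf{Well-definedness up to twists.} Two definitions of $f$ coming from different generators will give curves that agree outside a non-nested multicurve, namely the maximal curves, and are isotopic inside it. These curves are therefore isotopic away from $M$, and Lemma~\ref{lem:mainthm:gluingaction} corrects the discrepancy by a multitwist supported on $M$. I expect this to yield an $f$ that is well-defined modulo multitwists on $\extremal$-type curves, and to fix it by a normalization, for example requiring $f$ to act trivially on the disks $\diskinner{\gamma}$ for $\gamma\in\extremal$.

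\textbf{Commutation and conclusion.} Next, check that $f$ commutes with each $\Phi(s_j)$, using the symmetry provided by conjugation by $\Phi(a_1)$ (Lemma~\ref{lem:alpha:standardrootaction}). Given that, Lemma~\ref{lem:mainthm:centralizerperiodic} shows $f$ is periodic. By construction $\Ext(\Phi(s_i)f^{-1})$ is trivial up to the boundary twist, so $\Phi_{f^{-1}}$ is externally central. Lemma~\ref{lem:mainthm:irreducibletransvection} then keeps $\Phi_{f^{-1}}$ irreducible and non-cyclic, and since $\crs{}$ is unchanged, the maximal curves are unchanged as well.

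\textbf{Main obstacle.} The hardest step is the consistent gluing across the intersecting curves of $\extremalarg{s_i}$ and $\extremalarg{s_{i+1}}$, together with the proof that the resulting $f$ is centralizing. The multitwist ambiguity from Lemma~\ref{lem:mainthm:gluingaction} has to be shown to cancel. I would first try to choose the twist normalization to be $\Phi(a_1)$-equivariant and then verify commutation with $\Phi(s_i)$ directly on a filling set of curves, namely $\extremal$ together with its images.
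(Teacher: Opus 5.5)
There is a genuine gap: you never actually produce $f$, and the step you flag as the main obstacle is the whole content of the lemma and is left open. Your first instinct, gluing along a commuting pair via Lemma~\ref{lem:centralizer:sheaf}, is the right construction. You abandon it too early, because you do not need to glue all the $\Phi(s_i)$: a single commuting pair suffices.

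For $[s_i,s_j]=1$ put $M_i=\extremalarg{s_i}=\crs{\Phi(s_i)}^{\outmulti}$. Then $M_i\cup M_j$ is a non-nested multicurve (Lemma~\ref{lem:curves:nonesting}). Also $M_i\cap M_j=\emptyset$ by irreducibility and Lemma~\ref{lem:alpha:extremalcollision}. Finally, Lemma~\ref{lem:alpha:exterioragreement} is precisely the compatibility hypothesis of Lemma~\ref{lem:centralizer:sheaf}. That lemma therefore gives a unique $f_{i,j}\in\braid{m}$ with $\Ext_{M_i}(f_{i,j})=\Ext(\Phi(s_i))$ and $\Ext_{M_j}(f_{i,j})=\Ext(\Phi(s_j))$. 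Uniqueness yields $[f_{i,j},\Phi(s_i)]=[f_{i,j},\Phi(s_j)]=1$ for free.

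The real work is showing that $f_{i,j}$ does not depend on the pair, which is what gives commutation with all of $\Phi(\braid{n})$. Since $\extremal$ fills (Lemma~\ref{lem:mainthm:extremalfilling}), this reduces to proving $f_{i,j}(\gamma)=\Phi(s_h)(\gamma)$ for every $\gamma\in M_h$. That is immediate from Lemma~\ref{lem:alpha:exterioragreement} unless $h=i+1$ and $j=i+2$. By contrast, your definition of $f$ ``through its action on curves'' presupposes that a mapping class with the prescribed action exists. Invoking conjugation by $\Phi(a_1)$ also does not prove commutation.

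In the remaining case, the missing idea is that the twist ambiguity is pinned down from two sides. Take some $s_k$ commuting with $s_i$ and $s_{i+1}$, and symmetrically some $s_{k'}$ commuting with $s_{i+1}$ and $s_{i+2}$. These show that $f_{i,i+2}(\gamma)$ and $\Phi(s_{i+1})(\gamma)$ are isotopic away from $M_i$ and also away from $M_{i+2}$. By Lemma~\ref{lem:mainthm:gluingaction}, the discrepancy is then realized both by a multitwist on $M_i$ and by a multitwist on $M_{i+2}$. Since $M_i$ and $M_{i+2}$ are disjoint and share no curves, both multitwists must fix $\Phi(s_{i+1})(\gamma)$. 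Hence $f_{i,i+2}(\gamma)=\Phi(s_{i+1})(\gamma)$, and there is no residual ambiguity at all.

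Your proposed normalization is therefore unnecessary, and it is also not a free choice. Whenever $[s_i,s_j]=1$, every curve inside a disk bounded by a curve of $\extremalarg{s_j}$ is exterior to $\extremalarg{s_i}$. So your own defining condition already forces $f$ to act there as $\Phi(s_i)$ does. Demanding that $f$ be trivial on each $\diskinner{\gamma}$ is thus either redundant or incompatible with $\Ext_{M_i}(f)=\Ext(\Phi(s_i))$.
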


\begin{proof}
  For all $s_i \in \extgen{n}$ let $M_i = \crs{\Phi(s_i)}^{\outmulti}$.  As a consequence of Lemma~\ref{lem:curves:nonesting} we have $M_i = \extremalarg{s_i}$.  Therefore $M_i \cup M_j$ is a non-nested multicurve by Lemma~\ref{lem:curves:nonesting} for any $[s_i,s_j] = 1$.  By Lemma~\ref{lem:alpha:exterioragreement} we have
  \[
    \Ext_{M_i \cup M_j}(\Phi(s_i)) = \Ext_{M_i \cup M_j}(\Phi(s_j)).
  \]
  In particular, Lemma~\ref{lem:centralizer:sheaf} says that there is a unique element $f_{i,j} \in \braid{M_i \cap M_j}$ satisfying:
    \begin{itemize}
        \item $\Ext_{M_i}(f_{i,j}) = \Ext(\Phi(s_i))$; and
        \item $\Ext_{M_j}(f_{i,j}) = \Ext(\Phi(s_j))$.
        \end{itemize} Since $\Phi$ is irreducible, Lemma~\ref{lem:alpha:extremalcollision} implies that $M_i \cap M_j = \emptyset$, so in fact $f_{i,j} \in \braid{m}$.  Now, by construction the element $f_{i,j}$ behaves as $\Phi(s_j)$ interior to $M_i$ and vice versa.  Therefore we have $[f_{i,j}, \Phi(s_i)] = [f_{i,j}, \Phi(s_j)] = 1$.  Furthermore, $\Phi(s_i)f_{i,j}^{-1}(\gamma) = \gamma$ for all $\gamma \in M_i$.  In particular, to show that $f_{i,j}$ is the element $f$ as in the statement of the lemma, it is enough to show that $f_{i,j} = f_{k,\ell}$ for all $s_k, s_\ell \in \extgen{n}$ with $[s_k, s_\ell] = 1$.  Lemma~\ref{lem:mainthm:extremalfilling} says that $\extremal$, which is by definition
        \[M_1 \cup \ldots \cup M_n,\]
        is filling, so it suffices to show that $f_{i,j}$ and $f_{k,\ell}$ agree on all $\gamma \in \extremal$.  In particular, it is enough to prove that for all $s_h \in \extgen{n}$ and all $\gamma \in M_h$, we have
        \[
          f_{i,j}(\gamma) = \Phi(s_h)(\gamma).
        \]
        If $[s_h,s_i] = 1$ then this follows from Lemma~\ref{lem:curves:nonesting} which says that $M_h \cup M_i$ is a non-nested multicurve, and Lemma~\ref{lem:alpha:exterioragreement}.  The same argument holds if $[s_h, s_j]= 1$.  It therefore suffices to prove the result in the case that $[s_h,s_i] \neq 1$ and $[s_h, s_j] \neq 1$.  Since $n \geq 5$, this occurs precisely for $j = i+2$ and $h = i+1$.  
    
    Now, pick $\gamma \in M_h$.  Since $n \geq 5$ the graph $\Comm_{\braid{n}}(\extgen{n})$ has diameter 2.  In particular, there is some $s_k \in \extgen{n}$ with $[s_k,s_i] = [s_k,s_{i+1}] = 1$.  By Lemma~\ref{lem:alpha:exterioragreement} and the definition of $f_{i, i+2}$, we have that $\Phi(s_i), \Phi(s_k)$  and $f_{i. i+2}$ agree on the common exterior to $M_i$ and $M_k$. In particular, there are representatives $c$ of $f_{i,i+2}(\gamma)$ and $c'$ of $\Phi(s_k)(\gamma)=\Phi(s_{i+1})(\gamma)$ such that $c\cap (\disk{m}\setminus S_{M_i})=c'\cap (\disk{m}\setminus S_{M_i})$.  A similar line of reasoning yields that  $f_{i,i+2}(\gamma)$ and $\Phi(s_{i+1})(\gamma)$ are isotopic on $\disk{m}\setminus S_{M_{i+2}}$. Observe that $S_{M_{i}}\subseteq\disk{m}\setminus S_{M_{i+2}}$ so we may conclude $c\cap S_{M_{i}}$ is isotopic to $c'\cap S_{M_{i}}$ in $S_{M_i}$. That is, we proved that $f_{i, i+2}(\gamma)$ is isotopic to  $\Phi(s_{i+1})(\gamma)$ away from $M_i$. 
    
    By Lemma~\ref{lem:mainthm:gluingaction} we have that $f_{i, i+2}(\gamma)=t_i( \Phi(s_{i+1})(\gamma))$ for some multitwist $t_i\in T_{M_i}$. However, a symmetric argument yields that $f_{i, i+2}(\gamma)=t_{i+2}( \Phi(s_{i+1})(\gamma))$ for some multitwist $t_{i+2}\in T_{M_{i+2}}$. Since the curves in $M_i$ and $M_{i+2}$ are pairwise distinct by Lemma~\ref{lem:alpha:extremalcollision}, disjoint by Lemma~\ref{lem:crs:commdisj} and non-nested by Lemma~\ref{lem:curves:nonesting}, we conclude that $t_i=t_{i+2}=1$. Thus $f_{i, i+2}(\gamma)= \Phi(s_{i+1})(\gamma)$, as desired.
\end{proof}  We are now ready to conclude the proof of Theorem~\ref{mainthm:natleast5}.  Recall that this said that if $\Phi:\braid{n} \rightarrow \braid{m}$ is an irreducible and non-cyclic homomorphism with $n \geq 5$ and $m \geq 3$, then $\Phi$ is centrally equivalent to the identity. 

\begin{proof}[Proof of Theorem~\ref{mainthm:natleast5}]
    By Lemma~\ref{lem:mainthm:externallytrivial} there is an element $f \in \cent{\Phi(\braid{n})}{\braid{m}}$ such that $\Phi_f$ is externally central.  By Lemma~\ref{lem:mainthm:irreducibletransvection} the homomorphism $\Phi_f$ is still irreducible and non-cyclic.  By Theorem~\ref{thm:extcent} we have $m = n$.  The theorem then follows from Theorem~\ref{thm:case:m:leq:n}.
  \end{proof}  We can also reformulate Theorem~\ref{mainthm:natleast5} constructively.  Recall Dyer--Grossman's~\cite{DyerGrossman} description of $\Aut(\braid{n})$ as a split exact sequence
  \[
    1 \rightarrow \braid{n}/Z(\braid{n}) \rightarrow \Aut(\braid{n}) \rightarrow \Z/2\Z \rightarrow 1.
  \]

\setcounter{altthm}{1}

\begin{altthm}\label{mainthmalt:natleast5}
  Let $n \geq 5$ and $m \geq 3$.  Let $\Phi:\braid{n} \rightarrow \braid{m}$ be an irreducible and non-cyclic homomorphism.  Then $m = n$.  Furthermore, let $\nu:\braid{n} \rightarrow \ZZ$ denote the abelianization map.  Then there is some $k \in \ZZ$ and automorphism $\rho \in \Aut(\braid{n})$ such that, for all $f\in\braid{n}$,
  \[
    \Phi(f) = \rho(f)\cdot T_{\partial\disk{n}}^{k \nu(f)}.
  \]
\end{altthm}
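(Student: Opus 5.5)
The plan is to deduce Theorem~\ref{mainthmalt:natleast5} from Theorem~\ref{mainthm:natleast5} by unwinding what central equivalence means. Theorem~\ref{mainthm:natleast5} gives $m=n$ and some $\psi\in\Aut(\braid{n})$ with $\overline{\Phi}=\overline{\psi}$ as maps $\braid{n}\to\braid{n}/Z(\braid{n})$. Set $\rho=\psi$. Then for every $f\in\braid{n}$ the element $c(f)\coloneqq\rho(f)^{-1}\Phi(f)$ lies in $Z(\braid{n})=\langle T_{\partial\disk{n}}\rangle$. It remains to show that $c(f)=T_{\partial\disk{n}}^{k\nu(f)}$ for a fixed integer $k$.

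First I will check that $c$ is a homomorphism. Since $\Phi(f)=\rho(f)c(f)$ with $c(f)$ central, we get
\[
\Phi(fg)=\Phi(f)\Phi(g)=\rho(f)c(f)\rho(g)c(g)=\rho(fg)c(f)c(g).
\]
Hence $c(fg)=c(f)c(g)$, so $c\colon\braid{n}\to Z(\braid{n})\cong\Z$ is a homomorphism. It therefore factors through the abelianization $\nu\colon\braid{n}\to\Z$, where $\nu(s_i)=1$ for every standard generator.

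Next I write $c(s_1)=T_{\partial\disk{n}}^{k}$. Because all $s_i$ are conjugate and $c$ has abelian target, $c(s_i)=T_{\partial\disk{n}}^k$ for every $i$. This gives $c(f)=T_{\partial\disk{n}}^{k\nu(f)}$, which is the displayed formula.

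The only point needing care is that central equivalence in the paper is defined with $\psi$ applied to the identity, that is $\overline{\Phi}=\overline{\psi\circ\mathrm{id}}$. This is exactly the statement that $\Phi$ and $\rho$ agree modulo the center, so there is no real obstacle. All of the difficulty lies in Theorem~\ref{mainthm:natleast5}, which we take as given. One may optionally remark, using Dyer--Grossman, that $\rho$ is conjugation composed possibly with $\kappa$, but this is not needed.
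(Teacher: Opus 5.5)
Your proposal is correct and follows the paper's proof: both take $\rho$ from the central equivalence given by Theorem~\ref{mainthm:natleast5}, check that the central correction $f\mapsto\rho(f)^{-1}\Phi(f)$ is a homomorphism, and conclude because it factors through $\braid{n}^{\ab}\cong\ZZ$. Your verification $\Phi(fg)=\rho(fg)c(f)c(g)$ uses centrality in the same way as the paper's chain of equalities for $\zeta(fg)$.
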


\begin{proof}
  By Theorem~\ref{mainthm:natleast5} we have $m = n$ and $\Phi$ centrally equivalent to the identity map. Therefore, there is some $\rho \in \Aut(\braid{n})$ and a function $\zeta:\braid{n} \rightarrow Z(\braid{n})$ such that
  \[
    \Phi(f) = \rho(f) \cdot \zeta(f)
  \]
  for all $f \in \braid{n}$.  Since $\braid{n}^{\ab}\cong\Z$ and $Z(\braid{n})=\langle T_{\partial\disk{n}}\rangle$ is cyclic, it suffices to show that $\zeta$ is a homomorphism. And indeed, for all $f,g\in\braid{n}$ we have that
    \[
    \zeta(fg) = \Phi(f) \Phi(g) \rho(g)^{-1} \rho(f)^{-1} = \Phi(f) \zeta(g) \rho(f)^{-1} = \Phi(f)\rho(f)^{-1} \zeta(g) = \zeta(f) \zeta(g).\qedhere
    \]
\end{proof}

\section{Holomorphic maps}\label{section:holomorphic}

The purpose of this section is to prove Theorem~\ref{mainthm:holo} and Theorem~\ref{mainthm:hyp-to-hyp} along with several other holomorphic rigidity consequences of Theorem~\ref{mainthm:natleast5}.  Let $g,r\geq0$ and $2g+r\geq3$. Recall that $\calM_{g,r}$ refers to the moduli space of genus $g$ Riemann surfaces with $r$ marked points. The mapping class group $\Mod_{g,r}$ is the group $\Homeo^+(\Sigma)/\sim$, where $\sim$ denotes isotopy and $\Sigma$ is a topological surface of genus $g$ with $r$ punctures. The \defn{pure mapping class group} $\PMod_{g,r}$ is the subgroup of $\Mod_{g,r}$ that additionally fixes each puncture.

\p{Moduli space and orbifolds} The moduli space $\calM_{g,r}$ can be regarded as the orbifold quotient
\[
    \calM_{g,r} = \calT_{g,r} / \PMod_{g,r},
\]
where $\calT_{g,r}$ denotes Teichm\"{u}ller space, which is biholomorphic to a bounded domain in $\mathbb{C}^{3g-3+r}$. While $\Mod_{g,r}$ acts discretely on $\calT_{g,r}$, the action is not free,  hence  why we regard $\calM_{g,r}$ as an orbifold. When $\calM_{g,r}$ is equipped with its orbifold structure, we have $\pi_1^{\rm orb}(\calM_{g,r})\cong\PMod_{g,r}$. Similarly, we have $\pi_1^{\rm orb}(\calM_{g,r}/\mathfrak{S}_r)\cong\Mod_{g,r}$. The coarse moduli space of $\calM_{g,r}$ refers to the underlying space obtained by forgetting the orbifold structure, which is a quasi-projective variety. Note that $\calM_{g,r}$ admits a finite orbifold cover that is in fact a smooth quasi-projective variety.  A holomorphic map $X\to\calM_{g,r}$ is \defn{isotrivial} if it is constant when viewed as a map to the coarse moduli space. 

\p{Homotopic holomorphic maps} Imayoshi--Shiga proved a rigidity result for the monodromy of families of Riemann surfaces over a Riemann surface~\cite{IS88}. The second and fourth authors have recorded a generalization of their result based on combining a generalization due to Antonakoudis--Aramayona--Souto~\cite[Proposition 3.2]{AAS18} (allowing the base to be quasi-projective) and another independent generalization due to Chen and Salter~\cite[Theorem 2.5]{ChenSalter2026} (also allowing anti-holomorphic maps). 

\begin{theorem}[{\cite[Theorem 7.5]{HuxfordSchillewaert2025}}]\label{thm:moduli-rigidity}
    Let $g,r\geq0$ with $2g+r\geq3$, and let $X$ be a smooth quasi-projective variety. If $\Psi_1,\Psi_2\colon X\to\calM_{g,r}$ are non-isotrivial holomorphic or anti-holomorphic maps that are homotopic in the category of orbifolds, then $\Psi_1=\Psi_2$.
\end{theorem}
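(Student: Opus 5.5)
This statement is quoted from \cite[Theorem 7.5]{HuxfordSchillewaert2025}, so the plan is to reduce it to the two existing generalizations of Imayoshi--Shiga rigidity and combine them. The backbone is harmonic-map rigidity into Teichm\"uller space. Since $\calM_{g,r}$ has a finite orbifold cover that is a smooth quasi-projective variety, I would first pass to a finite-index torsion-free subgroup $\Gamma\le\PMod_{g,r}$. Then I would pull $X$ back along the finite cover determined by $\Psi_i^{-1}(\Gamma)$, which is the same subgroup for both maps because homotopic orbifold maps induce conjugate homomorphisms on $\pi_1^{\rm orb}$. This reduces to lifted maps $\widetilde\Psi_1,\widetilde\Psi_2\colon\widetilde X\to\calT_{g,r}/\Gamma$ between genuine manifolds. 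These lifts are homotopic, and their equality implies $\Psi_1=\Psi_2$ because the covering $\widetilde X\to X$ is surjective.

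The next step is to lift once more to the universal cover, obtaining $\rho$-equivariant maps $F_1,F_2\colon\widetilde X^{\rm univ}\to\calT_{g,r}$ with a common monodromy $\rho$, up to conjugation which I absorb by an element of $\Gamma$. The two cases require different inputs.
\begin{itemize}
\item If both maps are holomorphic, I would apply the argument of Antonakoudis--Aramayona--Souto \cite[Proposition 3.2]{AAS18}. One restricts to generic curves in $X$ (Lefschetz-type hyperplane sections) on which the induced family is non-isotrivial. On such a curve, the Imayoshi--Shiga theorem \cite{IS88} says that a non-isotrivial holomorphic family over a quasi-projective curve is determined by its monodromy. Since these curves cover $X$, equality on each of them gives $\Psi_1=\Psi_2$.
\item If the maps are of mixed type, or both anti-holomorphic, I would use Chen--Salter \cite[Theorem 2.5]{ChenSalter2026}. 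Composing with complex conjugation on $X$ reduces the both anti-holomorphic case to the holomorphic case. In the genuinely mixed case, Chen--Salter show that a holomorphic map and an anti-holomorphic map with the same non-trivial monodromy cannot exist unless the map is isotrivial; this is the ingredient that rules the case out.
\end{itemize}

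The main obstacle is matching the hypotheses precisely. AAS treat quasi-projective bases but only holomorphic maps, while Chen--Salter allow anti-holomorphic maps but under their own hypotheses on the base. I would check that the Chen--Salter argument, which rests on the Kobayashi/Teichm\"uller metric contraction together with uniqueness of equivariant pluriharmonic maps, extends to a quasi-projective $X$ via the same curve-restriction device. Concretely, I would restrict to curves through a general point of $X$ along which $\Psi_i$ remains non-isotrivial, apply the curve case, and conclude by density and analyticity.
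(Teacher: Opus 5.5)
Your proposal follows the paper's route. The paper gives no independent proof; it cites \cite[Theorem 7.5]{HuxfordSchillewaert2025}, which is obtained by combining the curve-restriction extension of Imayoshi--Shiga to quasi-projective bases \cite[Proposition 3.2]{AAS18} with the anti-holomorphic extension \cite[Theorem 2.5]{ChenSalter2026}, as you propose. One repair to your preliminary reduction: choose $\Gamma$ normal in $\PMod_{g,r}$ (e.g.\ a level-$\ell$ subgroup with $\ell\geq3$), since otherwise $(\Psi_1)_*^{-1}(\Gamma)=(\Psi_2)_*^{-1}(\Gamma)$ can fail, and absorb the conjugating element by letting it act on $\calT_{g,r}$, because it lies in $\PMod_{g,r}$ but not necessarily in $\Gamma$.
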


We note that two continuous maps $\Psi_1,\Psi_2\colon X\to\calM_{g,r}$ are homotopic in the category of orbifolds if and only if the induced homomorphisms $(\Psi_1)_*,(\Psi_2)_*\colon\pi_1(X)\to\PMod_{g,r}$ differ by at most an inner automorphism of $\PMod_{g,r}$, since $\calT_{g,r}$ is a classifying space for proper actions of $\PMod_{g,r}$~\cite{JW10}.

\p{Irreducibility} The following theorem of the first author and Souto has been shown in the special case where $X$ is a compact Riemann surface and $r=0$ by McMullen~\cite{McM00}. Daskalopoulos--Wentworth also reach a stronger conclusion~\cite[Theorem 5.7]{DW07}, but their results are also only stated explicitly in the special case where $X$ is a compact Riemann surface and $r=0$.  Recall that a homomorphism to $\Mod_{g,r}$ is irreducible if its image does not preserve a multicurve. Also, note that all smooth connected varieties are irreducible.

\begin{theorem}[{\cite[Theorem 1.2]{DPS24}}]\label{thm:hol-implies-irred}
    Let $g,r\geq0$ with $2g+r\geq3$, $X$ be an irreducible quasi-projective variety, and $\calM$ be a finite cover of $\calM_{g,r}/\mathfrak{S}_r$. If $\Psi\colon X\to\calM$ is a non-isotrivial holomorphic map, then $\Psi_*\colon\pi_1(X)\to\Mod_{g,r}$ is irreducible.
\end{theorem}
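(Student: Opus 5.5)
The plan is to argue by contradiction using the Nielsen--Thurston theory of the monodromy. Suppose $\Psi\colon X\to\calM$ is non-isotrivial but $\Psi_*\colon\pi_1(X)\to\Mod_{g,r}$ preserves a nonempty multicurve $M$. Since $\calM$ is a finite cover of $\calM_{g,r}/\mathfrak{S}_r$, we may first pass to a finite étale cover $X'\to X$ (still an irreducible quasi-projective variety) so that the monodromy lands in $\PMod_{g,r}$ and fixes each component of $M$ and each complementary subsurface, with orientations. Reducibility and isotriviality are unchanged by this cover, so it suffices to treat $X'$. Then $\Psi_*$ takes values in the stabilizer of $M$. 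Cutting along $M$ gives homomorphisms to the mapping class groups of the components $S_1,\dots,S_k$ of $S\setminus M$ (modulo the twist subgroup $T_M$).

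The key geometric input is the Teichmüller/Weil--Petersson boundary. Fix a flat (or nonpositively curved) metric viewpoint: the stratum of the augmented Teichmüller space where the curves of $M$ are pinched is $\prod_j \calT(S_j)$. I would construct an equivariant harmonic map (or use the pluriharmonicity of equivariant maps from a Kähler/quasi-projective source, as in Daskalopoulos--Wentworth) from the universal cover $\widetilde{X'}$ to the Weil--Petersson completion $\overline{\calT}_{g,r}$. Holomorphic $\Psi$ lifts to a holomorphic, hence pluriharmonic and energy-minimizing, equivariant map into $\calT_{g,r}$. Since the image group preserves $M$, the nearest-point projection onto the totally geodesic stratum $\overline{\calT}_M$ is equivariant and does not increase energy; because the WP metric strictly decreases length transverse to strata, one compares the energies to show a strict decrease unless the image already lies in the stratum, contradicting minimality of the holomorphic lift (which is in the interior, in $\calT_{g,r}$ itself). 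Equivalently, following McMullen's approach for compact curves, I would show that the lengths $\ell_\gamma\circ\Psi$ for $\gamma\in M$ are bounded, plurisubharmonic functions on $X'$ (via Wolpert's plurisubharmonicity of length functions along holomorphic disks), hence constant on the quasi-projective $X'$ after a Liouville-type argument using the boundedness at infinity. Constancy of $\ell_\gamma$ with strict plurisubharmonicity (Wolpert) forces the differential of $\Psi$ to vanish, so $\Psi$ is constant on the coarse space.

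Concretely, the steps in order: (1) reduce to pure, curve-by-curve-stabilizing monodromy via a finite cover; (2) show each $\log \ell_\gamma\circ\widetilde\Psi$ descends to $X'$ and is plurisubharmonic, in fact strictly so in directions where $\Psi$ is nonconstant; (3) control behavior near the boundary of a compactification $\overline{X'}$ to get upper boundedness and apply the maximum principle on the normalization of a compactified curve through generic points; (4) conclude $d\Psi=0$ generically, i.e.\ isotriviality. The main obstacle is step (3): on a non-compact quasi-projective base, $\ell_\gamma$ need not be bounded a priori, and the monodromy around punctures at infinity may act by twists along $M$. I would handle this by restricting to generic curves $C\subset X'$ (Lefschetz-type, so $\pi_1(C)\to\pi_1(X')$ is surjective and $\Psi|_C$ stays non-isotrivial), using that local monodromy around cusps is quasi-unipotent and that by Schwarz--Pick for the Teichmüller metric $\ell_\gamma$ grows at most logarithmically near a puncture, which is too slow to obstruct the maximum principle for a subharmonic function on a punctured compact curve.
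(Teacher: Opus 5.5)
Your step (1) is essentially the paper's entire proof. The paper passes to a finite connected cover $\tilde X\to X$ on which $\Psi$ lifts to $\calM_{g,r}$. It uses Grothendieck to see that $\tilde X$ is quasi-projective, notes that the lift is still non-isotrivial, and applies \cite[Theorem 1.2]{DPS24} directly. Irreducibility of $\Psi_*$ restricted to the finite-index subgroup $\pi_1(\tilde X)$ then gives irreducibility of $\Psi_*$. You could have stopped after step (1) and cited that theorem. Instead you try to reprove it with McMullen's length-function argument. Passing from a compact base to a quasi-projective base is exactly where your argument fails as written.

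\textbf{The gap is in step (3).} Schwarz--Pick combined with Wolpert's inequality $\ell_\gamma(Y)\le e^{2d_{\calT}(X,Y)}\ell_\gamma(X)$ gives $\ell_\gamma\circ\Psi\le C\log(1/|z|)$ near a puncture, since the cusp distance is $\tfrac12\log\log(1/|z|)+O(1)$. That is all this argument can give. But logarithmic growth is precisely the borderline that \emph{does} obstruct the maximum principle. A subharmonic function on a punctured disc extends across the puncture only if it is $o(\log(1/|z|))$, for instance bounded above. For example, $\log^{+}|z|$ on $\mathbb{C}=\mathbb{P}^1\setminus\{\infty\}$ and $|\log|z||$ on $\mathbb{C}^*$ are non-constant subharmonic functions on punctured compact curves with logarithmic growth. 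So as stated, step (3) does not force $\ell_\gamma\circ\Psi|_C$ to be constant.

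\textbf{What is needed.} You need genuine boundedness of $\ell_\gamma$ at each puncture of $C$, and this uses the fact that $\gamma$ is fixed by the local monodromy, not merely quasi-unipotence.
\begin{enumerate}
\item The local monodromy $T$ has a power that is either trivial or a positive multitwist about a multicurve $N$. After a finite base change, the map extends over the puncture into the Deligne--Mumford compactification, with nodal limit pinching exactly $N$ (Imayoshi's theorem on holomorphic families over a punctured disc, equivalently stable reduction).
\item $T$ fixes $\gamma\in M$. A positive multitwist $\prod T_{c_i}^{k_i}$ satisfies $\iota(T(\gamma),\gamma)\ge\sum k_i\,\iota(c_i,\gamma)^2$, so $\gamma$ is disjoint from $N$.
\item Hence $\ell_\gamma$ converges to the finite (possibly zero) length of $\gamma$ on the nodal limit, so it is bounded near the puncture.
\item Then $\ell_\gamma\circ\Psi|_C$ extends to a subharmonic function on the compact curve $\bar C$, so it is constant. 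Wolpert's strict plurisubharmonicity of $\ell_\gamma$ then forces $d\Psi|_C=0$.
\end{enumerate}

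Your alternative harmonic-map sketch does not get around this. On a non-compact base the equivariant energy is infinite, so ``energy-minimizing'' has no content without the same kind of control at infinity.
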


\begin{proof}
    The first author and Souto prove this result when $\calM=\calM_{g,r}$, however from their result we can deduce the general case. Let $\tilde{X}$ be a finite connected cover of $X$ such that $\Psi$ lifts to a holomorphic map $\tilde{\Psi}\colon\tilde{X}\to\calM_{g,r}$. Note that $\tilde{\Psi}$ is not isotrivial since $\Psi$ is not isotrivial. By a result of Grothendieck, $\tilde{X}$ is also a quasi-projective variety~\cite[Expos{\'e} XII, Th{\'e}or{\`e}me 5.1]{Gro63}. By~\cite[Theorem 1.2]{DPS24} we have $\tilde{\Psi}_*\colon\pi_1(\tilde{X})\to\PMod_{g,r}$ is irreducible, i.e.\ the restriction of $\Psi_*$ to the subgroup $\pi_1(\tilde{X})$ of $\pi_1(X)$ is irreducible. Hence $\Psi_*$ is also irreducible.
\end{proof}

We can now show that holomorphic maps that induce cyclic homomorphisms on the level of orbifold fundamental groups are in fact isotrivial.

\begin{theorem}\label{thm:hol-implies-noncyclic}
    Let $n\geq3$, $g,r\geq0$ with $2g+r\geq3$, and $\calM$ a finite cover of $\calM_{g,r}/\mathfrak{S}_r$. If $\Psi\colon\conf{n}\to\calM$ is a holomorphic map and $\Psi_*\colon\braid{n}\to\Mod_{g,r}$ has cyclic image, then $\Psi$ is isotrivial.
\end{theorem}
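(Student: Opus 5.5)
We argue by contraposition using Theorem~\ref{thm:hol-implies-irred}. Suppose $\Psi$ is not isotrivial. Since $\conf{n}$ is a smooth connected (hence irreducible) quasi-projective variety, Theorem~\ref{thm:hol-implies-irred} tells us that $\Psi_*\colon\braid{n}\to\Mod_{g,r}$ is irreducible. We then show that a cyclic image $\langle h\rangle\leq\Mod_{g,r}$ cannot be irreducible unless $h$ is periodic or pseudo-Anosov, and rule out both possibilities.

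\emph{Reducible and finite-order generators.} Let $h$ generate the image. If $h$ is aperiodic reducible, then $\crs{h}$ is a nonempty $\langle h\rangle$-invariant multicurve, contradicting irreducibility. So $h$ is periodic or pseudo-Anosov.

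\emph{Periodic generator.} If $h$ is periodic, then $\Psi_*(\braid{n})$ is finite. Pass to the finite-index subgroup $\ker\Psi_*$. It corresponds to a finite connected cover $\tilde X\to\conf{n}$, which is quasi-projective by Grothendieck. The composite $\tilde X\to\calM$ lifts further through the finite cover $\calM_{g,r}\to\calM$ after passing to another finite cover. This gives a non-isotrivial holomorphic map $\tilde X\to\calM_{g,r}$ with trivial monodromy. Theorem~\ref{thm:hol-implies-irred} would then say that the trivial group acts irreducibly on curves. That is false whenever curves exist ($3g-3+r>0$); in the finitely many sporadic cases with $3g-3+r=0$, $\calM_{g,r}$ is a point and isotriviality is automatic. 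This contradiction handles the periodic case.

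\emph{Pseudo-Anosov generator.} This is the main obstacle. Here we use the specific structure of $\conf{n}$. The center $Z(\braid{n})$ is generated by the loop $\lambda\mapsto\lambda z$ of the $\mathbb{C}^*$-action, and since the image is abelian the monodromy factors through $\braid{n}^{\ab}\cong\Z$. The idea is to restrict $\Psi$ to a subvariety on which the monodromy becomes trivial or periodic, while $\Psi$ stays non-isotrivial. The plan is to consider the commutator subgroup $[\braid{n},\braid{n}]$, which corresponds to the Milnor fiber $F=\{\mathrm{disc}=1\}\subset\conf{n}$. For $n\geq3$ this is a connected smooth affine variety, and its fundamental group surjects onto $[\braid{n},\braid{n}]$, so $\Psi|_F$ has trivial monodromy. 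Applying Theorem~\ref{thm:hol-implies-irred} to $\Psi|_F$ shows it is isotrivial, i.e.\ constant to the coarse space (after the finite-cover lifting above).

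Since $\conf{n}=\mathbb{C}^*\cdot F$ (every configuration can be rescaled to have discriminant $1$), $\Psi$ then factors through the quotient by the $\mathbb{C}^*$-action, modulo the finite $\mu$-stabilizer. Its image therefore lies in the image of a holomorphic map from $\mathbb{C}^*$, or a finite quotient thereof, via $\lambda\mapsto\Psi(\lambda z_0)$. It remains to show that a holomorphic map $\mathbb{C}^*\to\calM_{g,r}$ is isotrivial. Via a finite cover $\mathbb{C}^*\to\mathbb{C}^*$, this reduces to a map to $\calM_{g,r}$ from a Riemann surface whose fundamental group is abelian. By Theorem~\ref{thm:hol-implies-irred} such a map is irreducible if non-isotrivial. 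By Imayoshi--Shiga-type rigidity (Theorem~\ref{thm:moduli-rigidity}), together with the fact that $\mathbb{C}^*$ is not hyperbolic and Teichmüller space is Kobayashi hyperbolic, the lift of $\mathbb{C}\to\calT_{g,r}$ is constant. So the map is constant, $\Psi$ is isotrivial, and we reach a contradiction.
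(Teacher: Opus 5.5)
Your proof is correct in substance, but it takes a genuinely different route from the paper. The paper never has to deal with a pseudo-Anosov generator. It cites \cite[Proposition 7.2]{HuxfordSchillewaert2025}, which says each $\Psi_*(s_i)$ is a root of a possibly empty multitwist. Since the image is cyclic, it is generated by such an element, so the image is either reducible (contradicting Theorem~\ref{thm:hol-implies-irred}) or finite. In the finite case a lift to a finite cover is homotopically trivial, hence constant by \cite[Lemma 3.1]{AAS18}.

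You instead use the global geometry of $\conf{n}$: the discriminant fibration, whose fiber $F$ has $\pi_1(F)\cong[\braid{n},\braid{n}]$, together with the $\mathbb{C}^*$-action and Liouville's theorem for the bounded domain $\calT_{g,r}$. This argument never uses that the generator is pseudo-Anosov. Once tidied, it proves the theorem directly for any abelian image, so your three-way case split and the periodic-case detour become unnecessary. The trade-off is this: the paper's proof is two lines given the cited proposition. Yours avoids that input entirely, but it is tailored to $\conf{n}$, relying on the weighted-homogeneous discriminant, the $\mathbb{C}^*$-action, and $\braid{n}^{\ab}$ being realized by $\mathrm{disc}_*$.

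One step needs reordering. Constancy of $\Psi$ on $F$ does not by itself show that $\Psi$ ``factors through the quotient by the $\mathbb{C}^*$-action''. That is a statement about orbits, not about the other fibers $\lambda F$. Your reduction to the single orbit map $\lambda\mapsto\Psi(\lambda z_0)$ also tacitly assumes constancy on every fiber of $\mathrm{disc}$.

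The clean version runs as follows. For every $z\in\conf{n}$, the map $t\mapsto\Psi(e^t z)$ lifts to a holomorphic map $\mathbb{C}\to\calT_{g,r}$, which is constant by Liouville. Hence $\Psi$ is constant, on the coarse space, along each $\mathbb{C}^*$-orbit. Every orbit meets $F$, because $\mathrm{disc}(\lambda z)=\lambda^{n(n-1)}\mathrm{disc}(z)$, and $\Psi$ is constant on $F$. So $\Psi$ is isotrivial. Alternatively, you can apply the trivial-monodromy argument to every fiber $\{\mathrm{disc}=c\}$. The appeals to Theorems~\ref{thm:hol-implies-irred} and~\ref{thm:moduli-rigidity} in your final paragraph play no role.
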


\begin{proof}
    By~\cite[Proposition 7.2]{HuxfordSchillewaert2025} for each $i=1,\ldots,n-1$ we have that $\Psi_*(s_i)$ is a root of a possibly empty multitwist in $\Mod_{g,r}$, i.e.\ it is a root of a non-identity multitwist, or it is finite order. Since $\braid{n}=\langle s_1,\ldots,s_{n-1}\rangle$, this implies that $\Psi_*(\braid{n})$ is generated by a root of a possibly empty multitwist.

    If $\Psi_*(\braid{n})$ is generated by a root of a non-identity multitwist, then $\Psi_*$ is reducible, so by Theorem~\ref{thm:hol-implies-irred} $\Psi$ is isotrivial. Suppose then that $\Psi_*(\braid{n})$ is finite cyclic. Then there is a finite connected cover $X$ of $\conf{n}$ so that $\Psi$ lifts to a homotopically trivial map $\tilde{\Psi}\colon X\to\calM_{g,r}$. Note that $X$, being a finite cover of a smooth quasi-projective variety, is itself quasi-projective by a result of Grauert--Remmert~\cite{GR58}. By~\cite[Lemma 3.1]{AAS18}, $\tilde{\Psi}$ is constant, and hence $\Psi$ is isotrivial.
\end{proof}

\p{Chen--Salter's affine twisting} We now explain the distinctions between our notion of affine equivalence and the notion of affine twisting in Chen--Salter~\cite{ChenSalter2026}.  Given holomorphic maps $f\colon\conf{n}\to\conf{m}$ and $A\colon\conf{n}\to\Aff$, Chen--Salter~\cite{ChenSalter2026} define the \defn{affine twist} $f^A\colon\conf{n}\to\conf{m}$ of $f$ by $A$ to be the holomorphic map defined by
\[
  f^A(S) = A(S)\cdot f(S), \quad \text{for all } S\in\conf{n}.
\]
The operation of affine twisting defines an equivalence relation on the set of holomorphic maps $\conf{n}\to\conf{m}$ that is strictly finer than affine equivalence. However, our proof of Theorem~\ref{mainthm:holo} will show, when $n\geq5$ and $m\geq3$, that two holomorphic maps $\conf{n}\to\conf{m}$ that are not affine equivalent to a constant map, are affine equivalent to one another if and only if they are related by an affine twist.

\p{Central equivalence implies affine equivalence} The following result has been shown by the second and fourth authors. However, in this paper we only need to apply it in the case of $m=n$ and where $\Psi_1$ is the identity map, where it was established by Chen--Salter~\cite[Section 3.1]{ChenSalter2026}.

\begin{theorem}[{\cite[Proposition 7.6]{HuxfordSchillewaert2025}}]\label{thm:conf-rigidity}
    Let $n,m\geq3$, and $\Psi_1,\Psi_2\colon\conf{n}\to\conf{m}$ be holomorphic maps that are not affine equivalent to a constant map. If the induced homomorphisms $(\Psi_1)_*,(\Psi_2)_*\colon\braid{n}\to\braid{m}$ are centrally equivalent, then $\Psi_1$ is an affine twist of $\Psi_2$.
\end{theorem}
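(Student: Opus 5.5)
The plan is to pass to the quotient by $\Aff$, where the target becomes a moduli space and Theorem~\ref{thm:moduli-rigidity} applies, and then lift the resulting equality back to $\conf{m}$ as an affine twist.

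\textbf{Step 1: quotient by $\Aff$.} Let $q\colon\conf{m}\to\calM\coloneqq\conf{m}/\Aff$ be the quotient. Sending a configuration to the sphere $\mathbb{C}\cup\{\infty\}$ marked by the $m$ points and $\infty$ identifies $\calM$, as an orbifold, with $\calM_{0,m+1}/\mathfrak{S}_m$. Here $\mathfrak{S}_m$ permutes only the first $m$ points, so $\infty$ is the distinguished puncture. This is a finite cover of $\calM_{0,m+1}/\mathfrak{S}_{m+1}$, and its orbifold fundamental group is $\Mod(\sphere{m})\cong\braid{m}/Z(\braid{m})$. Under this identification, $(q\circ\Psi_i)_*=\overline{(\Psi_i)_*}$. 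By hypothesis each $\Psi_i$ is not affine equivalent to a constant map, so each $q\circ\Psi_i$ is non-isotrivial.

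\textbf{Step 2: rigidity on the quotient.} Central equivalence gives $\psi\in\Aut(\braid{m})$ with $\overline{(\Psi_1)_*}=\overline{\psi\circ(\Psi_2)_*}$. By Dyer--Grossman, $\psi$ is either inner or inner composed with $\kappa$.
\begin{itemize}
\item If $\psi$ is inner, the maps $q\circ\Psi_1$ and $q\circ\Psi_2$ induce homomorphisms differing by conjugation, so they are homotopic as orbifold maps.
\item If $\psi$ involves $\kappa$, note that $\kappa$ is induced by complex conjugation $c$ on $\conf{m}$. In that case $q\circ\Psi_1$ is homotopic to the anti-holomorphic map $c\circ q\circ\Psi_2$.
\end{itemize}
To apply Theorem~\ref{thm:moduli-rigidity}, whose target is $\calM_{0,m+1}$, pass to the finite cover $X$ of $\conf{n}$ corresponding to the preimages of $\PMod_{0,m+1}$. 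By Grauert--Remmert, $X$ is a smooth quasi-projective variety, and both maps lift to $X\to\calM_{0,m+1}$ with conjugate induced homomorphisms. We may need to adjust one lift by a deck transformation so that the homotopy lifts, and this choice has to be checked. Rigidity then gives equality of the lifts, hence equality downstairs.

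In the $\kappa$ case we would get a map $q\circ\Psi_1$ that is both holomorphic and anti-holomorphic, hence constant. This contradicts non-isotriviality, so that case cannot occur. Therefore $q\circ\Psi_1=q\circ\Psi_2$.

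\textbf{Step 3: lifting to an affine twist.} For each $S$ there is some $A(S)\in\Aff$ with $\Psi_1(S)=A(S)\cdot\Psi_2(S)$. The stabilizer in $\Aff$ of a configuration of $m\geq 2$ points is finite; it is nontrivial only for rotationally symmetric configurations, which form a proper analytic subset $B\subset\conf{m}$.

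Off $\Psi_2^{-1}(B)$, the action is free and $q$ is locally a principal $\Aff$-bundle, so $A$ is uniquely determined and holomorphic there. Concretely, one can solve for $(a,b)$ from power sums: using the centroid together with the second central moment, or the first nonvanishing central moment, gives explicit meromorphic formulas. Across $\Psi_2^{-1}(B)$, $A$ stays locally bounded, because $a$ is controlled by ratios of configuration diameters and $b$ by centroids, both continuous in $\Psi_1,\Psi_2$. Riemann extension then makes $A$ holomorphic, and $a\neq0$ by the symmetric bound applied to $\Psi_2=A^{-1}\Psi_1$.

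\textbf{Main obstacle.} I expect the hardest point to be the degenerate case where $\Psi_2(\conf{n})\subseteq B$ entirely. There I would work with the stratum of configurations having a fixed symmetry group $\ZZ/k$, on which $A$ is determined up to a finite group. I would choose a consistent branch by analytic continuation along the connected manifold $\conf{n}$, and use the relation $\Psi_1=A\cdot\Psi_2$ on the dense open subset of that stratum to show the branch is single-valued. The orbifold lifting bookkeeping in Step 2 is the secondary technical point.
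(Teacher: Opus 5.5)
\textbf{Which parts are sound.} The paper does not prove this statement itself. It quotes it from Huxford--Schillewaert and notes that the only case it needs ($m=n$, one map the identity) is due to Chen--Salter.

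Your Steps 1 and 2 are fine. The deck-transformation adjustment works: lift the orbifold homotopy starting at $\tilde F_1$; it ends at $\sigma\circ\tilde F_2$ for some deck transformation $\sigma$, which is biholomorphic. The generic part of Step 3 is also fine: when $\Psi_2^{-1}(B)$ is a proper analytic subset, $A$ is uniquely determined and holomorphic off it, and Riemann extension applies. In particular, your argument covers the case the paper actually uses, if you take the identity as the reference map $\Psi_2$.

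\textbf{The gap: the degenerate case.} Your proposed fix for the case $\Psi_2(\conf{n})\subseteq B$ cannot work. The relation $\Psi_1(S)=A(S)\cdot\Psi_2(S)$ is unchanged under $A\mapsto A\,r$ for $r\in\Stab(\Psi_2(S))\cong\ZZ/k$. So every branch produced by analytic continuation satisfies it, and the relation cannot force single-valuedness. Continuing around loops gives a monodromy homomorphism $\chi\colon\braid{n}\to\ZZ/k$, namely the obstruction to a holomorphic $k$-th root of the single-valued function $a^k$.

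This is not a technicality. Suppose $\Psi_2$ takes values in configurations with $k$-fold rotational symmetry about their centroid $c$. Then $S\mapsto c+\mathrm{disc}(S)^{1/k}\bigl(\Psi_2(S)-c\bigr)$ is a well-defined holomorphic, non-isotrivial map $\Psi_1$ with $q\circ\Psi_1=q\circ\Psi_2$. It is not an affine twist of $\Psi_2$: along each $s_i$ the discriminant winds once, and comparing winding numbers shows no single-valued $a$ can exist. So the pointwise equality you extract from Step 2 is strictly weaker than what Step 3 needs, and the hypothesis on induced homomorphisms must be used again there.

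\textbf{What is lost and how to recover it.} The missing information is the orbifold data along the symmetric locus. You discard it when you pass to $\calM_{0,m+1}$ and retain only equality downstairs.

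A nontrivial $\chi$ changes $\overline{(\Psi_1)_*}$ by $\bar\rho^{\chi}$, where $\rho$ is the non-central periodic braid obtained by rotating $\Psi_2(S_0)$ by $2\pi/k$ (it commutes with the image of $(\Psi_2)_*$). Central equivalence is exactly what has to rule this out.

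One way to make this precise is to work equivariantly on universal covers. Let $\bar g$ be the conjugating element from the central equivalence. Then $\tilde\Psi_1$ and $\bar g\cdot\tilde\Psi_2$ are holomorphic maps $\widetilde{\conf{n}}\to\calT_{0,m+1}$ that are equivariant for the same homomorphism on all of $\braid{n}$.
\begin{itemize}
\item Show they coincide. Applying rigidity on your finite cover identifies them only up to an element of $\PMod_{0,m+1}$ centralizing the image of a finite-index subgroup, and that element must then be shown to be trivial.
\item Lift through the $\widetilde{\Aff}$-action on $\widetilde{\conf{m}}$. This action is free, because $\widetilde{\Aff}$ is torsion-free.
\item The resulting $\widetilde{\Aff}$-valued map descends to a single-valued holomorphic $A\colon\conf{n}\to\Aff$.
\end{itemize}
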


\p{Orbifold structure of $\conf{m}/\Aff$} When regarded as a complex analytic orbifold, the quotient $\conf{m}/\Aff$ is identical to $\calM_{0,m+1}/\mathfrak{S}_m$, an $(m+1)$-sheeted cover of $\calM_{0,m+1}/\mathfrak{S}_{m+1}$, by e.g.~\cite[Lemmas 2.6 and 2.7]{ChenSalter2026}. Furthermore, the map $\conf{m}\to\conf{m}/\Aff$ induces the quotient homomorphism $\braid{m}\to\braid{m}/Z(\braid{m})$ on orbifold fundamental groups.

\p{Holomorphic rigidity} The fact that Theorem~\ref{mainthm:natleast5} implies Theorem~\ref{mainthm:holo} has already been shown by Chen--Salter~\cite[Theorem 1.5]{ChenSalter2026}. In fact, they show that Theorem~\ref{mainthm:natleast5} implies the following strengthening of Theorem~\ref{mainthm:holo}.

\setcounter{altthm}{0}

\begin{altthm}\label{mainthm:holo-improved}
    Let $n \geq 5$ and $m\geq 3$.  Let  $\Psi:\conf{n} \rightarrow \conf{m}$ be a holomorphic map that is not affine equivalent to a constant map. Then $m=n$ and $\Psi$ is an affine twist of the identity.
\end{altthm}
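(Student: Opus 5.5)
The plan is to chain together the tools assembled in this section. Let $\Psi\colon\conf{n}\to\conf{m}$ be holomorphic and not affine equivalent to a constant map, and let $\pi\colon\conf{m}\to\conf{m}/\Aff$ be the quotient map.

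The first step is to show that the induced map $\Psi_*\colon\braid{n}\to\braid{m}$ is irreducible and non-cyclic.
\begin{itemize}
\item By the orbifold description above, $\conf{m}/\Aff\cong\calM_{0,m+1}/\mathfrak{S}_m$. This is a finite cover of $\calM_{0,m+1}/\mathfrak{S}_{m+1}$.
\item The composite $\pi\circ\Psi$ is therefore a holomorphic map from $\conf{n}$ to such a cover. It is non-isotrivial precisely because $\Psi$ is not affine equivalent to a constant map.
\item On orbifold fundamental groups, $\pi\circ\Psi$ induces $\overline{\Psi_*}\colon\braid{n}\to\braid{m}/Z(\braid{m})\leq\Mod_{0,m+1}$.
\item Theorem~\ref{thm:hol-implies-irred} then gives that $\overline{\Psi_*}$ is irreducible. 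Since $\braid{m}\to\braid{m}/Z(\braid{m})$ is the capping map and multicurves in $\disk{m}$ correspond to multicurves in $\sphere{m}$, it follows that $\Psi_*$ is irreducible.
\item Theorem~\ref{thm:hol-implies-noncyclic} applied to $\pi\circ\Psi$ shows that $\overline{\Psi_*}$ has non-cyclic image.
\item Hence $\Psi_*$ is non-cyclic, because a cyclic image would remain cyclic modulo the center.
\end{itemize}

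Next I would apply Theorem~\ref{mainthm:natleast5} to $\Psi_*$. This gives $m=n$ and that $\Psi_*$ is centrally equivalent to the identity homomorphism, which is the map induced by the identity map of $\conf{n}$. The identity map is holomorphic and is not affine equivalent to a constant, since its image in $\conf{n}/\Aff$ is not a point. Theorem~\ref{thm:conf-rigidity}, applied with $\Psi_1=\Psi$ and $\Psi_2=\mathrm{id}$, then yields that $\Psi$ is an affine twist of the identity. This is the special case already established by Chen--Salter.

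The main obstacle is the bookkeeping in the first step. One has to check carefully that irreducibility and non-cyclicity of the homomorphism to $\braid{m}/Z(\braid{m})$, viewed inside $\Mod_{0,m+1}$, transfer back to $\braid{m}$. One also has to check that the notion of "irreducible" in Theorem~\ref{thm:hol-implies-irred}, namely not preserving a multicurve in $S_{0,m+1}$, matches the notion used in Theorem~\ref{mainthm:natleast5}. I expect both points to follow from the capping exact sequence, because curves in $\sphere{m}$ that avoid the distinguished puncture correspond bijectively to essential curves in $\disk{m}$.
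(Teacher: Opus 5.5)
Your proposal is correct and matches the paper's proof essentially step for step. Non-isotriviality of $\conf{n}\to\conf{m}/\Aff$ gives that $\overline{\Psi}_*$ is irreducible and non-cyclic by Theorems~\ref{thm:hol-implies-irred} and~\ref{thm:hol-implies-noncyclic}; both properties lift to $\Psi_*$, and Theorem~\ref{mainthm:natleast5} together with Theorem~\ref{thm:conf-rigidity} finishes the argument. Your extra checks are exactly the details the paper leaves implicit: the capping correspondence of essential curves, and that the identity is not affine equivalent to a constant.
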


We include a proof here for the sake of completeness.

\begin{proof}[{Proof of Theorem~\ref{mainthm:holo}} and \altref{mainthm:holo-improved}]
    Since $\Psi$ is not affine equivalent to a constant map, the induced map $\overline{\Psi}\colon\conf{n}\to\conf{m}/\Aff$ is not isotrivial. By Theorems~\ref{thm:hol-implies-irred} and~\ref{thm:hol-implies-noncyclic} the induced homomorphism $\overline{\Psi}_*\colon\braid{n}\to\braid{m}/Z(\braid{m})<\Mod_{0,m+1}$ is irreducible and has non-cyclic image. Therefore $\Psi_*\colon\braid{n}\to\braid{m}$ is irreducible and has non-cyclic image. By Theorem~\ref{mainthm:natleast5}, $m=n$ and $\Psi_*$ is centrally equivalent to the identity. By Theorem~\ref{thm:conf-rigidity}, $\Psi$ is an affine twist of, and hence also affine equivalent to, the identity map.
\end{proof}

Given $n\geq3$ distinct points $x_1,\ldots,x_n$ in $\mathbb{C}$, the affine equation $y^2=(x-x_1)\cdots(x-x_n)$ in $\mathbb{C}^2$ defines a hyperelliptic curve. If $n=2g+1$ then this curve has genus $g$ and has a distinguished point at infinity that is fixed by the hyperelliptic involution. We obtain a holomorphic map
\[
H_g\colon\conf{2g+1}\to\calH_{g,1}.
\]

The above map $H_g$ is not isotrivial. We prove the following.

\begin{theorem}\label{thm:conf-to-hyp}
Let $n\geq5$, $g\geq1$, and $\Psi\colon\conf{n}\to\calH_{g,1}$ be a non-isotrivial holomorphic map. Then $n=2g+1$ and $\Psi$ agrees with $H_g$ when viewed as a map to the coarse moduli space.
\end{theorem}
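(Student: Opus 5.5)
The plan is to pass to orbifold fundamental groups, use the Birman--Hilden identification to turn $\Psi_*$ into a braid group homomorphism, apply Theorem~\ref{mainthm:natleast5}, and then upgrade homotopy information to equality of maps via Theorem~\ref{thm:moduli-rigidity}.

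\textbf{Step 1: identify the target with a configuration-space quotient.} The orbifold $\calH_{g,1}$ is the moduli of $2g+2$ branch points on $\mathbb{P}^1$ with one marked branch point. Equivalently, it is $2g+1$ points in $\mathbb{C}$ modulo $\Aff$, i.e.\ $\calM_{0,2g+2}/\mathfrak{S}_{2g+1}\cong\conf{2g+1}/\Aff$, up to the generic hyperelliptic $\Z/2$ stabilizer. By Birman--Hilden, $\pi_1^{\rm orb}(\calH_{g,1})$ is the symmetric mapping class group $\mathrm{SMod}_{g,1}$, and there is a surjection $\mathrm{SMod}_{g,1}\to\braid{2g+1}/Z(\braid{2g+1})$ with kernel generated by the hyperelliptic involution. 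Under this identification, $H_g$ corresponds to the quotient map $\conf{2g+1}\to\conf{2g+1}/\Aff$, and it induces the natural map on fundamental groups.

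\textbf{Step 2: reduce to a homomorphism of braid groups.} Since $\Psi$ is non-isotrivial, Theorems~\ref{thm:hol-implies-irred} and~\ref{thm:hol-implies-noncyclic} show that $\Psi_*\colon\braid{n}\to\mathrm{SMod}_{g,1}\le\Mod_{g,1}$ is irreducible with non-cyclic image. This requires that $\calH_{g,1}$ is a finite cover of a suborbifold of $\calM_{g,1}$, or some substitute for this. Push down to $\overline{\Psi}_*\colon\braid{n}\to\braid{2g+1}/Z$.
\begin{itemize}
\item \emph{Irreducibility descends.} A multicurve in the $(2g+1)$-punctured sphere preserved by the image lifts to a symmetric multicurve in $S_{g,1}$ preserved by $\Psi_*(\braid{n})$.
\item \emph{Non-cyclicity descends.} If the image became cyclic, then $\Psi_*(\braid{n})$ would lie in a central $\Z/2$-extension of a cyclic group, hence would be abelian, and therefore cyclic because $\braid{n}^{\ab}\cong\Z$.
\end{itemize}
Next lift $\overline{\Psi}_*$ to $\Phi\colon\braid{n}\to\braid{2g+1}$. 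The obstruction to lifting through the central extension $\Z\to\braid{m}\to\braid{m}/Z$ is a class in $H^2(\braid{n};\Z)$. Since $H_1(\braid{n})=\Z$ is free, by universal coefficients this group is torsion, and it vanishes for $n\ge5$ because $H_2(\braid{n};\Z)=\Z/2$ for $n\geq4$ gives only Ext terms... Rather than rely on this computation, I would lift more concretely: send each $s_i$ to a chosen lift, adjusting by the central element so that the braid relations hold. Since all $s_i$ are conjugate, the lifts can be chosen compatibly.

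Alternatively, I would avoid lifting altogether. To do so I would observe that the proof of Theorem~\ref{mainthm:natleast5} only uses the induced action on curves and punctures. A cleaner route is to compose $\Psi$ with a finite étale cover, but I would try the cohomology vanishing first.

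\textbf{Step 3: apply the main theorem and conclude.} $\Phi$ is irreducible with the same curve stabilizers, and it is non-cyclic. By Theorem~\ref{mainthm:natleast5}, $2g+1=n$ and $\Phi$ is centrally equivalent to the identity. Hence $\overline{\Psi}_*$ agrees with an automorphism of $\braid{n}/Z$. Modulo inner automorphisms, this is either the map induced by $H_g$ or that map precomposed with $\kappa$.
\begin{itemize}
\item \emph{Inner automorphisms.} These are handled by the orbifold homotopy remark following Theorem~\ref{thm:moduli-rigidity}.
\item \emph{The automorphism $\kappa$.} This corresponds to complex conjugation, so $\Psi$ would be homotopic to the anti-holomorphic map $H_g\circ\overline{(\cdot)}$. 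Theorem~\ref{thm:moduli-rigidity} then forces equality of a holomorphic and an anti-holomorphic non-isotrivial map, which is impossible.
\end{itemize}
Composing with the forgetful map to $\calM_{0,n+1}/\mathfrak{S}_n$ (the coarse space), $\Psi$ and $H_g$ become homotopic non-isotrivial holomorphic maps. Theorem~\ref{thm:moduli-rigidity} therefore gives equality on coarse spaces. Passing to a finite smooth cover resolves the issue that the target is a quotient of $\calM_{0,n+1}$.

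\textbf{Main obstacle.} I expect the main difficulty to be the orbifold bookkeeping: the extra $\Z/2$ from the hyperelliptic involution, and making the lift to $\braid{n}\to\braid{n}$ rigorous.
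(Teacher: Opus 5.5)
Your proposal is correct and follows essentially the same route as the paper: pass to $\overline{\Psi}\colon\conf{n}\to\calM_{0,2g+2}/\mathfrak{S}_{2g+1}$, obtain an irreducible non-cyclic $\overline{\Psi}_*$, lift it to a homomorphism $\braid{n}\to\braid{2g+1}$, apply Theorem~\ref{mainthm:natleast5}, and finish with Theorem~\ref{thm:moduli-rigidity}. The paper differs only in small ways: it lifts via the explicit construction of Lemma~\ref{lem:holo:lift} rather than $H^2(\braid{n};\ZZ)=0$, and it applies Theorems~\ref{thm:hol-implies-irred} and~\ref{thm:hol-implies-noncyclic} directly to $\overline{\Psi}$ rather than descending from $\Mod_{g,1}$.

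One correction is needed. You must exclude $\kappa$ and run the final rigidity step for $\overline{\Psi}$, not for $\Psi$. The reason is that $\Psi_*$ is only determined up to twisting by the hyperelliptic involution, so $\Psi$ need not be homotopic in $\calH_{g,1}$ to $H_g$ or to $H_g\circ\overline{(\cdot)}$. This is exactly why the conclusion holds only on coarse spaces.
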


If $g\geq1$, then there are exactly two holomorphic maps $\conf{2g+1}\to\calH_{g,1}$ that agree with $H_{g}$ when viewed as maps to the coarse moduli space, see~\cite[Section 3.5]{ChenSalter2026} for a discussion of this phenomenon.  The lower bound on $n$ in Theorem~\ref{thm:conf-to-hyp} is once again sharp, since there is a non-isotrivial map $H_1\circ R\colon\conf{4}\to\calH_{1,1}=\calM_{1,1}$. The second and fourth authors have shown an analog of Theorem~\ref{thm:conf-to-hyp} for $n\geq3$ and $g=1$~\cite[Theorem 1.2]{HuxfordSchillewaert2025}.

\p{Orbifold structure of $\calH_{g,1}$} Let $\phi\in\Mod_{g,1}$ be a hyperelliptic involution. By work of Harvey--Maclachlan,~\cite[Proposition 8]{MH75}, the fix set $\Fix(\phi)$ of $\phi$ in Teichm{\"u}ller space $\calT_{g,1}$ is biholomorphic to the Teichm{\"u}ller space $\calT_{0,2g+2}$ of the quotient surface, which is a genus 0 surface with $2g+2$ branch points, one of which is distinguished. The isomorphism
\[
  \Fix(\phi) \cong \calT_{0, 2g+2}
\]
is given by lifting complex structures.  Since hyperelliptic involutions on Riemann surfaces in $\calM_{g,1}$ are unique when they exist, it follows that $\Fix(\phi)$ is a connected component of the preimage of $\calH_{g,1}$ under the orbifold universal covering map $\calT_{g,1}\to\calM_{g,1}$. Since $\Fix(\phi)\cong\calT_{0,2g+2}$ is simply connected, this is the orbifold universal cover of $\calH_{g,1}$. It follows that the inclusion map $\calH_{g,1}\hookrightarrow\calM_{g,1}$ is $\pi_1^{\orb}$-injective, and $\pi_1^{\orb}(\calH_{g,1})$ is precisely the centralizer $C_{\Mod_{g,1}}(\phi)$ of $\phi$ in $\Mod_{g,1}$.

Quotienting by the hyperelliptic involution defines a holomorphic map
\[
  \calH_{g,1}\to\calM_{0,2g+2}/\mathfrak{S}_{2g+1}.
\]
Precomposing this by $H_g$ gives the natural quotient map
\[
  \conf{2g+1}\to\conf{2g+1}/\Aff.
\]
The homomorphism $\pi_1^{\orb}(\calH_{g,1})\to\pi_1^{\orb}(\calM_{0,2g+2}/\mathfrak{S}_{2g+1})\cong\braid{2g+1}/Z(\braid{2g+1})$ has kernel $\langle\phi\rangle\cong\Z/2\Z$. Furthermore, it follows from Birman--Hilden that the homomorphism $(H_g)_*\colon\braid{2g+1}\to\pi_1^{\orb}(\calH_{g,1})$ is surjective~\cite{BirmanHilden}. Therefore,
\[
  \pi_1^{\orb}(\calH_{g,1})\cong\braid{2g+1}/Z(\braid{2g+1})^2
\]
and $(H_g)_*$ is the obvious quotient map.  A well known special case of this occurs when $g=1$, where $\calH_{1,1}=\calM_{1,1}$. Famously, $\pi_1^{\rm orb}(\calM_{1,1})\cong\SL_2(\ZZ)\cong\braid{3}/Z(\braid{3})^2$, whereas $\pi_1^{\rm orb}(\calM_{0,4}/\mathfrak{S}_3)\cong\PSL_2(\ZZ)\cong\braid{3}/Z(\braid{3})$.

We have the following auxiliary lemma that we use in the proof of Theorem~\ref{thm:conf-to-hyp}.

\begin{lemma}\label{lem:holo:lift}
Let $n,m\geq3$ and $\overline{\Phi}:\braid{n} \rightarrow \braid{m}/Z(\braid{m})$ be a homomorphism.  There is a lift of $\overline{\Phi}$ to a map $\Phi:\braid{n} \rightarrow \braid{m}$.
\end{lemma}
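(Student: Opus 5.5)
We have to lift $\overline{\Phi}\colon\braid{n}\to\braid{m}/Z(\braid{m})$ through the central extension
\[
1\to Z(\braid{m})\cong\ZZ\to\braid{m}\to\braid{m}/Z(\braid{m})\to1.
\]
The plan is to choose lifts of the generators and then adjust them by central elements so that all of Artin's relations hold in $\braid{m}$. We use the standard generators $s_1,\ldots,s_{n-1}$ and Artin's presentation from Section~\ref{section:braidprelim}. For each $i$ we fix an arbitrary lift $g_i\in\braid{m}$ of $\overline{\Phi}(s_i)$, and we try to define $\Phi(s_i)=g_i z^{k_i}$, where $z=T_{\partial\disk{m}}$ and $k_i\in\ZZ$ are to be determined.

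First we check the relations one at a time. For $|i-j|>1$, the commutator $[g_i,g_j]$ maps to $1$ in the quotient, so $[g_i,g_j]=z^{c}$ for some $c$. Replacing the $g_i$ by central multiples does not change this commutator, so the central shifts cannot fix it. Instead we argue $c=0$ directly: the map $x\mapsto[x,g_j]$ restricted to the preimage of the centralizer of $\overline{\Phi}(s_j)$ is a homomorphism to $Z(\braid{m})$. More simply, we use the abelianization $\braid{m}\to\ZZ$. It sends $z$ to $m(m-1)\neq0$ and kills every commutator, so $z^c$ being a commutator forces $c=0$. Hence the commuting relations hold for any choice of lifts.

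For the braid relations, $g_ig_{i+1}g_i=g_{i+1}g_ig_{i+1}z^{d_i}$ for some $d_i$. Replacing $g_i$ by $g_iz^{k_i}$ multiplies the left side by $z^{2k_i+k_{i+1}}$ and the right side by $z^{k_i+2k_{i+1}}$, so $d_i$ becomes $d_i+k_i-k_{i+1}$. We then choose $k_1=0$ and $k_{i+1}=k_i+d_i$ inductively. This is possible because the braid relations form a chain indexed by $i=1,\ldots,n-2$, each linking consecutive indices. With these choices every relation of Artin's presentation holds, so $s_i\mapsto g_iz^{k_i}$ defines a homomorphism $\Phi$. It lifts $\overline{\Phi}$ because it does so on generators.

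The step most likely to need care is the commuting relations, where central corrections do not help. The abelianization argument handles it: $[g_i,g_j]$ is central and lies in the commutator subgroup, and $Z(\braid{m})$ meets the commutator subgroup trivially since $z$ has nonzero image $m(m-1)$ in $\braid{m}^{\ab}\cong\ZZ$. The same abelianization trick gives a second route for the braid relations: $d_i\cdot m(m-1)$ equals the abelianization of $g_ig_{i+1}g_i(g_{i+1}g_ig_{i+1})^{-1}$, which is $0$, so $d_i=0$ already and no shifts are needed.
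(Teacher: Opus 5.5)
Your main argument is correct and is essentially the paper's proof. The paper's recursive choice $\tau_i=\tau_{i-1}\sigma_i\tau_{i-1}\sigma_i^{-1}\tau_{i-1}^{-1}$ is exactly $\sigma_i$ times the central element that cancels the braid-relation defect with the already-corrected $\tau_{i-1}$; this is your recursion $k_{i+1}=k_i+d_i$. The commuting relations are handled by the same observation, $Z(\braid{m})\cap[\braid{m},\braid{m}]=\{1\}$.

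Your closing ``second route'' is false, however, and should be deleted. The element $g_ig_{i+1}g_i(g_{i+1}g_ig_{i+1})^{-1}=T_{\partial\disk{m}}^{d_i}$ is not a commutator. Its image under the abelianization $\braid{m}\to\ZZ$ is the difference of the images of $g_i$ and $g_{i+1}$, which depends on the chosen lifts. For example, take $m=n$ and $\overline{\Phi}$ the quotient map; the lifts $g_1=s_1$ and $g_2=s_2T_{\partial\disk{m}}$ give $d_1=-1$. So the central shifts are genuinely needed.
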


\begin{proof}
  For each $s_i \in \extgen{n} \setminus \{s_n\}$ choose $\sigma_i$ a lift of $\overline{\Phi}(s_i)$ to $\braid{m}$.  We define elements $\tau_i \in \braid{m}$ as follows.  For $i = 1$, set $\tau_i = \sigma_i$.  For $2 \leq i \leq n -1$, we recursively define
  \[
    \tau_i = \tau_{i-1}\sigma_i\tau_{i-1} \sigma_i^{-1}\tau_{i-1}^{-1}.
  \]
  Note that $\tau_i$ and $\sigma_i$ have the same image in $\braid{m}/Z(\braid{m})$.  By construction, $\tau_i$ and $\tau_{i-1}$ satisfy the braid relation for $2 \leq i \leq n-1$.  Likewise, we see that $[\tau_i, \tau_j] \in Z(\braid{m})$ for $[s_i,s_j] = 1$. Note also that $[\braid{m}, \braid{m}] \cap Z(\braid{m}) = \{1\}$ since the composition $Z(\braid{m}) \hookrightarrow \braid{m} \rightarrow \braid{m}^{\ab} \cong \ZZ$ is injective, so $[\tau_i, \tau_j] = 1$.  In particular, the map $\Phi(s_i) = \tau_i$ for $s_i \in \extgen{n} \setminus \{s_n\}$ defines our desired homomorphism.
\end{proof} We are now ready to prove Theorem~\ref{thm:conf-to-hyp}.

\begin{proof}[{Proof of Theorem~\ref{thm:conf-to-hyp}}]
    Let $g\geq1$, $n\geq5$, and $\Psi\colon\conf{n}\to\calH_{g,1}$ be a non-isotrivial holomorphic map. By further quotienting by the hyperelliptic involution, we obtain a holomorphic map $\bar{\Psi}\colon\conf{n}\to\calM_{0,2g+2}/\mathfrak{S}_{2g+1}$. By Theorems~\ref{thm:hol-implies-irred} and~\ref{thm:hol-implies-noncyclic}, the induced homomorphism $\overline{\Psi}_*\colon\braid{n}\to\braid{2g+1}/Z(\braid{2g+1})<\Mod_{0,2g+2}$ is irreducible and has non-cyclic image.
    
    Let $\Phi\colon\braid{n} \rightarrow \braid{2g+1}$ denote a lift of $\overline{\Psi}_*$ from Lemma~\ref{lem:holo:lift}. Recall that the quotient map $\braid{2g+1}\to\braid{2g+1}/Z(\braid{2g+1})<\Mod_{0,2g+2}$ is induced by capping. Therefore, the fact that $\overline{\Psi}_*$ is irreducible implies that $\Phi$ is also irreducible. By Theorem~\ref{mainthm:natleast5} we have $n=2g+1$ and $\Phi$ is centrally equivalent to the identity. Therefore, $\overline{\Psi}_*$ is, up to post-composition by inner automorphisms, either the quotient map $\braid{n}\to\braid{n}/Z(\braid{n})$ or the pre-composition of this with an outer automorphism of $\braid{n}$. The former of these homomorphisms is realized by the holomorphic quotient map $\conf{n}\to\conf{n}/\Aff$ discussed above, while the latter is realized by the anti-holomorphic map obtained by pre-composing this with complex conjugation. Since $\overline{\Psi}$ is holomorphic, by Theorem~\ref{thm:moduli-rigidity} it must agree with the holomorphic quotient map. Hence $\Psi$ agrees with $H_g$ on the level of coarse spaces.
\end{proof}

\p{Maps between hyperelliptic moduli spaces} Note that there are exactly two holomorphic maps $\calH_{g,1}\to\calH_{g,1}$ that agree with the identity as maps of coarse moduli spaces. Precomposing them with $H_g$ gives the two holomorphic maps $\conf{2g+1}\to\calH_{g,1}$ that agrees with $H_g$ as a map to the coarse moduli space.

We now prove Theorem~\ref{mainthm:hyp-to-hyp}, which we restate here for the reader's convenience.

\begin{mainthm*}[\textbf{\ref{mainthm:hyp-to-hyp}}]
    Let $g\geq2$ and $h\geq1$. Let $\Psi\colon\calH_{g,1}\to\calH_{h,1}$ be a non-isotrivial holomorphic map. Then $g=h$ and $\Psi$ agrees with the identity when viewed as a map between the coarse moduli spaces.
\end{mainthm*}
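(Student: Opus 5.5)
The plan is to reduce Theorem~\ref{mainthm:hyp-to-hyp} to Theorem~\ref{thm:conf-to-hyp} by precomposing with $H_g$. Consider the composite $\Psi\circ H_g\colon\conf{2g+1}\to\calH_{h,1}$. Since $g\geq2$ we have $n=2g+1\geq5$, so Theorem~\ref{thm:conf-to-hyp} applies once we know $\Psi\circ H_g$ is non-isotrivial.

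To see non-isotriviality, note that $H_g$ is surjective onto the coarse moduli space of $\calH_{g,1}$: every pointed hyperelliptic curve with a Weierstrass point at infinity has an affine model $y^2=(x-x_1)\cdots(x-x_{2g+1})$. If $\Psi\circ H_g$ were constant on coarse spaces, then $\Psi$ would be constant on the coarse space of $\calH_{g,1}$. That would make $\Psi$ isotrivial, contrary to hypothesis.

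Theorem~\ref{thm:conf-to-hyp} then gives $2g+1=2h+1$, so $g=h$. It also shows that $\Psi\circ H_g$ agrees with $H_g$ as a map to the coarse moduli space. Let $\overline{H_g}$ and $\overline{\Psi}$ denote the induced maps on coarse spaces; we get $\overline{\Psi}\circ\overline{H_g}=\overline{H_g}$. Since $\overline{H_g}$ is surjective onto the coarse space of $\calH_{g,1}$, $\overline{\Psi}$ is the identity there. This is exactly the statement.

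The main point needing care is this passage from orbifold maps to coarse spaces: we must check that a holomorphic orbifold map induces a well-defined map of coarse spaces compatible with composition, and that $H_g$ is surjective on points. Both follow from the definitions, using the standard fact that every genus $g$ curve with a hyperelliptic-fixed marked point admits such a model with the marked point at infinity. No group theory beyond Theorem~\ref{thm:conf-to-hyp} should be needed.
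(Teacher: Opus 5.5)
Your proposal is correct and is essentially the paper's proof. The paper also precomposes with $H_g$, gets non-isotriviality of $\Psi\circ H_g$ from surjectivity of $H_g$, and applies Theorem~\ref{thm:conf-to-hyp} (valid since $2g+1\geq5$) to obtain $g=h$ and $\Psi\circ H_g=H_g$ on coarse spaces. It then cancels $H_g$ using surjectivity.
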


\begin{proof}
    The composition $\Psi\circ H_g\colon\conf{2g+1}\to\calH_{h,1}$ is also holomorphic, and is non-isotrivial since $H_g$ is surjective. By Theorem~\ref{thm:conf-to-hyp} we must have $g=h$ and $\Psi\circ H_g$ agrees with $H_g$ as a map to the coarse moduli space. Since $H_g$ is surjective, $\Psi$ agrees with the identity as a map between coarse spaces.
\end{proof}

We leverage Theorem~\ref{mainthm:hyp-to-hyp} to prove the Theorem~\ref{thm:mod-to-mod}.  This resolves Question~\ref{question:moduli} for $r=s=1$ and $g\geq2$ under the additional assumption that $\Psi$ sends the hyperelliptic locus to the hyperellptic locus.  Note that the conclusion does not mention coarse moduli spaces. 

\begin{theorem}\label{thm:mod-to-mod}
Let $g\geq3$ and $h\geq1$. Let $\Psi\colon\calM_{g,1}\to\calM_{h,1}$ be a non-constant holomorphic map. Suppose that $\Psi(\calH_{g,1})\subset \calH_{h,1}$. Then $g=h$ and $\Psi$ is the identity.
\end{theorem}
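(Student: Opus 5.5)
Let $\Psi\colon\calM_{g,1}\to\calM_{h,1}$ be as in Theorem~\ref{thm:mod-to-mod}, so that $\Psi$ restricts to a holomorphic map $\Psi|\colon\calH_{g,1}\to\calH_{h,1}$. The plan is to apply Theorem~\ref{mainthm:hyp-to-hyp} to this restriction, deduce that $g=h$ and that $\Psi$ is the identity on the coarse hyperelliptic locus, and then upgrade this to $\Psi=\mathrm{id}$ on all of $\calM_{g,1}$ using analytic continuation together with the rigidity of Theorem~\ref{thm:moduli-rigidity}.

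\textbf{Step 1: the restriction is non-isotrivial.} The hard step is to rule out the possibility that $\Psi|_{\calH_{g,1}}$ is isotrivial even though $\Psi$ is non-constant. I would argue on fundamental groups. Theorem~\ref{thm:hol-implies-irred} shows that $\Psi_*\colon\Mod_{g,1}\to\Mod_{h,1}$ is irreducible; note that $\calM_{g,1}$ has a smooth finite cover. Since $g\geq3$, the abelianization of $\Mod_{g,1}$ is trivial, so $\Psi_*$ has non-abelian image. Suppose now that $\Psi|_{\calH_{g,1}}$ were constant on coarse spaces. Then the restriction of $\Psi_*$ to $\pi_1^{\orb}(\calH_{g,1})=C_{\Mod_{g,1}}(\phi)$ would have finite image. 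This subgroup contains Dehn twists about nonseparating curves, namely the images of the $s_i$. Those twists are all conjugate in $\Mod_{g,1}$ and they generate it, so every Dehn twist about a nonseparating curve would map to an element of finite order. A standard lantern/chain relation argument, or the fact that a homomorphism killing the $k$th powers of twists has finite image, which is available for a suitable $k$ via Funar's results, would then force $\Psi_*$ to have finite image. That contradicts irreducibility together with non-constancy via \cite[Lemma 3.1]{AAS18}. I expect this step to be the main obstacle. If the finite-image argument proves awkward, I would instead argue geometrically: a generic point of $\calM_{g,1}$ degenerates to $\calH_{g,1}$ in a dimension-counting way, and I would combine this with the fact that holomorphic maps on quasi-projective varieties are algebraic.

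\textbf{Step 2: identify $g$ and the restriction.} With Step 1 in hand, Theorem~\ref{mainthm:hyp-to-hyp} applies, since $g\geq2$. It gives $g=h$ and shows that $\Psi|_{\calH_{g,1}}$ agrees with the inclusion $\calH_{g,1}\hookrightarrow\calM_{g,1}$ on coarse spaces. Consequently $\Psi_*$ restricted to $C_{\Mod_{g,1}}(\phi)$ agrees with the inclusion up to conjugation, modulo the finite kernel $\langle\phi\rangle$ issue. The centralizer $C_{\Mod_{g,1}}(\phi)$ contains the Humphries-type chain twists $T_{c_1},\dots,T_{c_{2g}}$. After conjugating, $\Psi_*(T_{c_i})=T_{c_i}$ for each $i$, possibly up to a sign ambiguity coming from $\phi$, which I would rule out by checking orders.

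\textbf{Step 3: extend to the whole group and conclude.} $\Mod_{g,1}$ is generated by the chain twists together with one extra twist $T_{c_0}$ about a non-hyperelliptic-symmetric curve, that is, by a Humphries generating set. The twist $T_{c_0}$ commutes with most of the chain twists and braids with one of them. Using these commutation and braid relations, and the fact that centralizers of such twist collections are small, I would show that $\Psi_*(T_{c_0})=T_{c_0}$, perhaps after composing with a twist that commutes with the entire chain image. Hence $\Psi_*$ is conjugate to the identity. Theorem~\ref{thm:moduli-rigidity}, applied on a smooth finite cover, then shows that $\Psi$ equals the identity, because both maps are non-isotrivial, holomorphic, and homotopic as orbifold maps.
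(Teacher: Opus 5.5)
Your overall architecture matches the paper's: restrict to $\calH_{g,1}$, apply Theorem~\ref{mainthm:hyp-to-hyp}, extend over a Humphries generating set, and finish with Theorem~\ref{thm:moduli-rigidity}. However, Step~1, which you rightly call the crux, does not go through.

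\textbf{Step 1.} The ``fact'' you invoke is false. Funar showed that for $g\geq3$ the quotient of $\Mod_g$ by the normal closure of $k$th powers of Dehn twists is \emph{infinite} for every $k\notin\{1,2,3,4,6,8,12\}$. You also have no control over the order of $\Psi_*(T_{\alpha_1})$. The lantern/chain and dimension-counting suggestions are not arguments. Knowing only that $\Psi_*(C_{\Mod_{g,1}}(\phi))$ is finite is too weak, since a finite quotient of $\braid{2g+1}$ (e.g.\ $\mathfrak{S}_{2g+1}$) need not identify $s_1$ and $s_2$.

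The missing idea is this. Isotriviality puts the monodromy of $\Psi\circ H_g$ inside the stabilizer of one point of $\calT_{h,1}$, i.e.\ inside $\operatorname{Aut}(X,x)$ for a single pointed Riemann surface. This group is \emph{cyclic}, because it fixes the marked point. A cyclic image of $\braid{2g+1}$ factors through $\braid{2g+1}^{\ab}\cong\Z$, so $\Psi_*(T_{\alpha_1})=\Psi_*(T_{\alpha_2})$ for the Birman--Hilden twists $T_{\alpha_i}=(H_g)_*(s_i)$. By change of coordinates, $T_{\beta_1}T_{\beta_2}^{-1}\in\ker\Psi_*$ for all nonseparating $\beta_1,\beta_2$ with $\iota(\beta_1,\beta_2)=1$. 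Hence all Humphries generators $T_{\alpha_1},\dots,T_{\alpha_{2g}},T_\beta$ have the same image, so $\Psi_*$ is cyclic. Since $\Mod_{g,1}$ is perfect for $g\geq3$, $\Psi_*$ is trivial, and $\Psi$ is constant by \cite[Lemma 3.1]{AAS18}.

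\textbf{Step 2.} ``Checking orders'' cannot remove the $\phi$-ambiguity. The alternative $T_{\alpha_i}\mapsto T_{\alpha_i}\phi$ is induced by the transvection $s_i\mapsto s_iT_{\partial\disk{n}}$ (with $n=2g+1$). It is an automorphism of $C_{\Mod_{g,1}}(\phi)\cong\braid{n}/Z(\braid{n})^2$. So no order or relation inside the centralizer distinguishes it from the inclusion; you must use an element outside it.

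The paper takes the Humphries twist $T_\beta$, which commutes with $T_{\alpha_1}$. Then $\Psi_*(T_\beta)$ commutes with $T_{\alpha_1}\phi$ and with its square $T_{\alpha_1}^2$. By Lemmas~\ref{lem:crs:commdisj} and~\ref{lem:crs:power} it fixes $\alpha_1$, hence commutes with $T_{\alpha_1}$ and therefore with $\phi$. The image of $\Psi_*$ would then be exactly $C_{\Mod_{g,1}}(\phi)$, whose abelianization is $\Z/2\Z$, contradicting perfectness of $\Mod_{g,1}$.

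\textbf{Step 3.} Once the sign is fixed, $\Psi_*$ sends every nonseparating twist to a nonseparating twist. You propose to pin down $\Psi_*(T_\beta)$ by hand, but you would still face the non-uniqueness of curves with $\beta$'s intersection pattern (e.g.\ $\phi(\beta)$ has the same one). The paper instead invokes \cite[Theorem 3.2]{DP25} to get that $\Psi_*$ is an automorphism, and then applies Theorem~\ref{thm:moduli-rigidity}.
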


\begin{proof}
    Let $n=2g+1$. Assume first for a contradiction that the restriction of $\Psi$ to $\calH_{g,1}$ is isotrivial, i.e.\ that $\Psi\circ H_g$ is isotrivial. This means that $(\Psi\circ H_g)_*(\braid{n})$ is a finite subgroup of $\Mod_{h,1}$, as it acts via monodromy by automorphisms on some Riemann surface in $\calM_{h,1}$. However, any such automorphism fixes the marked point, hence $(\Psi\circ H_g)_*$ is cyclic. Therefore, $(\Psi\circ H_g)_*(s_1)=(\Psi\circ H_g)_*(s_2)$, since $s_1$ and $s_2$ have the same image in $B_n^{\ab}\cong\Z$.

    Birman--Hilden proved that for each $i\in\{1,\ldots,n-1\}$ the mapping class $(H_g)_*(s_i)=T_{\alpha_i}$ is a Dehn twist about a non-separating curve $\alpha_i$. Moreover, the geometric intersection numbers of the $\alpha_i$ satisfy $\iota(\alpha_i,\alpha_{i+1})=1$ for $1\leq i<n-1$, and $\iota(\alpha_i,\alpha_j)=0$ if $[s_i,s_j]=1$~\cite{BH71}. Humphries showed that there is a non-separating curve $\beta$ in $S_{g,1}$ satisfying $\iota(\beta,\alpha_4)=1$ and $\iota(\beta,\alpha_i)=0$ for $i\in\{1,\ldots,n-1\}\setminus\{4\}$, such that $T_{\alpha_1},\ldots,T_{\alpha_{n-1}},T_\beta$ generate $\Mod_{g,1}$~\cite{Hum79}, see also~\cite[Section 4.4.4]{FM}. We have shown that $\Psi_*(T_{\alpha_1})=\Psi_*(T_{\alpha_2})$. Hence $T_{\alpha_1}T_{\alpha_2}^{-1}$ lies in the kernel of $\Psi_*$. By the change of coordinates principle, $T_{\beta_1}T_{\beta_2}^{-1}$ lies in the kernel of $\Psi_*$ for any non-separating simple closed curves $\beta_1$ and $\beta_2$ satisfying $\iota(\beta_1,\beta_2)=1$. Hence $\Psi_*(T_{\alpha_1})=\cdots=\Psi_*(T_{\alpha_{n-1}})=\Psi_*(T_\beta)$. Therefore, $\Psi_*$ has cyclic image. However, the abelianization of $\Mod_{g,1}$ is trivial if $g\geq3$. Hence $\Psi_*$ is trivial, and so $\Psi$ is constant by~\cite[Lemma 3.1]{AAS18}, which contradicts our hypothesis.

    Therefore the restriction of $\Psi$ to $\calH_{g,1}$, and the composition $\Psi\circ H_g$, are both non-isotrivial. By Theorem~\ref{mainthm:hyp-to-hyp} this implies that $g=h$. The first author and Souto show that if $g\geq4$ then any non-constant holomorphic map $\Psi\colon\calM_{g,1}\to\calM_{g,1}$ is the identity~\cite[Theorem 1.1]{DPS24}. Here we provide an argument that reaches this conclusion for all $g\geq3$ under the additional hypothesis that $\Psi(\calH_{g,1})\subset\calH_{g,1}$.
    
    The conclusion of Theorem~\ref{thm:conf-to-hyp} tells us that $\Psi\circ H_g$, viewed as a map $\conf{n}\to\calH_{g,1}$, agrees with $H_g$ on the level of coarse spaces. In fact, the proof tells us that by post-composing this with the map that quotients by the hyperelliptic involution, one obtains the quotient map $\conf{n}\to\conf{n}/\Aff$. This implies that there are one of two possibilities, up to precomposing with an inner automorphism, for the induced homomorphism
    \[
      (\Psi\circ H_g)_*\colon\braid{n}\to\braid{n}/Z(\braid{n})^2<\Mod_{g,1}.
    \]
    Namely, either $(\Psi\circ H_g)_*=(H_g)_*$, or $(\Psi\circ H_g)_*(s_i)=(H_g)_*(s_iT_{\partial\disk{n}})$ for all $s_i\in\extgen{n}$. We seek to rule out the latter possibility. Assume for a contradiction that $(\Psi\circ H_g)_*(s_i)=(H_g)_*(s_iT_{\partial\disk{n}})$ for all $s_i\in\extgen{n}$.

    Let $\alpha_1,\ldots,\alpha_{n-1},\beta$ be curves as above. Note that $(H_g)_*(T_{\partial\disk{n}})=\phi$ is the hyperelliptic involution. Our assumption implies $\Psi_*(T_{\alpha_1})=T_{\alpha_1}\phi$. Note that $\Psi_*(\phi)=\phi$, since $T_{\partial\disk{n}}=(s_1\cdots s_{n-1})^n$ is a word of even length in the standard generators of $\braid{n}$. In particular, the image of $\Psi_*$ contains $\cent{\phi}{\Mod_{g,1}}=\braid{n}/Z(\braid{n})^2<\Mod_{g,1}$. We now prove that there is nothing else in the image of $\Psi_*$.
    
    Suppose that $h\in\Mod_{g,1}$ commutes with $T_{\alpha_1}$. Then $\Psi_*(h)$ commutes with both $\Psi_*(T_{\alpha_1})=T_{\alpha_1}\phi$ and its square $T_{\alpha_1}^2$. By Lemmas~\ref{lem:crs:commdisj} and \ref{lem:crs:power} we have $\Psi_*(h)(\alpha_1)=\alpha_1$. Hence $\Psi_*(h)T_{\alpha_1}\Psi_*(h)^{-1}=T_{\Psi_*(h)(\alpha_1)}=T_{\alpha_1}$, i.e. $[\Psi_*(h),T_{\alpha_1}]=1$. Hence $[\Psi_*(h),\phi]=1$. If we apply this reasoning to $h=T_{\beta}$, then we conclude that $\Psi_*(T_\beta)\in\cent{\phi}{\Mod_{g,1}}$. Hence $\Psi_*(\Mod_{g,1})=\cent{\phi}{\Mod_{g,1}}=\braid{n}/Z(\braid{n})^2$. However, $\Mod_{g,1}$  has trivial abelianization if $g\geq 3$ and $\braid{n}/Z(\braid{n})^2$ has non-trivial abelianization $\Z/2\Z$, which is a contradiction.  Therefore $(\Psi\circ H_g)_*$ agrees with $(H_g)_*$. Hence $\Psi_*$ maps non-separating Dehn twists to Dehn twists. By~\cite[Theorem 3.2]{DP25}, $\Psi_*$ is an automorphism, so by Theorem~\ref{thm:moduli-rigidity}, $\Psi$ is the identity map.
\end{proof}

\printbibliography{}
\end{document}